\numberwithin{equation}{section}
\renewcommand{\email}[2][]{%
  \ifx\emails\@empty\relax\else{\g@addto@macro\emails{,\space}}\fi%
  \@ifnotempty{#1}{\g@addto@macro\emails{\textrm{(#1)}\space}}%
  \g@addto@macro\emails{#2}%
}
\def\@tocline#1#2#3#4#5#6#7{\relax
	\ifnum #1>\c@tocdepth % then omit
	\else
	\par \addpenalty\@secpenalty\addvspace{#2}%
	\begingroup \hyphenpenalty\@M
	\@ifempty{#4}{%
		\@tempdima\csname r@tocindent\number#1\endcsname\relax
	}{%
		\@tempdima#4\relax
	}%
	\parindent\z@ \leftskip#3\relax \advance\leftskip\@tempdima\relax
	\rightskip\@pnumwidth plus4em \parfillskip-\@pnumwidth
	#5\leavevmode\hskip-\@tempdima
	\ifcase #1
	\or\or \hskip 1em \or \hskip 2em \else \hskip 3em \fi%
	#6\nobreak\relax
	\hfill\hbox to\@pnumwidth{\@tocpagenum{#7}}\par% <---- \dotfill -> \hfill
	\nobreak
	\endgroup
	\fi}
\newtheorem{theorem}{Theorem}[section]
\newtheorem{lemma}[theorem]{Lemma}
\newtheorem{proposition}[theorem]{Proposition}
\newtheorem{corollary}[theorem]{Corollary}
\newenvironment{assumption}[1]{%
	\manualtheoreminner
}{\endmanualtheoreminner}
\newenvironment{example}[1]{%
	\manualtheoreminnerr
}{\endmanualtheoreminnerr}
{ \theoremstyle{definition}
\newtheorem{definition}[theorem]{Definition}}
\theoremstyle{remark}
\newtheorem{remark}[theorem]{Remark}}
\newcommand{\ex}{\mathbb{E}}
\newcommand{\pr}{\mathbb{P}}
\newcommand{\weyl}{W^\circ}
\title{Tightness of discrete Gibbsian line ensembles}
\author{Christian Serio}
\begin{document}

\maketitle

\begin{abstract}
	A discrete Gibbsian line ensemble $\mathfrak{L} = (L_1,\dots,L_N)$ consists of $N$ independent random walks on the integers conditioned not to cross one another, i.e., $L_1 \geq \cdots \geq L_N$. In this paper we provide sufficient conditions for convergence of a sequence of suitably scaled discrete Gibbsian line ensembles $f^N = (f_1^N,\dots,f_N^N)$ as the number of curves $N$ tends to infinity. Assuming log-concavity and a KMT-type coupling for the random walk jump distribution, we prove that under mild control of the one-point marginals of the top curves with a global parabolic shift, the full sequence $(f^N)$ is tight in the topology of uniform convergence over compact sets, and moreover any weak subsequential limit possesses the Brownian Gibbs property. If in addition the top curves converge in finite-dimensional distributions to the parabolic $\mathrm{Airy}_2$ process, we show that $(f^N)$ converges to the parabolically shifted Airy line ensemble. This generalizes the results of \cite{Ber} for Bernoulli line ensembles to a broad class of discrete jump distributions, including geometric as well as any distribution whose support forms a compact integer interval.
\end{abstract}

\tableofcontents

%-------------------------------------------------------------------------------------------------------------------------------------------------------------------------------------------------
% Section 1
%
%-------------------------------------------------------------------------------------------------------------------------------------------------------------------------------------------------
\section{Introduction}\label{Section1}

\subsection{Gibbsian line ensembles}\label{Section1.1}

	In recent years, there has been a growing interest in certain models of avoiding random walks known as \textit{line ensembles} satisfying a \textit{Gibbs property}. Put simply, a line ensemble is a collection of random real-valued continuous curves indexed by a set of integers and defined on a common interval on the real line. If $k$ is a positive integer and $\mathfrak{L} = (L_1,\dots,L_k)$ is such a line ensemble defined on an interval $\Lambda\subset\mathbb{R}$, we say that $\mathfrak{L}$ satisfies the \textit{Brownian Gibbs property} if the curves of $\mathfrak{L}$ are non-intersecting, i.e., $L_1 > \cdots > L_k$, and if the following resampling invariance condition holds. We choose a subinterval $\{k_1,k_1+1,\dots,k_2\}$ of the index set $\{1,\dots,k\}$ and a subinterval $[a,b]\subset\Lambda$. Then the law of the curves $(L_{k_1},\dots,L_{k_2})$ restricted to $[a,b]$, conditioned on their values at $a$ and $b$ and the values of the bounding curves $L_{k_1-1}$ and $L_{k_2+1}$ on $[a,b]$, is simply the law of $k_2-k_1+1$ Brownian bridges with the appropriate values at $a$ and $b$ conditioned to avoid one another and the two bounding curves on $[a,b]$.
	
	An example of particular interest is the \textit{Airy line ensemble}, a central object in the \textit{Kardar-Parisi-Zhang}\! (\textit{KPZ})\! \textit{universality class}. This is a stationary process with curves indexed by $\mathbb{N}$, whose top curve has finite-dimensional distributions given by the $\mathrm{Airy}_2$ \textit{process}, with one-point marginals given by the \textit{Tracy-Widom distribution}. The Brownian Gibbs property was first introduced in \cite{CH14} and used to prove the existence of the Airy line ensemble. In particular, the authors of that paper showed that the \textit{parabolic Airy line ensemble}, obtained from the Airy line ensemble by subtracting a parabola, satisfies the Brownian Gibbs property. This paper initiated the use of the Gibbs property as a powerful tool to prove regularity properties of line ensembles, including tightness under appropriate scalings, and through this technique the authors exhibited the parabolic Airy line ensemble as an edge scaling limit of Dyson Brownian motion.
	
	More recently, there has been significant interest in scaling limits of discrete Gibbsian line ensembles, in which the curves are obtained from discrete random walks with various jump distributions. This scaling limit is in many cases expected to be the parabolic Airy line ensemble, reminiscent of Donsker's classical theorem establishing Brownian motion as a scaling limit of random walks with generic jump distributions. This conjectural limit has recently been proven in a few particular cases. In the preprint \cite{DNV}, uniform convergence over compact sets to the parabolic Airy line ensemble was proven for sequences of (centered and scaled) non-intersecting Bernoulli, geometric, exponential, and Poisson random walks started from the origin, as the number of curves tends to infinity\textemdash under assumptions of exact formulas for the finite-dimensional distributions of all random walks, as well as a Gibbs property. Another preprint, \cite{Wu19}, studied distinct but related objects known as $(H,H^{\mathrm{RW}})$-\textit{Gibbsian line ensembles}, which consist of discrete random walks with a continuous jump distribution specified by a \textit{random walk Hamiltonian} $H^{\mathrm{RW}}$. The random walks are not required to avoid one another, but rather the \textit{interaction Hamiltonian} $H$ serves as an exponential energy penalty for crossings. These models correspond to a positive temperature regime, as opposed to non-crossing random walks which correspond to zero temperature. Assuming that the top curve of a sequence of such line ensembles converges weakly after a global parabolic shift to a stationary process, in addition to various technical conditions on the Hamiltonians $H$ and $H^{\mathrm{RW}}$, \cite{Wu19} proves tightness of the sequence as well as an $H_1$-Brownian Gibbs property for all subsequential limits. 
	
	Subsequently, an analogous convergence result was proven in \cite{Ber} for sequences of Bernoulli Gibbsian line ensembles with no prescribed initial or terminal data. One of the central goals of that work was to weaken the assumptions used in \cite{DNV}, with regard to both the initial data of the random walks and the control over the distributions of the random walks. Assuming only that the \textit{top} curves of a sequence of Bernoulli Gibbsian line ensembles satisfy a certain uniform one-point tightness condition at integer times with a global parabolic shape, the authors proved tightness of the sequence with respect to uniform convergence over compacts. (We remark that the idea of using control on the top curve alone to prove tightness for the full line ensemble originates in \cite{CH14}.) Moreover, it was shown that any weak subsequential limit of such a sequence satisfies the Brownian Gibbs property. In the prior work \cite{DimMat}, it was shown that the parabolic Airy line ensemble is uniquely characterized by the Brownian Gibbs property and the finite-dimensional distributions of its top curve. Thus, the results of \cite{Ber} show that finite-dimensional convergence of the top curve to the parabolic $\mathrm{Airy}_2$ process implies weak convergence to the parabolic Airy line ensemble.
	
	The primary goal of the present paper is to generalize the results of \cite{DNV} and \cite{Ber} to Gibbsian line ensembles arising from discrete random walks with more general jump distributions. Of particular interest are geometric Gibbsian line ensembles, which arise in connection with models of geometric last passage percolation (LPP) via the Robinson-Schensted-Knuth (RSK) correspondence\textemdash see \cite{J03} and \cite[Section 6]{CLW} for details of this construction. One such model was treated in \cite{DNV}, and convergence to the Airy line ensemble was proven. In the language of Schur processes, the model considered in that paper corresponds to the case of pure-$\alpha$ specializations which are all identical. It would be of interest to study versions of this model in which a finite number of the $\alpha$ parameters are changed, and in this case one may hope to prove Baik-Ben Arous-Peche (BBP) asymptotics rather than Tracy-Widom. With applications of this sort in mind, our goal is to provide a broad result which allows one to prove convergence of a sequence of Gibbsian line ensembles with fairly general discrete jump distributions. Thus, we treat the broader class of $H^{\mathrm{RW}}$-\textit{Gibbsian line ensembles}, consisting of random walks whose jumps are prescribed by a general random walk Hamiltonian function $H^{\mathrm{RW}}$ on the extended integers. We define these objects more precisely in the following section, but we note that $H^{\mathrm{RW}}$ is defined on the integers rather than the real line as in \cite{Wu19}, as we work here with discrete jumps rather than continuous. We assume that the Hamiltonian $H^{\mathrm{RW}}$ is a convex function, which allows us to prove a monotone coupling result, and that the $H^{\mathrm{RW}}$ random walk bridges satisfy a certain strong coupling result studied in \cite{DW19}, similar to the classical Koml\'os-Major-Tusn\'ady (KMT) embedding. The comparison between random walk bridges and Brownian bridges afforded by this strong coupling is a fundamental tool in many of our arguments. We note that these two assumptions are satisfied by Hamiltonians corresponding to many natural distributions, including geometric as well as any log-concave distribution whose support is a compact integer interval; see Section \ref{Section2.3} for more details.
	
	The main accomplishment of this paper is to extend the results proven in \cite{Ber} to line ensembles satisfying the above assumptions on their jump distributions, thus developing a general theory which allows one to prove convergence of a sequence of discrete Gibbsian line ensembles with minimal input from only the top curves. In principle, proving weak convergence requires proving both tightness and finite-dimensional convergence for the entire line ensemble, but our results show that in fact a condition weaker than finite-dimensional convergence for the top curve alone is sufficient for tightness. Namely, if $\mathfrak{L}^N = (L_1^N, \dots, L_N^N)$ is a sequence of $H^{\mathrm{RW}}$-Gibbsian line ensembles with $N$ curves, we show that if the top curves $L_1^N$ (centered and scaled) have tight one-point marginals near integer times that roughly display a global parabolic shape, then the whole sequence $(\mathfrak{L}^N)$ under the same centering and scaling is tight with respect to uniform convergence over compact sets. Moreover, we prove that any subsequential limit satisfies the Brownian Gibbs property. As a consequence, we prove that if $L_1^N$ (centered and scaled) in particular converges in finite-dimensional distributions to the parabolic $\mathrm{Airy}_2$ process, then the full sequence $(\mathfrak{L}^N)$ converges weakly to the parabolic Airy line ensemble. We also note that we assume a weaker one-point tightness condition than that in \cite{Ber} by allowing certain sub-parabolic fluctuations, and consequently our result may be amenable to proving convergence to objects other than the Airy line ensemble, such as the \textit{Airy processes with wanderers} studied in \cite{AFM}. Our results in addition generalize those of \cite{DNV} for integer jump distributions by requiring less control on the top curves and no control on the lower curves, and we believe that the ideas of our arguments can be applied to non-integer jump distributions treated there as well.
	
	Before describing our results in greater detail in the next section, we remark on two more recent works dealing with tightness of Gibbsian line ensembles, \cite{BCD21} and \cite{DW21}. Here the authors studied $(H,H^{\mathrm{RW}})$-Gibbsian line ensembles similar to those in \cite{Wu19}, but with general fixed interaction Hamiltonians $H$, whereas \cite{Wu19} required $H$ to vary along a sequence and converge to the exponential function. The authors prove results on tightness and the Brownian Gibbs property for subsequential limits similar to those of \cite{Ber}, and they provide an application to line ensembles arising from the log-gamma polymer. In particular, \cite{BCD21} deals with tightness of the top curve, while \cite{DW21} proves tightness of the full line ensemble. Together these two works generalize the results of \cite{Ber} to the case of line ensembles with continuous jump distributions which arise in positive temperature models. In comparison, our paper generalizes the results of \cite{Ber} to line ensembles with more general discrete jump distributions, with an outlook towards applications to certain zero temperature models, specifically geometric last passage percolation and its analogues.

\subsection{Main results}\label{Section1.2}

	We begin by giving some definitions necessary for our main result; we state these in more detail in Section \ref{Section2}. For $a,b\in\mathbb{Z}\cup\{\pm\infty\}$ with $a\leq b$ we denote the integer interval $[a,b]\cap\mathbb{Z}$ by $\llbracket a,b\rrbracket$. Given $T_0,T_1\in\mathbb{Z}$ with $T_0\leq T_1$ and $N\in\mathbb{N}$, we define a $\llbracket 1,N\rrbracket$\textit{-indexed} \textit{discrete line ensemble} $\mathfrak{L} = (L_1,\dots,L_N)$ with \textit{entry data} $\vec{x}\in\mathbb{Z}^N$ and \textit{exit data} $\vec{y}\in\mathbb{Z}^N$ to be a collection of $N$ random paths on $\llbracket T_0, T_1\rrbracket$ taking values in $\mathbb{Z}$, such that $L_i(T_0) = x_i$ and $L_i(T_1) = y_i$ for $i\in\llbracket 1,N\rrbracket$. 
	
	Given $\alpha<\beta$, possibly infinite, an $\llbracket\alpha,\beta\rrbracket$-\textit{supported random walk Hamiltonian} is a function $H^{\mathrm{RW}} : \mathbb{Z}\cup\{\pm\infty\} \to [0,\infty]$ which is finite precisely on the set $\llbracket \alpha,\beta\rrbracket$ and satisfies the normalization condition $\sum_{k\in\llbracket\alpha,\beta\rrbracket} \exp(-H^{\mathrm{RW}}(k)) = 1$. We say that $\mathfrak{L}$ above satisfies the $H^{\mathrm{RW}}$\textit{-Gibbs property} if
	\begin{enumerate}[label=(\arabic*)]
		
		\item The curves of $\mathfrak{L}$ are almost surely non-crossing, i.e., $L_1(s) \geq \cdots \geq L_N(s)$ for all $s\in\llbracket T_0,T_1\rrbracket$.
		
		\item For any $\llbracket k_1,k_2\rrbracket \subseteq \llbracket 1,N-1\rrbracket$ and $\llbracket a,b\rrbracket \subseteq\llbracket T_0,T_1\rrbracket$, the law of $(L_{k_1},\dots,L_{k_2})$ on $\llbracket a,b\rrbracket$ conditioned on the values of $\vec{x} := (L_{k_1}(a),\dots,L_{k_2}(a))$, $\vec{y} := (L_{k_1}(b),\dots,L_{k_2}(b))$, and $L_{k_1-1}$ and $L_{k_2+1}$ restricted to $\llbracket a,b\rrbracket $, is that of $k_2-k_1+1$ random walk bridges on $\llbracket a,b\rrbracket$ from $\vec{x}$ to $\vec{y}$, sampled independently with probabilities proportional to 
		\[
		\exp\left[-\sum_{i=a}^{b-1} H^{\mathrm{RW}}(L_k(i+1)-L_k(i))\right], \quad k_1\leq k \leq k_2,
		\]
		and conditioned to not cross one another or $L_{k_1-1}$ or $L_{k_2+1}$ on $\llbracket a,b\rrbracket$.

	\end{enumerate}

	This is a natural extension of the Schur Gibbs property for Bernoulli line ensembles introduced in \cite{Ber}. In particular, the case when $\alpha = 0$, $\beta = 1$, and $H^{\mathrm{RW}}(0) = H^{\mathrm{RW}}(1) = \log 2$ describes the Bernoulli Gibbsian line ensembles studied there. Moreover, this essentially corresponds to the $(H,H^{\mathrm{RW}})$-Gibbs property of \cite{Wu19}, \cite{BCD21}, and \cite{DW21} if the interaction Hamiltonian is taken to be $H = \infty\cdot\mathbf{1}_{(0,\infty)}$. In essence, the $H^{\mathrm{RW}}$-Gibbs property says that a line ensemble is non-crossing and has local distribution given by non-crossing random walk bridges with jumps prescribed by the Hamiltonian $H^{\mathrm{RW}}$. We note that the bottom curve $L_N$ is distinguished in the Gibbs property, in that we do not assume anything about its conditional law. Rather, this curve acts essentially as a bottom boundary for the line ensemble which the other curves are conditioned not to cross. We refer to discrete line ensembles with this Gibbs property as $H^{\mathrm{RW}}$\textit{-Gibbsian line ensembles}.
	
	In the remainder of this section, we fix $\alpha$ and $\beta$ and a random walk Hamiltonian $H^{\mathrm{RW}}$ satisfying the two technical assumptions described in the previous section. Namely, we require $H^{\mathrm{RW}}$ to be a convex function in order to obtain a monotone coupling result, and we assume the following strong coupling condition. For any $p\in(\alpha,\beta)$, there exist constants $C,A,a\in(0,\infty)$ such that for all $n\in\mathbb{N}$ and $z\in\llbracket n\alpha,n\beta\rrbracket$, we can couple a Brownian bridge $B^\sigma$ on $[0,1]$ of some variance $\sigma^2 = \sigma_p^2$ with random paths $\ell^{(n,z)}$ on $\llbracket 0,n\rrbracket$ from 0 to $z$ with jumps distributed according to $\exp(-H^{\mathrm{RW}}(\cdot))$, so that
	\[
	\ex\left[\exp\left(a\sup_{0\leq t\leq n}\left|\sqrt{n}\,B^\sigma_{t/n} +\frac{t}{n}z - \ell^{(n,z)}(t)\right|\right)\right] \leq Ce^{A\log n}e^{|z-pn|^2/n}.
	\]
	To see the utility of this assumption, note that if $|z-pn| = O(\sqrt{n})$, then the right hand side is $O(n^A)$. Chebyshev's inequality then implies that for any $\epsilon > 0$, the supremum on the left-hand side is bounded by $\epsilon\sqrt{n}$ with high probability for sufficiently large $n$. In view of the $1/\sqrt{n}$ scaling of random walks that we use, this provides close comparisons between the random walk bridges $\ell^{(n,z)}$ and a Brownian bridge, for which we have many useful formulas. We state these two assumptions in more detail in Section \ref{Section2.3}, and we give examples of particular Hamiltonians satisfying them.
	
	We fix a sequence of $\llbracket 1,N\rrbracket$-indexed $H^{\mathrm{RW}}$-Gibbsian line ensembles $\mathfrak{L}^N = (L_1^N,\dots,L_N^N)$ on $\llbracket a_N,b_N\rrbracket$, where $a_N \leq 0$ and $b_N\geq 0$ are integers. The results we prove concern scaling limits of the sequence $\mathfrak{L}^N$ as the number of curves $N$ tends to infinity. We first state two assumptions on the $\mathfrak{L}^N$ in terms of parameters $\gamma > 0$, $p\in(\alpha,\beta)$, $\lambda > 0$, and $\theta\in[0,2)$. The first parameter $\gamma$ is related to the fluctuation exponent of the line ensemble; in our setup the line ensemble will fluctuate on order $N^{\gamma/2}$. (In \cite{Ber}, this parameter was denoted by $\alpha$.) The second parameter $p$ can be thought of as the global slope of the random walks, which must lie between $\alpha$ and $\beta$ since the allowed jumps lie within this range. Thirdly, $\lambda$ is related to the global curvature of the line ensemble. Lastly, the parameter $\theta$ is new to this work, and it allows for sub-parabolic fluctuations in the line ensemble.
	
	In sum, our assumptions imply that on large scales the top curve $L_1^N$ roughly approximates the inverted parabola $-\lambda x^2$ with an affine shift of slope $p$. The precise hypotheses are as follows.
	
	\begin{assumption}{1}\label{a1}
		
		There exists a function $\psi : \mathbb{N} \to (0,\infty)$ such that $\psi(N)\to\infty$ as $N\to\infty$ and $a_N < -\psi(N)N^\gamma$, $b_N > \psi(N)N^\gamma$.
		
	\end{assumption}
	This assumption means essentially that in the limit as $N\to\infty$, the domain of definition $[a_N,b_N]$ of $\mathfrak{L}^N$ covers the whole real line on scale $N^\gamma$.
	
	\begin{assumption}{2}\label{a2}
		
		There is a function $\varphi: (0, \infty) \rightarrow (0,\infty)$ such that for any $\epsilon > 0$ we have 
		\[
		\sup_{n\in\mathbb{Z}}\limsup_{N\to\infty}\mathbb{P} \left( \left|N^{-\gamma/2}\left(L_1^N(\lfloor nN^\gamma\rfloor) - p\lfloor nN^\gamma\rfloor \right) + \lambda n^2 \right| \geq \varphi(\epsilon) + |n|^\theta \right) \leq \epsilon.
		\]
	
	\end{assumption}
	Here we use the convention $0^0 = 1$ in the case that $\theta = 0$. Observe that when $n = 0$, Assumption 2 implies that $N^{-\gamma/2}L_1^N(0)$ is a tight sequence, so the fluctuation exponent is $\gamma/2$. The transversal exponent is $\gamma$, twice $\gamma/2$ as expected with Brownian scaling. The assumption implies that after applying an affine shift of slope $p$ and scaling vertically by $N^{\gamma/2}$ and horizontally by $N^\gamma$, the top curve $L_1^N$ globally approximates the inverted parabola $-\lambda x^2$ near integer times, in a uniform way. We include the term $|n|^\theta$ to allow for weaker control on the one-point distributions than was required in \cite{Ber} and other works; we are able to do this since $\theta < 2$, so that for large $n$, $|n|^\theta$ is negligible compared to the parabolic shift $\lambda n^2$. Our reason for allowing these sub-parabolic fluctuations is to make our results potentially applicable to convergence not only to the Airy line ensemble, but also to the Airy processes with wanderers appearing in \cite{AFM}, whose one-point distributions correspond to certain spiked matrix models. Although to our knowledge there is so far no explicit construction in the literature of a Gibbsian line ensemble with these finite-dimensional distributions, it is generally believed that the Airy line ensemble is not the only possible limiting object for sequences of Gibbsian line ensembles. Moreover, it is expected that Airy processes with wanderers satisfy a condition on the one-point marginals analogous to Assumption 2, but this has not yet been proven in the literature. We have formulated Assumption 2 with the outlook of potentially proving convergence of geometric last passage percolation models either to the Airy line ensemble or to Airy processes with wanderers, but we leave this outside the scope of the present paper. We note that \cite{DNV} handles the case of the Airy line ensemble limit, but their argument relies heavily on properties of the Airy kernel and cannot be immediately generalized.

	Before finally stating our main results, we define the precise centering and scaling we will use. We will embed each of our line ensembles into the space $C(\mathbb{N}\times\mathbb{R})$ with the topology of uniform convergence over compact sets and the corresponding Borel $\sigma$-algebra, where $\mathbb{N}\times\mathbb{R}$ has the product topology. For $N\in\mathbb{N}$ we define
	\[
	f_i^N(x) = N^{-\gamma/2}\left(L_i^N(xN^\gamma) - pxN^\gamma\right), \mbox{ for } x\in[-\psi(N),\psi(N)] \mbox{ and } i\in\llbracket 1, N\rrbracket,
	\]
	and we extend $f_i^N$ to all of $\mathbb{R}$ by defining $f_i^N(x) = f_i^N(-\psi(N))$ for $x\leq -\psi(N)$ and $f_i^N(x) = f_i^N(\psi(N))$ for $x\geq\psi(N)$. For $i\geq N+1$, we simply define $f_i^N \equiv 0$. Then the map $(i,x)\mapsto f_i^N(x)$ defines a $C(\mathbb{N}\times\mathbb{R})$-valued random variable, whose law we denote by $\mathbb{P}_N$. Note that the way we choose to extend $f_i^N$ outside of $[-\psi(N),\psi(N)]$ is immaterial since $\psi(N)\to\infty$ as $N\to\infty$ and we are dealing with uniform convergence on compact sets.
	
	We now state the primary result of this paper. We let $\sigma_p^2$ denote the variance of the Brownian bridge afforded by our strong coupling assumption for the particular choice of $p$ in Assumptions \ref{a1} and \ref{a2}. We give the proof of this theorem in Section \ref{Section2.4}.
	
	\begin{theorem}\label{mainthm}
		
		Under Assumptions \ref{a1} and \ref{a2}, $(\mathbb{P}_N)_{N\in\mathbb{N}}$ is a tight sequence of probability measures. Moreover, if $f^\infty$ is any random variable whose law is a weak subsequential limit of $(\mathbb{P}_N)$, then $\mathcal{L}^\infty := \sigma_p^{-1} f^\infty$ satisfies the Brownian Gibbs property $\mathrm{(}$see Section \ref{Section1.1} and Definition \ref{DefBGP}$\mathrm{)}$.
		
	\end{theorem}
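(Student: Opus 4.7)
The plan is to follow the template of \cite{Ber}, replacing Bernoulli-specific estimates by comparisons with Brownian bridges of variance $\sigma_p^2$ made available by the KMT-type strong coupling, and using the convexity of $H^{\mathrm{RW}}$ to provide monotone stochastic sandwiches. The theorem has two conclusions --- tightness of $(\mathbb{P}_N)$ and the Brownian Gibbs property for any subsequential limit --- which I would handle sequentially.

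For tightness, since $C(\mathbb{N}\times\mathbb{R})$ is a countable product of copies of $C(\mathbb{R})$ with the compact-open topology, it suffices to establish tightness of each coordinate $f_k^N$ in $C([-M,M])$ for every $k\in\mathbb{N}$ and $M>0$. By Arzel\`a--Ascoli this further decomposes into (a) tightness of the one-point marginal $f_k^N(0)$, and (b) a uniform modulus-of-continuity estimate on $[-M,M]$. For (a) I would induct on $k$: Assumption \ref{a2} handles $k=1$; the non-crossing property $f_k^N\leq f_{k-1}^N$ gives the upper tail for free; for the lower tail I resample the top $k$ curves on a large interval $\llbracket-TN^\gamma,TN^\gamma\rrbracket$ conditional on the exterior data. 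The resampled ensemble is a non-crossing $H^{\mathrm{RW}}$ bridge ensemble, which by monotone coupling (from convexity of $H^{\mathrm{RW}}$) is bounded below by the corresponding ensemble with conveniently chosen reference endpoints, and by the strong coupling each such bridge is within $O(\log N)$ of a Brownian bridge of variance $\sigma_p^2$. Standard gap estimates for non-intersecting Brownian bridges then produce the desired lower tail, while the $|n|^\theta$ slack with $\theta<2$ in Assumption \ref{a2} ensures that for $T$ large the parabolic term $-\lambda n^2$ dominates the endpoint contributions. For (b) I would resample on dyadic subintervals of $[-M,M]$ and apply the strong coupling to reduce the modulus estimate to the classical Brownian bridge case.

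For the Brownian Gibbs property of a subsequential limit $\mathcal{L}^\infty=\sigma_p^{-1}f^\infty$, I would fix an index interval $\llbracket k_1,k_2\rrbracket$ and a compact $[a,b]$, pass to an almost-surely convergent subsequence via Skorokhod, and analyze the conditional law of $(f_{k_1}^N,\dots,f_{k_2}^N)$ on $[a,b]$ given the exterior data. By the prelimit $H^{\mathrm{RW}}$-Gibbs property this conditional law is a non-crossing $H^{\mathrm{RW}}$ bridge ensemble with boundary values $f_{k_i}^N(a)$, $f_{k_i}^N(b)$ lying above the curve $f_{k_2+1}^N$. Applying the strong coupling curve-by-curve inside the resampling box, this law is close after rescaling to the non-crossing Brownian bridge ensemble of variance $\sigma_p^2$ with the same boundary data, and passing to the limit on the subsequence --- using continuity of this conditional law in the boundary data together with a uniform lower bound on the non-crossing acceptance probability --- yields the resampling invariance required by Definition \ref{DefBGP}.

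The main obstacle I anticipate is the lower-tail estimate in step (a): monotone coupling and non-crossing only give upper bounds easily, whereas lower bounds require genuine control on how far non-intersecting curves can stretch apart. This difficulty is exacerbated here, relative to \cite{Ber}, by the sub-parabolic slack $|n|^\theta$, which weakens the available input and forces one to work with endpoints in a larger and less uniform range. A secondary technical issue is ensuring that the non-crossing acceptance probability in the Gibbs resamplings is bounded away from $0$ uniformly in $N$, so that the conditional laws pass cleanly to the limit; this is typically handled by separately showing that the limiting curves are strictly ordered on compact sets, which in turn feeds back into the tightness estimates used in (a) and (b).
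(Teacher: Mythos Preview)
Your overall architecture matches the paper's: reduce tightness to one-point bounds plus a modulus-of-continuity estimate, and pass the $H^{\mathrm{RW}}$-Gibbs property to the Brownian Gibbs property via strong coupling and weak convergence of the conditional (avoiding-bridge) laws. The tools you name --- monotone coupling from convexity, KMT-type comparison to Brownian bridges of variance $\sigma_p^2$, and a lower bound on the non-crossing acceptance probability --- are exactly the ones the paper uses.

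Two points where your plan diverges from, or underestimates, what the paper actually does. First, the lower-tail (``no low min'') estimate is not obtained by induction on the curve index or by ``standard gap estimates for non-intersecting Brownian bridges.'' The paper's mechanism (Lemma~\ref{ParabolaMin}) is specific to the global parabolic shape: if $L_{k-1}^N$ were uniformly low on a long interval $[a,b]$, then resampling the top $k-2$ curves there and using monotone coupling makes $L_1^N$ behave like a free bridge, so at the midpoint it sits near the chord of the parabola; but the chord misses the parabola by order $(b-a)^2$, contradicting Assumption~\ref{a2} at the midpoint. This chord-versus-parabola argument is where the condition $\theta<2$ is actually consumed. Your induction sketch does not supply this idea, and without it the lower tail for curve $k-1$ is not clear.

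Second, you flag the uniform lower bound on the acceptance probability as a ``secondary technical issue,'' but in the paper it is the single hardest estimate (Proposition~\ref{APProp}, proved over all of Section~\ref{Section6}). It is used twice: once to strip the avoidance conditioning in the modulus-of-continuity step (the paper does this on a single large interval, not dyadically), and once, via the same circle of ideas, to show that the limiting curves are a.s.\ strictly ordered at each fixed time (Lemma~\ref{DistinctPts}), which is what allows Lemma~\ref{ScaledWeakConv} to be applied when passing the Gibbs property to the limit. Your plan should promote this from an afterthought to a central lemma and budget real effort for it; the paper's proof proceeds by first forcing the curves high at intermediate times $\pm t_2$ (Lemma~\ref{APt2}), then showing they separate on scale $\sqrt{N^\gamma}$ at times $\pm t_{12}$ (via Lemma~\ref{DeltaSep}), and finally propagating this separation to $\pm t_1$.
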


	We next aim to strengthen this result to obtain weak convergence of $(\mathbb{P}_N)$ to the Airy line ensemble. We let $(\mathcal{A}_i)_{i\in\mathbb{N}}$ denote the $\mathbb{N}$-indexed Airy line ensemble, as constructed in \cite[Theorem 3.1]{CH14}, and we let $(\mathcal{L}_i^{\mathrm{Ai}})_{i\in\mathbb{N}}$ denote the parabolic Airy line ensemble defined by $\mathcal{L}_i^{\mathrm{Ai}}(x) = 2^{-1/2}\left(\mathcal{A}_i(x) - x^2\right)$. In particular, these are both $C(\mathbb{N}\times\mathbb{R})$-valued random variables, with $\mathcal{A}_1$ the $\mathrm{Airy}_2$ process and $\mathcal{L}_1^{\mathrm{Ai}}$ the parabolic $\mathrm{Airy}_2$ process. It is shown in \cite[Theorem 3.1]{CH14} that $\mathcal{L}^{\mathrm{Ai}}$ possesses the Brownian Gibbs property; the factor of $2^{-1/2}$ is included to ensure that the Brownian bridges related to this property have diffusion parameter 1. In order to obtain convergence of $\sigma_p^{-1} f^N$ to $\mathcal{L}^{\mathrm{Ai}}$, we use the following strengthening of Assumption \ref{a2}.
	
	\begin{assumption}{2'}\label{a2'}
		Let $c = (2\lambda^2/\sigma_p^2)^{1/3}$. Then for any $k\in\mathbb{N}$ and $t_1,\dots,t_k,x_1,\dots,x_k\in\mathbb{R}$, we have
		\[
		\lim_{N\to\infty} \mathbb{P}\left(\,\bigcap_{i=1}^k\left\{\frac{L_1^N(\lfloor t_i N^\gamma\rfloor) - p\lfloor t_i N^\gamma\rfloor}{\sigma_p N^{\gamma/2}} \leq x_i \right\}\right) = \mathbb{P}\left(\,\bigcap_{i=1}^k\left\{c^{-1/2}\mathcal{L}_1^{\mathrm{Ai}}(ct_i)\leq x_i\right\}\right).
		\]
	\end{assumption}

	We observe that Assumption 2' implies Assumption \ref{a2} with $\theta = 0$, therefore for any $\theta\in[0,2)$. Indeed, Assumption \ref{a2'} implies that the sequence $N^{-\gamma/2}(L_1^N(\lfloor xN^\gamma\rfloor ) - p\lfloor xN^\gamma\rfloor) + \lambda x^2$ converges in finite-dimensional distributions to $\sigma(2c)^{-1/2}(\mathcal{A}_1(cx) - c^2x^2) + \lambda x^2$, where $\sigma := \sigma_p$. Since $\sigma(2c)^{-1/2}c^2 = \lambda$, this last random variable is equal to $\sigma(2c)^{-1/2}\mathcal{A}_1(cx)$. Thus, using the fact that $\mathcal{A}_1(x)$ is a stationary process with one-point marginals given by the GUE Tracy-Widom distribution $F_{\mathrm{GUE}}$ (see \cite{TW}), we have for each $n\in\mathbb{Z}$ and $a\geq 0$ that
	\begin{align*}
		&\lim_{N\to\infty}\mathbb{P}\left(\left|N^{-\gamma/2}\left(L_1^N(\lfloor nN^\gamma\rfloor) - p\lfloor nN^\gamma\rfloor\right) + \lambda n^2\right| \geq a\right) = \mathbb{P}\left(|\mathcal{A}_i(cx)| \geq a\sqrt{2c}/\sigma\right)\\
		= \; & 1 - F_{\mathrm{GUE}}\left(a\sqrt{2c}/\sigma\right) + F_{\mathrm{GUE}}\left(-a\sqrt{2c}/\sigma\right).
	\end{align*}
	Given $\epsilon > 0$, we can choose $a$ large enough so that the last line is less than $\epsilon$, and taking this choice of $a$ for $\varphi(\epsilon)$ gives the content of Assumption \ref{a2} with $\theta = 0$.
	
	With the additional strength of Assumption 2', we obtain the following corollary of Theorem \ref{mainthm}, which follows quickly from the uniqueness result of \cite[Theorem 1.1]{DimMat}. The proof is given in Section \ref{Section2.4}.
	
	\begin{theorem}\label{airythm}
		Under Assumptions \ref{a1} and \ref{a2'}, the sequence $\mathcal{L}^N := \sigma_p^{-1} f^N$ converges weakly in the topology of uniform convergence over compact sets to the line ensemble $\mathcal{L}^\infty$ defined by
		\[
		\mathcal{L}_i^\infty(x) = c^{-1/2}\mathcal{L}_i^{\mathrm{Ai}}(cx), \mbox{ for } i\in\mathbb{N},\, x\in\mathbb{R}, \mbox{ where } c = (2\lambda^2/\sigma_p^2)^{1/3}.
		\]
	\end{theorem}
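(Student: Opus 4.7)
The strategy is to combine Theorem \ref{mainthm} with the characterization of the parabolic Airy line ensemble from \cite{DimMat}. Since Assumption \ref{a2'} implies Assumption \ref{a2} (with $\theta = 0$), as verified in the paragraph preceding the statement of Theorem \ref{airythm}, Theorem \ref{mainthm} applies and yields both tightness of $(\mathbb{P}_N)$ and the Brownian Gibbs property for every weak subsequential limit of $\mathcal{L}^N = \sigma_p^{-1} f^N$. By Prokhorov's theorem, it then suffices to show that every such subsequential limit has the same law as the proposed $\mathcal{L}^\infty$.

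Let $\mathcal{L}^{N_k} \Rightarrow \mathcal{L}^{\infty}_{\mathrm{sub}}$ along some subsequence. First, applying the continuous mapping theorem to evaluation at any finite set of times $t_1,\dots,t_k$, in combination with Assumption \ref{a2'}, shows that the top curve $\mathcal{L}^{\infty}_{\mathrm{sub},1}$ has the finite-dimensional distributions of $c^{-1/2}\mathcal{L}_1^{\mathrm{Ai}}(c\,\cdot)$. Second, consider the rescaled ensemble defined by $\hat{\mathcal{L}}_i(x) := c^{1/2}\mathcal{L}^{\infty}_{\mathrm{sub},i}(x/c)$. A direct calculation shows that Brownian scaling preserves the Brownian Gibbs property with unit diffusion parameter: if $B$ is a Brownian bridge of diffusion parameter $1$ on $[a,b]$, then $c^{1/2}B(\cdot/c)$ is a Brownian bridge of diffusion parameter $1$ on $[ca,cb]$, and the non-crossing constraint is preserved under the same transformation. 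Hence $\hat{\mathcal{L}}$ satisfies the Brownian Gibbs property, and its top curve has exactly the finite-dimensional distributions of the parabolic $\mathrm{Airy}_2$ process $\mathcal{L}_1^{\mathrm{Ai}}$.

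Invoking the characterization result of \cite{DimMat} -- which asserts that the parabolic Airy line ensemble $\mathcal{L}^{\mathrm{Ai}}$ is uniquely determined by the Brownian Gibbs property and the finite-dimensional law of its top curve -- we conclude that $\hat{\mathcal{L}}$ equals $\mathcal{L}^{\mathrm{Ai}}$ in distribution. Undoing the rescaling yields $\mathcal{L}^{\infty}_{\mathrm{sub}} = c^{-1/2}\mathcal{L}^{\mathrm{Ai}}(c\,\cdot) = \mathcal{L}^\infty$ in law. Since every weak subsequential limit of $\mathcal{L}^N$ has this same law, tightness upgrades to full weak convergence $\mathcal{L}^N \Rightarrow \mathcal{L}^\infty$. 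The only genuinely technical point in this argument is the invariance of the Brownian Gibbs property under Brownian scaling, which is a routine computation; all the substantive work has already been absorbed into the proof of Theorem \ref{mainthm}.
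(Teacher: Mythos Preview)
Your proposal is correct and follows essentially the same route as the paper: use Theorem~\ref{mainthm} for tightness and the Brownian Gibbs property of subsequential limits, identify the top-curve finite-dimensional distributions via Assumption~\ref{a2'}, and then invoke the characterization theorem of \cite{DimMat}. The only cosmetic difference is that the paper keeps the subsequential limit fixed and rescales $\mathcal{L}^{\mathrm{Ai}}$ to match it, whereas you rescale the subsequential limit to match $\mathcal{L}^{\mathrm{Ai}}$; both directions require the same Brownian-scaling invariance of the Gibbs property (which the paper cites as \cite[Lemma~3.5]{DimMat}).

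One small point you gloss over: Assumption~\ref{a2'} is stated for the values $\mathcal{L}_1^N(\lfloor t_i N^\gamma\rfloor N^{-\gamma})$, not for $\mathcal{L}_1^N(t_i)$, so the continuous mapping theorem alone does not immediately identify the finite-dimensional distributions of $\mathcal{L}^{\infty}_{\mathrm{sub},1}$. The paper closes this gap by passing to almost sure uniform convergence via Skorohod representation and using that $\lfloor t_i N^\gamma\rfloor N^{-\gamma}\to t_i$; alternatively one can argue directly from the equicontinuity built into tightness that $\mathcal{L}_1^N(t_i)-\mathcal{L}_1^N(\lfloor t_i N^\gamma\rfloor N^{-\gamma})\to 0$ in probability and apply Slutsky. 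Either way it is a routine fix, but it should be acknowledged.
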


	The general structure of the proofs of Theorems \ref{mainthm} and \ref{airythm} follows that of \cite{Ber}, and most of our auxiliary results are analogues of those appearing in that paper. However, the broader class of random walk Hamiltonians we treat here introduces further complexities. Of greatest importance is the fact that a key component of the arguments in \cite{Ber} consisted of exact computations involving non-crossing Bernoulli random walks. The authors of this paper established a connection between non-crossing Bernoulli random walks and Schur symmetric functions, and then employed asymptotic formulas for these symmetric functions to compute exact limit formulas for the fixed time distribution of a non-crossing Bernoulli line ensemble. These computations cannot be extended to random walks with a generic Hamiltonian $H^{\mathrm{RW}}$, and thus it is a priori not clear how to extend the arguments beyond the case of Bernoulli random walks. An innovation of this paper is to make further use of the KMT coupling assumption\textemdash which is also satisfied by Bernoulli random walks\textemdash to translate the results of \cite[Section 8]{Ber} to the more general setting we deal with here. The central idea is that since Bernoulli random walks may be coupled with Brownian bridges of a certain variance, any other discrete random walk which also satisfies a KMT coupling assumption can effectively be coupled with Bernoulli random walks with a different parameter, and this allows us to conclude convergence of the fixed time distribution of general non-crossing $H^{\mathrm{RW}}$-line ensembles. The detailed argument may be found in Section \ref{Section3.3}. This observation allows us to avoid the lengthy technical computations performed in \cite{Ber}, and we are able to use the outputs of these efforts to make relatively simple arguments that do not directly rely on integrability.
%-------------------------------------------------------------------------------------------------------------------------------------------------------------------------------------------------
% Section 2
%
%-------------------------------------------------------------------------------------------------------------------------------------------------------------------------------------------------
\section{Line ensembles} \label{Section2}

In this section we state definitions and basic properties of line ensembles we use throughout the paper, as well as our main results.

%-------------------------------------------------------------------------------------------------------------------------------------------------------------------------------------------------
% Section 2.1
%
%-------------------------------------------------------------------------------------------------------------------------------------------------------------------------------------------------
\subsection{Line ensembles and the Brownian Gibbs property}\label{Section2.1}

In this section, we give formal definitions of \textit{line ensembles} and the \textit{Brownian Gibbs property}, a resampling invariance property of which we make great use. The reader may find most of the information in this section in \cite[Section 2.1]{Ber}, but we include it here for the sake of completeness.

Let us first establish some notation we will use throughout the paper. For integers $a\leq b$, we denote by $\llbracket a,b\rrbracket$ the integer interval $[a,b]\cap\mathbb{Z}$. Given an interval $\Lambda\subset\mathbb{R}$ we let $(C(\Lambda),\mathcal{C})$ denote the space of continuous real valued functions on $\Lambda$ with the topology of uniform convergence over compact sets and the Borel $\sigma$-algebra $\mathcal{C}$. For an index set $\Sigma\subset\mathbb{Z}$ equipped with the discrete topology, we endow the product $\Sigma\times\Lambda$ with the product topology. We give $C(\Sigma\times\Lambda)$ the topology of uniform convergence over compacts and the Borel $\sigma$-algebra as above, which we denote by $\mathcal{C}^\Sigma$. We will usually take $\Sigma = \llbracket 1,N\rrbracket$, where $1\leq N \leq \infty$.

\begin{definition}\label{LEdef}
	Let $\Sigma\subset\mathbb{Z}$ and $\Lambda\subset\mathbb{R}$ an interval. A $\Sigma$\textit{-indexed line ensemble} $\mathcal{L}$ is a random variable defined on a probability space $(\Omega,\mathcal{F},\mathbb{P})$ taking values in $(C(\Sigma\times\Lambda),\mathcal{C}^\Sigma)$. We say that $\mathcal{L}$ is \textit{non-intersecting} if $\mathbb{P}$-a.s. we have $\mathcal{L}(i,x) > \mathcal{L}(j,x)$ for all $i,j\in\Sigma$ with $i<j$ and all $x\in\Lambda$.
\end{definition}

\begin{remark}
	Intuitively, $\mathcal{L}$ in the previous definition is a random collection of continuous curves on $\Lambda$ indexed by $\Sigma$. To say that $\mathcal{L}$ is non-intersecting means that the curves remain in a specified order vertically, with lower indexed curves at the top. We often view $\mathcal{L}$ as a function $\Sigma\times\Lambda \to \mathbb{R}$, although it is in fact $\mathcal{L}(\omega)$ for each $\omega\in\Omega$ which is such a function. We write $\mathcal{L}_i = \mathcal{L}(i,\cdot)$ for $i\in\Sigma$ to denote the $i$th curve of $\mathcal{L}$; note that each $\mathcal{L}_i$ is then a $(C(\Lambda),\mathcal{C})$-valued random variable on $(\Omega,\mathcal{F},\mathbb{P})$.
\end{remark}

We now state a result given as Lemma 2.2 in \cite{Ber}, which says that the space $C(\Sigma\times\Lambda)$ with the topology of uniform convergence over compacts is a Polish space. We will use this result frequently in order to apply the Skorohod representation theorem. The proof is elementary and can be found in \cite[Section 7.1]{Ber}.

\begin{lemma}\label{Polish} Let $\Sigma \subset \mathbb{Z}$ and $\Lambda \subset \mathbb{R}$ be an interval. Suppose that $(a_n)_{n\geq 1}, (b_n)_{n\geq 1}$ are sequences of real numbers such that $a_n < b_n$, $[a_n, b_n] \subset \Lambda$, $a_{n+1} \leq a_n$, $b_{n+1} \geq b_n$ and $\bigcup_{n = 1}^\infty [a_n, b_n] = \Lambda$. For $n \in \mathbb{N}$ we let $K_n := \Sigma_n \times [a_n, b_n]$ where $\Sigma_n := \Sigma \cap \llbracket -n, n \rrbracket$. Then there exists a metric $d$ on $C (\Sigma \times \Lambda) $ which induces the topology of uniform convergence over compact sets, and the metric space $(C (\Sigma \times \Lambda), d)$ is complete and separable.
\end{lemma}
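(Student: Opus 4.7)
\textbf{Proof plan for Lemma \ref{Polish}.} The key structural observation is that $\{K_n\}_{n\geq 1}$ is an exhaustion of $\Sigma \times \Lambda$ by compact sets in the sense that $K_n \subseteq K_{n+1}$, $\bigcup_n K_n = \Sigma \times \Lambda$, and, crucially, every compact $K \subset \Sigma \times \Lambda$ is contained in some $K_n$. The last fact holds because the projection of $K$ onto $\Sigma$ is a bounded subset of $\mathbb{Z}$ (hence finite, so contained in $\Sigma_n$ for $n$ large), while the projection onto $\Lambda$ is compact (hence contained in $[a_n,b_n]$ for $n$ large, by the exhaustion of $\Lambda$ by the intervals $[a_n,b_n]$). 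This is the only place where the hypotheses on $(a_n),(b_n)$ really enter.

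First I would verify that $d$ is a metric. Symmetry, non-negativity, and the bound $d(f,g)\leq 1$ are immediate from the definition, and $d(f,g)=0$ forces $f=g$ on each $K_n$ and thus everywhere on $\Sigma \times \Lambda$ by the exhaustion property. For the triangle inequality, one observes that for each $n$ the map $\phi_n(f,g):=\min\{\sup_{K_n}|f-g|,1\}$ is itself a pseudo-metric on $C(\Sigma\times\Lambda)$ (using subadditivity of $\sup$ together with the general fact that $\min\{u+v,1\}\leq \min\{u,1\}+\min\{v,1\}$), and a weighted $\ell^1$-sum of pseudo-metrics is a pseudo-metric.

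Next I would show that $d$ induces the topology of uniform convergence on compact sets. Since the tails of the sum are uniformly small (bounded by $2^{-N}$), the standard Weierstrass $M$-test argument gives $d(f_m,f)\to 0$ if and only if $\phi_n(f_m,f)\to 0$ for every $n$, which is in turn equivalent to $\sup_{K_n}|f_m-f|\to 0$ for every $n$. By the compact-exhaustion observation of the first paragraph, this is equivalent to $f_m\to f$ uniformly on every compact subset of $\Sigma\times\Lambda$.

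Finally I would establish completeness and separability. For completeness, a $d$-Cauchy sequence $(f_m)$ is uniformly Cauchy on each $K_n$ by the equivalence just established, so it converges uniformly on each $K_n$ to a continuous function $f^{(n)}$; compatibility on overlaps $K_n\subset K_{n+1}$ yields a globally defined continuous limit $f\in C(\Sigma\times\Lambda)$, and $d(f_m,f)\to 0$ follows from uniform convergence on every $K_n$. For separability, each $K_n$ is compact metrizable, so $C(K_n)$ is separable; one takes, for each $n$, a countable dense set $\mathcal{D}_n \subset C(K_n)$ (e.g.\ functions that are piecewise affine in the $\Lambda$-variable with rational values at rationally spaced nodes, defined on each of the finitely many indices in $\Sigma_n$), and extends each element of $\mathcal{D}_n$ to a continuous function on $\Sigma\times\Lambda$ by Tietze's extension theorem (applied coordinate by coordinate in $\Sigma$, using the fact that each curve lives in $C(\Lambda)$ which is separable). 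A standard diagonal argument using the telescoping structure of the $K_n$ then produces a countable dense subset of $(C(\Sigma\times\Lambda),d)$. The main obstacle is really just the bookkeeping in these extensions, together with verifying the compact-exhaustion property; everything else is a direct verification.
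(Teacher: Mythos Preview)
Your outline is correct and follows the standard route for showing that $C(X)$ with the compact--open topology is Polish when $X$ is $\sigma$-compact and locally compact. The paper does not actually give a proof of this lemma; it simply records the statement and refers the reader to \cite[Section 7.1]{Ber}, remarking that the argument is elementary. So there is nothing in the paper to compare against beyond that citation, and your argument would serve perfectly well as a self-contained replacement.

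One minor comment on the separability step: invoking Tietze and a diagonal argument is more machinery than you need. Since $\Sigma_n$ is finite, $C(K_n)\cong C([a_n,b_n])^{|\Sigma_n|}$ is separable; take a countable dense $\mathcal{D}_n$ there, extend each element to all of $\Sigma\times\Lambda$ by (say) constant continuation in the $\Lambda$-variable and zero on indices outside $\Sigma_n$, and let $\mathcal{D}=\bigcup_n\widetilde{\mathcal{D}}_n$. Given $f$ and $\varepsilon>0$, pick $N$ with $2^{-N}<\varepsilon/2$, approximate $f|_{K_N}$ to within $\varepsilon/2$ by some $g\in\mathcal{D}_N$, and note that the extension $\tilde g$ agrees with $g$ on $K_N\supseteq K_n$ for $n\leq N$, giving $d(f,\tilde g)<\varepsilon$ directly. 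No diagonalization is required.
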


Our main results regarding line ensembles will be concerned with tightness. The following gives sufficient conditions for tightness of a sequence of line ensembles, which appear as \cite[Lemma 2.4]{Ber}; we refer to \cite[Section 7.2]{Ber} for a proof.

\begin{lemma}\label{2Tight}
	Let $\Sigma \subset \mathbb{Z}$ and $\Lambda\subset\mathbb{R}$ an interval. Suppose that $(a_n)_{n\geq 1}, (b_n)_{n\geq 1}$ are sequences of real numbers such that $a_n < b_n$, $[a_n, b_n] \subset \Lambda$, $a_{n+1} \leq a_n$, $b_{n+1} \geq b_n$ and $\bigcup_{n = 1}^\infty [a_n, b_n] = \Lambda$. Let $(\mathcal{L}^n)_{n\geq 1}$ be a sequence of line ensembles on $[a_n,b_n]$. Then $(\mathcal{L}^n)$ is tight if and only if for every $i\in\Sigma$ we have
	\begin{enumerate}[label=(\roman*)]
		\item $\lim_{a\to\infty} \limsup_{n\to\infty}\, \pr(|\mathcal{L}^n_i(a_0)|\geq a) = 0$;
		\item For all $\epsilon>0$ and $k \in \mathbb{N}$,  $\lim_{\delta\to 0} \limsup_{n\to\infty}\, \pr\bigg(\sup_{\substack{x,y\in [a_k,b_k], \\ |x-y|\leq\delta}} |\mathcal{L}^n_i(x) - \mathcal{L}^n_i(y)| \geq \epsilon\bigg) = 0.$
		
	\end{enumerate}
\end{lemma}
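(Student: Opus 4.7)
The plan is to reduce this to the classical Arzelà--Ascoli tightness criterion on each compact $K_n = \Sigma_n \times [a_n,b_n]$, exploiting the fact that $\Sigma$ carries the discrete topology so that compactness in $\Sigma \times \Lambda$ forces the index set to be finite. Recall from Lemma \ref{Polish} that $(C(\Sigma\times\Lambda),d)$ is a Polish space whose topology is generated by the seminorms $\sup_{K_n}|\cdot|$. Prokhorov's theorem therefore tells us that $(\mathcal{L}^n)$ is tight if and only if for every $\epsilon > 0$ there is a set $A_\epsilon \subset C(\Sigma\times\Lambda)$ whose closure is sequentially compact with $\mathbb{P}(\mathcal{L}^n \in A_\epsilon) \geq 1-\epsilon$ uniformly in $n$, and by the Arzelà--Ascoli theorem such sets are precisely those which are uniformly bounded and equicontinuous on each $K_n$.

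For the forward implication I would simply note that the continuous projection $\pi_i : C(\Sigma\times\Lambda) \to C(\Lambda)$ sending $\mathcal{L} \mapsto \mathcal{L}_i$ sends tight families to tight families. Tightness of $\mathcal{L}^n_i$ in $C(\Lambda)$ with the topology of uniform convergence on compacts is a standard one-variable fact equivalent to the two stated conditions: (i) is tightness of the one-point marginal at $a_0$, and (ii) is the equicontinuity modulus condition on each $[a_k,b_k]$.

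For the backward implication, assume (i) and (ii). Fix $\epsilon > 0$. For each pair $(i,k) \in \Sigma \times \mathbb{N}$, choose $M_{i,k} < \infty$ and a decreasing sequence $\delta_{i,k,m} \to 0$ so that, setting
\[
A_{i,k} = \Bigl\{ g \in C([a_k,b_k]) : |g(a_0)| \leq M_{i,k},\; \sup_{|x-y|\leq \delta_{i,k,m}} |g(x)-g(y)| \leq 1/m \text{ for all } m \in \mathbb{N}\Bigr\},
\]
we have $\limsup_{n\to\infty} \mathbb{P}(\mathcal{L}^n_i\lvert_{[a_k,b_k]} \notin A_{i,k}) \leq \epsilon \cdot 2^{-(|i|+k+2)}$; this is possible precisely because of (i) and (ii), after adjusting the bound on $|g(a_0)|$ to absorb the modulus of continuity up to a reference point in $[a_k,b_k]$. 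By Arzelà--Ascoli each $A_{i,k}$ is relatively compact in $C([a_k,b_k])$. Now set
\[
A_\epsilon = \bigl\{ \mathcal{L} \in C(\Sigma\times\Lambda) : \mathcal{L}_i\lvert_{[a_k,b_k]} \in A_{i,k} \text{ for all } (i,k) \in \Sigma\times\mathbb{N}\bigr\}.
\]
The union bound gives $\liminf_{n\to\infty}\mathbb{P}(\mathcal{L}^n \in A_\epsilon) \geq 1 - \epsilon$. It remains to check that the closure of $A_\epsilon$ in $(C(\Sigma\times\Lambda),d)$ is compact; given any sequence in $A_\epsilon$, one extracts via a diagonal argument a subsequence whose restriction to each $K_n$ converges uniformly (using that $\Sigma_n$ is finite and each $A_{i,k}$ is precompact), and this uniform convergence on every $K_n$ is exactly convergence in the metric $d$ from Lemma \ref{Polish}.

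The main obstacle is organizing the diagonal extraction cleanly across the countable double index $(i,k)$ while keeping the total probability loss below $\epsilon$; this is a bookkeeping issue rather than a conceptual one, and the weights $2^{-(|i|+k+2)}$ handle it. Everything else reduces to the one-dimensional Arzelà--Ascoli theorem, which is why the two pointwise-plus-modulus conditions of the lemma suffice.
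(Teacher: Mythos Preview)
The paper does not actually prove this lemma; it simply cites \cite[Section 7.2]{Ber} for the argument. So there is no in-paper proof to compare against. Your approach---Prokhorov plus Arzel\`a--Ascoli on each compact $K_n$, with a diagonal extraction and geometric weights $2^{-(|i|+k+2)}$ over the countable index set---is the standard one and is essentially what appears in the cited reference.

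One small point you should tighten: your construction only gives $\limsup_{n\to\infty}\mathbb{P}(\mathcal{L}^n \notin A_\epsilon) \leq \epsilon$, whereas tightness requires $\sup_n \mathbb{P}(\mathcal{L}^n \notin A_\epsilon) \leq \epsilon$. This is easily fixed since for each of the finitely many exceptional $n$ the single law of $\mathcal{L}^n$ is tight on the Polish space $C(\Sigma\times\Lambda)$, so you can enlarge $A_\epsilon$ by a finite union of compacts to cover them; but you should say this explicitly. Also note that $a_0$ is not part of the sequence $(a_n)_{n\geq 1}$ as stated---it should be read as a fixed reference point lying in $[a_1,b_1]$ (and hence in every $[a_k,b_k]$ by nesting), which is what makes your remark about ``absorbing the modulus of continuity up to a reference point'' work.
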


We now define \textit{Brownian line ensembles} and formulate the Brownian Gibbs property. We first state some useful information about Brownian bridges. If $(W_t)_{0\leq t\leq 1}$ is a standard Wiener process on $[0,1]$, then we refer to the process
\[
\tilde{B}(t) =  W_t - t W_1, \hspace{5mm} 0 \leq t \leq 1,
\]
as a {\em Brownian bridge} (from $\tilde{B}(0) = 0$ to $\tilde{B}(1) = 0 $) with \textit{diffusion parameter} 1 (or more frequently, \textit{variance} 1). For brevity we refer to $(\tilde{B}_t)_{0\leq t\leq 1}$ as a {\em standard Brownian bridge} on $[0,1]$.

Given $a, b,x ,y \in \mathbb{R}$ with $a < b$, we define a random variable on $(C([a,b]), \mathcal{C})$ via
\begin{equation}\label{BBDef}
	B(t) = (b-a)^{1/2} \cdot \tilde{B} \left( \frac{t - a}{b-a} \right) + \left(\frac{b-t}{b-a} \right) \cdot x + \left( \frac{t- a}{b-a}\right) \cdot y, 
\end{equation}
and refer to this random variable as a Brownian bridge on $[a,b]$ (from $B(a) = x$ to $B(b) = y$) with variance $1$. Given $k \in \mathbb{N}$ and $\vec{x}, \vec{y} \in \mathbb{R}^k$ we let $\mathbb{P}^{a,b, \vec{x},\vec{y}}_{\mathrm{free}}$ denote the law of $k$ independent Brownian bridges $B_i$ on $[a,b]$ from $B_i(a) = x_i$ to $B_i(b) = y_i$ all with variance $1$.

We next state two results about Brownian bridges from \cite{CH14} for future use. These are given as Corollary 2.9 and Corollary 2.10 respectively in \cite{CH14}. 

\begin{lemma}\label{NoTouch} Fix a continuous function $f: [0,1] \rightarrow \mathbb{R}$ such that $f(0) > 0$ and $f(1) > 0$. Let $B$ be a standard Brownian bridge and let $C = \{ B(t) > f(t) \mbox{ for some $t \in [0,1]$}\}$ $\mathrm{(}$crossing$\mathrm{)}$ and $T = \{ B(t) = f(t) \mbox{ for some } t\in [0,1]\}$ $\mathrm{(}$touching$\mathrm{)}$. Then $\mathbb{P}(T \cap C^c) = 0.$
\end{lemma}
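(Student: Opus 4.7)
The event $T \cap C^c$ coincides with $\{M = 0\}$, where $M := \sup_{t \in [0,1]}(B(t) - f(t))$, so it suffices to show that $M$ has no atom at zero. My plan is to localize at the first touching time of $B$ with $f$ and argue via the strong Markov property that an immediate crossing must almost surely follow.

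First I would introduce the first touching time $\tau := \inf\{t \in [0,1] : B(t) = f(t)\}$ on the event $T$. Since $B(0) = B(1) = 0$ while $f(0), f(1) > 0$, continuity of $B$ and $f$ forces $\tau \in (0,1)$, and a standard estimate gives $\mathbb{P}(T \cap C^c \cap \{\tau \in [\eta, 1-\eta]\}) \to \mathbb{P}(T \cap C^c)$ as $\eta \downarrow 0$. It therefore suffices to bound the probability on the event $\{\tau \in [\eta, 1-\eta]\}$ for fixed small $\eta > 0$. On the compact sub-interval $[0, 1-\eta/2]$ I would invoke the mutual absolute continuity between the Brownian bridge $B$ and a standard Brownian motion $W$ (the Radon-Nikodym derivative coming from the classical bridging formula is bounded on this sub-interval), reducing the claim to the analogous statement under the BM measure.

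Applying the strong Markov property of $W$ at the stopping time $\tau$, the post-$\tau$ increment $\tilde{W}(s) := W(\tau+s) - W(\tau)$ is a standard Brownian motion starting at $0$, independent of $\mathcal{F}_\tau$. On $T \cap C^c$ this BM is dominated by the continuous barrier $g(s) := f(\tau+s) - f(\tau)$, which vanishes at $s = 0$ by continuity of $f$. The goal then is to show that the event $\{\tilde{W}(s) > g(s) \text{ for some } s \in (0, \delta)\}$ has probability one for every $\delta > 0$; intersecting over $\delta = 1/n$, this tail event lies in $\mathcal{F}_{0+}^{\tilde{W}}$, so by Blumenthal's zero-one law it is enough to show the probability is positive, which follows by combining the $o(1)$ decay of $g$ with the Gaussian density of $\tilde{W}(s)$. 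This contradicts the hypothesized domination on $T \cap C^c$, yielding $\mathbb{P}(T \cap C^c) = 0$.

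The main obstacle is carrying out the Blumenthal argument uniformly in $\tau$ and in the pathwise local regularity of $f$ near $\tau$: for a merely continuous $f$, the ratio $g(s)/\sqrt{s}$ may be unbounded as $s \downarrow 0$, so the positive-probability estimate must be applied along a subsequence $s_n \downarrow 0$ with $g(s_n)/\sqrt{s_n}$ controlled, and for especially pathological $f$ one may additionally need to exploit the two-sided oscillation of $B$ at $\tau$ using the time-reversal symmetry of the Brownian bridge.
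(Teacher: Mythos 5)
The paper does not prove this lemma; it cites \cite[Corollary 2.9]{CH14}, whose argument is a Cameron--Martin (absolute-continuity) argument, quite different from the strong Markov route you propose. Let me first flag the gap in your proposal, then sketch the standard proof for comparison.

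Your key step is the claim that the event
\[
F := \bigcap_{\delta>0}\bigl\{\exists\, s\in(0,\delta):\ \tilde W(s) > g(s)\bigr\}
\]
has probability one, where $g(s)=f(\tau+s)-f(\tau)$ is a generic continuous function with $g(0)=0$. You correctly place $F$ in $\mathcal F_{0+}$ and invoke Blumenthal, and you correctly identify that the issue is to show $\mathbb P(F)>0$. But this is genuinely false for some admissible $g$: if $g(s)=2\sqrt{2s\log\log(1/s)}$ near $0$, the law of the iterated logarithm gives $\limsup_{s\downarrow 0}\tilde W(s)/\sqrt{2s\log\log(1/s)}=1<2$ a.s., so a.s. $\tilde W(s)<g(s)$ for all sufficiently small $s$, hence $\mathbb P(F)=0$. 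Your proposed repairs do not close this: (i) passing to a subsequence $s_n\downarrow 0$ with $g(s_n)/\sqrt{s_n}$ bounded is impossible when $g(s)/\sqrt{s}\to\infty$ monotonically, which a continuous $g$ with $g(0)=0$ may well do; (ii) two-sided oscillation and time reversal is the right intuition --- the touching time is a \emph{maximum} of $B-f$, so the barrier must dominate the path in both directions --- but the strong Markov property at $\tau$ only controls the post-$\tau$ increment given $\mathcal F_\tau$, and extracting a contradiction from the joint pre- and post-$\tau$ behavior requires an essentially different argument. In short, the one-sided localization does not suffice, and this is a fundamental obstruction rather than a technical detail.

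The argument in \cite{CH14} avoids all of this. Set $M=\sup_{t\in[0,1]}(B(t)-f(t))$, so $T\cap C^c=\{M=0\}$, and take $h(t)=t(1-t)$, which is in the Cameron--Martin space of the bridge and strictly positive on $(0,1)$. Since $B(0)=B(1)=0$ while $f(0),f(1)>0$, on $\{M(B)=0\}$ every argmax of $B-f$ lies in $(0,1)$, where $h>0$; hence $\delta\mapsto M(B+\delta h)$ is strictly increasing, and the events $A_\delta:=\{M(B+\delta h)=0\}$, $\delta\in\mathbb R$, are pairwise disjoint. By Cameron--Martin/Girsanov the laws of $B$ and $B+\delta h$ are mutually absolutely continuous, so if $\mathbb P(A_0)>0$ then $\mathbb P(A_\delta)>0$ for every $\delta$. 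Uncountably many pairwise disjoint events cannot all carry positive probability, so $\mathbb P(M(B)=0)=\mathbb P(T\cap C^c)=0$. This proof is global (it never localizes at the touching time), which is precisely what sidesteps the LIL obstruction that defeats the strong-Markov approach.
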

\begin{lemma}\label{Spread} Let $U$ be an open subset of $C([0,1])$, which contains a function $f$ such that $f(0) = f(1) = 0$. If $B$ is a standard Brownian bridge on $[0,1]$ then $\mathbb{P}(B[0,1] \subset U) > 0$.
\end{lemma}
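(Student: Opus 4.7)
The plan is first to reduce the statement to a concrete small-ball question. Since $U$ is open in the topology of uniform convergence over compact sets on $C([0,1])$ and contains $f$, there is some $\epsilon > 0$ such that the sup-norm ball $\{g \in C([0,1]) : \|g - f\|_\infty < \epsilon\}$ is contained in $U$. So it suffices to show $\mathbb{P}(\|B - f\|_\infty < \epsilon) > 0$.

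The next step is to approximate $f$ by a nicer function with the same boundary values. Using a polygonal approximation at a sufficiently fine partition of $[0,1]$, one obtains a piecewise linear function $h$ with $h(0) = h(1) = 0$ and $\|h - f\|_\infty < \epsilon/2$. By the triangle inequality, on the event $\{\|B - h\|_\infty < \epsilon/2\}$ one has $\|B - f\|_\infty < \epsilon$, so it suffices to prove $\mathbb{P}(\|B - h\|_\infty < \epsilon/2) > 0$. The point of this reduction is that $h$ now lies in the Cameron-Martin space of the Brownian bridge: it is absolutely continuous on $[0,1]$ with $h(0) = h(1) = 0$ and $h' \in L^2([0,1])$.

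The final step is to invoke the Cameron-Martin theorem for Brownian bridge: for any such $h$, the law of $B - h$ is mutually absolutely continuous with respect to the law of $B$, so the two measures assign positive probability to exactly the same events. Since $\mathbb{P}(\|B\|_\infty < \epsilon/2) > 0$ (a standard small-ball fact, immediate from path continuity together with nondegeneracy of the Gaussian finite-dimensional distributions of $B$), we conclude $\mathbb{P}(\|B - h\|_\infty < \epsilon/2) > 0$, as desired.

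The main obstacle is the appeal to Cameron-Martin; if one wishes to give a self-contained argument, an alternative I would try is to condition on the values of $B$ at the breakpoints $0 = t_0 < t_1 < \cdots < t_n = 1$ of $h$. The Gaussian vector $(B(t_1),\ldots,B(t_{n-1}))$ has a positive density in a neighborhood of $(h(t_1),\ldots,h(t_{n-1}))$, so the event that each $B(t_i)$ lies within $\epsilon/4$ of $h(t_i)$ has positive probability. Conditional on these values, the restrictions $B|_{[t_{i-1},t_i]}$ are independent Brownian bridges, and each one stays within $\epsilon/4$ of its (short) linear interpolation with positive probability by a Brownian bridge small-ball bound, so intersecting these independent positive-probability events yields the claim.
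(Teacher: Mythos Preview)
Your proof is correct. Both the Cameron--Martin route and the conditioning-on-breakpoints alternative are valid and standard ways to establish this small-ball positivity for the Brownian bridge.

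Note, however, that the paper does not give its own proof of this lemma: it is quoted as Corollary~2.10 of \cite{CH14} and used as a black box. So there is no in-paper argument to compare against. For what it is worth, the approach in \cite{CH14} is essentially your alternative: one reduces to a piecewise linear $h$, observes that the vector of bridge values at the breakpoints has a strictly positive Gaussian density, and then uses the Markov/bridge decomposition on each subinterval together with a one-bridge small-ball estimate. Your Cameron--Martin argument is slightly slicker in that it handles all of this in one stroke via absolute continuity, at the cost of invoking a heavier theorem; the conditioning argument is more elementary and self-contained. Either is perfectly acceptable here.
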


In simple terms, an $(f,g)$-avoiding Brownian line ensemble, with $f,g$ two continuous functions, is a collection of $k$ independent Brownian bridges, conditioned on not intersecting one another and staying above the graph of $g$ and below the graph of $f$.

\begin{definition}\label{DefAvoidingLaw}
	Let $k \in \mathbb{N}$ and $\weyl_k$ denote the open Weyl chamber in $\mathbb{R}^{k}$, i.e.
	$$\weyl_k = \{ \vec{x} = (x_1, \dots, x_k) \in \mathbb{R}^k: x_1 > x_2 > \cdots > x_k \}.$$
	Let $\vec{x}, \vec{y} \in \weyl_k$, $a,b \in \mathbb{R}$ with $a < b$, and $f: [a,b] \rightarrow (-\infty, \infty]$ and $g: [a,b] \rightarrow [-\infty, \infty)$ be two continuous functions. (Either $f: [a,b] \rightarrow \mathbb{R}$ is continuous or $f = \infty$ everywhere, and similarly for $g$.) Assume that $f(t) > g(t)$ for all $t \in[a,b]$, $f(a) > x_1, f(b) > y_1$ and $g(a) < x_k, g(b) < y_k.$
	
	We define the {\em $(f,g)$-avoiding Brownian line ensemble on $[a,b]$ with entry data $\vec{x}$ and exit data $\vec{y}$} to be the $\llbracket 1,k\rrbracket$-indexed line ensemble $\mathcal{Q}$ on $[a,b]$, with law $\mathbb{P}^{a,b, \vec{x},\vec{y}}_{\mathrm{free}}$ (see below \eqref{BBDef}) conditioned on the avoidance event 
	\[
	E  = \left\{ f(r) > B_1(r) > B_2(r) > \cdots > B_k(r) > g(r) \mbox{ for all $r \in[a,b]$} \right\}.
	\] 
	We denote the probability distribution of $\mathcal{Q}$ as $\mathbb{P}_{\mathrm{avoid}}^{a,b, \vec{x}, \vec{y}, f, g}$ and write $\mathbb{E}_{\mathrm{avoid}}^{a,b, \vec{x}, \vec{y}, f, g}$ for the expectation with respect to this measure.
\end{definition}

\begin{remark}
	We point out that the event $E$ above is an open set of positive measure, and thus the conditional law above is well-defined. Indeed, it is easy to see that we can find $u_1, \dots, u_k \in C([0,1])$ and $\epsilon > 0$ such that 
	\[
	\mathbb{P}(E) \geq \mathbb{P}\left(\max_{1 \leq i \leq k} \sup_{r \in [0,1]}|B_i(r) - u_i(r)| < \epsilon \right) = \prod_{i = 1}^k \mathbb{P} \left( \sup_{r \in [0,1]}|B_i(r) - u_i(r)| < \epsilon  \right),
	\]
	and by Lemma \ref{Spread} the product on the right is positive.
\end{remark}

The following definition gives the notion of the Brownian Gibbs property from \cite{CH14}.

\begin{definition}\label{DefBGP}
	Fix a set $\Sigma = \llbracket 1, N \rrbracket$ and an interval $\Lambda \subset \mathbb{R}$, and let $K = \llbracket k_1,k_2\rrbracket \subset \Sigma$ be finite and $a,b \in \Lambda$ with $a < b$. Set $f = \mathcal{L}_{k_1 - 1}$ and $g = \mathcal{L}_{k_2 + 1}$ (where $f = \infty$ if $k_1 - 1 \not \in \Sigma$ and $g = -\infty$ if $k_2 +1 \not \in \Sigma$). Write $D_{K,a,b} = K \times (a,b)$ and $D_{K,a,b}^c = (\Sigma \times \Lambda) \setminus D_{K,a,b}$. A $\Sigma$-indexed line ensemble $\mathcal{L}$ on $\Lambda$ is said to have the {\em Brownian Gibbs property} if it is non-intersecting and 
	\[
	\mbox{ Law}\left( \mathcal{L}|_{K \times [a,b]} \mbox{ conditional on } \mathcal{L}|_{D^c_{K,a,b}} \right)= \mbox{Law} \left( \mathcal{Q} \right),
	\]
	where $\mathcal{Q}_i = \tilde{\mathcal{Q}}_{i - k_1 + 1}$, with $\tilde{\mathcal{Q}}$ denoting the $(f,g)$-avoiding Brownian line ensemble on $[a,b]$ with entry data $(\mathcal{L}_{k_1}(a), \dots, \mathcal{L}_{k_2}(a))$ and exit data $(\mathcal{L}_{k_1}(b), \dots, \mathcal{L}_{k_2}(b))$ from Definition \ref{DefAvoidingLaw}. 
	
	Equivalently, $\mathcal{L}$ on $\Lambda$ satisfies the Brownian Gibbs property if and only if it is non-intersecting and for any finite $K = \llbracket k_1,k_2\rrbracket \subset \Sigma$ and $[a,b] \subset \Lambda$ and any bounded Borel-measurable function $F: C(K \times [a,b]) \rightarrow \mathbb{R}$, we have $\mathbb{P}$-a.s.
	\begin{equation}\label{BGPTower}
		\mathbb{E} \left[ F\left(\mathcal{L}|_{K \times [a,b]} \right) \,\big|\, \mathcal{F}_{\mathrm{ext}} (K \times (a,b))  \right] =\mathbb{E}_{\mathrm{avoid}}^{a,b, \vec{x}, \vec{y}, f, g} \bigl[ F(\tilde{\mathcal{Q}}) \bigr].
	\end{equation}
	Here,
	\[
	\mathcal{F}_{\mathrm{ext}} (K \times (a,b)) := \sigma \left ( \mathcal{L}_i(s): (i,s) \in D_{K,a,b}^c \right),
	\]
	$\vec{x} = (\mathcal{L}_{k_1}(a), \dots, \mathcal{L}_{k_2}(a))$, $\vec{y} = (\mathcal{L}_{k_1}(b), \dots, \mathcal{L}_{k_2}(b))$, $f = \mathcal{L}_{k_1 - 1}|_{[a,b]}$, and $g = \mathcal{L}_{k_2 + 1}|_{[a,b]}$ ($f = \infty$ if $k_1 - 1 \not \in \Sigma$ and $g =-\infty$ if $k_2 +1 \not \in \Sigma$.)
\end{definition}

\begin{remark}\label{RemMeas} We note briefly that equation \eqref{BGPTower} does in fact make sense, as the right-hand side is $\mathcal{F}_{\mathrm{ext}}(K\times(a,b))$-measurable. This follows from \cite[Lemma 3.4]{DimMat}, which shows that the right side is measurable with respect to the $\sigma$-algebra 
	$$ \sigma \left( \mathcal{L}_i(s) : \mbox{  $i \in K$ and $s \in \{a,b\}$, or $i \in \{k_1 - 1, k_2 +1 \}$ and $s \in [a,b]$} \right).$$
\end{remark}

In this paper we use a modification of the above definition, known as the partial Brownian Gibbs property. This notion was introduced in \cite{DimMat} and used in \cite{Ber}. We explain the difference between the two definitions in Remark \ref{RPBGP}.

\begin{definition}\label{DefPBGP}
	Fix a set $\Sigma = \llbracket 1 , N \rrbracket$ and an interval $\Lambda \subset \mathbb{R}$. A $\Sigma$-indexed line ensemble $\mathcal{L}$ on $\Lambda$ is said to satisfy the {\em partial Brownian Gibbs property} if and only if it is non-intersecting and for any finite $K = \llbracket k_1,k_2\rrbracket \subset \Sigma$ with $k_2 \leq N - 1$, $[a,b] \subset \Lambda$, and any bounded Borel-measurable function $F: C(K \times [a,b]) \rightarrow \mathbb{R}$, we have $\mathbb{P}$-a.s.
	\begin{equation}\label{PBGPTower}
		\mathbb{E} \left[ F(\mathcal{L}|_{K \times [a,b]}) \,\big|\, \mathcal{F}_{\mathrm{ext}} (K \times (a,b))  \right] =\mathbb{E}_{\mathrm{avoid}}^{a,b, \vec{x}, \vec{y}, f, g} \bigl[ F(\tilde{\mathcal{Q}}) \bigr],
	\end{equation}
	with notation as in Definition \ref{DefBGP}. 
\end{definition}

\begin{remark}\label{RPBGP}
	Definition \ref{DefPBGP} is slightly different from the Brownian Gibbs property of Definition~\ref{DefBGP}. If $\Sigma = \mathbb{N}$, the two definitions are equivalent. However, if $\Sigma = \llbracket 1,N\rrbracket$ with $1 \leq N < \infty$, then the partial Brownian Gibbs property is weaker than the Brownian Gibbs property. Specifically, the Brownian Gibbs property allows for the possibility that $k_2 = N$ in Definition \ref{DefPBGP}, in which case the convention is that $g = -\infty$. We prefer to work with the more general partial Brownian Gibbs property, and most of the results later in this paper are stated in its terms.
\end{remark}

%-------------------------------------------------------------------------------------------------------------------------------------------------------------------------------------------------
% Section 2.2
%
%-------------------------------------------------------------------------------------------------------------------------------------------------------------------------------------------------
\subsection{$H^{\mathrm{RW}}$-Gibbsian line ensembles}\label{Section2.2}

In this section we introduce discrete analogues of the line ensembles and Gibbs properties in Section \ref{Section2.1} above. Our exposition mirrors that in \cite[Section 2.2]{Ber}, in which Bernoulli Gibbsian line ensembles were defined. Here we discuss a more general class of discrete Gibbsian line ensembles, which we refer to as $H^{\mathrm{RW}}$\textit{-Gibbsian line ensembles}.

\begin{definition}\label{discLEdef}
	Let $\Sigma\subseteq\mathbb{Z}$ and $T_0,T_1\in\mathbb{Z}$ with $T_0 < T_1$. We call a function $\llbracket T_0, T_1\rrbracket\to\mathbb{Z}$ a \textit{discrete path}. Let $(Y,\mathcal{D})$ denote the space of functions $f : \Sigma \times\llbracket T_0,T_1\rrbracket \to \mathbb{Z}$ (collections of discrete paths), endowed with the discrete topology. A $\Sigma$-\textit{indexed discrete line ensemble} $\mathfrak{L}$ is a $(Y,\mathcal{D})$-valued random variable defined on a probability space $(\Omega,\mathcal{B},\mathbb{P})$.
\end{definition}

\begin{remark}\label{discLErmk}
	A discrete line ensemble $\mathfrak{L}$ is intuitively a collection of random walks on the integer interval $\llbracket T_0,T_1\rrbracket$, indexed by $\Sigma$. We often view $\mathfrak{L}$ as a function $\Sigma\times\llbracket T_0,T_1\rrbracket\to \mathbb{Z}$, although it is in fact $\mathfrak{L}(\omega)$ for each $\omega\in\Omega$ which is such a function. By linearly interpolating between the points $\mathfrak{L}(i,j)$ and $\mathfrak{L}(i,j+1)$ for each $i\in\Sigma$ and $j\in\llbracket T_0, T_1-1\rrbracket$, we view $\mathfrak{L}$ as a collection of random continuous curves on the interval $[T_0,T_1]$, indexed by $\Sigma$. Thus, we view discrete line ensembles as particular examples of the line ensembles of Definition \ref{LEdef}. If $T_0 \leq a < b \leq T_1$, we denote by $\mathfrak{L}\llbracket a, b\rrbracket$ the restricted random variable given by $\mathfrak{L}\llbracket a, b\rrbracket(\omega) = \mathfrak{L}(\omega)|_{\llbracket a, b\rrbracket}$.
\end{remark}

\begin{definition}\label{hamdef}
	Given integers $\alpha,\beta\in\mathbb{Z}\cup\{\pm\infty\}$, we define an $\llbracket \alpha,\beta \rrbracket$-\textit{supported random walk Hamiltonian} to be a function $H^{\mathrm{RW}} : \mathbb{Z}\cup\{\pm\infty\}\to [0,\infty]$ such that $H^{\mathrm{RW}}(k) < \infty$ if and only if $k\in\llbracket \alpha,\beta \rrbracket$, and
	\[
	\sum_{k\in \llbracket \alpha,\beta \rrbracket} \exp\left(-H^{\mathrm{RW}}(k)\right) = 1.
	\]
	For $t_1,t_2,z_1,z_2\in\mathbb{Z}$ with $t_1<t_2$ and $\alpha(t_2-t_1) \leq z_2-z_1 \leq \beta(t_2-t_1)$, we let $\Omega(t_1,t_2,z_1,z_2)$ denote the set of discrete paths $\ell$ starting at $(t_1,z_1)$ and ending at $(t_2,z_2)$, such that $\ell(i+1)-\ell(i)\in\llbracket\alpha,\beta\rrbracket$ for each $i$. We denote by $\mathbb{P}_{H^{\mathrm{RW}}}^{t_1,t_2,z_1,z_2}$ the measure on $\Omega(t_1,t_2,z_1,z_2)$ given by
	\begin{equation}\label{HRWmeasdef}
	\mathbb{P}_{H^{\mathrm{RW}}}^{t_1,t_2,z_1,z_2}(\ell) = Z^{-1}\exp\left[-\sum_{i=t_1}^{t_2-1} H^{\mathrm{RW}}(\ell(i+1)-\ell(i))\right],
	\end{equation}
	where $Z$ is the normalization constant chosen so that this defines a probability measure. Here we use the convention that $\exp(-\infty) = 0$. Note that our assumptions on $z_1,z_2$ and $H^{\mathrm{RW}}$ ensure that $\Omega(t_1,t_2,z_1,z_2)$ is nonempty and this measure is well-defined.
	
	Lastly, given $k\in\mathbb{N}$, $T_0,T_1\in\mathbb{Z}$ with $T_0 < T_1$, and $\vec{x},\vec{y}\in\mathbb{Z}^k$, we let $\mathbb{P}_{H^{\mathrm{RW}}}^{T_0,T_1,\vec{x},\vec{y}}$ denote the law of $k$ independent discrete paths in $\Omega(T_0,T_1,x_i,y_i)$ with laws $\mathbb{P}_{H^{\mathrm{RW}}}^{T_0,T_1,x_i,y_i}$.
\end{definition}

\begin{remark}\label{consec}
	We emphasize that we require $H^{\mathrm{RW}}$ to be finite at each point of its support interval $\llbracket\alpha,\beta\rrbracket$, which is to say that there is a positive probability that the random walk will jump by each amount in the interval. This is to avoid issues of conditioning on events with zero probability when fixing the values $z_1$ and $z_2$ of the random walk at the endpoints. We refer to \cite[Section 2.3]{DW19} for more details on this assumption.
\end{remark}

\begin{definition}\label{HRWavoiddef}
	Let $k \in \mathbb{N}$ and let $\mathfrak{W}_k$ denote the set of signatures of length $k$, i.e.,
	$$\mathfrak{W}_k = \{ \vec{x} = (x_1, \dots, x_k) \in \mathbb{Z}^k: x_1 \geq  \cdots \geq  x_k \}.$$ Note this is distinct from the closed Weyl chamber $\overline{W}_k$ of Definition \ref{DefAvoidingLaw}. Let $\vec{x}, \vec{y} \in \mathfrak{W}_k$, $T_0, T_1 \in \mathbb{Z}$ with $T_0 < T_1$, $S\subseteq\llbracket T_0,T_1\rrbracket$, and let $f: \llbracket T_0, T_1 \rrbracket \rightarrow (-\infty, \infty]$, $g: \llbracket T_0, T_1 \rrbracket \rightarrow [-\infty, \infty)$. Let $H^{\mathrm{RW}}$ be an $\llbracket \alpha,\beta \rrbracket$-supported random walk Hamiltonian.
	
	We define the {\em $(f,g;S)$-non-crossing $H^{\mathrm{RW}}$ line ensemble} on the interval $\llbracket T_0, T_1 \rrbracket$ with {\em entry data $\vec{x}$} and {\em exit data $\vec{y}$} to be the $\llbracket 1,k\rrbracket$-indexed $H^{\mathrm{RW}}$ line ensemble $\mathfrak{Q} = (Q_1,\dots,Q_k)$ on $\llbracket T_0, T_1 \rrbracket$, with law given by $\mathbb{P}^{T_0,T_1, \vec{x},\vec{y}}_{H^{\mathrm{RW}}}$ conditioned on the non-crossing event 
	\[
	A  = \left\{ f(r) \geq Q_1(r) \geq \cdots \geq Q_k(r) \geq g(r) \mbox{ for all $r \in S$} \right\}.
	\]
	This definition is well-posed if there exist $Q_i \in \Omega(T_0,T_1,x_i,y_i)$ for $i \in \llbracket 1, k \rrbracket$ that satisfy the conditions in $A$. The \textit{acceptance probability} is defined to be
	\[
	Z(T_0,T_1,\vec{x},\vec{y},f,g;S) = \mathbb{P}^{T_0,T_1,\vec{x},\vec{y}}_{H^{\mathrm{RW}}}(A).
	\] 
	We will denote by $\Omega_{\mathrm{avoid}}(T_0, T_1, \vec{x}, \vec{y}, f,g; S)$ the set of $\mathfrak{Q}$ satisfying the conditions in $A$. We denote the conditional probability distribution of $\mathfrak{Q}$ by $\mathbb{P}_{\mathrm{avoid}, H^{\mathrm{RW}};S}^{T_0,T_1, \vec{x}, \vec{y}, f, g}$ and write $\mathbb{E}_{\mathrm{avoid}, H^{\mathrm{RW}}; S}^{T_0, T_1, \vec{x}, \vec{y}, f, g}$ for the expectation with respect to this measure. If $S = \llbracket T_0,T_1\rrbracket$, we write $\Omega_{\mathrm{avoid}}(T_0,T_1,\vec{x},\vec{y},f,g)$, $\mathbb{P}_{\mathrm{avoid}, H^{\mathrm{RW}}}^{T_0,T_1, \vec{x}, \vec{y}, f, g}$, $\mathbb{E}_{\mathrm{avoid}, H^{\mathrm{RW}}}^{T_0, T_1, \vec{x}, \vec{y}, f, g}$, and $Z(T_0,T_1,\vec{x},\vec{y},f,g)$. Likewise if $f=+\infty$ and $g=-\infty$, we omit $f$ and $g$ from the notation.
\end{definition}

\begin{definition}\label{DefHRWGP}
	Fix a set $\Sigma = \llbracket 1, N \rrbracket$ with $N \in \mathbb{N}$ or $N = \infty$ and $T_0, T_1\in \mathbb{Z}$ with $T_0 < T_1$, as well as an $\llbracket \alpha,\beta \rrbracket$-supported random walk Hamiltonian $H^{\mathrm{RW}}$. A $\Sigma$-indexed discrete line ensemble $\mathfrak{Q}=(Q_1,\dots,Q_N)$ on $\llbracket T_0, T_1 \rrbracket$ is said to satisfy the {\em $H^{\mathrm{RW}}$-Gibbs property} if it is non-crossing, meaning that 
	$$ Q_j(i) \geq Q_{j+1}(i) \mbox{ for all $j \in \llbracket 1, N-1\rrbracket$ and $i \in \llbracket T_0, T_1 \rrbracket$},$$
	and for any finite $K = \llbracket k_1,k_2\rrbracket \subseteq \llbracket 1, N - 1 \rrbracket$ and $a,b \in \llbracket T_0, T_1 \rrbracket$ with $a < b$, the following holds.  Suppose that $f, g$ are two discrete paths on $\llbracket a,b\rrbracket$ and $\vec{x}, \vec{y} \in \mathfrak{W}_{k_2 - k_1 + 1}$, and define the event 
	\[ 
	E=\{ \vec{x} = (Q_{k_1}(a), \dots, Q_{k_2}(a)), \vec{y} = (Q_{k_1}(b), \dots, Q_{k_2}(b)), Q_{k_1-1} \llbracket a,b \rrbracket = f, Q_{k_2+1} \llbracket a,b \rrbracket = g \},
	\]
	where if $k_1 = 1$ we use the convention $f = \infty = Q_0$. If $L_i \in \Omega(a, b, x_i , y_i)$ for $i\in\llbracket 1, k\rrbracket$ with $k = k_2-k_1+1$, then 
	\begin{equation}\label{GibbsEq}
	\mathbf{1}_E \cdot \mathbb{P}\left( Q_{i + k_1-1}\llbracket a,b \rrbracket = L_{i} \mbox{ for $i \in \llbracket 1, k\rrbracket$} \, \big|\,  \mathcal{F}_{\mathrm{ext}}^{H^{\mathrm{RW}}} \right) = \mathbf{1}_E\cdot\mathbb{P}_{\mathrm{avoid}, H^{\mathrm{RW}}}^{a,b, \vec{x}, \vec{y}, f, g} (L_1,\dots,L_k),
	\end{equation}
	where $\mathcal{F}_{\mathrm{ext}}^{H^{\mathrm{RW}}}$ is the $\sigma$-algebra generated by $L_i(j)$ for $i\in\llbracket k_1,k_2\rrbracket$ and $j\in\llbracket T_0,T_1\rrbracket\setminus\llbracket a+1,b-1\rrbracket$.
\end{definition}

%-------------------------------------------------------------------------------------------------------------------------------------------------------------------------------------------------
% Section 2.3
%
%-------------------------------------------------------------------------------------------------------------------------------------------------------------------------------------------------
\subsection{Main technical results}\label{Section2.3}

In this section, we state the main technical results of our paper, which we use in the following section to prove Theorem \ref{mainthm}. We establish two assumptions we will require our $\llbracket\alpha,\beta\rrbracket$-supported random walk Hamiltonians $H^{\mathrm{RW}}$ to satisfy throughout the rest of this paper, unless stated otherwise. We recall here that implicit in the definition of an $\llbracket\alpha,\beta\rrbracket$-supported random walk Hamiltonian is that $\exp(-H^{\mathrm{RW}}(k)) > 0$ for \textit{every} $k\in \llbracket\alpha,\beta\rrbracket$, meaning that each of these jump increments for the random walk has positive probability. (See Definition \ref{hamdef} and Remark \ref{consec}.)

The first assumption is needed for a monotone coupling lemma, \ref{MCL} below.

\begin{assumption}{A1}\label{assume1}
	The function $H^{\mathrm{RW}} : \mathbb{Z}\cup\{\pm\infty\} \to [0,\infty]$ is convex. That is, the extension of $H^{\mathrm{RW}}$ to $\mathbb{R}\cup\{\pm\infty\}$ by linear interpolation is a convex function.
\end{assumption}

The second assumption is a strong coupling condition, analogous to the Koml\'{o}s-Major-Tusn\'{a}dy (KMT) embedding. This is the key tool we use to study discrete random walks, by comparing them with Brownian bridges. These embeddings were recently studied extensively in \cite{DW19}.

\begin{assumption}{A2}\label{KMT}
	Let $p \in (\alpha,\beta)$. Then there exist constants $0 < C,A,a < \infty$ depending on $p$ and $H^{\mathrm{RW}}$ such that for every $n\in\mathbb{N}$, there is a probability space supporting a Brownian bridge $B^\sigma$ with variance $\sigma^2 = \sigma_p^2$ depending on $p$ and $H^{\mathrm{RW}}$, as well as a family of random paths $\ell^{(n,z)} \in \Omega(0,n, 0, z)$ for $z \in \llbracket n\alpha,n\beta\rrbracket$, such that $\ell^{(n,z)}$ has law $\mathbb{P}^{0,n,0,z}_{H^{\mathrm{RW}}}$ and
	\begin{equation}\label{KMTeq}
		\mathbb{E}\left[ e^{a \Delta(n,z)} \right] \leq C e^{A \log n}e^{|z- p n|^2/n}, \mbox{ where $\Delta(n,z)=  \sup_{0 \leq t \leq n} \left| \sqrt{n} B^\sigma_{t/n} + \frac{t}{n}z - \ell^{(n,z)}(t) \right|.$}
	\end{equation}
\end{assumption}

We refer to \cite[Section 2.2]{DW19} for a list of five technical conditions which ensure that Assumption \ref{KMT} holds. In Section 8.2 of that paper, the authors give several specific examples in which these conditions are satisfied. We summarize two of these here with which we are particularly concerned.

\begin{example}{E1}\label{ex1}
	If $H^{\mathrm{RW}}$ is an $\llbracket\alpha,\beta\rrbracket$-supported random walk Hamiltonian where $-\infty < \alpha < \beta < \infty$, then Assumption \ref{KMT} holds. This is to say that the allowable jumps of the random walk form a compact integer interval, and the jump probabilities may be arbitrary but nonzero. In particular, if we take $\alpha = 0, \beta = 1$, $q\in(0,1)$, and $H^{\mathrm{RW}}(0) = -\log q$, $H^{\mathrm{RW}}(1) = -\log (1-q)$, then Assumptions \ref{assume1} and \ref{KMT} are both satisfied, with the variance $\sigma^2$ of the Brownian bridge given by that of a Bernoulli random variable with parameter $q$, i.e., $\sigma^2 = q(1-q)$. (An earlier treatment of this special case appears in \cite[Theorem 8.1]{CD}.) This choice corresponds to the Bernoulli line ensembles studied in \cite{Ber}. 
\end{example}

\begin{example}{E2}\label{ex2}
	Suppose $H^{\mathrm{RW}}$ corresponds to a geometric distribution with parameter $q\in (0,1)$, i.e., $\alpha = 0$, $\beta = \infty$, and $H^{\mathrm{RW}}(k) = -\log[q(1-q)^k] = -k\log(1-q) - \log q$ for integers $k\geq 0$. Then Assumption \ref{KMT} is satisfied, and the extension of $H^{\mathrm{RW}}$ to $\mathbb{R}\cup\{\pm\infty\}$ is linear and thus convex, so Assumption \ref{assume1} also holds.
\end{example}

We now give a definition which contains the assumptions we require for tightness of a sequence of $H^{\mathrm{RW}}$-Gibbsian line ensembles. This is essentially a restatement of Assumptions \ref{a1} and \ref{a2}. The most important condition is a mild uniform control on the one-point marginals of the top curve near integer times, with a global parabolic shift.

\begin{definition}\label{gooddef} Fix $k \in \mathbb{N}$, $\gamma, \lambda > 0$, and $\theta\in[0,2)$. Let $H^{\mathrm{RW}}$ be an $\llbracket \alpha,\beta \rrbracket$-supported random walk Hamiltonian, and fix $p \in (\alpha,\beta)$. Suppose we have a sequence $( T_N )_{N\in\mathbb{N}}$ in $\mathbb{N}$ and that $(\mathfrak{L}^N)_{N \in\mathbb{N}}$, $\mathfrak{L}^N = (L^N_1, \dots, L^N_k)$, is a sequence of $\llbracket 1, k \rrbracket$-indexed discrete line ensembles on $ \llbracket -T_N, T_N \rrbracket$. We call the sequence $(\gamma,p,\lambda,\theta)$-{\em good} if 
	\begin{enumerate}[label=(\arabic*)]
		\item For each $N \in \mathbb{N}$, $\mathfrak{L}^N$ satisfies the $H^{\mathrm{RW}}$-Gibbs property of Definition \ref{DefHRWGP};  
		\item There is a function $\psi: \mathbb{N} \rightarrow (0, \infty)$ such that $\lim_{N \rightarrow \infty} \psi(N) = \infty$ and for each $N \in \mathbb{N}$ we have $ T_N > \psi(N)N^{\gamma}$;
		\item There are functions $\Phi:\mathbb{Z}\times(0,\infty)\to(0,\infty)$ and $\phi: (0, \infty) \rightarrow (0,\infty)$ such that for any $\epsilon > 0$, $n\in\mathbb{Z}$, and $N\geq\Phi(n,\epsilon)$, we have 
		\begin{equation}\label{globalParabola}
		\mathbb{P} \left( \left|N^{-\gamma/2}\left(L_1^N(\lfloor nN^\gamma\rfloor) - p\lfloor nN^\gamma\rfloor\right) + \lambda n^2 \right| \geq \phi(\epsilon) + |n|^\theta \right) \leq \epsilon.
		\end{equation}
	\end{enumerate}
\end{definition}

We now state our two main technical results, which we use below to prove Theorem \ref{mainthm}. Note that the line ensembles in the statements of these results have a fixed number of curves. The first result concerns tightness, and its proof occupies Section \ref{Section4}.

\begin{theorem}\label{GoodTight}
	Fix $k \in \mathbb{N}$ with $k \geq 2$, $\gamma, \lambda > 0$, $\theta\in[0,2)$, $H^{\mathrm{RW}}$ an $\llbracket \alpha,\beta \rrbracket$-supported random walk Hamiltonian satisfying Assumptions \ref{assume1} and \ref{KMT}, and $p \in (\alpha,\beta)$. Let $\mathfrak{L}^N = (L^N_1, \dots, L^N_k)$ be a $(\gamma, p, \lambda, \theta)$-good sequence of $\llbracket 1, k \rrbracket$-indexed discrete line ensembles. Define
	\[
	f^N_i(x) =  N^{-\gamma/2}\left(L^N_i(xN^{\gamma}) - p x N^{\gamma}\right), \mbox{ for $x\in [-\psi(N) ,\psi(N)]$ and $i \in \llbracket 1, k-1 \rrbracket$,}
	\]
	and extend $f^N_i$ to $\mathbb{R}$ by setting, for $i \in \llbracket 1, k - 1 \rrbracket$,
	\[
	f^N_i(x) = f^N_i(-\psi(N)) \mbox{ for $x \leq -\psi(N)$ and } f^N_i(x) = f_i^N(\psi(N)) \mbox{ for $x \geq \psi(N)$}.
	\]
	Let $\mathbb{P}_N$ denote the law of $f^N = (f_1^N,\dots,f^N_{k-1})$ as a $\llbracket 1, k-1 \rrbracket$-indexed line ensemble, i.e., as a random variable in $(C( \llbracket 1, k -1 \rrbracket \times \mathbb{R}), \mathcal{C}^{\llbracket 1,k-1\rrbracket})$. Then the sequence $(\mathbb{P}_N)_{N\in\mathbb{N}}$ is tight. 
\end{theorem}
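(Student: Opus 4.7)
The plan is to verify the two hypotheses of Lemma \ref{2Tight}---one-point tightness at a base point and equicontinuity on compacts---for each rescaled curve $f_i^N$, $i \in \llbracket 1, k-1\rrbracket$, taking $a_n = -n$ and $b_n = n$. I will first handle the top curve $i=1$ directly and then propagate control to $i \geq 2$ by induction, exploiting the $H^{\mathrm{RW}}$-Gibbs property at each step.

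For $i=1$, one-point tightness of $f_1^N(0)$ is immediate from Definition \ref{gooddef}(3) with $n=0$. For the modulus of continuity on a compact interval $[-M,M]$, I would fix an integer $T \gg M$ and condition on the endpoint values $L_1^N(\pm \lfloor T N^\gamma \rfloor)$ together with the second curve $L_2^N$ restricted to $\llbracket -\lfloor TN^\gamma\rfloor, \lfloor TN^\gamma\rfloor\rrbracket$. Definition \ref{gooddef}(3) controls these endpoints, with high probability, inside a window of size $(\phi(\epsilon)+T^\theta)N^{\gamma/2}$ around the parabola $pTN^\gamma - \lambda T^2 N^{\gamma/2}$. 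Conditionally, the Gibbs property identifies $L_1^N$ on this interval with an $H^{\mathrm{RW}}$ bridge between the fixed endpoints constrained to lie above $L_2^N$. Assumption \ref{assume1} (convexity of $H^{\mathrm{RW}}$) then affords a monotone coupling comparing this conditioned bridge with an unconditioned $H^{\mathrm{RW}}$ bridge, and Assumption \ref{KMT} couples the unconditioned bridge to a Brownian bridge of variance $\sigma_p^2$ with polylogarithmic error. Standard Hölder modulus-of-continuity bounds for Brownian bridges then deliver the equicontinuity of $f_1^N$ on $[-M,M]$.

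For $i \geq 2$, the same template runs inductively. Assuming the tightness of $f_{i-1}^N$, I condition $L_i^N$ on its endpoint values at $\pm\lfloor TN^\gamma\rfloor$ together with the upper curve $L_{i-1}^N$ and the lower curve $L_{i+1}^N$. The Gibbs property represents $L_i^N$ on the middle interval as an avoiding $H^{\mathrm{RW}}$ bridge between these two barriers, and the KMT-plus-Brownian-modulus comparison again yields the required modulus-of-continuity estimates; one-point tightness at $0$ follows from an analogous endpoint argument, now leveraging the controlled upper barrier $f_{i-1}^N$. The restriction to $i \leq k-1$ reflects the fact that $L_k^N$ is not governed by any Gibbs resampling and serves only as a lower boundary for $L_{k-1}^N$.

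The principal obstacle is the \emph{acceptance probability} lower bound needed to pass from a bound on an unconditioned $H^{\mathrm{RW}}$ bridge to a bound on the bridge conditioned to avoid its upper and lower barriers. This in turn rests on a \emph{curve-separation} estimate: with uniform positive probability, consecutive curves should be separated on scale $N^{\gamma/2}$ at the endpoints $\pm\lfloor TN^\gamma\rfloor$, so that a free Brownian-bridge computation shows the avoidance event is not rare. In \cite{Ber} such separation estimates were obtained via Schur-function asymptotics for Bernoulli line ensembles, and the key innovation here, flagged at the end of Section \ref{Section1.2}, is to transport that input to arbitrary $H^{\mathrm{RW}}$ through a secondary KMT-based coupling to Bernoulli bridges of matched variance. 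Once these separation and acceptance estimates are in hand, Lemma \ref{2Tight} closes the argument.
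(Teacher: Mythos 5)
Your high-level plan is sound and shares several ingredients with the paper (Lemma \ref{2Tight}, the $H^{\mathrm{RW}}$-Gibbs property, monotone coupling via Assumption \ref{assume1}, the KMT coupling via Assumption \ref{KMT}, and an acceptance-probability lower bound resolved ultimately by coupling to Bernoulli bridges). However, your inductive, curve-by-curve architecture is genuinely different from what the paper does, and it leaves a gap precisely at the hardest point.

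The central difficulty you do not resolve is one-point tightness of $f_i^N(0)$ for $i\geq 2$. You propose to ``leverage the controlled upper barrier $f_{i-1}^N$,'' but avoidance from \emph{above} does not bound a curve from \emph{below}: conditioned on $L_{i-1}^N$, the curve $L_i^N$ is a bridge constrained to lie \emph{beneath} $L_{i-1}^N$, and no amount of control on $L_{i-1}^N$ prevents $L_i^N$ from drifting arbitrarily low. Conditioning on $L_{i+1}^N$ as a lower barrier does not save you, because $L_{i+1}^N$ is not controlled by the inductive hypothesis. The paper sidesteps this entirely by proving a ``no low min'' bound directly on the $(k-1)$st curve (Lemma \ref{NoLowMin}), and then exploiting the pointwise sandwich $L_1^N(0)\geq L_i^N(0)\geq L_{k-1}^N(0)$ together with the ``no big max'' bound (Lemma \ref{NoBigMax}) to get one-point tightness for every $i$ simultaneously, with no induction. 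The engine behind Lemma \ref{NoLowMin} is a pinning argument (Lemma \ref{ParabolaMin}): if $L_{k-1}^N$ stayed uniformly low on a wide interval, the Gibbs property would make the top curve $L_1^N$ behave like a free bridge there, and the midpoint of a free bridge is near the chord through its endpoints rather than near the parabola $-\lambda x^2$ that Definition \ref{gooddef}(3) forces it to track. This is the place where the global parabolic curvature assumption actually enters, and your proposal has no analogue of it.

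There is a second, more minor, structural difference. For the modulus of continuity you condition one curve at a time on its two neighbors; the paper instead conditions \emph{all} of $L_1^N,\dots,L_{k-1}^N$ on $L_k^N$ and on the boundary values at $\pm t_1$ at once, writes the conditional law's Radon--Nikodym derivative against the product of free bridge laws as $\mathbf 1_A/Z$, uses Proposition \ref{APProp} to bound $Z$ from below on a high-probability event, and then applies the single-bridge modulus bound Lemma \ref{MOC} to each factor of the product measure. This joint conditioning is cleaner because it removes the need to control the separation between \emph{interior} pairs $(L_{i-1},L_{i+1})$ curve by curve, which in your scheme would have to be established inductively and risks circularity. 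Your curve-by-curve route is not obviously unworkable, but it would require a separate separation estimate at each level, and you do not indicate where that comes from beyond a gesture toward the Bernoulli-coupling idea.

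In short: the framing and the tool list are right, but you are missing the ``no low min'' mechanism (and the parabolic-pinning argument behind it) that controls the lower curves, and that is not a detail but the heart of the tightness proof.
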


The second result shows that the $H^{\mathrm{RW}}$-Gibbs property transfers to the partial Brownian Gibbs property in the limit. We give the proof in Section \ref{Section5}.

\begin{theorem}\label{GoodBGP}
	Fix the same notation as in Theorem \ref{GoodTight}. If $f^\infty = (f_1^{\infty},\dots,f_{k-1}^\infty)$ denotes any random variable whose law is a weak subsequential limit of $\mathbb{P}_N$, then there is a constant $\sigma > 0$ depending on $p$ and $H^{\mathrm{RW}}$ so that $\mathcal{L}^\infty := \sigma^{-1}f^{\infty}$ satisfies the partial Brownian Gibbs property.
\end{theorem}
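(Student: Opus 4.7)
The plan is to lift the discrete $H^{\mathrm{RW}}$-Gibbs property enjoyed by each $\mathfrak{L}^N$ to the partial Brownian Gibbs property of $\mathcal{L}^\infty := \sigma^{-1} f^\infty$, taking $\sigma = \sigma_p$ from Assumption \ref{KMT} so the limiting Brownian bridges have variance $1$. By Theorem \ref{GoodTight} and Lemma \ref{Polish}, the laws of the rescaled ensembles $\mathcal{L}^N := \sigma^{-1} f^N$ form a tight sequence in the Polish space $C(\llbracket 1,k-1\rrbracket\times\mathbb{R})$, so along the subsequence defining $f^\infty$ the Skorohod representation theorem provides versions with $\mathcal{L}^{N_m}\to\mathcal{L}^\infty$ uniformly on compact sets almost surely on a common probability space. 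The discrete non-crossing $\mathcal{L}_1^{N_m}\geq\cdots\geq\mathcal{L}_{k-1}^{N_m}$ inherited from the $H^{\mathrm{RW}}$-Gibbs property passes to $\mathcal{L}^\infty$ in the limit; strict non-crossing follows a posteriori from Lemma \ref{NoTouch} once the Brownian resampling identity is in hand.

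The core step is verifying \eqref{PBGPTower}. Fix $K=\llbracket k_1,k_2\rrbracket\subseteq\llbracket 1,k-2\rrbracket$ — the restriction $k_2\leq k-2$ guarantees the presence of a bounding lower curve $\mathcal{L}_{k_2+1}^\infty$ inside the retained ensemble, which is precisely why the property is only \emph{partial} — together with a rational interval $[a,b]$, a bounded continuous $F\colon C(K\times[a,b])\to\mathbb{R}$, and a bounded continuous $\mathcal{F}_{\mathrm{ext}}$-measurable $G$. With $a_N=\lfloor aN^\gamma\rfloor$, $b_N=\lfloor bN^\gamma\rfloor$, endpoints $\vec{x}_N=(L^{N_m}_i(a_N))_{i\in K}$, $\vec{y}_N=(L^{N_m}_i(b_N))_{i\in K}$, and bounding paths $f_N=L^{N_m}_{k_1-1}$, $g_N=L^{N_m}_{k_2+1}$, Definition \ref{DefHRWGP} gives
\[
\mathbb{E}\bigl[F(\mathcal{L}^{N_m}|_{K\times[a,b]})\,G\bigr] \;=\; \mathbb{E}\Bigl[G\cdot\mathbb{E}^{a_N,b_N,\vec{x}_N,\vec{y}_N,f_N,g_N}_{\mathrm{avoid},H^{\mathrm{RW}}}[F_N]\Bigr],
\]
where $F_N$ denotes $F$ composed with the rescaling. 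The left side converges to $\mathbb{E}[F(\mathcal{L}^\infty|_{K\times[a,b]})\,G]$ by dominated convergence using a.s.\ uniform convergence on $K\times[a,b]$ and continuity of $F$, $G$. For the right side, I would couple each of the $k_2-k_1+1$ independent $H^{\mathrm{RW}}$ random walk bridges composing the free measure with Brownian bridges of variance $\sigma^2$ via Assumption \ref{KMT}, then argue that the rescaled avoiding discrete measure converges to $\mathbb{P}_{\mathrm{avoid}}^{a,b,\vec{x}^\infty,\vec{y}^\infty,\mathcal{L}_{k_1-1}^\infty|_{[a,b]},\mathcal{L}_{k_2+1}^\infty|_{[a,b]}}$ with $\vec{x}^\infty_i=\mathcal{L}_i^\infty(a)$, $\vec{y}^\infty_i=\mathcal{L}_i^\infty(b)$. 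A density argument over rationals together with a.s.\ continuity of $\mathcal{L}^\infty$ upgrades the identity to arbitrary real intervals.

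The hardest part will be pushing the KMT coupling through the avoidance conditioning. Writing the avoiding measure as the free measure on the event $A_N=\{f_N\geq Q_1\geq\cdots\geq Q_{k_2-k_1+1}\geq g_N\}$ divided by the acceptance probability $Z_N=\mathbb{P}^{a_N,b_N,\vec{x}_N,\vec{y}_N}_{H^{\mathrm{RW}}}(A_N)$, I need two ingredients. First, a uniform positive lower bound on $Z_N$ on high-probability events, which I expect to obtain by combining the monotone coupling afforded by Assumption \ref{assume1} with the parabolic one-point control \eqref{globalParabola} to tame the endpoint and boundary data, and then deducing the discrete lower bound from its Brownian counterpart via Lemma \ref{Spread}. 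Second, on the coupled probability space the indicators $\mathbf{1}_{A_N}$ and the Brownian avoidance indicator $\mathbf{1}_E$ of Definition \ref{DefAvoidingLaw} must agree outside a set of vanishing probability; the sup-norm bound \eqref{KMTeq} together with Chebyshev gives discrepancy $o(N^{\gamma/2})$ with high probability, becoming $o(1)$ after the $N^{-\gamma/2}$ rescaling, and Lemma \ref{NoTouch} applied to the limiting Brownian bridges excludes the pathology where Brownian avoidance fails via a tangential touch. Together these give convergence of numerator and denominator, identifying the limiting conditional law and establishing \eqref{PBGPTower}, hence the partial Brownian Gibbs property for $\mathcal{L}^\infty$.
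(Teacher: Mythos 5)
Your overall strategy matches the paper's: pass to a subsequence, apply Skorohod to get a.s.\ uniform convergence, use the discrete $H^{\mathrm{RW}}$-Gibbs property plus a KMT-based weak convergence of rescaled avoiding bridges to identify the limiting conditional law, and handle the boundary of the avoidance event via Lemma~\ref{NoTouch}. The ingredients you list for the convergence of the avoiding measures are essentially the content of the paper's Lemma~\ref{ScaledWeakConv}. However, there is a genuine gap in how you propose to launch this machinery.

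Your argument requires the limiting avoiding measure $\mathbb{P}^{a,b,\vec{x}^\infty,\vec{y}^\infty,f,g}_{\mathrm{avoid}}$, with $\vec{x}^\infty=(\mathcal{L}_i^\infty(a))_{i\in K}$ and $\vec{y}^\infty=(\mathcal{L}_i^\infty(b))_{i\in K}$, to be well-defined, and by Definition~\ref{DefAvoidingLaw} this forces $\vec{x}^\infty,\vec{y}^\infty$ to lie in the \emph{open} Weyl chamber $W_{k_2-k_1+1}^\circ$, i.e.\ the one-point marginals of $\mathcal{L}^\infty$ must be a.s.\ strictly decreasing. This is not automatic: the discrete curves can touch, and touching can persist to the limit. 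If $\mathcal{L}_i^\infty(a)=\mathcal{L}_{i+1}^\infty(a)$ on an event of positive probability, then on that event the Brownian avoidance probability vanishes (the difference of two independent bridges started at equal heights is a scaled Brownian bridge from $0$, which dips negative immediately by the Blumenthal 0--1 law), and the discrete acceptance probabilities $Z_N\to 0$; so your proposed ``uniform positive lower bound on $Z_N$ on high-probability events'' simply cannot hold. Moreover your remark that ``strict non-crossing follows a posteriori from Lemma~\ref{NoTouch} once the Brownian resampling identity is in hand'' is circular: the resampling identity can neither be stated nor proved until one knows the endpoint data are a.s.\ distinct. The paper handles this with a separate preparatory result, Lemma~\ref{DistinctPts}, whose proof is nontrivial: it feeds the KMT-coupled difference-of-two-bridges limit (giving a scaled Brownian bridge from $0$, hence vanishing avoidance probability) against the ensemble-level acceptance probability bound Proposition~\ref{APProp}, which shows $\{Z_N<\delta\}$ has small probability — and Proposition~\ref{APProp} is itself proved by an argument on the full $H^{\mathrm{RW}}$-Gibbsian ensemble over a larger time window, not by ``deducing the discrete lower bound from its Brownian counterpart via Lemma~\ref{Spread}'' (there is no positive Brownian counterpart on the offending event). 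You must prove a.s.\ distinctness of $\mathcal{L}^\infty(a)$ and $\mathcal{L}^\infty(b)$ before your coupling argument for the avoiding measures can start, and this is a substantial missing piece.

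A secondary, smaller point: after establishing the split identity for bounded continuous test functions $F$ and for conditioning on finitely many marginals outside $K\times(a,b)$, one still needs to extend to arbitrary bounded Borel $F$ and to the full $\sigma$-algebra $\mathcal{F}_{\mathrm{ext}}(K\times(a,b))$. The paper does this with a monotone class argument in $F$ and a $\pi$-$\lambda$ argument in the conditioning set (Step~2 of its proof, relying on the measurability fact from \cite[Lemma 3.4]{DimMat}). Your ``density argument over rationals'' addresses a different and less pressing concern; the measure-theoretic upgrade is where the work actually is.
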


\subsection{Proof of Theorems \ref{mainthm} and \ref{airythm}}\label{Section2.4}
	
	With Theorems \ref{GoodTight} and \ref{GoodBGP} stated above, we are ready to give the proof of Theorems \ref{mainthm} and \ref{airythm}. The primary difference between the statement of Theorem \ref{mainthm} and the combined statements of the previous two theorems is that the former deals with a sequence of line ensembles with an increasing number of curves, while in the latter the number of curves is fixed. As it suffices to show that each curve individually forms a tight sequence, the extension to the case of an increasing number of curves is fairly immediate. We now give the argument.
	
	\begin{proof}[Proof of Theorem \ref{mainthm}]
		
		We fix the same notation as in Section \ref{Section1.2}. We prove the two statements of the theorems in separate steps.\\
		
		\noindent\textbf{Step 1.} We first prove that the sequence $(f^N)_{N\geq 1}$, or equivalently $(\mathbb{P}_N)_{N\geq 1}$, is tight. By Lemma \ref{2Tight}, it suffices to prove for each $k\in\mathbb{N}$ that
		\begin{enumerate}[label=(\roman*)]
			\item $\lim_{a\to\infty} \limsup_{N\to\infty}\, \pr(|f^N_k(0)|\geq a) = 0$;
			\item For all $\epsilon>0$ and $m\in\mathbb{N}$, $\lim_{\delta\to 0} \limsup_{N\to\infty}\, \pr\bigg(\sup_{\substack{x,y\in [-m,m], \\ |x-y|\leq\delta}} |f^N_k(x) - f^N_k(y)| \geq \epsilon\bigg) = 0.$
		\end{enumerate}
		Set $T_N = \min(-a_N,b_N)$, and for $N \geq k+1$ let $\tilde{\mathfrak{L}}^N = (\tilde{L}_1^N, \dots, \tilde{L}_{k+1}^N)$ denote the $\llbracket 1, k+1\rrbracket$-indexed line ensemble obtained by restricting $\mathfrak{L}^N$ to the set $\llbracket 1,k+1\rrbracket\times\llbracket -T_N,T_N\rrbracket$. Since $\mathfrak{L}^N$ satisfies the $H^{\mathrm{RW}}$-Gibbs property, so does $\tilde{\mathfrak{L}}^N$, and Assumptions 1 and 2 in Section \ref{Section1.2} imply that $(\tilde{\mathfrak{L}}^N)_{N\geq k+1}$ is a $(\gamma,p,\lambda, \theta)$-good sequence in the sense of Definition \ref{gooddef}. In particular, for the function $\phi$ appearing in this definition, we take $\phi(\epsilon) = \varphi(\epsilon/2)$, where the function $\varphi$ is afforded by Assumption 2. Thus 
		\[
		\sup_{n\in\mathbb{Z}}\limsup_{N\to\infty}\mathbb{P} \left( \left|N^{-\gamma/2}\left(L_1^N(\lfloor nN^\gamma\rfloor) - p\lfloor nN^\gamma\rfloor\right) + \lambda n^2 \right| \geq \phi(\epsilon) + |n|^\theta \right) \leq \epsilon/2.
		\] 
		This allows us to choose, for each $n\in\mathbb{Z}$ and $\epsilon>0$, a $\Phi(n,\epsilon)\geq k+1$ such that 
		\[
		\mathbb{P} \left( \left|N^{-\gamma/2}\left(L_1^N(\lfloor nN^\gamma\rfloor) - p\lfloor nN^\gamma\rfloor\right) + \lambda n^2 \right| \geq \phi(\epsilon) + |n|^\theta \right) \leq \epsilon
		\] 
		for all $N\geq\Phi(n,\epsilon)$; this provides the function $\Phi$ appearing in Definition \ref{gooddef}. Now Theorem \ref{GoodTight} implies that the random variables $(\tilde{f}_i^N)_{i=1}^k$, defined as in the theorem statement for the line ensembles $\tilde{\mathfrak{L}}^N$, form a tight sequence in $(C(\llbracket 1,k\rrbracket\times\mathbb{R}), \mathcal{C}^{\llbracket 1,k\rrbracket})$. Note that for each $N$, $\tilde{f}_k^N$ has the same law as $f_k^N$ defined in Section \ref{Section1.2}. Since projection maps are continuous, the sequences $\tilde{f}_k^N(0)$ and $\tilde{f}_k^N|_{[-m,m]}$ are tight in $\mathbb{R}$ and $C[-m,m]$ respectively, implying conditions (i) and (ii) above. We conclude that the full sequence $(f^N)_{N\geq 1}$ is tight.\\
		
		\noindent\textbf{Step 2.} We now fix a weak subsequential limit $f^\infty$ of $f^N$ and show that $\mathcal{L}^\infty := \sigma^{-1}f^\infty$ possesses the Brownian Gibbs property. Without loss of generality, we can assume that $f^N \implies f^\infty$. It suffices to show that $\mathcal{L}^\infty$ is almost surely non-intersecting, and that if $a,b\in\mathbb{R}$ with $a<b$ and $K = \llbracket k_1,k_2\rrbracket\subset\mathbb{N}$ are fixed, then for any bounded Borel function $F : C(K\times[a,b])\to\mathbb{R}$, we have a.s.
		\begin{equation}\label{BGPmain}
		\ex\left[ F(\mathcal{L}^\infty |_{K\times(a,b)}) \, | \, \mathcal{F}_{\mathrm{ext}}(K\times(a,b))\right] = \ex^{a,b,\vec{x},\vec{y},f,g}_{\mathrm{avoid}}\big[F(\tilde{\mathcal{Q}})\big].
		\end{equation}
		Here our notation is as in Definition \ref{DefBGP}. Fix $k\geq k_2 + 1$. Then since the projection map $f^N \mapsto f^N|_{\llbracket 1,k\rrbracket\times\mathbb{R}}$ is continuous, $ f^N|_{\llbracket 1,k\rrbracket\times\mathbb{R}}$ converges weakly to $f^\infty|_{\llbracket 1,k\rrbracket\times\mathbb{R}}$ in $C(\llbracket 1,k\rrbracket\times\mathbb{R})$. Let $(\tilde{f}_i^N)_{i=1}^k$ be as in Step 1. By construction, the restriction of $(\tilde{f}_i^N)_{i=1}^k$ to the interval $[-\psi(N),\psi(N)]$ has the same law as the restriction of $f^N|_{\llbracket 1,k\rrbracket\times\mathbb{R}}$ to this interval. Since $\psi(N)\to\infty$ and $f^N|_{\llbracket 1,k\rrbracket\times\mathbb{R}} \implies f^\infty|_{\llbracket 1,k\rrbracket\times\mathbb{R}}$ with respect to uniform convergence over compact sets, it follows that $(\tilde{f}_i^N)_{i=1}^k$ converges weakly to $f^\infty|_{\llbracket 1,k\rrbracket\times\mathbb{R}}$ as $N\to\infty$. Theorem \ref{GoodBGP} now implies that $\mathcal{L}^\infty|_{\llbracket 1,k\rrbracket\times\mathbb{R}} = \sigma^{-1} f^\infty|_{\llbracket 1,k\rrbracket\times\mathbb{R}}$ possesses the partial Brownian Gibbs property. Then in particular $\mathcal{L}^\infty|_{\llbracket 1,k\rrbracket\times\mathbb{R}}$ is a.s. non-intersecting, and since $k\geq k_2+1$ was arbitrary it follows that $\mathcal{L}^\infty$ is a.s. non-intersecting. Moreover, we have almost surely
		\begin{equation}\label{BGPk}
			\ex\big[ F(\mathcal{L}^\infty|_{K\times(a,b)})\, |\, \tilde{\mathcal{F}}_{\mathrm{ext}}(K\times(a,b))\big] = \ex_{\mathrm{avoid}}^{a,b,\vec{x},\vec{y},f,g}\big[F(\tilde{\mathcal{Q}})\big],
		\end{equation}
		where
		\[
		\tilde{\mathcal{F}}_{\mathrm{ext}}(K\times(a,b)) = \sigma\big(f_i^\infty(s) : (i,s) \in (\llbracket 1,k\rrbracket \times \mathbb{R})\setminus(K\times(a,b))\big).
		\]
		Now define the $\pi$-system $\mathcal{A}$ of sets $A$ of the form
		\[
		A = \{f^\infty_{i_r}(x_r) \in B_r \mbox{ for } r\in\llbracket 1,m\rrbracket\},
		\]
		where $m\in\mathbb{N}$, $B_r\in\mathcal{B}(\mathbb{R})$, and $(i_r,x_r)\notin K\times(a,b)$. Since $k\geq k_2+1$ was arbitrary in \eqref{BGPk}, for any fixed $A\in\mathcal{A}$ we can take $k\geq m$, so that $A\in\tilde{\mathcal{F}}_{\mathrm{ext}}(K\times(a,b))$. It then follows that
		\[
		\ex\left[F(\mathcal{L}^\infty|_{K\times(a,b)})\,\mathbf{1}_A\right] = \ex\left[\ex_{\mathrm{avoid}}^{a,b,\vec{x},\vec{y},f,g}\big[F(\tilde{\mathcal{Q}})\big]\,\mathbf{1}_A\right].
		\]
		Let $\mathcal{B}$ denote the collection of sets $A$ satisfying the above equation. Then $\mathcal{B}$ is a $\lambda$-system. Indeed, \eqref{BGPk} implies that the equation holds with $\mathbf{1}_A = 1$, and since $\mathbf{1}_{A^c} = 1-\mathbf{1}_A$ we see that $\mathcal{B}$ is closed under complementation. Lastly if $A_1\subseteq A_2\subseteq\cdots$ are elements of $\mathcal{B}$ and $A = \bigcup_{i=1}^\infty A_i$, then $\mathbf{1}_{A_i}\nearrow\mathbf{1}_A$, so the monotone convergence theorem implies that $A\in\mathcal{B}$. Since $\mathcal{B}$ contains the $\pi$-system $\mathcal{A}$, it follows from the $\pi$-$\lambda$ theorem \cite[Theorem 2.1.6]{Durrett} that $\mathcal{B}$ contains $\sigma(\mathcal{A}) = \mathcal{F}_{\mathrm{ext}}(K\times(a,b))$. It is proven in \cite[Lemma 3.4]{DimMat} that the right hand side of \eqref{BGPmain} is $\mathcal{F}_{\mathrm{ext}}(K\times(a,b))$-measurable, and so \eqref{BGPmain} follows from the definition of conditional expectation. This completes the proof.
		
	\end{proof} 

We now turn to the proof of Theorem \ref{airythm}. We recall from Section \ref{Section1.2} that Assumption \ref{a2'} implies Assumption \ref{a2}, so that Theorem \ref{mainthm} may be applied in the proof. 

	\begin{proof}[Proof of Theorem \ref{airythm}]
		
		In view of Theorem \ref{mainthm}, we know that $(\mathcal{L}^N)_{N\in\mathbb{N}}$ is a tight sequence. Let $\mathcal{K}^\infty$ denote any weak subsequential limit of this sequence. We will show that $\mathcal{K}^\infty$ has the same law as the line ensemble $\mathcal{L}^\infty$ defined in the theorem statement. Thus $\mathcal{L}^N$ has at most one possible subsequential limit, $\mathcal{L}^\infty$, and in combination with tightness this implies that $\mathcal{L}^N \implies \mathcal{L}^\infty$.
		
		Let $(\mathcal{L}^{N_\ell})_{\ell\in\mathbb{N}}$ be a subsequence converging weakly to $\mathcal{K}^\infty$. By the Skorohod representation theorem (using Lemma 2.3), by moving to another probability space we may assume that $\mathbb{P}$-almost surely, $\mathcal{L}^{N_\ell}\to\mathcal{K}^\infty$ uniformly on compact sets as $\ell\to\infty$. Note that we may rewrite Assumption 2' in terms of the random variables $\mathcal{L}^N = \sigma_p^{-1}f^N$ as the statement that for any $k\in\mathbb{N}$ and $t_1,\dots,t_k,x_1,\dots,x_k\in\mathbb{R}$, we have
		\begin{equation}\label{lim1}
		\lim_{N\to\infty}\mathbb{P}\left(\mathcal{L}_1^N(\lfloor t_i N^\gamma\rfloor N^{-\gamma}) \leq x_i \mbox{ for } i\in\llbracket 1,k\rrbracket\right) = \mathbb{P}\left(\mathcal{L}^\infty_1(t_i)\leq x_i \mbox{ for } i\in\llbracket 1,k\rrbracket\right).
		\end{equation}
		Now $\lfloor t_i N^\gamma\rfloor N^{-\gamma} \to t_i$ as $N\to\infty$ for each $i$, and because the sequence of continuous functions $\mathcal{L}^{N_\ell}$ converges uniformly to $\mathcal{K}^\infty$ a.s. it follows that $\mathcal{L}_1^{N_\ell}(\lfloor t_i N^\gamma\rfloor N^{-\gamma}) \to \mathcal{K}^\infty(t_i)$, $\mathbb{P}$-a.s. as $\ell\to\infty$. This implies that
		\begin{equation}\label{lim2}
		\lim_{\ell\to\infty}\mathbb{P}\left(\mathcal{L}_1^{N_\ell}(\lfloor t_i N^\gamma\rfloor N^{-\gamma})\leq x_i \mbox{ for } i\in\llbracket 1,k\rrbracket\right) = \mathbb{P}\left(\mathcal{K}^\infty_1(t_i)\leq x_i \mbox{ for } i\in\llbracket 1,k\rrbracket\right).
		\end{equation}
		Taking the limit in \eqref{lim1} along the subsequence $(N_\ell)$ and combining with \eqref{lim2}, we see that the top curves $\mathcal{K}^\infty_1$ and  $\mathcal{L}^\infty_1$ have the same finite-dimensional distributions.
		
		Moreover, by Theorem \ref{mainthm}, $\mathcal{K}^\infty$ satisfies the Brownian Gibbs property. By \cite[Theorem 3.1]{CH14}, $\mathcal{L}^{\mathrm{Ai}}$ satisfies the Brownian Gibbs property, and since $\mathcal{L}_i^\infty(x) = c^{-1/2}\mathcal{L}_i^{\mathrm{Ai}}(cx)$, we see that $\mathcal{L}^\infty$ satisfies the property as well. (This follows from the fact that if $W_t$ is a standard Brownian motion then so is $c^{-1/2}W_{ct}$; we refer to \cite[Lemma 3.5]{DimMat} for a proof of a similar result.) In summary, $\mathcal{L}^\infty$ and $\mathcal{K}^\infty$ both satisfy the Brownian Gibbs property, and their top curves have the same finite-dimensional distributions. It follows from \cite[Theorem 1.1]{DimMat} that $\mathcal{L}^\infty$ and $\mathcal{K}^\infty$ have the same law. This proves the theorem.
		
	\end{proof}

%-------------------------------------------------------------------------------------------------------------------------------------------------------------------------------------------------
% Section 3
%
%-------------------------------------------------------------------------------------------------------------------------------------------------------------------------------------------------
\section{Properties of discrete line ensembles}\label{Section3}

In this section, we prove several results for discrete line ensembles which will be used in the proof of Theorems \ref{GoodTight} and \ref{GoodBGP} in Sections \ref{Section4} and \ref{Section5}. Throughout this section, we fix $\alpha,\beta\in\mathbb{Z}\cup\{\pm\infty\}$ with $\alpha < \beta$ and an $\llbracket \alpha,\beta \rrbracket$-supported random walk Hamiltonian $H^{\mathrm{RW}}$ satisfying Assumptions \ref{assume1} and \ref{KMT}.

\subsection{Monotone coupling lemma}\label{Section3.1}

The following lemma provides a coupling of the non-crossing $H^{\mathrm{RW}}$ line ensemble measures of Definition \ref{HRWavoiddef} which is monotone in the entry and exit data and the values of the lower bounding curve. Roughly, it states that if we raise the entry and exit data or the lower bounding curve of a non-crossing line ensemble, then the curves of the ensemble will be stochastically shifted upwards on their entire interval of definition. We note that the proof requires only Assumption \ref{assume1} on the Hamiltonian $H^{\mathrm{RW}}$. The proof of the lemma is given in the \hyperref[Appendix]{Appendix}.

\begin{lemma}\label{MCL}
	Fix $k \in \mathbb{N}$, $T_0, T_1 \in \mathbb{Z}$ with $T_0 < T_1$, $S\subseteq\llbracket T_0, T_1\rrbracket$, and two functions $g^{\mathrm{b}}, g^{\mathrm{t}}: \llbracket T_0, T_1 \rrbracket  \rightarrow [-\infty, \infty)$ with $g^{\mathrm{b}}\leq g^{\mathrm{t}}$ on $S$. Also fix $\vec{x}, \vec{y}, \vec{x}\,', \vec{y}\,' \in \mathfrak{W}_k$ such that $x_i\leq x_i'$, $y_i\leq y_i'$ for $i\in\llbracket 1,k\rrbracket$. Assume that $\Omega_{\mathrm{avoid}}(T_0, T_1, \vec{x}, \vec{y}, \infty,g^{\mathrm{b}}; S)$ and $\Omega_{\mathrm{avoid}}(T_0, T_1, \vec{x}\,', \vec{y}\,', \infty,g^{\mathrm{t}}; S)$ are both non-empty. Then there exists a probability space $(\Omega, \mathcal{F}, \mathbb{P})$, which supports two $\llbracket 1, k \rrbracket$-indexed $H^{\mathrm{RW}}$ line ensembles $\mathfrak{L}^{\mathrm{b}}$ and $\mathfrak{L}^{\mathrm{t}}$ on $\llbracket T_0, T_1 \rrbracket$ such that the law of $\mathfrak{L}^{\mathrm{b}}$ {\big (}resp. $\mathfrak{L}^{\mathrm{t}}${\big )} under $\mathbb{P}$ is given by $\mathbb{P}_{\mathrm{avoid}, H^{\mathrm{RW}}; S}^{T_0, T_1, \vec{x}, \vec{y}, \infty, g^{\mathrm{b}}}$ {\big (}resp. $\mathbb{P}_{\mathrm{avoid}, H^{\mathrm{RW}}; S}^{T_0, T_1, \vec{x}\,', \vec{y}\,', \infty, g^{\mathrm{t}}}${\big )} and such that $\mathbb{P}$-almost surely we have $\mathfrak{L}_i^{\mathrm{b}}(r) \leq \mathfrak{L}^{\mathrm{t}}_i(r)$ for all $i \in \llbracket 1, k\rrbracket$ and $r \in \llbracket T_0, T_1 \rrbracket$.
\end{lemma}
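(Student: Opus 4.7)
My plan is to prove Lemma \ref{MCL} via a Markov chain coupling argument using heat-bath (single-site Gibbs) dynamics. On each of the state spaces $\Omega_{\mathrm{avoid}}(T_0, T_1, \vec{x}, \vec{y}, \infty, g^{\mathrm{b}}; S)$ and $\Omega_{\mathrm{avoid}}(T_0, T_1, \vec{x}\,', \vec{y}\,', \infty, g^{\mathrm{t}}; S)$, I will build a Markov chain as follows. At each step, pick a pair $(i, t) \in \llbracket 1, k \rrbracket \times \llbracket T_0 + 1, T_1 - 1\rrbracket$ uniformly at random, and resample the value of $L_i(t)$ from its conditional distribution given all other values (with the avoidance and bounding constraints preserved). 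By inspection the two measures in the statement are stationary for these respective chains, and a standard irreducibility argument (moves can be chained to interpolate between any two allowed configurations by local pushes, using that jumps of any size in $\llbracket\alpha,\beta\rrbracket$ have positive probability) combined with finiteness of the state spaces implies convergence to the stationary laws from arbitrary initial configurations.

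The coupling is built using shared randomness: at each step the two chains update the same site $(i,t)$ and resample using a common uniform random variable fed into the inverse CDF of each conditional distribution. To carry this through I need to show the monotonicity step: if $\mathfrak{L}^{\mathrm{b}}(s) \leq \mathfrak{L}^{\mathrm{t}}(s)$ coordinatewise prior to the update, then the conditional law of $L^{\mathrm{t}}_i(t)$ given its neighbors stochastically dominates that of $L^{\mathrm{b}}_i(t)$, after which inverse-CDF coupling preserves the pointwise ordering. The support of the conditional law of $L_i(t)$ is an integer interval whose endpoints are nondecreasing functions of the four neighboring values $L_{i-1}(t), L_{i+1}(t), L_i(t-1), L_i(t+1)$ (and of $g(t)$ when $i=k$), so raising the neighbors shifts the support to the right. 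On this support the unnormalized mass at $z$ is $\exp(-H^{\mathrm{RW}}(z - L_i(t-1)) - H^{\mathrm{RW}}(L_i(t+1) - z))$.

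The key technical step, and the main obstacle, is verifying the monotone likelihood ratio (MLR) property of this conditional distribution under upward shifts of the temporal neighbors. Concretely, for a unit upward shift in $L_i(t-1)$ the ratio of new to old mass at $z$ equals $\exp\bigl(H^{\mathrm{RW}}(z - L_i(t-1)) - H^{\mathrm{RW}}(z - L_i(t-1) - 1)\bigr)$, and this is a nondecreasing function of $z$ precisely because the discrete first difference of $H^{\mathrm{RW}}$ is nondecreasing, which is Assumption \ref{assume1}. The symmetric calculation handles upward shifts in $L_i(t+1)$, and the shifts of $L_{i\pm 1}(t)$ (and of $g(t)$) only widen or translate the support upward, which is easily accommodated by the same MLR / inverse-CDF argument. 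Together these imply stochastic monotonicity of each single-site update.

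To finish, I initialize the two coupled chains at ordered configurations: such a pair exists because $\Omega_{\mathrm{avoid}}(T_0, T_1, \vec{x}, \vec{y}, \infty,g^{\mathrm{b}}; S)$ is nonempty by hypothesis and one can add a large constant to obtain an element of $\Omega_{\mathrm{avoid}}(T_0, T_1, \vec{x}\,', \vec{y}\,', \infty, g^{\mathrm{t}}; S)$ pointwise above it (if needed, one then lowers it to the endpoint data $\vec{x}\,', \vec{y}\,'$ by running the top chain alone, which stays above the fixed bottom configuration by the same MLR reasoning). Run the coupled dynamics to time $n$, apply the stochastic monotonicity inductively to conclude that the ordering is preserved at all times, and take $n\to\infty$. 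By convergence of each marginal chain to its stationary law plus tightness (on the finite state space this is automatic), we pass to a subsequential joint limit whose marginals are the two desired avoiding laws and whose support lies in the closed set $\{(\mathfrak{L}^{\mathrm{b}}, \mathfrak{L}^{\mathrm{t}}) : \mathfrak{L}^{\mathrm{b}} \leq \mathfrak{L}^{\mathrm{t}} \text{ pointwise}\}$, delivering the monotone coupling.
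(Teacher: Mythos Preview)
Your approach is essentially the same as the paper's: both construct a monotone coupling via Markov chain dynamics whose stationary laws are the two avoiding measures, with the convexity of $H^{\mathrm{RW}}$ (Assumption~\ref{assume1}) being the key ingredient for preserving the ordering under updates. The paper uses Metropolis dynamics (propose $L_i(t)\mapsto L_i(t)\pm 1$ and accept with probability $\min(1,R)$, where $R$ is the ratio of $\mathbb{P}_{H^{\mathrm{RW}}}$-weights), while you use heat-bath dynamics with inverse-CDF coupling; these are equivalent in spirit, and your MLR computation is exactly the discrete analogue of the paper's ratio comparison $R^{(i,t,+1)}_{Y^n}\geq R^{(i,t,+1)}_{X^n}$.

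There is, however, a genuine gap in your initialization. Adding a large constant to an element of $\Omega_{\mathrm{avoid}}(T_0,T_1,\vec{x},\vec{y},\infty,g^{\mathrm{b}};S)$ does \emph{not} yield an element of $\Omega_{\mathrm{avoid}}(T_0,T_1,\vec{x}\,',\vec{y}\,',\infty,g^{\mathrm{t}};S)$, because the endpoints become $\vec{x}+C,\vec{y}+C$ rather than $\vec{x}\,',\vec{y}\,'$. Your parenthetical fix (``run the top chain alone'' to lower the endpoints) does not make sense as stated, since the chain is only defined on the state space $\Omega_{\mathrm{avoid}}(T_0,T_1,\vec{x}\,',\vec{y}\,',\infty,g^{\mathrm{t}};S)$ and heat-bath updates never touch the endpoints. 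The paper resolves this cleanly by initializing each chain at the \emph{maximal} element of its own state space, namely $X_i^0(t)=\min(x_i+\beta(t-T_0),\,y_i)$ and $Y_i^0(t)=\min(x_i'+\beta(t-T_0),\,y_i')$; since $x_i\leq x_i'$ and $y_i\leq y_i'$, these are automatically ordered, and maximality ensures they lie above $g^{\mathrm{b}},g^{\mathrm{t}}$ respectively (as they dominate any valid configuration). Using this initialization, your argument goes through. A secondary remark: your appeal to ``finiteness of the state spaces'' requires at least one of $\alpha,\beta$ to be finite; otherwise one should invoke positive recurrence on a countable space, as the paper implicitly does.
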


%-------------------------------------------------------------------------------------------------------------------------------------------------------------------------------------------------
% Section 3.2
%
%-------------------------------------------------------------------------------------------------------------------------------------------------------------------------------------------------
\subsection{Properties of random walk bridges}\label{Section3.2}

We next prove several results of random walk bridges with laws $\mathbb{P}^{T_0,T_1,x,y}_{H^{\mathrm{RW}}}$ (see Definition \ref{hamdef}), as well as Brownian bridges. The first result we state is an immediate consequence of Assumption \ref{KMT} that allows us to directly compare random walk bridges to Brownian bridges. 

\begin{corollary}\label{Cheb}
	Fix $p\in(\alpha,\beta)$ and $\gamma, M > 0$. Suppose $|z-pn| \leq K\sqrt{n}$ for some constant $K > 0$. Then for any $\epsilon > 0$, there exists $N\in\mathbb{N}$ depending on $M,K,p,\epsilon,\gamma$ so that for $n\geq N$ we have
	\[
	\mathbb{P}\left(\Delta(n,z) \geq Mn^\gamma\right) < \epsilon,
	\]
	where $\Delta(n,z)$ is defined in \eqref{KMTeq}.
\end{corollary}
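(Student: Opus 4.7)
The plan is a direct application of the exponential Markov (Chebyshev) inequality together with the KMT coupling bound from Assumption \ref{KMT}. Since $\Delta(n,z)$ is nonnegative and has a controlled exponential moment, this is essentially a two-line calculation once one tracks the constants.

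Concretely, first I would invoke Assumption \ref{KMT} to extract constants $C, A, a \in (0,\infty)$ depending on $p$ and $H^{\mathrm{RW}}$ such that
\[
\mathbb{E}\bigl[ e^{a \Delta(n,z)} \bigr] \leq C e^{A \log n}\, e^{|z-pn|^2/n}.
\]
Under the hypothesis $|z-pn| \leq K\sqrt{n}$, the last factor is bounded by $e^{K^2}$, so
\[
\mathbb{E}\bigl[ e^{a\Delta(n,z)}\bigr] \leq C e^{K^2} n^A.
\]

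Next, for any $M > 0$ I would apply Markov's inequality to the random variable $e^{a\Delta(n,z)}$:
\[
\mathbb{P}\bigl(\Delta(n,z) \geq Mn^\gamma\bigr) = \mathbb{P}\bigl(e^{a\Delta(n,z)} \geq e^{aMn^\gamma}\bigr) \leq e^{-aMn^\gamma}\,\mathbb{E}\bigl[e^{a\Delta(n,z)}\bigr] \leq C e^{K^2} n^A e^{-aMn^\gamma}.
\]
Finally, since $\gamma > 0$, the exponential factor $e^{-aMn^\gamma}$ decays faster than the polynomial factor $n^A$ grows (as $n^\gamma$ dominates $\log n$), so the right-hand side tends to $0$ as $n\to\infty$. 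Hence, given $\epsilon > 0$, there exists $N = N(M,K,p,\epsilon)$ such that for all $n \geq N$, the right-hand side is less than $\epsilon$, which gives the desired bound.

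There is no real obstacle here; the only point requiring any care is verifying that the constants $C, A, a$ from Assumption \ref{KMT} do not depend on $z$ or $n$, so that the $N$ produced at the end depends only on $M, K, p, \epsilon$ (through the fixed constants $C, A, a$ and the parameter $K$). Since those constants depend solely on $p$ and $H^{\mathrm{RW}}$, this is immediate.
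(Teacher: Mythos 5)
Your proposal is correct and follows exactly the same route as the paper: apply the exponential Markov inequality to $e^{a\Delta(n,z)}$, bound $e^{|z-pn|^2/n}$ by $e^{K^2}$ using the hypothesis, and observe that the exponential $e^{-aMn^\gamma}$ dominates the polynomial $n^A$ as $n \to \infty$. No differences worth noting.
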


\begin{proof}
	Using Chebyshev's inequality with \eqref{KMTeq}, we find 
	\begin{align*}
	\mathbb{P}\left(\Delta(n,z) \geq Mn^\gamma\right) &\leq e^{-aMn^\gamma}\ex\left[e^{a\Delta(n,z)}\right] \leq C\exp\left[-aMn^\gamma + A\log n + \frac{|z-pn|^2}{n}\right]\\
	&\leq C\exp\left[-aMn^\gamma + A\log n + K^2\right],
	\end{align*}
	where the constants $0 < C,A,a < \infty$ depend on $p$. The expression in the last line tends to 0 as $n\to\infty$, and the conclusion follows.
\end{proof}

The next result gives exact formulas for the distribution of the maximum of a Brownian bridge and the maximum absolute value of a Brownian bridge with prescribed variance. The proof is an elementary computation following from the reflection principle; see \cite[Propositions 12.3.3 and 12.3.4]{Dudley} and \cite[Lemma 3.6]{Ber}.

\begin{lemma}\label{BBmax}
	Let $B^\sigma$ be a Brownian bridge of variance $\sigma^2$ on $[0,1]$. Then for any $C,T > 0$, we have
	\begin{align}
	\mathbb{P}\left(\max_{s\in[0,T]} B^\sigma_{s/T} \geq C\right) &= \exp\left(-2C^2/\sigma^2\right),\label{maxDist}\\
	\mathbb{P}\left(\max_{s\in[0,T]} |B^\sigma_{s/T}| \geq C\right) &= 2\sum_{n=1}^\infty (-1)^{n-1}\exp\left(-2n^2C^2/\sigma^2\right).\label{maxAbsDist}
	\end{align}
	In particular,
	\begin{equation}\label{maxAbsBd}
	\mathbb{P}\left(\max_{s\in[0,T]} |B^\sigma_{s/T}| \geq C\right) \leq 2\exp\left(-2C^2/\sigma^2\right).
	\end{equation}
\end{lemma}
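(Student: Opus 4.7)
The plan is to reduce everything to the classical distributions of the maximum and maximum absolute value of a standard Brownian bridge on $[0,1]$, and then rescale. First I would observe that the substitution $u = s/T$ gives
$$\max_{s\in[0,T]} B^\sigma_{s/T} = \max_{u\in[0,1]} B^\sigma_u,$$
and similarly with absolute values, so the parameter $T$ is cosmetic. Since the process $(B^\sigma_u)_{u\in[0,1]}$ has the same law as $(\sigma \tilde{B}_u)_{u\in[0,1]}$ for $\tilde{B}$ a standard Brownian bridge, both identities reduce (upon setting $c := C/\sigma$) to the $\sigma = 1$ statements
$$\mathbb{P}\bigl(\max_{u\in[0,1]} \tilde{B}_u \geq c\bigr) = e^{-2c^2}, \qquad \mathbb{P}\bigl(\max_{u\in[0,1]} |\tilde{B}_u| \geq c\bigr) = 2\sum_{n=1}^\infty (-1)^{n-1} e^{-2n^2 c^2}.$$

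For the first formula, the cleanest route is through the reflection principle for Brownian motion. Writing $W$ for standard Brownian motion on $[0,1]$ and $\phi$ for the density of $W_1$, the classical reflection identity yields
$$\mathbb{P}\bigl(\max_{u\in[0,1]} W_u \geq c,\; W_1 \in dy\bigr) = \phi(2c-y)\,dy \quad \text{for } y \leq c.$$
Conditioning on $W_1 = 0$ (which realizes the standard Brownian bridge, consistent with the construction \eqref{BBDef}) and dividing by $\phi(0)$ gives $\mathbb{P}(\max_{u\in[0,1]} \tilde{B}_u \geq c) = \phi(2c)/\phi(0) = e^{-2c^2}$, establishing \eqref{maxDist} after undoing the scaling.

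For the second formula, I would apply iterated reflection across the two barriers $\pm c$. Partitioning the event $\{\max|\tilde{B}|\geq c\}$ according to the alternating sequence of barriers hit and applying inclusion--exclusion, an $n$-fold reflection argument for each block contributes a term $e^{-2n^2 c^2}$ with alternating sign, while the two initial choices of barrier account for the prefactor $2$. Summing yields \eqref{maxAbsDist}. The bound \eqref{maxAbsBd} then follows immediately from the union bound
$$\mathbb{P}\bigl(\max_{u\in[0,1]} |\tilde{B}_u| \geq c\bigr) \leq \mathbb{P}\bigl(\max_{u\in[0,1]} \tilde{B}_u \geq c\bigr) + \mathbb{P}\bigl(\max_{u\in[0,1]} (-\tilde{B}_u) \geq c\bigr) = 2e^{-2c^2},$$
using that $-\tilde{B}$ is again a standard Brownian bridge and invoking \eqref{maxDist}.

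The main obstacle is the iterated-reflection and inclusion--exclusion bookkeeping for \eqref{maxAbsDist}: one must verify carefully that the signs alternate correctly and that the resulting series converges absolutely. Since this is a classical computation carried out in full in \cite[Propositions 12.3.3 and 12.3.4]{Dudley}, I would cite those references rather than reproduce the argument in detail.
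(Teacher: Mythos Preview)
Your proposal is correct and matches the paper's approach exactly: the paper does not give a detailed proof but simply remarks that the result is an elementary computation from the reflection principle and cites \cite[Propositions 12.3.3 and 12.3.4]{Dudley} and \cite[Lemma 3.6]{Ber}. Your reduction to the standard bridge via scaling, the reflection-principle derivation of \eqref{maxDist}, the iterated-reflection sketch for \eqref{maxAbsDist} with citation to Dudley, and the union-bound derivation of \eqref{maxAbsBd} are precisely what the paper has in mind.
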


We state one more lemma about Brownian bridges, which says essentially that a bridge on $[0,1]$ can be decomposed into two independent bridges with appropriate Gaussian affine shifts meeting at a given point in $(0,1)$. This lemma can be proven by computing the covariance of the centered Gaussian process $\tilde{B}^\sigma$ and using the fact that a Brownian bridge is characterized by its covariance. We refer to \cite[Lemma 3.7]{Ber} for the details of this argument.

\begin{lemma}\label{2bridges}
	Fix $T>0$, $t\in(0,T)$, and let $\xi$ be a Gaussian random variable with mean 0 and variance
	\[
	\ex[\xi^2] = \sigma^2\frac{t}{T}\left(1-\frac{t}{T}\right).
	\]
	Let $B^1,B^2$ be two independent Brownian bridges on $[0,1]$ with variances $\sigma^2 t/T$ and $\sigma^2(T-t)/T$ respectively. Define the process
	\[
	\tilde{B}^\sigma_{s/T} = \begin{dcases}
		\frac{s}{t}\,\xi + B^1\Big(\frac{s}{t}\Big), & s\leq t,\\
		\frac{T-s}{T-t}\,\xi + B^2\Big(\frac{s-t}{T-t}\Big), & s\geq t,
	\end{dcases}
	\]
	for $s\in [0,T]$. Then $\tilde{B}^\sigma$ is a Brownian bridge with variance $\sigma$.
\end{lemma}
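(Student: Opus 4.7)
My plan is to identify $\tilde{B}^\sigma$ as a centered Gaussian process on $[0,1]$ (parametrized by $u = s/T$) and verify that it has the covariance of a Brownian bridge of variance $\sigma^2$, namely $\sigma^2(u_1 \wedge u_2 - u_1 u_2)$. Since a centered continuous Gaussian process is characterized by its covariance, this will suffice.

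First I would dispose of the structural properties. Because each slice of $\tilde{B}^\sigma$ is an affine function of the independent centered Gaussian triple $(\xi, B^1, B^2)$, the finite-dimensional distributions of $\tilde{B}^\sigma$ are jointly Gaussian with mean zero. Evaluating the definition at the endpoints gives $\tilde{B}^\sigma_0 = 0\cdot\xi + B^1(0) = 0$ and $\tilde{B}^\sigma_1 = 0\cdot\xi + B^2(1) = 0$, so the bridge endpoint conditions hold. At the gluing point $s = t$ the two formulas both yield $\xi + B^1(1) = \xi = \xi + B^2(0)$, so $\tilde{B}^\sigma$ is a.s.\ continuous on $[0,1]$.

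The heart of the argument is the covariance computation. Fix $0 \leq s_1 \leq s_2 \leq T$ and write $u_i = s_i/T$. I would split into three cases according to the positions of $s_1, s_2$ relative to $t$. When $s_1 \leq t \leq s_2$, the independence of $B^1$ and $B^2$ kills the bridge cross terms, leaving only
\[
\mathrm{Cov}(\tilde{B}^\sigma_{u_1}, \tilde{B}^\sigma_{u_2}) = \frac{s_1}{t}\cdot\frac{T-s_2}{T-t}\cdot\mathrm{Var}(\xi) = \frac{s_1(T-s_2)}{t(T-t)}\cdot\sigma^2\frac{t}{T}\cdot\frac{T-t}{T} = \frac{\sigma^2 s_1(T-s_2)}{T^2},
\]
which is exactly $\sigma^2(u_1 \wedge u_2 - u_1 u_2)$. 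When $s_1 \leq s_2 \leq t$, both the $\xi$ contribution $\frac{s_1 s_2}{t^2}\mathrm{Var}(\xi)$ and the $B^1$ covariance $\sigma^2\frac{t}{T}\cdot\frac{s_1}{t}(1-\frac{s_2}{t})$ appear, and an elementary algebraic collapse (factoring out $\sigma^2 s_1/(tT^2)$ from the sum of the two terms, so that the bracketed expression $s_2(T-t)+T(t-s_2)$ simplifies to $t(T-s_2)$) again yields $\sigma^2 s_1(T-s_2)/T^2$. The case $t \leq s_1 \leq s_2$ is analogous by symmetry, with the roles of $s$ and $T-s$ swapped.

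Once the covariance identity holds pointwise on $[0,1]^2$, it agrees with that of a standard Brownian bridge scaled by $\sigma^2$, so $\tilde{B}^\sigma$ and a variance-$\sigma^2$ Brownian bridge are two continuous centered Gaussian processes with identical finite-dimensional distributions and hence equal in law. The only place requiring care is the same-side covariance calculation, and there the obstacle is purely bookkeeping: it is precisely the choice $\mathrm{Var}(\xi) = \sigma^2\frac{t}{T}(1-\frac{t}{T})$ together with the bridge variances $\sigma^2 t/T$ and $\sigma^2(T-t)/T$ that makes the two contributions recombine into the correct bridge covariance on $[0,T]$, so the proof is essentially the verification that these parameters were chosen consistently.
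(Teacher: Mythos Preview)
Your proposal is correct and follows essentially the same approach the paper indicates: the paper does not give a detailed proof but states that the lemma ``can be proven by computing the covariance of the centered Gaussian process $\tilde{B}^\sigma$ and using the fact that a Brownian bridge is characterized by its covariance,'' referring to \cite[Lemma 3.7]{Ber} for details. Your case-by-case covariance verification is exactly this argument, and the algebra in each case checks out.
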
 

We now give a few estimates for random walk bridges, analogous to those appearing in \cite[Section 3.2]{Ber}. Each of these lemmas are proven using the monotone coupling Lemma \ref{MCL} and strong coupling Lemma \ref{Cheb}. If not explicitly stated, in the following $\ell$ will denote a random variable whose law is whichever measure it appears alongside; the measures may change from line to line, but we use the same letter $\ell$ for simplicity. 

The first lemma gives a lower bound on the probability that a random walk bridge does not fall far below the straight line connecting its endpoints at a fixed interior time.

\begin{lemma}\label{BridgeInterp}
	Fix $p\in(\alpha,\beta)$ and $M_1,M_2\in\mathbb{R}$. Then we can find $T_1\in\mathbb{N}$ depending on $M_2-M_1$ and $p$ so that the following holds for all $T\geq T_1$. If $x,y\in\mathbb{Z}$ satisfy $y-x\in\llbracket\alpha T,\beta T\rrbracket$, $x\geq M_1\sqrt{T}$, and $y \geq pT + M_2\sqrt{T}$, then for all $s\in[0,T]$ we have
	\begin{equation}\label{BridgeInterpEq}
	\mathbb{P}^{0,T,x,y}_{H^{\mathrm{RW}}}\left(\ell(s) \geq \left(1-\frac{s}{T}\right) M_1\sqrt{T} + \frac{s}{T}\left(pT+M_2\sqrt{T}\right) - T^{1/3}\right) \geq \frac{1}{3}.
	\end{equation}
	
\end{lemma}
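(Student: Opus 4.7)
The argument rests on three ingredients: (i) the monotone coupling of Lemma \ref{MCL} to reduce to the smallest admissible endpoints; (ii) the KMT strong coupling (Assumption \ref{KMT}, in the form of Corollary \ref{Cheb}) to compare the random walk bridge with a Brownian bridge up to an error of order $T^{1/3}$; and (iii) the fact that a centered Gaussian at any fixed time is nonnegative with probability at least $1/2$. The constant $1/3$ reflects the gap between this $1/2$ and a probability budget of $1/6$ allotted to the KMT error event. Since the right-hand side of \eqref{BridgeInterpEq} depends only on $M_1, M_2, p, T$ and not on the specific endpoints $x, y$, I apply Lemma \ref{MCL} (with $k = 1$, $S = \emptyset$, and $g^{\mathrm{b}} = g^{\mathrm{t}} = -\infty$) to lower $x$ and $y$ to the minimal admissible values $x_0 := \lceil M_1 \sqrt{T} \rceil$ and $y_0 := \lceil pT + M_2 \sqrt{T} \rceil$. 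For $T$ large enough in terms of $p$ and $M_2 - M_1$, both $x_0, y_0$ correspond to a valid bridge with $x_0 \le x$, $y_0 \le y$, and the coupling produces a bridge $\ell^0$ with law $\mathbb{P}^{0,T,x_0,y_0}_{H^{\mathrm{RW}}}$ pointwise dominated by $\ell$; thus it suffices to prove \eqref{BridgeInterpEq} for $\ell^0$.

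Next, I apply Assumption \ref{KMT} to the centered bridge $\ell^{(T, y_0 - x_0)} := \ell^0 - x_0$: on a common probability space there is a Brownian bridge $B^\sigma$ of variance $\sigma_p^2$ such that the quantity $\Delta := \Delta(T, y_0 - x_0)$ of \eqref{KMTeq} dominates $|\sqrt{T} B^\sigma_{s/T} + (s/T)(y_0 - x_0) - \ell^{(T, y_0 - x_0)}(s)|$ uniformly in $s$. Unwinding this bound and using $x_0 \ge M_1 \sqrt{T}$ and $y_0 \ge pT + M_2 \sqrt{T}$ yields
\[
\ell^0(s) \ge \Bigl(1 - \tfrac{s}{T}\Bigr) M_1 \sqrt{T} + \tfrac{s}{T} \bigl(pT + M_2 \sqrt{T}\bigr) + \sqrt{T}\, B^\sigma_{s/T} - \Delta,
\]
so the event in \eqref{BridgeInterpEq} contains $\{\sqrt{T} B^\sigma_{s/T} \ge -2 T^{1/3}/3\} \cap \{\Delta \le T^{1/3}/3\}$. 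Since $|y_0 - x_0 - pT| \le K \sqrt{T}$ for a constant $K = K(M_1, M_2)$, Corollary \ref{Cheb} (with exponent $1/3$ and constant $M = 1/3$) gives $\mathbb{P}(\Delta > T^{1/3}/3) \le 1/6$ for $T$ large enough, and $\mathbb{P}(\sqrt{T} B^\sigma_{s/T} \ge -2 T^{1/3}/3) \ge \mathbb{P}(B^\sigma_{s/T} \ge 0) = 1/2$ by Gaussian symmetry. A union bound then yields the claimed lower bound $1/2 - 1/6 = 1/3$.

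The main obstacle is that $1/3$ is essentially tight for this single-time approach: the Brownian bridge marginal at a single interior time has median zero, so any estimate based solely on that marginal cannot exceed $1/2$, leaving only $1/6$ of probability budget for the KMT discrepancy. This is what forces the $T^{1/3}/3$ split used above and, through the explicit dependence of the threshold in Corollary \ref{Cheb}, pins down how large $T_1$ must be chosen in terms of $M_2 - M_1$ and $p$.
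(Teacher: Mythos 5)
Your proposal is correct and follows essentially the same route as the paper's proof: monotone coupling to reduce to the extremal endpoints, the KMT strong coupling (via Corollary \ref{Cheb}) to replace the random walk bridge with a Brownian bridge up to an error $\Delta$, and a union bound against the $1/2$ Gaussian median, with the $T^{1/3}$ slack split between the bridge marginal and the coupling error. The only cosmetic differences are that you take ceilings rather than floors for the reduced endpoints and split the $T^{1/3}$ threshold as $T^{1/3}/3 + 2T^{1/3}/3$, whereas the paper's version requires $\Delta \leq T^{1/3}-1$ and $\sqrt{T}\,B^\sigma_{s/T} \geq 0$; both splits yield the same $1/2 - 1/6 = 1/3$ bound.
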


\begin{proof}
	Let us define $A = \lfloor M_1\sqrt{T}\rfloor$, $B = \lfloor pT + M_2\sqrt{T}\rfloor$, so that $x\geq A$, $y\geq B$. By Lemma \ref{MCL}, there is a probability space supporting random variables $\ell_1,\ell_2$ with laws $\mathbb{P}^{0,T,x,y}_{H^{\mathrm{RW}}},\mathbb{P}^{0,T,A,B}_{H^{\mathrm{RW}}}$ respectively, such that $\ell_1 \geq \ell_2$ a.s. It follows that the probability on the left hand side of \eqref{BridgeInterpEq} is bounded below by
	\begin{equation}\label{BridgeIntMC}
	\begin{split}
	&\mathbb{P}^{0,T,A,B}_{H^{\mathrm{RW}}}\left(\ell(s) \geq \left(1-\frac{s}{T}\right)M_1\sqrt{T} + \frac{s}{T}\left(pT+M_2\sqrt{T}\right) - T^{1/3}\right)\\
	= \; & \mathbb{P}^{0,T,0,B-A}_{H^{\mathrm{RW}}}\left(\ell(s) + A \geq \left(1-\frac{s}{T}\right)M_1\sqrt{T} + \frac{s}{T}\left(pT+M_2\sqrt{T}\right) - T^{1/3}\right).
	\end{split}
	\end{equation}
	The equality in the second law follows from the simple observation that the distribution on discrete paths is unchanged if we shift vertically by $A$.
	
	We now utilize the strong coupling given by Assumption \ref{KMT}. We have a probability space with a measure $\mathbb{P}_0$, supporting a random variable $\ell^{(T,B-A)}$ with law $\mathbb{P}^{0,T,0,B-A}_{H^{\mathrm{RW}}}$, as well as a Brownian bridge $B^\sigma$. The second line of \eqref{BridgeIntMC} is equal to
	\begin{equation}\label{BridgeIntKMT}
	\begin{split}
	&\mathbb{P}_0\bigg(\left[\ell^{(T,B-A)}(s) - \sqrt{T}\,B^\sigma_{s/T} - \frac{s}{T}(B-A)\right] + \sqrt{T}\,B^\sigma_{s/T} \\
	&\qquad \leq -A - \frac{s}{T}(B-A) + \left(1-\frac{s}{T}\right) M_1\sqrt{T} + \frac{s}{T}\left(pT + M_2\sqrt{T}\right) - T^{1/3}\bigg)\\
	\geq \; & \mathbb{P}_0\left(\left[\ell^{(T,B-A)}(s) - \sqrt{T}\,B^\sigma_{s/T} - \frac{s}{T}(B-A)\right] + \sqrt{T}\,B^\sigma_{s/T} \leq -T^{1/3} + 1\right).
	\end{split}
	\end{equation}
	The inequality in the third line follows by rewriting the expression appearing in the second line as
	\begin{align*}
	&\frac{T-s}{T}\left(M_1\sqrt{T}-A\right) +\frac{s}{T}\left(pT+M_2\sqrt{T}-B\right) - T^{1/3}\\
	\leq \; & \frac{s}{T} + \frac{T-s}{T} - T^{1/3} = -T^{1/3} + 1.
	\end{align*}
	Observe that the expression in brackets in the third line of \eqref{BridgeIntKMT} is bounded in absolute value by $\Delta(T,B-A)$, as defined in Assumption \ref{KMT}. Thus we can bound the last line of \eqref{BridgeIntKMT} from below by
	\begin{equation}\label{BridgeIntSplit}
	\begin{split}
	&\mathbb{P}_0\left(\Delta(T,B-A) \leq T^{1/3} - 1 \quad \mathrm{and} \quad \sqrt{T}\,B^\sigma_{s/T} \geq 0\right)\\
	\geq \; & \mathbb{P}_0\left(B^\sigma_{s/T} \geq 0\right) - \mathbb{P}_0\left(\Delta(T,B-A) > T^{1/3} - 1\right)\\
	= \; & \frac{1}{2} - \mathbb{P}_0\left(\Delta(T,B-A) > T^{1/3} - 1\right).
	\end{split}
	\end{equation}
	Note that $|B-A-pT| \leq (|M_2-M_1|+1)\sqrt{T}$, so Lemma \ref{Cheb} allows us to choose $T_1$ depending on $M_2-M_1$ and $p$ so that $\mathbb{P}_0(\Delta(T,B-A) > T^{1/3}/2) \leq 1/6$ for all $T\geq T_1$. As long as $T_1^{1/3} \geq 2$, it then follows that the last line of \eqref{BridgeIntSplit} is bounded below by $1/2-1/6 = 1/3$ for $T\geq T_1$. This proves \eqref{BridgeInterpEq}.
	
\end{proof}

The next lemma shows roughly that if the endpoints of a bridge on $[0,T]$ are not too low (resp. high) on scale $\sqrt{T}$, then with high probability the bridge will not become very low (resp. high) on scale $\sqrt{T}$ anywhere on $[0,T]$.

\begin{lemma}\label{BridgeInfSup}
	Fix $p\in(\alpha,\beta)$, $M,\epsilon > 0$. Then we can find $T_2\in\mathbb{N}$ and $A > 0$ both depending on $M,p,\epsilon$ so that the following holds for all $T \geq T_2$. If $y,z\in\llbracket\alpha T,\beta T\rrbracket$ satisfy $y \geq pT - M\sqrt{T}$ and $z\leq pT + M\sqrt{T}$, then
	\begin{align}
	&\mathbb{P}^{0,T,0,y}_{H^{\mathrm{RW}}}\left(\inf_{s\in[0,T]} \left( \ell^y(s) - ps \right) \leq -A\sqrt{T}\right) < \epsilon,\label{BridgeInf}\\
	&\mathbb{P}^{0,T,0,z}_{H^{\mathrm{RW}}}\bigg(\sup_{s\in[0,T]} \left( \ell^z(s) - ps \right) \geq A\sqrt{T}\bigg) < \epsilon. \label{BridgeSup}
	\end{align}
	Here, $\ell^y$ and $\ell^z$ denote random variables with laws $\mathbb{P}^{0,T,0,y}_{H^{\mathrm{RW}}}$ and $\mathbb{P}^{0,T,0,z}_{H^{\mathrm{RW}}}$ respectively. 
\end{lemma}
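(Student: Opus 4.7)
The plan is to prove \eqref{BridgeInf}; the bound \eqref{BridgeSup} follows by an analogous argument (or, if one prefers, by considering the reflected Hamiltonian $\tilde H^{\mathrm{RW}}(k) = H^{\mathrm{RW}}(-k)$). The first step is to reduce to the extremal endpoint. Fix $y_0 = \lceil pT - M\sqrt{T}\rceil$, which lies in $\llbracket \alpha T,\beta T\rrbracket$ for $T$ large by the assumption $p \in (\alpha,\beta)$. For any admissible $y \geq pT - M\sqrt{T}$ we have $y \geq y_0$, so Lemma \ref{MCL} (applied with $k=1$ and no lower boundary, i.e.\ $g^{\mathrm{b}} = g^{\mathrm{t}} = -\infty$) provides a coupling of $\ell^y \sim \mathbb{P}^{0,T,0,y}_{H^{\mathrm{RW}}}$ and $\ell^{y_0} \sim \mathbb{P}^{0,T,0,y_0}_{H^{\mathrm{RW}}}$ such that $\ell^{y_0} \leq \ell^y$ pointwise almost surely. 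Consequently,
\[
\mathbb{P}^{0,T,0,y}_{H^{\mathrm{RW}}}\!\left(\inf_{s\in[0,T]}(\ell^y(s)-ps)\leq -A\sqrt{T}\right) \leq \mathbb{P}^{0,T,0,y_0}_{H^{\mathrm{RW}}}\!\left(\inf_{s\in[0,T]}(\ell^{y_0}(s)-ps)\leq -A\sqrt{T}\right),
\]
so it suffices to bound the right-hand side uniformly in $y$.

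The second step is to transfer the question to a Brownian bridge via Assumption \ref{KMT}. Since $|y_0 - pT|\leq M\sqrt{T}+1$, we are in the regime where Corollary \ref{Cheb} applies. On the coupling probability space from Assumption \ref{KMT}, write
\[
\ell^{y_0}(s) - ps = \sqrt{T}\,B^\sigma_{s/T} + \frac{s}{T}(y_0 - pT) + R(s), \qquad |R(s)| \leq \Delta(T,y_0),
\]
so that
\[
\inf_{s\in[0,T]}\bigl(\ell^{y_0}(s)-ps\bigr) \geq \sqrt{T}\inf_{s\in[0,T]} B^\sigma_{s/T} - (M\sqrt{T}+1) - \Delta(T,y_0).
\]
Thus the event $\{\inf_s (\ell^{y_0}(s)-ps) \leq -A\sqrt{T}\}$ is contained in
\[
\Bigl\{\sqrt{T}\inf_{s\in[0,T]} B^\sigma_{s/T} \leq -\tfrac{A}{2}\sqrt{T}\Bigr\} \cup \Bigl\{\Delta(T,y_0) \geq \tfrac{A}{2}\sqrt{T} - M\sqrt{T} - 1\Bigr\},
\]
provided $A > 2M$.

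The final step is to choose the constants. By symmetry of the Brownian bridge and Lemma \ref{BBmax} (reflection principle applied to $-B^\sigma$), the probability of the first event equals $\mathbb{P}(\sup_{s\in[0,T]} B^\sigma_{s/T} \geq A/2) = \exp(-A^2/(2\sigma^2))$, which can be made smaller than $\epsilon/2$ by taking $A$ sufficiently large depending on $\sigma$ (hence on $p,H^{\mathrm{RW}}$) and $\epsilon$. With $A$ now fixed, Corollary \ref{Cheb} applied with growth exponent $\gamma = 1/2$ and constant $A/2 - M > 0$ (using $K = M+1$) furnishes a threshold $T_2$ depending on $M,p,\epsilon$ such that for all $T \geq T_2$ the probability of the second event is at most $\epsilon/2$. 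Adding these two bounds gives the desired inequality $< \epsilon$. The only mildly delicate point is the order of quantifiers: $A$ must be chosen first based on the Brownian bridge tail (depending on $M,p,\epsilon$ only), and then $T_2$ is chosen to dominate the KMT error at scale $\sqrt{T}$; no step presents a genuine obstacle, as both ingredients are already packaged in Lemma \ref{BBmax} and Corollary \ref{Cheb}.
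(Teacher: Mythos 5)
Your proof is correct and follows essentially the same route as the paper's: reduce to the extremal endpoint via the monotone coupling Lemma \ref{MCL}, decompose $\ell(s)-ps$ into a Brownian bridge term plus a KMT error plus the drift of the line segment, and then union-bound the resulting tail via Lemma \ref{BBmax} (reflection principle) and Corollary \ref{Cheb}. The only cosmetic difference is that the paper uses $B=\lfloor pT-M\sqrt T\rfloor$ while you use $\lceil pT-M\sqrt T\rceil$; both are valid, and your ordering of quantifiers (fix $A$ first from the Brownian tail, then $T_2$ from the KMT error) matches the paper exactly.
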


\begin{proof}
	We first prove \eqref{BridgeInf}. Let $B = \lfloor pT - M\sqrt{T}\rfloor$, so that $y\geq B$. It follows from Lemma \ref{MCL} that the probability on the left of \eqref{BridgeInf} is bounded below by
	\begin{equation}\label{BridgeInfMC}
	\mathbb{P}^{0,T,0,y}_{H^{\mathrm{RW}}}\left(\inf_{s\in[0,T]} \left( \ell^y(s) - ps \right) \leq -A\sqrt{T}\right) \leq \mathbb{P}^{0,T,0,B}_{H^{\mathrm{RW}}}\left(\inf_{s\in[0,T]}\left(\ell(s)-ps\right) \leq -A\sqrt{T}\right).
	\end{equation}
	By Assumption \ref{KMT}, there is a probability space with measure $\mathbb{P}_0$ supporting a random variable $\ell^{(T,B)}$ with law $\mathbb{P}^{0,T,0,B}_{H^{\mathrm{RW}}}$, as well as a Brownian bridge $B^\sigma$ with variance $\sigma^2$ depending only on $p$. The expression on the right of \eqref{BridgeInfMC} is bounded above by
	\begin{equation}\label{BridgeInfKMT}
	\begin{split}
	&\mathbb{P}_0\left(\inf_{s\in[0,T]} \sqrt{T}\,B^\sigma_{s/T} \leq -A\sqrt{T}/2\right) + \mathbb{P}_0\bigg(\sup_{s\in[0,T]}\left|\ell^{(T,B)}(s) - ps - \sqrt{T}\,B^\sigma_{s/T}\right| \geq A\sqrt{T}/2\bigg)\\
	\leq \; & \mathbb{P}_0\left(\max_{s\in[0,T]} B^\sigma_{s/T} \geq A/2\right) + \mathbb{P}_0\left(\Delta(T,B) \geq (A/2-M)\sqrt{T}-1\right).
	\end{split}
	\end{equation}
	Here, $\Delta(T,B)$ is as defined in Assumption \ref{KMT}. For the first term in the second line, we used the fact that $B^\sigma$ and $-B^\sigma$ have the same law. For the second term, we used the fact that 
	\[
	\sup_{s\in[0,T]}|ps - (s/T)B| \leq \sup_{s\in[0,T]}\left|ps - \frac{s}{T}\big(pT-M\sqrt{T})\right| + 1 = M\sqrt{T} + 1.
	\]
	Now by Lemma \ref{BBmax}, the first term in the second line of \eqref{BridgeInfKMT} is equal to $e^{-A^2/2\sigma^2}$. This is $<\epsilon/2$ as long as $A > \sigma\sqrt{2\log(2/\epsilon)}$. Since $|B-pT| \leq (M+1)\sqrt{T}$, as long as $A > 2M$, Lemma \ref{Cheb} allows us to choose $T_2$ depending on $M,p,\epsilon$ so that the second term in the last line of \eqref{BridgeInfKMT} is also $<\epsilon/2$ for $T\geq T_2$. This implies \eqref{BridgeInf}.
	
	The proof of \eqref{BridgeSup} is nearly identical. After replacing $B$ with $\lceil pT + M\sqrt{T}\rceil$ and swapping signs and inequalities where appropriate, the same argument applies.
\end{proof}

The third lemma concerns the modulus of continuity of a rescaled bridge. For a function $f\in C([a,b])$ and any $\delta > 0$, we write
\[
w(f,\delta) := \sup_{x,y\in[a,b],\, |x-y|<\delta} |f(x)-f(y)|
\]
for the modulus of continuity of $f$.

\begin{lemma}\label{MOC}
	Fix $p\in(\alpha,\beta)$ and $M,\epsilon,\eta > 0$. Then we can find $T_3\in\mathbb{N}$ depending on $M,p,\epsilon,\eta$ and $\delta > 0$ depending on $M,\epsilon,\eta$ so that the following holds for all $T\geq T_3$. If $y\in\llbracket \alpha T,\beta T\rrbracket$ satisfies $|y-pT| \leq M\sqrt{T}$, then
	\begin{equation}\label{MOCeq}
	\mathbb{P}^{0,T,0,y}_{H^{\mathrm{RW}}}\left(w(f^\ell,\delta) \geq \epsilon\right) < \eta,
	\end{equation}
	where $f^\ell(s) = T^{-1/2}(\ell(sT) - psT)$ for $s\in [0,1]$. 
\end{lemma}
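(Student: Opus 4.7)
The plan is to use the strong coupling in Assumption \ref{KMT} to compare $\ell$ with a Brownian bridge $B^\sigma$ of variance $\sigma^2 = \sigma_p^2$, so that up to a small affine shift and a uniformly small error $f^\ell$ looks like $B^\sigma$. Since $B^\sigma$ is almost surely an element of $C([0,1])$, hence uniformly continuous, its modulus of continuity on $[0,1]$ tends to zero in probability as $\delta \to 0$, and this control transfers through the coupling to $f^\ell$.

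Concretely, I would first invoke Assumption \ref{KMT} with $n = T$ and $z = y$ to obtain a probability space $(\Omega_0, \mathbb{P}_0)$ supporting $\ell$ with law $\mathbb{P}^{0,T,0,y}_{H^{\mathrm{RW}}}$ and a Brownian bridge $B^\sigma$, together with the error $\Delta(T,y)$ of \eqref{KMTeq}. Substituting $t = sT$ in the defining inequality and dividing by $\sqrt{T}$, a brief rearrangement gives
\[
f^\ell(s) = B^\sigma_s + c\,s + T^{-1/2}E(s), \qquad s \in [0,1],
\]
where $c := (y - pT)/\sqrt{T}$ satisfies $|c| \leq M$ by hypothesis, and $|E(s)| \leq \Delta(T,y)$ uniformly in $s$. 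The triangle inequality then yields the pointwise bound
\[
w(f^\ell, \delta) \leq w(B^\sigma, \delta) + M\delta + 2T^{-1/2}\Delta(T,y).
\]
I would force each of the three summands to be $\leq \epsilon/3$ with the appropriate probability. The middle term is controlled deterministically by choosing $\delta \leq \epsilon/(3M)$. For the first term, using that $B^\sigma$ is a.s.\ uniformly continuous on $[0,1]$ (equivalently, that the law of $B^\sigma$ is a tight Borel probability measure on $C([0,1])$), I can further shrink $\delta$ depending only on $M, \epsilon, \eta$ (with $\sigma$ fixed by $p$ and $H^{\mathrm{RW}}$) so that $\mathbb{P}_0(w(B^\sigma, \delta) \geq \epsilon/3) < \eta/2$. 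For the third term, since $|y - pT| \leq M\sqrt{T}$, Corollary \ref{Cheb} applied with exponent $\gamma = 1/2$ and constant $\epsilon/6$ in place of its $M$ produces $T_3 \in \mathbb{N}$, depending on $M, p, \epsilon, \eta$, such that $\mathbb{P}_0(\Delta(T,y) \geq (\epsilon/6)\sqrt{T}) < \eta/2$ for all $T \geq T_3$. A union bound over the two bad events finishes the proof of \eqref{MOCeq}.

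The argument is a clean application of KMT-type coupling, so there is no serious obstacle. The only subtlety worth flagging is the affine shift $c\,s$ that arises because the endpoint $y$ may deviate from $pT$ by up to $M\sqrt{T}$; fortunately this contributes only the harmless deterministic $M\delta$ piece to the modulus, which is absorbed by the choice of $\delta$. The Brownian bridge modulus-of-continuity fact used above is standard and can be justified either by the Kolmogorov continuity criterion (giving Hölder continuity of any exponent $< 1/2$) or by citing tightness of $B^\sigma$ as a single $C([0,1])$-valued random variable.
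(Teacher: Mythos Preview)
Your proposal is correct and follows essentially the same argument as the paper: both invoke the KMT coupling to write $f^\ell$ as $B^\sigma$ plus an affine shift of slope $(y-pT)/\sqrt{T}$ plus a uniform error controlled by $\Delta(T,y)$, bound the modulus of continuity by $w(B^\sigma,\delta)+M\delta+2T^{-1/2}\Delta(T,y)$, and then handle the three summands via the a.s.\ uniform continuity of $B^\sigma$ and Corollary~\ref{Cheb}. The only differences are cosmetic choices of constants in the $\epsilon$-splitting.
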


\begin{proof}
	By Assumption \ref{KMT}, there is a probability space with measure $\mathbb{P}_0$ supporting a random variable $\ell^{(T,y)}$ with law $\mathbb{P}^{0,T,0,y}_{H^{\mathrm{RW}}}$, as well as a Brownian bridge $B^\sigma$. We have
	\begin{align*}
	w\big(f^{\ell^{(T,y)}},\delta\big) &= T^{-1/2}\sup_{s,t\in[0,1],\,|s-t| < \delta} \left|\ell^{(T,y)}(sT) - psT - \ell^{(T,y)}(tT) + ptT\right|\\
	&\leq T^{-1/2}\sup_{s,t\in[0,1],\,|s-t| < \delta} \bigg(\left|\sqrt{T}\,B^\sigma_s + sy - psT - \sqrt{T}\,B^\sigma_t - ty + ptT\right|\\
	&\qquad\qquad\qquad + \left|\sqrt{T}\,B^\sigma_s + sy - \ell^{(T,y)}(sT)\right| + \left|\sqrt{T}\,B^\sigma_t + ty - \ell^{(T,y)}(tT)\right|\bigg)\\
	&\leq \sup_{s,t\in[0,1],\,|s-t|<\delta} \left|B_s^\sigma - B_t^\sigma + T^{-1/2}(y-pT)(s-t)\right| + 2T^{-1/2}\Delta(T,y)\\
	&\leq w(B^\sigma,\delta) + M\delta + 2T^{-1/2}\Delta(T,y).
	\end{align*}
	Here, $\Delta(T,y)$ is as defined in Assumption \ref{KMT}. For the second term in the last line, we used the assumption that $|y-pT| \leq M\sqrt{T}$. It follows that
	\begin{align*}
	\mathbb{P}^{0,T,0,y}_{H^{\mathrm{RW}}}\left(w(f^\ell,\delta) \geq \epsilon\right) &\leq \mathbb{P}_0\left(w(B^\sigma,\delta) + M\delta + 2T^{-1/2}\Delta(T,y) \geq \epsilon\right)\\
	&\leq \mathbb{P}_0\left(w(B^\sigma,\delta) + M\delta \geq \epsilon/2\right) + \mathbb{P}_0\left(\Delta(T,y) \geq \epsilon\sqrt{T}/4\right).
	\end{align*}
	Since the paths of $B^\sigma$ are a.s. continuous and $[0,1]$ is compact, we have $w(B^\sigma,\delta)\to 0$ a.s. as $\delta \to 0$. It follows that $w(B^\sigma,\delta)\to 0$ in probability, so we can choose $\delta < \epsilon/4M$ depending on $\epsilon,\eta$ so that $w(B^\sigma,\delta) < \epsilon/4$ with probability at least $1-\eta/2$. Then $\mathbb{P}_0(w(B^\sigma,\delta) + M\delta \geq \epsilon/2) < \eta/2$. Since $|y-pT| \leq M\sqrt{T}$, Lemma \ref{Cheb} allows us to choose $T_3$ depending on $M,p,\epsilon,\eta$ so that also $\mathbb{P}_0(\Delta(T,y) \geq \epsilon\sqrt{T}/4) < \eta/2$ for $T\geq T_3$. This implies \eqref{MOCeq}.
\end{proof}

The next result concerns independent random walk bridges. It says roughly that if the entry and exit data of such bridges are ordered and well-separated, then with some nonzero probability the curves will not cross one another anywhere on their domain of definition. This will be useful for analyzing random walk bridges conditioned to avoid one another, as it allows us to reduce the analysis to that of independent bridges, for which the previous three lemmas can be used. We employ this type of argument extensively in the proof of Theorem \ref{GoodTight} in Section \ref{Section4}.

\begin{lemma}\label{CurveSep}
	Fix $p\in(\alpha,\beta)$, $k\in\mathbb{N}$ with $k\geq 2$, and $C,K > 0$. Let $\sigma = \sigma_p$ be as in Assumption \ref{KMT} for this value of $p$. Then we can find $T_4\in\mathbb{N}$ depending on $C,K,p$ so that the following holds for all $T\geq T_4$. Let $a,b\in\mathbb{Z}$ be such that $\Omega(0,T,a,b)$ is nonempty, and let $\ell_{\mathrm{b}} \in \Omega(0,T,a,b)$. Suppose $\vec{x},\vec{y}\in\mathfrak{W}_{k-1}$ are such that $y_i-x_i\in\llbracket\alpha T,\beta T\rrbracket$ for $\llbracket 1,k-1\rrbracket$, write $\vec{z}=\vec{y}-\vec{x}$, and suppose that
	\begin{enumerate}[label=(\arabic*)]
		
		\item $x_{k-1} + (z_{k-1}/T)s - \ell_{\mathrm{b}}(s) \geq C\sqrt{T}$ for all $s\in\llbracket0,T\rrbracket$, i.e., $\ell_{\mathrm{b}}$ lies a distance of at least $C\sqrt{T}$ uniformly below the line segment connecting $x_{k-1}$ and $y_{k-1}$;
		
		\item $x_i - x_{i+1} \geq C\sqrt{T}$ and $y_i - y_{i+1} \geq C\sqrt{T}$ for $i\in\llbracket 1,k-2\rrbracket$;
		
		\item $|z_i-pT| \leq K\sqrt{T}$ for $i\in\llbracket 1,k-1\rrbracket$.
	
	\end{enumerate}
	Let $(L_1,\dots,L_{k-1})$ be a $\llbracket 1,k-1\rrbracket$-indexed $H^{\mathrm{RW}}$ line ensemble with law $\mathbb{P}^{0,T,\vec{x},\vec{y}}_{H^{\mathrm{RW}}}$, and define the avoidance event
	\[
	A = \{L_1(s) \geq \cdots \geq L_{k-1}(s) \geq \ell_{\mathrm{b}}(s) \mbox{ for } s\in\llbracket0,T\rrbracket\}.
	\]
	Then
	\begin{equation}\label{CurveSepBd}
	\mathbb{P}^{0,T,\vec{x},\vec{y}}_{H^{\mathrm{RW}}}(A) \geq \left(\frac{1}{2} - \sum_{n=1}^\infty (-1)^{n-1}e^{-n^2C^2/8\sigma^2}\right)^{k-1},
	\end{equation}
	and the expression in parentheses on the right hand side is strictly positive. Moreover, if $C^2 \geq 8\sigma^2\log 3$, then 
	\begin{equation}\label{CurveSepWeak}
	\mathbb{P}^{0,T,\vec{x},\vec{y}}_{H^{\mathrm{RW}}}(A) \geq \left(1 - 3e^{-C^2/8\sigma^2}\right)^{k-1}.
	\end{equation}
\end{lemma}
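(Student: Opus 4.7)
The strategy is to exhibit a simpler sufficient event, involving single-curve bounds decoupled across the $k-1$ independent bridges, and to estimate it using the KMT coupling (Assumption~\ref{KMT}) together with the Brownian bridge maximum formulas of Lemma~\ref{BBmax}. For each $i \in \llbracket 1, k-1 \rrbracket$, let $\bar{L}_i(s) = x_i + (z_i/T)\,s$ denote the line segment interpolating $(0,x_i)$ and $(T, y_i)$, and define the band event
\[
F_i = \Bigl\{ \sup_{s \in \llbracket 0, T \rrbracket} |L_i(s) - \bar{L}_i(s)| < C\sqrt{T}/2 \Bigr\}.
\]
Condition~(2) makes $\bar{L}_i - \bar{L}_{i+1}$ a convex combination of endpoint gaps each at least $C\sqrt{T}$, so on $\bigcap_i F_i$ the triangle inequality gives $L_i(s) - L_{i+1}(s) \ge (\bar{L}_i - \bar{L}_{i+1})(s) - C\sqrt{T} \ge 0$ for $i \le k-2$, while condition~(1) gives $L_{k-1}(s) - \ell_{\mathrm{b}}(s) \ge C\sqrt{T} - C\sqrt{T}/2 \ge 0$. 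Hence $\bigcap_i F_i \subseteq A$, and since the $L_i$ are independent under $\mathbb{P}^{0,T,\vec{x},\vec{y}}_{H^{\mathrm{RW}}}$, it suffices to lower bound each $\mathbb{P}(F_i)$.

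For each $i$, the shifted path $L_i - x_i$ has law $\mathbb{P}^{0,T,0,z_i}_{H^{\mathrm{RW}}}$, so Assumption~\ref{KMT} yields a coupling with a Brownian bridge $B^\sigma_i$ of variance $\sigma^2 = \sigma_p^2$ such that $\sup_s |L_i(s) - \bar{L}_i(s) - \sqrt{T}\,B^\sigma_{i,\,s/T}| \le \Delta(T, z_i)$. Splitting the bandwidth $C\sqrt{T}/2$ equally between a Brownian contribution and a KMT error yields
\[
\mathbb{P}(F_i) \ge \mathbb{P}\bigl(\sup_{t \in [0,1]} |B^\sigma_{i,t}| < C/4\bigr) - \mathbb{P}\bigl(\Delta(T, z_i) \ge C\sqrt{T}/4\bigr).
\]
Equation~\eqref{maxAbsDist} applied at threshold $C/4$ evaluates the first term as $1 - 2\sum_{n \ge 1}(-1)^{n-1} e^{-n^2 C^2/(8\sigma^2)}$; its nonnegativity as a probability proves, in particular, that $\tfrac{1}{2} - \sum_{n \ge 1}(-1)^{n-1} e^{-n^2 C^2/(8\sigma^2)} \ge 0$, with strict positivity since the Brownian bridge is a.s. bounded. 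Condition~(3), namely $|z_i - pT| \le K\sqrt{T}$, combined with Corollary~\ref{Cheb}, lets me choose $T_4 = T_4(C, K, p)$ so that for $T \ge T_4$ the KMT term is at most $\tfrac{1}{2} - \sum_{n \ge 1}(-1)^{n-1} e^{-n^2 C^2/(8\sigma^2)}$, because this target is a fixed positive quantity depending only on $C$ and $p$. Thus $\mathbb{P}(F_i) \ge \tfrac{1}{2} - \sum_{n \ge 1}(-1)^{n-1} e^{-n^2 C^2/(8\sigma^2)}$, and taking the product over $i \in \llbracket 1, k-1 \rrbracket$ establishes \eqref{CurveSepBd}.

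For the weak bound \eqref{CurveSepWeak}, I would substitute the cruder estimate \eqref{maxAbsBd} for \eqref{maxAbsDist}, obtaining $\mathbb{P}(\sup_t |B^\sigma_{i,t}| \ge C/4) \le 2 e^{-C^2/(8\sigma^2)}$, and enlarge $T_4$ so that additionally $\mathbb{P}(\Delta(T, z_i) \ge C\sqrt{T}/4) \le e^{-C^2/(8\sigma^2)}$ via Corollary~\ref{Cheb}; this yields $\mathbb{P}(F_i) \ge 1 - 3 e^{-C^2/(8\sigma^2)}$, which is nonnegative under $C^2 \ge 8\sigma^2 \log 3$, and multiplying gives \eqref{CurveSepWeak}. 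The main technical point is to verify that $T_4$ can be made to depend only on $C, K, p$: this reduces to the tail estimate for $\Delta(T, z_i)$ provided by Assumption~\ref{KMT} together with $|z_i - pT| \le K\sqrt{T}$, which via Chebyshev gives a bound of the form $C_0 \, T^A\, e^{K^2 - aC\sqrt{T}/4}$ with $C_0, A, a$ depending only on $p$, decaying super-polynomially in $T$.
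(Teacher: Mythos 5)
Your proof is correct and follows essentially the same route as the paper's: inclusion of the tube event $\bigcap_i F_i$ in $A$ via conditions (1)–(2), independence to factor the probability, the KMT coupling of Assumption~\ref{KMT} with Corollary~\ref{Cheb} to control $\Delta(T,z_i)$ uniformly over $i$ thanks to condition~(3), and Lemma~\ref{BBmax} to evaluate the Brownian bridge tube probability at threshold $C/4$. The paper phrases the choice of $T_4$ as making the KMT term at most half the Brownian tube probability, which is numerically identical to your target of $\tfrac12 - \sum_{n\ge1}(-1)^{n-1}e^{-n^2C^2/8\sigma^2}$, and it invokes Lemma~\ref{Spread} for strict positivity where you cite a.s.\ boundedness of the bridge; both are fine.
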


\begin{proof}
	Observe that conditions (1) and (2) together imply that the event $A$ occurs as long as each curve $L_i$ remains within a distance of $C\sqrt{T}/2$ from the line segment connecting its endpoints $x_i$ and $y_i$ on the interval $[0,T]$. That is,
	\begin{equation}\label{CurveSepTube}
	\begin{split}
	\mathbb{P}^{0,T,\vec{x},\vec{y}}_{H^{\mathrm{RW}}}(A) &\geq \mathbb{P}^{0,T,\vec{x},\vec{y}}_{H^{\mathrm{RW}}}\left(\bigcap_{i=1}^{k-1} \bigg\{\sup_{s\in[0,T]} |L_i(s)-x_i-(z_i/T)s| < C\sqrt{T}/2\bigg\}\right)\\
	&= \prod_{i=1}^{k-1} \mathbb{P}^{0,T,0,z_i}_{H^{\mathrm{RW}}}\bigg(\sup_{s\in[0,T]}|\ell_i(s)-(z_i/T)s| < C\sqrt{T}/2\bigg).
	\end{split}
	\end{equation}
	Now by Assumption \ref{KMT}, there exist probability spaces with measures $\mathbb{P}_i$ for $i\in\llbracket 1,k-1\rrbracket$, supporting random variables $\ell^{(T,z_i)}$ with laws $\mathbb{P}^{0,T,0,z_i}_{H^{\mathrm{RW}}}$, as well as Brownian bridges $B^{\sigma,i}$ each with variance $\sigma^2 = \sigma^2_p$ depending only on $p$. We have
	\begin{equation}\label{CurveSepCheb}
	\begin{split}
	\mathbb{P}^{0,T,0,z_i}_{H^{\mathrm{RW}}}\bigg(\sup_{s\in[0,T]}|\ell_i(s)-(z_i/T)s| < C\sqrt{T}/2\bigg) &\geq \mathbb{P}_i\bigg(\sup_{s\in[0,T]} |B^{\sigma,i}_{s/T}| < C/4\bigg)\\
	&\qquad\quad - \mathbb{P}_i\left(\Delta(T,z_i) > C\sqrt{T}/4\right),
	\end{split}
	\end{equation}
	where $\Delta(T,z_i)$ is as defined in Assumption \ref{KMT}. Note that the probability on the right hand side of the first line is strictly positive by Lemma \ref{Spread}. Condition (3) in the hypothesis thus allows us to apply Corollary \ref{Cheb} to find $T_4$ depending on $p,C,K$ but not on $i$, so that 
	\[
	\mathbb{P}_i\left(\Delta(T,z_i) > C\sqrt{T}/4\right) \leq \frac{1}{2}\cdot\mathbb{P}_i\bigg(\sup_{s\in[0,T]} |B^{\sigma,i}_{s/T}| < C/4\bigg)
	\]
	for all $T\geq T_4$. It then follows from \eqref{CurveSepCheb} and Lemma \ref{BBmax} that
	\[
	\mathbb{P}^{0,T,0,z_i}_{H^{\mathrm{RW}}}\bigg(\sup_{s\in[0,T]}|\ell_i(s)-(z_i/T)s| < C\sqrt{T}/2\bigg) \geq \frac{1}{2} - \sum_{n=1}^\infty e^{-n^2C^2/8\sigma^2}
	\]
	for $T\geq T_4$, and the right hand side is positive. In view of \eqref{CurveSepTube}, we conclude \eqref{CurveSepBd}.
	
	Now suppose $C^2 \geq 8\sigma^2\log 3$. By \eqref{maxAbsBd}, the term on the right of the first line of \eqref{CurveSepCheb} is bounded below by $1-2\exp(-C^2/8\sigma^2)$. We can enlarge $T_4$ if necessary so that the probability on the second line is $\leq \exp(-C^2/8\sigma^2)$. The sum is then bounded below by $1-3\exp(-C^2/8\sigma^2)$, and the assumption on $C$ implies that this lower bound is positive. Now \eqref{CurveSepWeak} follows from \eqref{CurveSepTube}.
\end{proof}

\subsection{Properties of non-crossing $H^{\mathrm{RW}}$ line ensembles}\label{Section3.3}

In this section, we give several results about non-crossing $H^{\mathrm{RW}}$ line ensembles with the laws $\mathbb{P}^{T_0,T_1,\vec{x},\vec{y},f,g}_{\mathrm{avoid},H^{\mathrm{RW}};S}$ of Definition \ref{HRWavoiddef}. The first result gives a lower bound on the probability that a collection of non-crossing random walks on $[0,T]$ rises very high on scale $\sqrt{T}$.

\begin{lemma}\label{CurvesHigh}
	Fix $p\in(\alpha,\beta)$, $k\in\mathbb{N}$, $M,M_1 > 0$, and let $\sigma = \sigma_p$ be as in Assumption \ref{KMT} for this value of $p$. Then we can find $T_5\in\mathbb{N}$ depending on $p,k,M,M_1$ such that the following holds for all $T\geq T_5$. Let $\vec{x},\vec{y}\in\mathfrak{W}_k$ be such that $y_i - x_i \in \llbracket \alpha T,\beta T\rrbracket$ for $i=1,\dots,k$, and suppose $x_k \geq - M_1\sqrt{T}$, and $y_k \geq pT - M_1\sqrt{T}$. Then for any $S\subseteq \llbracket 0,T\rrbracket$ we have
	\begin{equation}\label{19ineq}
		\begin{split}
		&\mathbb{P}^{0,T,\vec{x},\vec{y}}_{\mathrm{avoid}, H^{\mathrm{RW}}; S}\left(Q_k(T/2) - pT/2 \geq M\sqrt{T}\right) \\
		\geq \; & \frac{2^{3k/2}}{\pi^{k/2}\sigma^k}\left(1 - 2\sum_{n=1}^\infty (-1)^{n-1} e^{-2n^2/\sigma^2}\right)^{2k}\exp\left(-
		2k(M+M_1+10k-4)^2/\sigma^2\right).
		\end{split}
	\end{equation}
\end{lemma}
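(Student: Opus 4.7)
The approach is to reduce the $S$-avoiding measure to the product measure on independent random walk bridges and exhibit a concrete sub-event that forces both the target event and full-interval avoidance. Since $\mathbb{P}^{0,T,\vec{x},\vec{y}}_{\mathrm{avoid}, H^{\mathrm{RW}}; S}$ is obtained from the product measure $\mathbb{P}^{0,T,\vec{x},\vec{y}}_{H^{\mathrm{RW}}}$ by conditioning on an event containing the full-interval avoidance event $A$, we have
\[
\mathbb{P}^{0,T,\vec{x},\vec{y}}_{\mathrm{avoid}, H^{\mathrm{RW}}; S}(E) \;\geq\; \mathbb{P}^{0,T,\vec{x},\vec{y}}_{H^{\mathrm{RW}}}(E \cap A),
\]
so it suffices to lower bound the product-measure probability of a sub-event of $\{Q_k(T/2) - pT/2 \geq M\sqrt{T}\} \cap A$.

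I would set $r_i := M + M_1 + 10(k-i) + c_0$ for a constant $c_0$ to be fixed, $b_i := \lceil pT/2 + r_i\sqrt{T}\rceil$, and let $E'$ be the event that for each $i \in \llbracket 1,k\rrbracket$ one has $Q_i(T/2) \in \llbracket b_i, b_i + 2\lfloor\sqrt{T}\rfloor\rrbracket$ and $Q_i$ stays within a tube of width $O(\sqrt{T})$, thin relative to the midpoint spacing $10\sqrt{T}$, about the piecewise-linear interpolation joining $x_i$, its $T/2$-value, and $y_i$. With the tubes pairwise disjoint in the interior of $[0,T]$, $E' \subseteq A$, and because $b_k \geq pT/2 + M\sqrt{T}$, also $E' \subseteq \{Q_k(T/2) - pT/2 \geq M\sqrt{T}\}$. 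Under the product measure the $k$ bridges are independent; on each bridge the Markov property at $T/2$ factors the midpoint-plus-tube event into a midpoint-location probability (summed over a window of width $2\sqrt{T}$) and two half-bridge tube probabilities. Each half-bridge tube probability is bounded below by the Brownian bridge tube probability from Lemma \ref{BBmax} (with $C=1$) via Assumption \ref{KMT} and Corollary \ref{Cheb}, yielding a factor $(1 - 2\sum_{n\geq 1}(-1)^{n-1} e^{-2n^2/\sigma^2})^{2k}$ after multiplying over the $2k$ half-bridges. The midpoint-location probability is likewise controlled by strong coupling by the Gaussian integral of $B^\sigma_{T/2} \sim N((x_i+y_i)/2, \sigma^2 T/4)$ over the window; crucially, the window length $\sqrt{T}$ exactly cancels the Gaussian density prefactor $T^{-1/2}$. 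Using the lower bound $(x_i+y_i)/2 \geq pT/2 - M_1\sqrt{T}$ together with $r_i \leq M + M_1 + 10(k-1) + c_0$ yields a per-curve factor $\sqrt{8/(\pi\sigma^2)}\,\exp(-2(M+M_1+10k-4)^2/\sigma^2)$, after tuning $c_0$ to absorb the rounding and small constants. Multiplying across the $k$ curves produces the claimed bound.

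The main obstacle is the tube construction when the entry or exit data of adjacent curves coincide ($x_i = x_{i+1}$ or $y_i = y_{i+1}$, allowed by $\vec{x}, \vec{y} \in \mathfrak{W}_k$): near the offending endpoint, the two straight-line tubes would initially overlap. I would resolve this either by letting the tube widths taper toward the endpoints in a Brownian-bridge-like fashion so that the tubes meet only at single points, or by first applying the $H^{\mathrm{RW}}$-Gibbs property at a small time $s_0$ near $0$ and at $T-s_0$ near $T$ to show that with positive probability the curves have separated by at least $C\sqrt{T}$ there (after which Lemma \ref{CurveSep} applies on the middle interval). A secondary technical point is that $x_i, y_i$ for $i < k$ are not controlled from above by the hypotheses, so $(x_i+y_i)/2$ could be large and the Gaussian density at $b_i$ correspondingly small; the monotone coupling Lemma \ref{MCL} can be used to reduce to the worst case in which $\vec{x}$ and $\vec{y}$ cluster near $(x_k,\dots,x_k)$ and $(y_k,\dots,y_k)$ respectively, where the above density estimate is sharp.
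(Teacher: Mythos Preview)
Your approach is essentially the same as the paper's: reduce the $S$-avoiding measure to the free product measure via a tube sub-event that forces full avoidance, then use the KMT coupling to estimate each bridge's tube-plus-midpoint probability by the corresponding Brownian quantities (the paper uses Lemma~\ref{2bridges} to split the Brownian bridge at $T/2$, which is equivalent to your Markov-property factorization).

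The one place where the paper is cleaner is in handling your two ``obstacles.'' Rather than tapering tubes or invoking the Gibbs property to separate curves near the endpoints, and then separately invoking monotone coupling to control the upper data, the paper applies a \emph{single} monotone coupling step at the very beginning: it replaces $\vec{x},\vec{y}$ by the lower, well-separated data
\[
x_i' = \lfloor -M_1\sqrt{T}\rfloor - 10(i-1)\lceil\sqrt{T}\rceil,\qquad
y_i' = \lfloor pT - M_1\sqrt{T}\rfloor - 10(i-1)\lceil\sqrt{T}\rceil.
\]
Since lowering the data only decreases $\mathbb{P}(Q_k(T/2)-pT/2 \geq M\sqrt{T})$, this is the right direction; and because the new data are already spaced by $10\sqrt{T}$ at both endpoints, the piecewise-linear tubes of half-width $3\sqrt{T}$ are automatically disjoint on all of $[0,T]$, eliminating the coinciding-endpoint issue entirely. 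The same step also fixes $x_i',y_i'$ explicitly, so the midpoint Gaussian estimate no longer depends on uncontrolled upper data. Your two patches would work, but this single preliminary reduction is what makes the argument short.
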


\begin{proof}
	We first exploit monotone coupling to reduce to the case in which the entries of the data $\vec{x},\vec{y}$ are well-separated. Define vectors $\vec{x}\,',\vec{y}\,'\in\mathfrak{W}_k$ by
	\begin{align*}
		x_i' &= \lfloor - M_1\sqrt{T} \rfloor - 10(i-1)\lceil\sqrt{T}\rceil, \qquad
		y_i' = \lfloor pT - M_1\sqrt{T}\rfloor - 10(i-1)\lceil\sqrt{T}\rceil.
	\end{align*}
	Then $x_i'\leq x_k \leq x_i$ and $y_i' \leq y_k \leq y_i$ for $1\leq i\leq k-1$, so Lemma \ref{MCL} implies that
	\begin{equation*}
		\mathbb{P}^{0,T,\vec{x},\vec{y}}_{\mathrm{avoid}, H^{\mathrm{RW}}; S} \Big(Q_k(T/2) - pT/2 \geq M\sqrt{T}\Big) \geq \mathbb{P}^{0,T,\vec{x}\,',\vec{y}\,'}_{\mathrm{avoid}, H^{\mathrm{RW}}; S} \Big(Q_k(T/2) - pT/2 \geq M\sqrt{T}\Big).
	\end{equation*}
	We now introduce the constants
	\[
	K_i := pT/2 + M\sqrt{T}+(10(k-i)+5)\lceil\sqrt{T}\rceil
	\] 
	for $1\leq i\leq k$. This is the midpoint of the two points $pT/2 + M\sqrt{T} + 10(k-i)\lceil\sqrt{T}\rceil$ and $pT/2 + M\sqrt{T} + 10(k-i+1)\lceil\sqrt{T}\rceil$. Let $E$ denote the event that the following conditions hold for $1\leq i\leq k$:
	\begin{enumerate}[label=(\arabic*)]
		
		\item $\left| Q_i(T/2) - K_i \right| \leq 2\sqrt{T}$,
		
		\item $\sup_{s\in[0,T/2]} \Big|Q_i(s)-x_i'-\dfrac{K_i-x_i'}{T/2}\,s\Big| \leq 3\sqrt{T}$,
		
		\item $\sup_{s\in[T/2,T]} \Big|Q_i(s)-K_i-\dfrac{y_i'-K_i}{T/2}(s-T/2)\Big| \leq 3\sqrt{T}$.
		
	\end{enumerate}
	Note that $K_k - pT/2 - M\sqrt{T} \geq 5\sqrt{T}$, so the first condition implies in particular that $Q_k(T/2)-pT/2 \geq M\sqrt{T}$, and also that $Q_i(T/2)-Q_{i+1}(T/2)\geq 6\sqrt{T}$ for each $i$. The second and third conditions require that each curve $Q_i$ remain within a distance of $3\sqrt{T}$ of the graph of the piecewise linear function on $[0,T]$ passing through the points $(0,x_i')$, $(T/2,K_i)$, and $(T,y_i')$. We observe that
	\[
	\mathbb{P}^{0,T,\vec{x}\,',\vec{y}\,'}_{\mathrm{avoid}, H^{\mathrm{RW}}; S} \left(Q_k(T/2) - ptT \geq M\sqrt{T}\right) \geq \mathbb{P}^{0,T,\vec{x}\,',\vec{y}\,'}_{\mathrm{avoid}, H^{\mathrm{RW}}; S}(E) \geq \mathbb{P}^{0,T,\vec{x}\,',\vec{y}\,'}_{H^{\mathrm{RW}}}(\tilde{E}).
	\]
	Here we have written $\tilde{E}$ for the event defined in the same way as $E$ but with $(Q_1,\dots,Q_k)$ replaced by a line ensemble $(L_1,\dots,L_k)$ with law $\mathbb{P}^{0,T,\vec{x}\,',\vec{y}\,'}_{H^{\mathrm{RW}}}$. The second inequality follows since on the event $\tilde{E}$ we have $Q_1(s)\geq\cdots\geq Q_k(s)$ for all $s\in\llbracket 0,T \rrbracket$. Then we have
	\begin{equation}\label{19gibbs}
		\begin{split}
			\mathbb{P}^{0,T,\vec{x}\,',\vec{y}\,'}_{H^{\mathrm{RW}}}(\tilde{E}) &= \prod_{i=1}^k \mathbb{P}^{0,T,0,y_i'-x_i'}_{H^{\mathrm{RW}}}\bigg(\left|L_i(T/2) - K_i + x_i'\right|\leq 2\sqrt{T} \quad\mathrm{and}\\
			&\qquad\qquad \sup_{s\in[0,T/2]}\left|L_i(s) - \frac{K_i-x_i'}{T/2}\,s\right| \leq 3\sqrt{T}\quad\mathrm{and}\\
			&\qquad\qquad \sup_{s\in[T/2,T]}\left|L_i(s)-(K_i-x_i')-\frac{y_i'-K_i}{T/2}(s-T/2)\right| \leq 3\sqrt{T}\bigg).
		\end{split}
	\end{equation}
	Note that $y_i' - x_i'$ is independent of $i$; we will write $z = y_i' - x_i'$. Let $\tilde{\mathbb{P}}$ be a probability measure supporting a random variable $\ell^{(T,z)}$ with law $\mathbb{P}_{H^{\mathrm{RW}}}^{0,T,0,z}$ coupled with a Brownian bridge $B^\sigma$ with variance $\sigma^2$, as in Assumption \ref{KMT}. Note that $K_i - x_i'$ and $y_i' - K_i$ are also both independent of $i$. Writing $M_2 := M+M_1+10k-5$, we see that each of the $k$ factors in the product in \eqref{19gibbs} is bounded below for large $T$ by
	\begin{equation}\label{19BB}
		\begin{split}
			& \mathbb{P}^{0,T,0,z}_{H^{\mathrm{RW}}}\bigg(\left|\ell(T/2)-pT/2-M_2\sqrt{T}\right|\leq 2\sqrt{T} - 10k + 5\quad\mathrm{and}\\
			&\qquad \sup_{s\in[0,T/2]}\left|\ell(s)-ps-\frac{M_2}{\sqrt{T}/2}\,s\right| \leq 3\sqrt{T} - 1 \quad\mathrm{and}\\
			&\qquad \sup_{s\in[T/2,T]}\left|\ell(s)-ps-M_2\sqrt{T}+\frac{M_2}{\sqrt{T}/2}(s-T/2)\right| \leq 3\sqrt{T} - 1 \bigg)\\
			\geq \; & \tilde{\mathbb{P}}\bigg(\left|\sqrt{T}\,B^\sigma_{1/2} - M_2\sqrt{T}\right|\leq \sqrt{T} \quad\mathrm{and}\\
			&\qquad\sup_{s\in[0,T/2]}\left|\sqrt{T}\,B^\sigma_{s/T}-M_2\sqrt{T}\cdot\frac{s}{T/2}\right| \leq 2\sqrt{T}\quad\mathrm{and}\\
			&\qquad \sup_{s\in[T/2,T]}\left|\sqrt{T}\,B^\sigma_{s/T}-M_2\sqrt{T}\cdot\frac{T-s}{T/2}\right| \leq 2\sqrt{T} \bigg) -  \tilde{\mathbb{P}}\left(\Delta(T,z) > \sqrt{T}/2\right).
		\end{split}
	\end{equation}
	Here $\Delta(T,z)$ is as in Assumption \ref{KMT}. Note that $B^\sigma_{1/2}$ is a centered Gaussian random variable with variance $\sigma^2/4 = \sigma^2(1/2)(1-1/2)$. Writing $\xi = B^\sigma_{1/2}$, it follows from Lemma \ref{2bridges} that there exist independent Brownian bridges $B^1,B^2$ with variance $\sigma^2/2$ so that $B^\sigma_s$ has the same law as $\frac{s}{T/2}\xi + B^1_{2s/T}$ for $s\in[0,T/2]$ and $\frac{T-s}{T/2}\xi + B^2_{(2s-T)/T}$ for $s\in[T/2,T]$. The first term in the last expression in \eqref{19BB} is thus equal to
	\begin{equation*}
		\begin{split}
			&\tilde{\mathbb{P}}\bigg(|\xi - M_2|\leq 1 \quad\mathrm{and} \sup_{s\in[0,T/2]}\left|B^1_{s/T}-(M_2-\xi)\cdot\frac{s}{T/2}\right| \leq 2 \quad \mathrm{and}\\
			&\qquad  \sup_{s\in[T/2,T]}\left|B^2_{(2s-T)/T}-(M_2-\xi)\cdot\frac{T-s}{T/2}\right| \leq 2 \bigg) \\
			\geq \; & \tilde{\mathbb{P}}\bigg(|\xi - M_2|\leq 1 \quad\mathrm{and} \sup_{s\in[0,T/2]}\big|B^1_{2s/T}\big| \leq 1 \qquad \mathrm{and}\quad \sup_{s\in[T/2,T]}\big|B^2_{(2s-T)/T}\big| \leq 1 \bigg) \\
			= \; & \tilde{\mathbb{P}}\Big(|\xi-M_2|\leq 1\Big)\cdot \tilde{\mathbb{P}}\bigg(\sup_{s\in[0,T/2]} \big|B^1_{2s/T}\big|\leq 1\bigg)\cdot \tilde{\mathbb{P}}\bigg(\sup_{s\in[0,T/2]} \big|B^2_{(2s-T)/T}\big|\leq 1\bigg) \\
			\geq \; & \left(1-2\sum_{n=1}^\infty (-1)^{n-1}e^{-2n^2/\sigma^2}\right)^2 \int_{M_2-1}^{M_2+1} \frac{e^{-2\xi^2/\sigma^2}}{\sqrt{\pi \sigma^2/2}}\,d\xi \\
			\geq \; & \left(1-2\sum_{n=1}^\infty (-1)^{n-1}e^{-2n^2/\sigma^2}\right)^2\cdot\frac{2\sqrt{2}\,e^{-2(M_2+1)^2/\sigma^2}}{\sqrt{\pi}\,\sigma}.
		\end{split}
	\end{equation*}
	In the fourth line, we used the fact that $\xi$, $B^1_\cdot$, and $B^2_\cdot$ are independent, and in the last line we used Lemma \ref{BBmax}. Since $|z-pT|\leq (M_1+1)\sqrt{T}$, Lemma \ref{Cheb} allows us to choose $T$ large enough so that $\tilde{\mathbb{P}}(\Delta(T,z) > \sqrt{T}/2)$ is less than 1/2 the expression on the right of the last line. Then in view of \eqref{19gibbs} and \eqref{19BB}, and recalling the definition of $M_2$, we conclude \eqref{19ineq}.
	
\end{proof}

We now state an important weak convergence result for scaled non-crossing $H^{\mathrm{RW}}$ line ensembles. This result will be used in the proofs of both Theorems \ref{GoodTight} and \ref{GoodBGP}. Roughly, the following lemma states that if the data of a sequence of non-crossing $H^{\mathrm{RW}}$ line ensembles converge, then after appropriate scaling, the sequence of line ensembles will converge weakly to avoiding Brownian bridges with the limiting data. We state and prove the result for a general random walk Hamiltonian, not necessarily the one fixed earlier in this section. We note that this lemma requires only Assumption \ref{KMT} on $H^{\mathrm{RW}}$.

\begin{lemma}\label{ScaledWeakConv}
	Fix $\alpha,\beta\in\mathbb{Z}$ with $\alpha < \beta$ and an $\llbracket\alpha,\beta\rrbracket$-supported random walk Hamiltonian $H^{\mathrm{RW}}$. Fix $p\in(\alpha,\beta)$, $t\in(0,1)$, $k\in\mathbb{N}$, $\vec{x},\vec{y}\in W_k^\circ$, and $a,b\in\mathbb{R}$ with $a<b$, and let $\sigma$ be the constant appearing in Assumption \ref{KMT}. For $T\in\mathbb{N}$, write $a_T = \lfloor aT\rfloor$, $b_T = \lceil bT\rceil$. Let $\vec{x}\,^T,\vec{y}\,^T\in\mathfrak{W}_k$ be sequences such that for each $i\in\llbracket 1,k\rrbracket$,
	\[
	\frac{x_i^T - pa_T}{\sigma\sqrt{T}} \longrightarrow x_i, \quad \frac{y_i^T - pb_T}{\sigma\sqrt{T}} \longrightarrow y_i
	\]
	as $T\to\infty$. Let $f : [a-1,b+1]\to(-\infty,\infty]$ and $g : [a-1,b+1]\to[-\infty,\infty)$ be continuous functions such that $f(t) > g(t)$ for all $t\in[a-1,b+1]$, $f(a) > x_1$, $f(b) > y_1$, $g(a) < x_k$, and $g(b) < y_k$. Let $f_T : [(a-1)T,(b+1)T]\to(-\infty,\infty]$ and $g_T : [(a-1)T,(b+1)T]\to[-\infty,\infty)$ be sequences of continuous functions such that
	\[
	\frac{f_T(tT) - ptT}{\sigma\sqrt{T}} \longrightarrow f(t), \quad \frac{g_T(tT) - ptT}{\sigma\sqrt{T}} \longrightarrow g(t)
	\]
	uniformly in $t\in[a-1,b+1]$ as $T\to\infty$.
	
	Then we can find $T_6 \in\mathbb{N}$ so that the measure $\mathbb{P}^{a_T,b_T,\vec{x}\,^T,\vec{y}\,^T,f_T,g_T}_{\mathrm{avoid},H^{\mathrm{RW}}}$ is well defined for $T\geq T_6$. Moreover, if $Q^T$ is a sequence of random variables with these laws and $Z^T$ are the $C([a,b])^k$-valued random variables defined by
	\[
	Z^T(t) = \left(\frac{Q_1^T(tT) - ptT}{\sigma\sqrt{T}}, \dots, \frac{Q_k^T(tT) - ptT}{\sigma\sqrt{T}}\right), \quad t\in[a,b],
	\]
	then the law of $Z^T$ converges weakly to $\mathbb{P}^{a,b,\vec{x},\vec{y},f,g}_{\mathrm{avoid}}$ as $T\to\infty$.
\end{lemma}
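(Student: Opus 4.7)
\textbf{Proof plan for Lemma \ref{ScaledWeakConv}.} The overall strategy is to realize everything on one probability space through the KMT coupling of Assumption \ref{KMT}, pass to the almost-sure limit for the \emph{free} bridges, and then promote this to a statement about the \emph{conditional} (avoiding) laws via a Portmanteau argument that is driven by Lemma \ref{NoTouch}. Concretely, for each $T$ and each $i\in\llbracket 1,k\rrbracket$, the coupling gives us, on some probability space $(\Omega_T,\mathcal{F}_T,\mathbb{Q}_T)$, a random walk bridge $\ell^{(n_T,z_i^T)}$ with $n_T=b_T-a_T$ and $z_i^T=y_i^T-x_i^T$, paired with a Brownian bridge $B_i^{\sigma}$ of variance $\sigma^2$, satisfying the KMT bound \eqref{KMTeq}. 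Shifting by $x_i^T$ and the linear interpolant, one obtains free bridges $L_i^T$ with law $\mathbb{P}^{a_T,b_T,x_i^T,y_i^T}_{H^{\mathrm{RW}}}$; Chebyshev (as in Corollary \ref{Cheb}) applied to the KMT bound — together with the hypothesis that $|z_i^T - pn_T|=O(\sqrt{T})$, which follows from the convergence of the scaled endpoints — yields
\[
\sup_{t\in[a,b]} \left| \frac{L_i^T(\lfloor tT\rfloor)-ptT}{\sigma\sqrt{T}} - \tilde{B}_i(t) \right| \longrightarrow 0 \quad \text{in probability},
\]
where $\tilde{B}_i$ is the (variance-one) Brownian bridge on $[a,b]$ from $x_i$ to $y_i$ obtained by rescaling $B_i^\sigma/\sigma$ and adding the limiting linear interpolant. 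Taking the $k$ copies to be independent, the free scaled ensembles $\tilde{Z}^T:=(L^T_\cdot - p\,\cdot\,)/(\sigma\sqrt T)$ converge in probability in $C([a,b])^k$ to $k$ independent Brownian bridges with law $\mathbb{P}^{a,b,\vec{x},\vec{y}}_{\mathrm{free}}$.

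For well-definedness, the limiting data $\vec{x},\vec{y}\in\weyl_k$ are strictly ordered and strictly separated from $f,g$ at the endpoints, and $f>g$ on $[a-1,b+1]$ by hypothesis. Hence the scaled data $(\vec{x}^T,\vec{y}^T)/(\sigma\sqrt T)$ satisfy, after centering by $p\,\cdot$, the separation hypotheses of Lemma \ref{CurveSep} on order $\sqrt{T}$ for large $T$, and the scaled $f_T,g_T$ are uniformly separated from the entry/exit data. A direct application of Lemma \ref{CurveSep} (with the bottom curve taken to be $g_T$ or just a piecewise-linear function safely below all data) shows $\mathbb{P}^{a_T,b_T,\vec{x}^T,\vec{y}^T}_{H^{\mathrm{RW}}}(A_T)>0$ for $T\ge T_6$ suitably large, where $A_T$ is the avoidance event defining $\mathbb{P}^{a_T,b_T,\vec{x}^T,\vec{y}^T,f_T,g_T}_{\mathrm{avoid},H^{\mathrm{RW}}}$; this gives the measure.

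Now for the weak convergence. Let $E$ denote the open avoidance event in $C([a,b])^k$ for the limiting Brownian bridges against $f,g$ (Definition \ref{DefAvoidingLaw}), and write $E_T$ for the analogous (scaled) event against $f_T(\cdot T)/(\sigma\sqrt T) - p\,\cdot\,/\sigma$ and its $g$-counterpart. For any bounded continuous $F:C([a,b])^k\to\mathbb{R}$,
\[
\mathbb{E}\bigl[F(Z^T)\bigr] = \frac{\mathbb{E}^{a_T,b_T,\vec{x}^T,\vec{y}^T}_{H^{\mathrm{RW}}}[F(\tilde Z^T)\,\mathbf{1}_{E_T}]}{\mathbb{P}^{a_T,b_T,\vec{x}^T,\vec{y}^T}_{H^{\mathrm{RW}}}(E_T)}.
\]
Using the uniform convergence of $f_T,g_T$ (scaled) to $f,g$ together with the a.s.\ convergence of $\tilde Z^T$ to a sample from $\mathbb{P}^{a,b,\vec{x},\vec{y}}_{\mathrm{free}}$ on the coupling space, the indicator $\mathbf{1}_{E_T}$ converges to $\mathbf{1}_E$ except on configurations that touch one of the boundary curves $f,g$ or have two adjacent curves touching. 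Lemma \ref{NoTouch}, applied pairwise to consecutive Brownian bridges and to each bridge versus $f$ and $g$ (which are continuous and strictly separated from $\vec{x},\vec{y}$ at the endpoints), shows that the $\mathbb{P}^{a,b,\vec{x},\vec{y}}_{\mathrm{free}}$-probability of this exceptional touching set is zero. Hence $\mathbf{1}_{E_T}\to\mathbf{1}_E$ almost surely under the coupling, and bounded convergence gives both numerator $\to \mathbb{E}^{a,b,\vec{x},\vec{y}}_{\mathrm{free}}[F(B)\mathbf{1}_E]$ and (with $F\equiv 1$) denominator $\to \mathbb{P}^{a,b,\vec{x},\vec{y}}_{\mathrm{free}}(E)$, which is strictly positive by Lemma \ref{Spread} (see the remark after Definition \ref{DefAvoidingLaw}). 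Dividing gives $\mathbb{E}[F(Z^T)]\to \mathbb{E}^{a,b,\vec{x},\vec{y},f,g}_{\mathrm{avoid}}[F(B)]$, which is the desired weak convergence.

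The main technical point to be careful about is the boundary passage in the Portmanteau step: a priori $E$ is open but not closed, and without Lemma \ref{NoTouch} one would only obtain $\liminf \mathbb{P}(E_T)\ge \mathbb{P}(E)$ rather than equality. The non-touching property of Brownian bridges, combined with the hypothesis that the limiting configuration at the endpoints lies strictly inside $(g,f)$ and that $f>g$ on a slightly enlarged interval (which is why the hypothesis is stated on $[a-1,b+1]$, giving room to avoid pathological behavior near $a$ and $b$ when extending $f_T$ and $g_T$), is exactly what makes the boundary of $E$ a null set under $\mathbb{P}^{a,b,\vec{x},\vec{y}}_{\mathrm{free}}$ and closes the argument.
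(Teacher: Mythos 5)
Your proposal follows essentially the same route as the paper's proof: a KMT-coupling argument to show that the free scaled bridges converge (in probability/weakly) to independent Brownian bridges, a positive-probability tube argument to establish well-definedness of the avoiding measure for large $T$, and a Skorohod/Portmanteau passage through Lemma \ref{NoTouch} to show that the conditioning on the avoidance event survives the limit. The one small imprecision is in the well-definedness step: you invoke Lemma \ref{CurveSep}, but as stated that lemma only handles a lower bounding curve $\ell_{\mathrm{b}}$ and does not control the curves from above by $f_T$; the paper instead applies Lemma \ref{Spread} to the Brownian limit and transfers back via the weak convergence of the free bridges from Step 1, which handles both boundaries at once. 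Your tube idea is salvageable — the proof of \ref{CurveSep} really does confine each curve to a band of width $O(\sqrt{T})$ around its linear interpolant, which for large $T$ sits below the scaled $f_T$ because $f(a)>x_1$ and $f(b)>y_1$ — but this needs to be said explicitly rather than attributed to a ``direct application'' of the lemma as written.
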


\begin{remark}\label{Bernoulli}
	We will use the special case of Lemma \ref{ScaledWeakConv} of symmetric Bernoulli random walks with $p = 1/2$, i.e., $\alpha = 0$, $\beta = 1$, and $\exp(-H^{\mathrm{RW}}(0)) = \exp(-H^{\mathrm{RW}}(1)) = 1/2$. In this case, the constant $\sigma$ is given by $\sigma^2 = \frac{1}{2}(1-\frac{1}{2}) = 1/4$ (see Example \ref{ex2}).
\end{remark}

\begin{proof}
	The proof of this lemma will rely on the KMT coupling provided by Assumption \ref{KMT}. We split the argument into three steps for clarity.\\
	
	\noindent\textbf{Step 1.} We first prove that for each $i\in\llbracket 1,k\rrbracket$, if $\ell_i^T$ have laws $\mathbb{P}^{a_T,b_T,x_i^T,y_i^T}_{H^{\mathrm{RW}}}$ and if the independent $C([a,b])$-valued random variables $Y_i^T$ are defined by
	\[
	Y^T_i(t) = \frac{\ell_i^T(tT) - ptT}{\sigma\sqrt{T}}\, \quad t\in[a,b],
	\] 
	then $Y_i^T$ converges weakly to $\mathbb{P}^{a,b,x_i,y_i}_{\mathrm{free}}$. Independence then implies that $Y^T := (Y_1^T,\dots,Y_k^T)$ converges weakly to $\mathbb{P}^{a,b,\vec{x},\vec{y}}_{\mathrm{free}}$, i.e., to $k$ independent Brownian bridges. Note that the $Y_i^T$ are well-defined for large enough $T$. Indeed, we have $(y_i^T-x_i^T-p(b_T-a_T))/\sigma\sqrt{T} \to y_i - x_i$ and $\sqrt{T} = o(b_T-a_T)$. Since $\alpha < p < \beta$, we can find $T_{60}$ so that $\alpha(b_T-a_T) \leq y_i^T-x_i^T \leq \beta(b_T-a_T)$ for all $T\geq T_{60}$.
	
	Let $B$ denote a standard Brownian bridge on $[0,1]$, and define $C([a,b])$-valued random variables $\tilde{B}^T, \tilde{B}$ via
	\begin{align*}
		\tilde{B}^T(t) &= \sqrt{\frac{b_T-a_T}{T}}\cdot B\left(\frac{tT-a_T}{b_T-a_T}\right) + \frac{tT-a_T}{b_T-a_T}\cdot\frac{x_i^T-pa_T}{\sqrt{T}} + \frac{b_T-tT}{b_T-a_T}\cdot\frac{y_i^T-pb_T}{\sqrt{T}},\\
		\tilde{B}(t) &= \sqrt{b-a}\cdot B\left(\frac{t-a}{b-a}\right) + \frac{t-a}{b-a}\cdot x_i + \frac{b-t}{b-a}\cdot y_i.
	\end{align*}
	We note that $\tilde{B}$ has law $\mathbb{P}^{a,b,x_i,y_i}_{\mathrm{free}}$. Since $b_T/T \to b$ and $a_T/T\to a$, our assumptions on $x_i^T,y_i^T$ imply that $\tilde{B}^T \implies \tilde{B}$ as $T\to\infty$. Now by \cite[Theorem 3.1]{Billing}, to show that $Y_i^T \implies \tilde{B}$ as $T\to\infty$, it suffices to find a sequence of probability spaces supporting $Y_i^T$ and $\tilde{B}^T$ such that
	\begin{equation}\label{rhoWeak}
		\rho(Y_i^T,\tilde{B}^T) := \sup_{t\in[a,b]} \left|Y_i^T(t) - \tilde{B}^T(t)\right| \implies 0 \quad \mbox{as } T\to\infty.
	\end{equation}
	Let us write
	\[
	\Delta_T = \sup_{t\in[aT,bT]}\left|\sigma\sqrt{T}\,\tilde{B}^T(t/T)-\sigma\sqrt{T}\,Y_i^T(t/T)\right| = \sigma\sqrt{T}\,\rho(Y_i^T,\tilde{B}^T).
	\]
	Since $\sigma B$ is a Brownian bridge with variance $\sigma$, Assumption \ref{KMT} gives us for each $T$ a probability space supporting $\tilde{B}^T$ and $Y_i^T$ as well as constants $0 < C,A,a < \infty$ such that
	\[
	\ex\left[e^{a\Delta_T}\right] \leq Ce^{A\log((b-a)T)} e^{|y_i^T-x_i^T-p(b-a)T|^2/T}
	\]
	We have $(x_i^T-paT)/\sqrt{T}\to \sigma x_i$ and $(y_i^T-pbT)/\sqrt{T}\to \sigma y_i$, so there exists $T_0\in\mathbb{N}$ so that $|y_i^T-x_i^T-p(b-a)T| \leq \sigma(y_i-x_i+1)\sqrt{T}$ for $T\geq T_0$. Chebyshev's inequality now implies that for any $\epsilon>0$ and $T\geq T_0$ we have
	\begin{align*}
	\mathbb{P}\left(\rho(Y_i^T,\tilde{B}^T) > \epsilon\right) &= \mathbb{P}\left(a\Delta_T > a\sigma\epsilon\sqrt{T}\right) \leq e^{-a\sigma\epsilon\sqrt{T}}\,\ex\left[e^{a\Delta_T}\right] \\
	&\leq C\exp\left[-a\sigma\epsilon\sqrt{T} + A\log T + A\log(b-a)+\sigma^2(y_i-x_i+1)^2\right].
	\end{align*}
	The last expression goes to 0 as $T\to\infty$, proving \eqref{rhoWeak} and completing this step.\\
	
	\noindent\textbf{Step 2.} Next, we fix $T_6$ as in the statement of the lemma, so that $\mathbb{P}^{a_T,b_T,\vec{x}\,^T,\vec{y}\,^T,f_T,g_T}_{\mathrm{avoid},H^{\mathrm{RW}}}$ is well defined for $T\geq T_6$. Observe that there exist $\epsilon > 0$ and continuous functions $h_1,\dots,h_k : [a,b]\to\mathbb{R}$ for $i\in\llbracket 1,k\rrbracket$ depending on $a,b,\vec{x},\vec{y},f,g$, such that $h_i(a) = x_i$, $h_i(b)=y_i$, and if $u_i:[a,b]\to\mathbb{R}$ are continuous functions with $\rho(u_i,h_i) := \sup_{x\in[a,b]} |u_i(x)-h_i(x)| < \epsilon$, then
	\begin{equation}\label{fuepsilon}
		f(x) - \epsilon > u_1(x) + \epsilon > u_1(x) - \epsilon > \cdots > u_k(x) + \epsilon > u_k(x) - \epsilon > g(x) + \epsilon
	\end{equation}
	for all $x\in[a,b]$. By Lemma \ref{Spread}, we have
	\begin{equation*}
		\mathbb{P}^{a,b,\vec{x},\vec{y}}_{\mathrm{free}}\left(\rho(\mathcal{Q}_i,h_i) < \epsilon \mbox{ for } i\in\llbracket 1,k\rrbracket\right) > 0.
	\end{equation*}
	The random variables $Y^T$ defined in Step 1 for $T\geq T_{60}$ converge weakly to $\mathbb{P}^{a,b,\vec{x},\vec{y}}_{\mathrm{free}}$, so we can find $T_{61}\geq T_{60}$ so that if $T\geq T_{61}$ then
	\begin{equation}\label{HRWwindow}
	\mathbb{P}^{a_T,b_T,\vec{x}\,^T,\vec{y}\,^T}_{H^{\mathrm{RW}}}\left(\rho(Y^T_i,h_i) < \epsilon \mbox{ for } i\in\llbracket 1,k\rrbracket\right) > 0.
	\end{equation}
	We now choose $T_{62}$ so that 
	\[
	\sup_{t\in[a_T/T,b_T/T]}\left|\frac{f_T(tT)-ptT}{\sigma\sqrt{T}} - f(t)\right| < \epsilon/4, \quad \sup_{t\in[a_T/T,b_T/T]}\left|\frac{g_T(tT)-ptT}{\sigma\sqrt{T}} - g(t)\right| < \epsilon/4
	\] 
	for $T\geq T_{62}$. If $f=\infty$ (resp. $g=-\infty$), we interpret this to mean that $f_N=\infty$ (resp. $g_N=-\infty$). We take $T_{63}$ large enough so that if $T\geq T_{63}$ and $|x-y|\leq 1/\sqrt{T}$ then $|f(x)-f(y)|<\epsilon/4$ and $|g(x)-g(y)|<\epsilon/4$. Lastly, we choose $T_{64}$ so that $1/\sqrt{T_{64}} < \epsilon/4$. Then \eqref{fuepsilon} implies that for $T\geq T_6 := \max(T_{61},T_{62},T_{63},T_{64})$, we have
	\begin{align*}
	&\{\rho(\mathcal{Q}_i,h_i) > \epsilon \mbox{ for } i\in\llbracket 1,k\rrbracket\}\\ 
	\subset\,&\left\{\frac{f_T(tT) - ptT}{\sigma\sqrt{T}} \geq Y^T_1(t) \geq \cdots \geq Y^T_k(t) \geq \frac{g_T(tT) - ptT}{\sigma\sqrt{T}} \mbox{ for } t\in[a_T/T,b_T/T]\right\}\\
	=\,&\left\{f_T(t) \geq \ell_1^T(t) \geq \cdots \geq \ell_k^T(t) \geq g_T(t) \mbox{ for } t\in[a_T,b_T]\right\},
	\end{align*}
	where $\ell_i^T$ have laws $\mathbb{P}^{a_T,b_T,x_i^T,y_i^T}_{H^{\mathrm{RW}}}$ as in Step 1. In combination with \eqref{HRWwindow}, this implies that $\mathbb{P}^{a_T,b_T,\vec{x}\,^T,\vec{y}\,^T,f_T,g_T}_{\mathrm{avoid},H^{\mathrm{RW}}}$ is well-defined.\\
	
	\noindent\textbf{Step 3.} We now complete the proof by showing that $Z^T \implies \mathcal{Z}$, where $\mathcal{Z}$ has law $\mathbb{P}^{a,b,\vec{x},\vec{y},f,g}_{\mathrm{avoid}}$. We write $\Sigma = \llbracket 1,k\rrbracket$, $\Lambda = [a,b]$, and $\Lambda_T = [a_T,b_T]$. By \cite[Theorem 2.1]{Billing}, it suffices to show that for any bounded continuous function $F : C(\Sigma\times\Lambda)\to\mathbb{R}$ we have
	\begin{equation}\label{scaledavoidweak}
		\lim_{T\to\infty} \ex[F(Z^T)] = \ex[F(\mathcal{Z})].
	\end{equation}
	We define the functions $\chi,\chi^T:C(\Sigma\times\Lambda)\to\mathbb{R}$ by
	\begin{align*}
		\chi(\mathcal{L}) &= \mathbf{1}\{f(t) > \mathcal{L}_1(t) > \cdots > \mathcal{L}_k(t) > g(t) \mbox{ for } t\in\Lambda\},\\
		\chi^T(L) &= \mathbf{1}\left\{\frac{f_T(tT)-ptT}{\sigma\sqrt{T}} \geq L_1(t) \geq \cdots \geq L_k(t) \geq \frac{g_T(tT)-ptT}{\sigma\sqrt{T}} \mbox{ for } t\in\Lambda\right\}.
	\end{align*}
	Then we observe that for $T\geq T_6$,
	\begin{equation}\label{scaledavoidcond}
		\ex[F(Z^T)] = \frac{\ex[F(Y^T)\chi^T(Y^T)]}{\ex[\chi^T(Y^T)]},
	\end{equation}
	where $Y^T$ is as in Step 1. By our choice of $T_6$ in Step 2, the denominator in \eqref{scaledavoidcond} is positive for all $T\geq T_6$. Similarly, we have
	\begin{equation}\label{BBavoidcond}
		\ex[F(\mathcal{Z})] = \frac{\ex[F(\mathcal{B})\chi(\mathcal{B})]}{\ex[\chi(\mathcal{L})]}, 
	\end{equation}
	where $\mathcal{B}$ has law $\mathbb{P}^{a,b,\vec{x},\vec{y}}_{\mathrm{free}}$. From \eqref{scaledavoidcond} and \eqref{BBavoidcond}, we see that to prove \eqref{scaledavoidweak} it suffices to show that for any bounded continuous function $F:C(\Sigma\times\Lambda)\to\mathbb{R}$, we have
	\begin{equation}\label{scaledBBex}
		\lim_{T\to\infty}\ex[F(Y^T)\chi^T(Y^T)] = \ex[F(\mathcal{B})\chi(\mathcal{B})].
	\end{equation}
	By Step 1, $Y^T \implies \mathcal{B}$ as $T\to\infty$. The Skorohod representation theorem \cite[Theorem 6.7]{Billing} gives a probability space $(\Omega,\mathcal{F},\mathbb{P})$ supporting $C(\Sigma\times\Lambda)$-valued random variables $\mathcal{Y}^T$ with the laws of $Y^T$ and a $C(\Sigma\times\Lambda)$-valued random variable $\mathcal{L}$ with law $\mathbb{P}^{a,b,\vec{x},\vec{y}}_{\mathrm{free}}$, such that $\mathcal{Y}^T(\omega) \to \mathcal{L}(\omega)$ uniformly on compact sets as $T\to\infty$ for all $\omega\in\Omega$. Define the events
	\begin{align*}
		A &= \{\omega\in\Omega : f > \mathcal{L}_1(\omega) > \cdots > \mathcal{L}_k(\omega) > g \mbox{ on } [a,b]\},\\
		C &= \{\omega\in\Omega : \mathcal{L}_i(\omega)(r) < \mathcal{L}_{i+1}(\omega)(r) \mbox{ for some } i\in\llbracket 0,k\rrbracket \mbox{ and } r\in[a,b]\},
	\end{align*}
	where in the definition of $E_2$ we use the convention $\mathcal{L}_0 = f$, $\mathcal{L}_{k+1} = g$. The continuity of $F$ and the assumptions on $f_T,g_T$ imply that $F(\mathcal{Y}^T)\chi^T(\mathcal{Y}^T) \to F(\mathcal{L})$ on the event $A$, and $F(\mathcal{Y}^T)\chi^T(\mathcal{Y}^T)\to 0$ on the event $C$. By Lemma \ref{NoTouch} we have $\mathbb{P}(A\cup C) = 1$, so $\mathbb{P}$-a.s. we have $F(\mathcal{Y}^T)\chi^T(\mathcal{Y}^T) \to F(\mathcal{L})\chi(\mathcal{L})$. The bounded convergence theorem then implies \eqref{scaledBBex}, completing the proof of \eqref{scaledavoidweak}.
	
\end{proof}

The next lemma gives a weak convergence result for the sequence $Z^T(t_T)$ along a sequence $t_T$ converging to a fixed interior time $t$, where $Z^T$ is as in Lemma \ref{ScaledWeakConv}. In particular, the limiting random vector almost surely has distinct components. We emphasize that this lemma applies even if $\vec{x}$ or $\vec{y}$ lie in the \textit{closed} Weyl chamber, $\overline{W}_k$, i.e., have repeated components. Thus even if the curves collide at the endpoints in the limit, they will almost surely remain separated in the interior. In \cite{Ber}, the proof of the analogous result (which only dealt with $Z^T(t)$ at a fixed $t$) required a lengthy computation of the exact limiting density $\rho$ appearing in the statement below for the case of non-crossing Bernoulli random walk bridges (see Section 8). This computation does not easily generalize to the broader setting in which we work here. To circumvent this difficulty, we exploit the weak convergence result of Lemma \ref{ScaledWeakConv}, both for non-crossing $H^{\mathrm{RW}}$ line ensembles and for non-crossing Bernoulli line ensembles, to reduce the problem to the case dealt with in \cite{Ber} (see Remark \ref{Bernoulli}).

\begin{lemma}\label{SliceWeakConv}
	Fix $p\in(\alpha,\beta)$, $t\in(0,1)$, $k\in\mathbb{N}$, and $\vec{x},\vec{y}\in\overline{W}_k$. Let $\vec{x}\,^T,\vec{y}\,^T\in\mathfrak{W}_k$ be sequences such that for each $i\in\llbracket 1,k\rrbracket$,
	\[
	\frac{x_i^T}{\sigma\sqrt{T}} \longrightarrow x_i, \quad \frac{y_i^T - pT}{\sigma\sqrt{T}} \longrightarrow y_i
	\]
	as $T\to\infty$. By Lemma \ref{ScaledWeakConv}, the measure $\mathbb{P}^{0,T,\vec{x}\,^T,\vec{y}\,^T}_{\mathrm{avoid},H^{\mathrm{RW}}}$ is well defined for sufficiently large $T$. Let $Q^T$ be a sequence of random variables with these laws and let $Z^T$ be the $C([0,1])^k$-valued random variable appearing in Lemma \ref{ScaledWeakConv} $\mathrm{(}$with $a=0, b=1\mathrm{)}$. Let $(t_T)$ be a sequence in $(0,1)$ converging to $t$. Then as $T\to\infty$, the sequence of random vectors $Z^T(t_T)$ converges weakly to a random vector $Z$ in $\mathbb{R}^k$ with a probability density $\rho$ supported on $W_k^\circ$.
\end{lemma}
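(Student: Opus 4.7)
The plan is to reduce the statement to the analogous one for avoiding symmetric Bernoulli line ensembles, which was established by explicit Schur-function computations in \cite[Section 8]{Ber}. The reduction relies on applying Assumption \ref{KMT} simultaneously to both $H^{\mathrm{RW}}$ and to the symmetric Bernoulli Hamiltonian $H^{\mathrm{Ber}}$ of Example \ref{ex1} (for which $p' = 1/2$ and $\sigma' = 1/2$), coupling both families of walks with a common Brownian bridge. This avoids any direct integrable computation for the general Hamiltonian $H^{\mathrm{RW}}$.

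First I would choose Bernoulli endpoints $\vec{\tilde{x}}^T, \vec{\tilde{y}}^T \in \mathfrak{W}_k$ satisfying $\tilde{x}_i^T/(\sigma'\sqrt{T}) \to x_i$ and $(\tilde{y}_i^T - p'T)/(\sigma'\sqrt{T}) \to y_i$. Let $V^T$ denote a sample from $\mathbb{P}^{0,T,\vec{\tilde{x}}^T,\vec{\tilde{y}}^T}_{\mathrm{avoid}, H^{\mathrm{Ber}}}$. The Bernoulli analogue from \cite[Section 8]{Ber} yields weak convergence of the rescaled slice $W^T(t_T) := ((V_i^T(t_T) - p't_T)/(\sigma'\sqrt{T}))_{i=1}^k$ to a random vector $Z$ with density $\rho$ on $\weyl_k$. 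Our task is then to show that $Z^T(t_T)$ has the same weak limit.

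Next I would construct a joint probability space supporting both the $k$-tuple of independent free $H^{\mathrm{RW}}$ walks $\ell_i^{T,H^{\mathrm{RW}}}$ from $x_i^T$ to $y_i^T$ and the $k$-tuple of independent free Bernoulli walks $\ell_i^{T,\mathrm{Ber}}$ from $\tilde{x}_i^T$ to $\tilde{y}_i^T$, coupled via common Brownian bridges $B_i$ of variance $1$ using Assumption \ref{KMT} applied to each Hamiltonian. On a high-probability event, each individual walk differs from its associated affine-shifted Brownian bridge $\sqrt{T}\,\sigma B_i(\cdot/T) + (\cdot/T)(y_i^T - x_i^T)$ (or its Bernoulli analog with $\sigma'$) by at most $O(\log T)$. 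Consequently the rescaled free walks $Y_i^{T,H^{\mathrm{RW}}}(t) := (\ell_i^{T,H^{\mathrm{RW}}}(tT) - ptT)/(\sigma\sqrt{T})$ and $W_i^T(t)$ differ uniformly by $O(\log T/\sqrt{T}) \to 0$, and they jointly converge to the same affine-shifted Brownian bridges $B_i(t) + (1-t)x_i + ty_i$.

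The final step is to transfer this free-walk comparison through the avoidance conditioning. Writing the avoiding expectation as a ratio
\[
\mathbb{E}^{\mathrm{avoid}, H^{\mathrm{RW}}}_{0,T,\vec{x}^T,\vec{y}^T}\!\left[F(Z^T(t_T))\right] = \frac{\mathbb{E}[F(Y^{T,H^{\mathrm{RW}}}(t_T))\,\mathbf{1}_{A^{H^{\mathrm{RW}}}}]}{\mathbb{P}(A^{H^{\mathrm{RW}}})},
\]
and similarly for the Bernoulli version, I would argue that on the joint space both numerator and denominator agree with their Bernoulli analogues to leading order, so the ratios have the same limit. The main obstacle is precisely this last step: since $\mathbb{P}(A^{H^{\mathrm{RW}}})$ and $\mathbb{P}(A^{\mathrm{Ber}})$ both decay to zero as $T\to\infty$, showing their ratio tends to $1$ requires proving that the error event $\{\mathbf{1}_{A^{H^{\mathrm{RW}}}} \ne \mathbf{1}_{A^{\mathrm{Ber}}}\}$ has probability of strictly smaller order than $\mathbb{P}(A^{\mathrm{Ber}})$. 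Heuristically, disagreement of the indicators forces some pair of adjacent walks to come within the KMT error window of width $O(\log T)$; this is a rare event because the joint scaling limit is an avoiding Brownian-bridge line ensemble with relaxed endpoints in $\weyl_k$, whose curves have strictly positive separation margins in the interior of $[0,1]$.
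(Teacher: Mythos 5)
Your high-level idea (reduce to the Bernoulli computation from \cite[Section 8]{Ber} via a KMT-style comparison) matches the spirit of the paper's argument, but your execution relies on a direct joint coupling of the $H^{\mathrm{RW}}$ walks and Bernoulli walks against common Brownian bridges, and then tries to compare the two avoidance events $A^{H^{\mathrm{RW}}}$ and $A^{\mathrm{Ber}}$ through that coupling. This is precisely where the argument breaks down, as you yourself flag: when $\vec{x},\vec{y}\in\overline{W}_k\setminus W_k^\circ$ the avoidance probabilities tend to zero, and showing $\mathbb{P}(\mathbf{1}_{A^{H^{\mathrm{RW}}}}\neq\mathbf{1}_{A^{\mathrm{Ber}}})=o(\mathbb{P}(A^{\mathrm{Ber}}))$ demands a fine estimate on near-collision events that the $O(\log T)$ KMT error alone does not supply. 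Your ``heuristic'' that the error event is rare is not a proof --- with degenerate boundary data the curves spend macroscopic time $o(\sqrt{T})$-close to each other, so the error event is not obviously of strictly smaller order.

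The paper sidesteps this obstacle entirely. Rather than couple $H^{\mathrm{RW}}$ and Bernoulli walks together, it perturbs the boundary data by $\epsilon$ into the strictly open Weyl chamber ($\vec{x}_{\epsilon\pm},\vec{y}_{\epsilon\pm}\in W_k^\circ$), so Lemma~\ref{ScaledWeakConv} applies cleanly to each Hamiltonian \emph{separately} (no joint coupling needed) and gives convergence of \emph{both} the perturbed $H^{\mathrm{RW}}$ ensembles $Z^T_{\epsilon\pm}$ and the perturbed Bernoulli ensembles $Y^T_{\epsilon\pm}$ to the \emph{same} avoiding Brownian line ensemble. The explicit limiting density of the Bernoulli slice from \cite[Proposition 8.2]{Ber} then identifies the slice density $\rho_{\epsilon\pm}$. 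The monotone coupling Lemma~\ref{MCL} (which you do not use) sandwiches $\mathbb{P}(Z^T(t_T)\in R)$ between $\mathbb{P}(Z^T_{\epsilon+}(t_T)\in R)$ and $\mathbb{P}(Z^T_{\epsilon-}(t_T)\in R)$, giving bounds in terms of $\int_R\rho_{\epsilon\pm}$ after $T\to\infty$. Finally \cite[Corollary 8.12]{Ber} shows $\int_R\rho_{\epsilon\pm}\to\int_R\rho$ as $\epsilon\to 0$, completing the squeeze. The key gaps in your proposal are therefore (i) no resolution of the ratio-of-vanishing-probabilities problem you correctly identify, and (ii) no use of the monotone coupling and $\epsilon$-perturbation device that makes the problem tractable.
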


\begin{proof}
	We first introduce small perturbations of the vectors $\vec{x},\vec{y}$ that lie in $W_k^\circ$, so that Lemma \ref{ScaledWeakConv} may be applied. For $\epsilon > 0$, we define $\vec{x}_{\epsilon+},\vec{y}_{\epsilon+}\in \mathbb{R}^k$ by
	\[
	(x_{\epsilon+})_i = x_i + (k-i)\epsilon, \quad (y_{\epsilon+})_i = y_i + (k-i)\epsilon.
	\]
	We have $(x_{\epsilon+})_i - (x_{\epsilon+})_{i+1} = x_i - x_{i+1} + \epsilon \geq \epsilon > 0$, so $\vec{x}_{\epsilon+}\in W_k^\circ$, and similarly $\vec{y}_{\epsilon+}\in W_k^\circ$. Likewise, we define $\vec{x}_{\epsilon-},\vec{y}_{\epsilon-}\in W_k^\circ$ via
	\[
	(x_{\epsilon-})_i = x_i - (i-1)\epsilon, \quad (y_{\epsilon-})_i = y_i - (i-1)\epsilon.
	\]
	The key observations are that $(x_{\epsilon-})_i \leq x_i \leq (x_{\epsilon+})_i$ for $i\in\llbracket 1,k\rrbracket$, $\vec{x}_{\epsilon-}\to \vec{x}$ and $\vec{x}_{\epsilon+}\to \vec{x}$ as $\epsilon\to 0$, and similarly for $\vec{y}_{\epsilon-},\vec{y}_{\epsilon+}$. We also let $\vec{x}_{\epsilon\pm}^T,\vec{y}_{\epsilon\pm}^T\in\mathfrak{W}_k$ be sequences such that
	\[
	\frac{(x_{\epsilon\pm}^T)_i}{\sigma\sqrt{T}} \longrightarrow (x_{\epsilon\pm})_i, \quad \frac{(y_{\epsilon\pm}^T)_i - pT}{\sigma\sqrt{T}} \longrightarrow (y_{\epsilon\pm})_i.
	\]
	We write $Q^T_{\epsilon+},Q^T_{\epsilon-}$ for the random variables with laws $\mathbb{P}^{0,T,\vec{x}_{\epsilon\pm}^T,\vec{y}_{\epsilon\pm}^T}_{\mathrm{avoid},H^{\mathrm{RW}}}$, and we let 
	\[
	Z^T_{\epsilon\pm}(x) = \left(\frac{(Q^T_{\epsilon\pm})_1(xT) - pxT}{\sigma\sqrt{T}},\dots,\frac{(Q^T_{\epsilon\pm})_k(xT) - pxT}{\sigma\sqrt{T}}\right), \quad x\in[0,1].
	\]
	Then Lemma \ref{ScaledWeakConv} implies that
	\[
	Z^T_{\epsilon\pm}\implies \mathbb{P}^{0,1,\vec{x}_{\epsilon\pm},\vec{y}_{\epsilon\pm}}_{\mathrm{avoid}} \quad \mathrm{as} \quad T\to\infty.
	\]
	
	We now exploit the result for Bernoulli line ensembles from \cite{Ber}. Let $B$ denote the $\llbracket 0,1\rrbracket$-supported random walk Hamiltonian with $\exp(-B(0)) = \exp(-B(1)) = 1/2$, and take $p=1/2$. As explained in Remark \ref{Bernoulli}, the constant $\sigma$ in this case is $1/2$. Let $\vec{z}_{\epsilon\pm}^T,\vec{w}_{\epsilon\pm}^T\in\mathfrak{W}_k$ be sequences such that
	\[
	\frac{(z_{\epsilon\pm}^T)_i}{\sqrt{T}/2} \longrightarrow (x_{\epsilon\pm})_i, \quad \frac{(w_{\epsilon\pm}^T)_i - T/2}{\sqrt{T}/2} \longrightarrow (y_{\epsilon\pm})_i,
	\]
	and let $L^T,L_{\epsilon\pm}^T$ denote three sequences of random variables with laws $\mathbb{P}^{0,T,\vec{x}\,^T, \vec{y}\,^T}_{\mathrm{avoid},B}$ and $\mathbb{P}^{0,T,\vec{z}_{\epsilon\pm}^T,\vec{w}_{\epsilon\pm}^T}_{\mathrm{avoid},B}$ respectively. Lastly, define 
	\begin{align*}
	Y^T(x) &= \left(\frac{L_1(xT)-xT/2}{\sqrt{T}/2}, \dots, \frac{L_k(xT)-xT/2}{\sqrt{T}/2}\right),\\
	Y^T_{\epsilon\pm}(x) &=  \left(\frac{(L_{\epsilon\pm}^T)_1(xT) - xT/2}{\sqrt{T}/2}, \dots, \frac{(L_{\epsilon\pm}^T)_k(xT) - xT/2}{\sqrt{T}/2}\right), \quad x\in[0,1].
	\end{align*}
	By Lemma \ref{ScaledWeakConv}, $Y_{\epsilon\pm}^T$ also converge weakly to $\mathbb{P}^{0,1,\vec{x}_{\epsilon\pm},\vec{y}_{\epsilon\pm}}_{\mathrm{avoid}}$ as $T\to\infty$, the same limit as $Z_{\epsilon\pm}^T$. Let us denote by $Z_{\epsilon\pm}$ random variables with the laws of these two limits. Now let $t_T,t\in (0,1)$ and $t_T\to t$ as in the statement of the lemma. We claim that
	\begin{equation}\label{YZtT}
	Y^T_{\epsilon\pm}(t_T) \implies Z_{\epsilon\pm}(t) \quad \mathrm{and} \quad Z^T_{\epsilon\pm}(t_T) \implies Z_{\epsilon\pm}(t) 
	\end{equation}
	as $T\to\infty$. To see this, note that since $Y_{\epsilon\pm}^T \implies Z_{\epsilon\pm}$ in the topology of uniform convergence on compact sets, the Skorohod representation theorem allows us to find a probability space with measure $\tilde{\mathbb{P}}$ supporting random variables with the laws of $Y_{\epsilon\pm}^T, Z_{\epsilon\pm}^T$ (we use the same notation for these random variables for simplicity), such that $Y_{\epsilon\pm}^T, Z_{\epsilon\pm}^T \to Z_{\epsilon\pm}$ uniformly on $[0,1]$, $\tilde{\mathbb{P}}$-a.s. Then since the $Y_{\epsilon\pm}^T, Z_{\epsilon\pm}^T$ are continuous, $t_T\to t$, and the convergence is uniform, we have $Y_{\epsilon\pm}^T(t_T), Z_{\epsilon\pm}^T(t_T) \longrightarrow Z_{\epsilon\pm}(t)$, $\tilde{\mathbb{P}}$-a.s. This implies \eqref{YZtT}.
	
	Now the important technical result \cite[Proposition 8.2]{Ber} shows that the random vectors $2Y^T(t)$ and $2Y^T_{\epsilon\pm}(t)$ converge weakly to random vectors with some probability densities $\rho^B,\rho^B_{\epsilon\pm}$ supported on $W_k^\circ$; precise formulas can be found in \cite[Section 8.1]{Ber}, but for our purposes it is sufficient to know the supports. It follows that $Y^T(t_T)$ and $Y^T_{\epsilon\pm}(t_T)$ have limiting densities $\rho,\rho_{\epsilon\pm}$ given by 
	\begin{equation}\label{densities}
		\rho(x) = 2\rho^B(2x), \quad \rho_{\epsilon\pm}(x) = 2\rho^B_{\epsilon\pm}(2x),
	\end{equation} 
	which are also supported on $W_k^\circ$. 
	
	We now show that $Z^T(t_T)$ also converges weakly to a random vector with this density $\rho$. It suffices to prove that for any rectangle of the form $R = [-\infty,a_1)\times\cdots\times[-\infty,a_k)$, we have
	\begin{equation}\label{rectrho}
	\lim_{T\to\infty} \mathbb{P}\left(Z^T(t_T) \in R\right) = \int_R \rho(x)\,dx.
	\end{equation}
	Lemma \ref{MCL} implies that
	\[
		\mathbb{P}\left(Z^T_{\epsilon+}(t_T)\in R\right) \leq \mathbb{P}\left(Z^T(t_T)\in R\right) \leq \mathbb{P}\left(Z^T_{\epsilon-}(t_T)\in R\right).
	\]
	By \eqref{YZtT}, $Z^T_{\epsilon\pm}(t_T)$ have the same weak limits as $Y^T_{\epsilon\pm}(t_T)$, so letting $T\to\infty$ gives
	\begin{equation}\label{rhosqueeze}
	\int_R \rho_{\epsilon+}(x)\,dx \leq \liminf_{T\to\infty} \mathbb{P}\left(Z^T(t_T)\in R\right) \leq \limsup_{T\to\infty} \mathbb{P}\left(Z^T(t_T)\in R\right) \leq \int_R \rho_{\epsilon-}(x)\,dx.
	\end{equation}
	It is shown in \cite[Corollary 8.12]{Ber} that 
	\[
	\lim_{\epsilon\to 0+} \int_R \rho^B_{\epsilon+}(x)\,dx = \lim_{\epsilon\to 0+} \int_R \rho_{\epsilon-}^B(x)\,dx = \int_R \rho^B(x)\,dx.
	\]
	Thus using \eqref{densities} and letting $\epsilon\to 0$ in \eqref{rhosqueeze}, we obtain \eqref{rectrho}. This completes the proof.
	
\end{proof}

The final lemma of this section states roughly that if the entry and exit data of a sequence of non-crossing $H^{\mathrm{RW}}$ line ensembles on $[0,T]$ remain bounded on scale $\sqrt{T}$, then with high probability along any sequence of times converging to an interior time, the curves will in fact separate from one another by a small positive distance $\delta$ on scale $\sqrt{T}$.

\begin{lemma}\label{DeltaSep}
	Fix $p\in(\alpha,\beta)$, $t\in(0,1)$, $k\in\mathbb{N}$, and $M_1,M_2,\epsilon > 0$, and let $(t_T)$ be a sequence in $(0,1)$ with $t_T\to t$ as $T\to\infty$. Then we can find $T_7\in\mathbb{N}$ and $\delta > 0$ depending on $M_1,M_2,p,k,\epsilon,(t_T)$ so that the following holds for all $T\geq T_7$. Let $\vec{x},\vec{y}\in\mathfrak{W}_k$ be such that $y_i-x_i\in\llbracket\alpha T,\beta T\rrbracket$ for $i\in\llbracket 1,k\rrbracket$. Then if $|x_i| \leq M_1\sqrt{T}$ and $|y_i - pT| \leq M_2\sqrt{T}$ for $i\in\llbracket 1,k\rrbracket$, we have
	\[
	\mathbb{P}^{0,T,\vec{x},\vec{y}}_{\mathrm{avoid},H^{\mathrm{RW}}}\left(\min_{1\leq i\leq k-1} \left(Q_i(t_TT) - Q_{i+1}(t_TT)\right) < \delta\sqrt{T}\right) < \epsilon.
	\]
	Here, $(Q_1,\dots,Q_n)$ denotes a random variable with law $\mathbb{P}^{0,T,\vec{x},\vec{y}}_{\mathrm{avoid},H^{\mathrm{RW}}}$.
\end{lemma}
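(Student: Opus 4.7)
The plan is to argue by contradiction and exploit the weak convergence result of Lemma \ref{SliceWeakConv}, which is tailor-made for this situation. Suppose the conclusion fails: then there exists $\epsilon > 0$ such that for each $n \in \mathbb{N}$ one can find $T_n \geq n$ and $\vec{x}^{T_n}, \vec{y}^{T_n} \in \mathfrak{W}_k$ satisfying $|x_i^{T_n}| \leq M_1\sqrt{T_n}$, $|y_i^{T_n} - pT_n| \leq M_2\sqrt{T_n}$, and
\[
\mathbb{P}^{0,T_n,\vec{x}^{T_n},\vec{y}^{T_n}}_{\mathrm{avoid},H^{\mathrm{RW}}}\left(\min_{1\leq i\leq k-1} \bigl(Q_i(t_{T_n}T_n) - Q_{i+1}(t_{T_n}T_n)\bigr) < n^{-1}\sqrt{T_n}\right) \geq \epsilon.
\]
By the uniform bounds on the entry and exit data, the scaled vectors $\vec{x}^{T_n}/(\sigma\sqrt{T_n})$ and $(\vec{y}^{T_n} - pT_n)/(\sigma\sqrt{T_n})$ all lie in a compact subset of $\overline{W}_k$, so by passing to a subsequence (which I continue to index by $n$) I may assume they converge to some $\vec{x}, \vec{y} \in \overline{W}_k$.

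With these limits in hand, Lemma \ref{SliceWeakConv} applies directly: the random vectors
\[
Z^{T_n}(t_{T_n}) = \left(\frac{Q_i^{T_n}(t_{T_n}T_n) - pt_{T_n}T_n}{\sigma\sqrt{T_n}}\right)_{i=1}^k
\]
converge weakly to a random vector $Z$ whose law has a density $\rho$ supported on the open Weyl chamber $W_k^\circ$. In particular $Z$ lies in $W_k^\circ$ almost surely, so there exists $\delta_0 > 0$ with $\mathbb{P}(\min_i(Z_i - Z_{i+1}) \leq \delta_0) < \epsilon/2$.

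Next I will translate the event of interest to the rescaled coordinates: the inequality $\min_i(Q_i(t_{T_n}T_n) - Q_{i+1}(t_{T_n}T_n)) < n^{-1}\sqrt{T_n}$ is equivalent to $\min_i(Z_i^{T_n}(t_{T_n}) - Z_{i+1}^{T_n}(t_{T_n})) < (\sigma n)^{-1}$. For $n$ large enough we have $(\sigma n)^{-1} \leq \delta_0$, so the set $\{z \in \mathbb{R}^k : \min_i(z_i - z_{i+1}) \leq \delta_0\}$ is a closed superset of the event in question. The Portmanteau theorem then gives
\[
\limsup_{n\to\infty}\, \mathbb{P}\bigl(\min_i (Z_i^{T_n}(t_{T_n}) - Z_{i+1}^{T_n}(t_{T_n})) \leq \delta_0\bigr) \leq \mathbb{P}(\min_i (Z_i - Z_{i+1}) \leq \delta_0) < \epsilon/2,
\]
contradicting the assumed lower bound $\epsilon$ for all sufficiently large $n$. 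This contradiction proves the lemma.

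The argument is essentially a compactness-plus-Portmanteau reduction; there is no serious obstacle to overcome, because the main technical work is already encapsulated in Lemma \ref{SliceWeakConv}, whose key output is that the limiting density charges only the open Weyl chamber. The only point requiring mild care is the subsequence extraction: the scaled entry and exit data lie in a compact subset of $\overline{W}_k$ rather than $W_k^\circ$, but this is harmless because Lemma \ref{SliceWeakConv} was explicitly formulated to allow limiting endpoints in $\overline{W}_k$.
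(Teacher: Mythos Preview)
Your proof is correct and follows essentially the same approach as the paper: argue by contradiction, use compactness to extract a subsequence with convergent scaled endpoints in $\overline{W}_k$, invoke Lemma~\ref{SliceWeakConv} to get weak convergence to a vector supported in $W_k^\circ$, and finish with the Portmanteau theorem. The only cosmetic difference is that the paper first applies Portmanteau for an arbitrary $\delta$ and then sends $\delta\to 0$ via dominated convergence, whereas you fix $\delta_0$ in advance from the absolute continuity of the limit; these are equivalent ways of organizing the same argument.
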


\begin{proof}
	Suppose the conclusion does not hold. Then we can find constants $M_1,M_2,\epsilon > 0$, sequences $T_n\to\infty$, $\delta_n\to 0$, and $\vec{x}\,^n,\vec{y}\,^n\in\mathfrak{W}_k$, and random variables $(Q_1^n,\dots,Q_k^n)$ with laws $\mathbb{P}^{0,T_n,\vec{x}\,^n,\vec{y}\,^n}_{\mathrm{avoid},H^{\mathrm{RW}}}$ under $\mathbb{P}$, such that
	\[
	\mathbb{P}\left(\min_{1\leq i\leq k-1} \left(Z_i^n - Z_{i+1}^n\right) < \delta_n\right) \geq\epsilon, \qquad Z_i^n := \frac{Q_i^n(t_{T_n}T_n) - pt_{T_n}T_n}{\sqrt{T_n}}.
	\]
	We have $|x_i^n/\sqrt{T_n}| \leq M_1$ and $|(y_i^n-pT_n)/\sqrt{T_n}| \leq M_2$ by assumption. Since $\vec{x}\,^n,\vec{y}\,^n\in\overline{W}_k$ for all $n$ and $\overline{W}_k\subset\mathbb{R}^k$ is closed, by passing to subsequences if necessary we can therefore assume that there exist $\vec{x},\vec{y}\in\overline{W}_k$ such that
	\[
	\frac{x_i^n}{\sqrt{T_n}} \longrightarrow x_i^n, \quad \frac{y_i^n - pT_n}{\sqrt{T_n}} \longrightarrow y_i^n
	\]
	as $n\to\infty$. Given $\delta > 0$, we can choose $N$ large enough so that $\delta_n \leq \delta$ for $n\geq N$. Then we have
	\[
	\epsilon \leq \limsup_{n\to\infty}\mathbb{P}\left(\min_{1\leq i\leq k-1} \left(Z_i^n - Z_{i+1}^n\right) \leq \delta\right).
	\]
	By Lemma \ref{SliceWeakConv}, $(Z_1^n,\dots,Z_k^n)$ converges weakly to a random vector $Z = (Z_1,\dots,Z_k)$ with a density $\rho$ supported on $W_k^\circ$ (the same density up to constant factors of $\sigma$); in particular, $Z_i - Z_{i+1} \geq 0$ a.s. The portmanteau theorem \cite[Theorem 3.2.11]{Durrett} in combination with the previous inequality implies that
	\begin{equation}\label{portmanteau}
	\epsilon \leq \mathbb{P}\left(\min_{1\leq i\leq k-1} \left(Z_i - Z_{i+1}\right) \leq \delta\right) \leq \sum_{i=1}^{k-1} \mathbb{P}\left(Z_i - Z_{i+1} \leq \delta\right).
	\end{equation}
	We will find a $\delta > 0$ for which this inequality cannot hold. For $\eta \geq 0$ and $i\in\llbracket 1,k-1\rrbracket$, define the sets
	\[
	E_i^\eta = \{(z_1,\dots,z_k)\in\mathbb{R}^k : 0 \leq z_i - z_{i+1} \leq \eta\}.
	\]
	Then for $\delta > 0$,
	\[
	\mathbb{P}(Z_i - Z_{i+1} \leq \delta) = \int_{\mathbb{R}^k} \rho\cdot\mathbf{1}_{E_i^\delta}\,dx_1\cdots dx_k.
	\]
	Note that $E_i^0 = \{\vec{z}\in\mathbb{R}^k : z_i = z_{i+1}\}$ is a $(k-1)$-dimensional subspace of $\mathbb{R}^k$, so it has Lebesgue measure 0. We see that $\rho\cdot\mathbf{1}_{E_i^\delta} \to \rho\cdot\mathbf{1}_{E_i^0}$ pointwise as $\delta\to 0$, so $\rho\cdot\mathbf{1}_{E_i^\delta} \to 0$ a.s. Since $|\rho\cdot\mathbf{1}_{E_i^\delta}| \leq \rho$ and $\rho$ is integrable, the dominated convergence theorem and the last equality imply that
	\[
	\mathbb{P}(Z_i - Z_{i+1} \leq \delta) \longrightarrow 0 \quad \mbox{as} \quad \delta\to 0.
	\]
	Since $\epsilon>0$ is fixed, this clearly contradicts \eqref{portmanteau} for sufficiently small $\delta > 0$.

\end{proof}
%-------------------------------------------------------------------------------------------------------------------------------------------------------------------------------------------------
% Section 4
%
%-------------------------------------------------------------------------------------------------------------------------------------------------------------------------------------------------
\section{Proof of Theorems \ref{GoodTight} and \ref{GoodBGP} }\label{Section4}

In this section we prove our main technical results, Theorems \ref{GoodTight} and \ref{GoodBGP}. Throughout, we assume that $k \in \mathbb{N}$ with $k \geq 2$, $\alpha,\beta\in\mathbb{Z}$ with $\alpha <\beta$, $p \in (\alpha,\beta)$, $\gamma,\lambda > 0$, $\theta\in[0,2)$, and an $\llbracket\alpha,\beta\rrbracket$-supported random walk Hamiltonian $H^{\mathrm{RW}}$ are all fixed. We assume \begin{equation*}
\left(\mathfrak{L}^N\right)_{N=1}^{\infty}, \qquad \mathfrak{L}^N = (L^N_1,L^N_2, \dots, L^N_k),
\end{equation*}
is a $(\gamma,p,\lambda,\theta)$-good sequence of $\llbracket 1, k\rrbracket$-indexed $H^{\mathrm{RW}}$ line ensembles as in Definition \ref{gooddef}, defined on a probability space with measure $\mathbb{P}$. 

After stating several important results in Section \ref{Section4.1}, we prove Theorem \ref{GoodTight} in Section \ref{Section4.2} and Theorem \ref{GoodBGP} in Section \ref{Section4.3}.
%-------------------------------------------------------------------------------------------------------------------------------------------------------------------------------------------------
% Section 4.1
%
%-------------------------------------------------------------------------------------------------------------------------------------------------------------------------------------------------
\subsection{Three important bounds}\label{Section4.1}
We first state three key results which we will use in the proof of the theorem. We give the proofs of these results in Sections \ref{Section5} and \ref{Section6}. For given $r > 0$ and $m \in \mathbb{N}$, we will use the notation
\begin{equation}\label{eqsts}
	t_m =\lfloor (r+m) N^{\gamma} \rfloor.
\end{equation}
	
	The first result effectively gives lower bounds on the acceptance probability of Definition \ref{HRWavoiddef}, which will be needed for the proofs of both theorems. The proof of this proposition occupies Section \ref{Section6}.
	
\begin{proposition}\label{APProp} For any $r,\epsilon > 0$ there exist $\delta > 0$ and $N_1 \in\mathbb{N}$ depending on $r,\epsilon$ such that for all $N \geq N_1$,
	\[
	\mathbb{P}\Big(Z\big( -t_1, t_1, \vec{x}, \vec{y} , \infty,  L^N_{k}\llbracket -t_1, t_1\rrbracket\big) < \delta\Big) < \epsilon,
	\]
where $\vec{x} = (L_1^N(-t_1), \dots, L_{k-1}^N(-t_1))$, $\vec{y} = (L_1^N(t_1), \dots, L^N_{k-1}(t_1))$.
\end{proposition}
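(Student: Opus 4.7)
The plan is to exhibit a high-probability event $G_N$ on which the hypotheses of Lemma \ref{CurveSep} hold with constants independent of $N$, directly producing the deterministic lower bound $\delta = (1 - 3e^{-C^2/(8\sigma^2)})^{k-1}$ on $Z$. For a large constant $C$ to be chosen in terms of $r$ and $\epsilon$, we arrange on $G_N$ that: (i) $|x_i - p(-t_1)|$ and $|y_i - pt_1|$ are at most $K\sqrt{N^\gamma}$ for $i \in \llbracket 1, k-1\rrbracket$; (ii) $x_i - x_{i+1} \geq C\sqrt{N^\gamma}$ and $y_i - y_{i+1} \geq C\sqrt{N^\gamma}$ for $i \in \llbracket 1, k-2\rrbracket$; and (iii) $L_k^N(s)$ lies at least $C\sqrt{N^\gamma}$ below the line segment joining $(-t_1, x_{k-1})$ and $(t_1, y_{k-1})$ for every $s \in \llbracket -t_1, t_1\rrbracket$. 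The upper bounds in (i) follow immediately from the good condition applied at $n = \pm(r+1)$ combined with the non-crossing ordering $L_1^N \geq \cdots \geq L_k^N$.

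To achieve (ii) and (iii), we apply the $H^{\mathrm{RW}}$-Gibbs property on an enlarged interval $\llbracket -t_M, t_M\rrbracket$ for a large integer $M$ to be chosen. The good condition at $n = \pm(r+M)$ shows that with high probability $L_1^N(\pm t_M)$, and thus by non-crossing every $L_i^N(\pm t_M)$, lies near $p(\pm t_M) - \lambda M^2 N^{\gamma/2}$ -- well below the typical value $p(\pm t_1) - \lambda(r+1)^2 N^{\gamma/2}$ at $\pm t_1$. By the Gibbs property, conditional on these endpoint values and on $L_k^N \llbracket -t_M, t_M\rrbracket$, the curves $L_1^N,\dots,L_{k-1}^N$ on this enlarged interval are distributed as avoiding $H^{\mathrm{RW}}$ random walk bridges above $L_k^N$. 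Iterated applications of Lemma \ref{CurvesHigh} -- first pushing the top curve high on $\llbracket -t_M, t_M\rrbracket$, then pushing each successive lower curve above the previous one on nested subintervals targeted near $\pm t_1$ -- produce with positive probability the heights for $L_i^N(\pm t_1)$ needed for (ii). For (iii), the same Gibbs decomposition combined with Lemma \ref{BridgeInfSup} bounds the supremum of $L_{k-1}^N(s)$, and hence of $L_k^N(s) \leq L_{k-1}^N(s)$, below a value close to $ps - \lambda M^2 N^{\gamma/2}$ on $\llbracket -t_1, t_1\rrbracket$; the line segment from $x_{k-1}$ to $y_{k-1}$ sits near $ps - \lambda(r+1)^2 N^{\gamma/2}$, giving a clearance of order $\lambda(M^2 - (r+1)^2) N^{\gamma/2}$ that exceeds $C\sqrt{N^\gamma}$ once $M$ is chosen large enough.

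The main obstacle will be choosing the enlargement $M$ consistently: we need $M$ large enough that the parabolic gain $\lambda M^2$ dominates both the fluctuation budget $\phi(\epsilon') + (r+M)^\theta$ from the good condition at $\pm(r+M)$ and the required separation scale $C$, while simultaneously the probabilistic estimates on this enlarged interval must combine to keep $\mathbb{P}(G_N^c) < \epsilon$. The assumption $\theta < 2$ is essential here: for large $M$, the quadratic drift $\lambda M^2$ outgrows the sub-parabolic fluctuation $(r+M)^\theta$, making the trade-off feasible. Once $M$ is fixed, $\mathbb{P}(G_N^c)$ decomposes into a finite sum of probabilities each controlled by the good condition or one of the bridge estimates of Section \ref{Section3.2}, giving $\mathbb{P}(G_N^c) < \epsilon$ for $N$ sufficiently large; on $G_N$, Lemma \ref{CurveSep} then delivers the uniform lower bound $\delta$, completing the argument.
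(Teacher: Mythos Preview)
Your plan has a genuine gap: the event $G_N$ you describe cannot have high probability. Conditions (ii) and (iii) ask that at the \emph{specific} times $\pm t_1$ the curves $L_1^N,\dots,L_{k-1}^N$ are separated from one another and from $L_k^N$ by $C\sqrt{N^\gamma}$. There is no reason this should hold with probability close to $1$; indeed, under the Gibbs conditional law the curves at a fixed interior time can be arbitrarily close with positive probability. You seem to recognize this yourself, since you write that iterated applications of Lemma~\ref{CurvesHigh} produce the desired separation ``with positive probability'' --- but this directly contradicts the stated goal that $\mathbb{P}(G_N^c)<\epsilon$. Your argument for (iii) is also flawed: $L_k^N$ is the bottom curve and is \emph{not} resampled by the Gibbs property, so you cannot use bridge estimates to force it low; the only available upper bound is $L_k^N\le L_1^N$ together with Lemma~\ref{NoBigMax}, which places $L_k^N(s)-ps$ below $R_1 N^{\gamma/2}$, not below $-\lambda M^2 N^{\gamma/2}$.

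What is missing is the size-biasing (Radon--Nikodym) step that the paper carries out in Section~\ref{APpf}. The paper conditions on the boundary data at $\pm t_3$ and on $L_k^N$, then compares the full avoiding law on $\llbracket -t_3,t_3\rrbracket$ with the law that only enforces avoidance on $S=\llbracket -t_3,-t_1\rrbracket\cup\llbracket t_1,t_3\rrbracket$. The Radon--Nikodym derivative between these two laws, restricted to $S$, is precisely $Z/Z_S$, where $Z$ is the acceptance probability appearing in the proposition and $Z_S$ is its expectation under the $S$-avoiding law. Lemma~\ref{APtech} then shows that under the $S$-avoiding law, $Z\ge g$ with probability at least $h>0$ (this is where the separation arguments you sketch, via Lemmas~\ref{CurvesHigh} and \ref{DeltaSep}, actually enter). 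Hence $Z_S\ge gh$, and one computes
\[
\mathbb{P}_S(Z\le gh\eta)=\widetilde{\mathbb{E}}_S\!\left[\mathbf{1}_{\{Z\le gh\eta\}}\cdot\frac{Z}{Z_S}\right]\le \frac{gh\eta}{gh}=\eta.
\]
This is the mechanism that converts a merely positive-probability separation estimate into the high-probability lower bound on $Z$ that the proposition requires; without it, your argument does not close.
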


	The next two results are ``no big max'' and ``no low min'' bounds, which are needed for the proof of tightness in Theorem \ref{GoodTight}. We give the proofs in Section \ref{Section5}.

\begin{lemma}\label{NoBigMax} For any $r,\epsilon > 0$, there exist $R_1 > 0$ and $N_2 \in\mathbb{N}$ depending on $r,\epsilon$ such that for all $N \geq N_2$,
	\[
	\mathbb{P} \bigg( \sup_{s \in [ -t_3, t_3] }\left( L^N_1(s) - p s \right) \geq  R_1N^{\gamma/2} \bigg) < \epsilon.
	\]
\end{lemma}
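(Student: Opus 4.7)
The plan is to combine the $H^{\mathrm{RW}}$-Gibbs property on a symmetric integer-scaled interval containing $[-t_3,t_3]$ with the acceptance-probability lower bound of Proposition \ref{APProp} as a denominator, and a KMT tail estimate for the free random-walk bridge as a numerator. Fix an integer $R_0 \geq \lceil r \rceil + 4$ and set $s_N = \lfloor R_0 N^\gamma\rfloor$, so that $[-t_3, t_3] \subset [-s_N, s_N]$ for $N$ large. Applying Proposition \ref{APProp} with parameter $R_0 - 1$ (so that its $t_1$ equals $s_N$) gives $\delta > 0$ and $N_1^* \in \mathbb{N}$ such that
\[
G_1 := \left\{ Z\!\left(-s_N, s_N, \vec{x}_N, \vec{y}_N, \infty, L_k^N \llbracket -s_N, s_N\rrbracket\right) \geq \delta \right\}
\]
has $\mathbb{P}(G_1) \geq 1 - \epsilon/3$ for $N \geq N_1^*$, with $\vec{x}_N,\vec{y}_N$ the endpoint vectors of $(L_1^N,\dots,L_{k-1}^N)$ at $\mp s_N$. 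Applying condition (3) of Definition \ref{gooddef} at the integers $n = \pm R_0$ gives $M > 0$ and $N_2^* \in \mathbb{N}$ such that
\[
G_2 := \bigl\{ |L_1^N(\pm s_N) - p(\pm s_N)| \leq M N^{\gamma/2} \bigr\}
\]
has $\mathbb{P}(G_2) \geq 1 - \epsilon/3$ for $N \geq N_2^*$; the at-most-one lattice-step discrepancy between $-s_N$ and $\lfloor -R_0 N^\gamma\rfloor$ is absorbed into $M$ by incurring at most a single $H^{\mathrm{RW}}$ jump.

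Next, apply the $H^{\mathrm{RW}}$-Gibbs property with $K = \llbracket 1, k-1\rrbracket$ and $[a,b] = [-s_N, s_N]$. Since the $k-1$ bridges are independent under $\mathbb{P}^{-s_N, s_N, \vec{x}_N, \vec{y}_N}_{H^{\mathrm{RW}}}$ and the event $A := \{\sup_{s \in [-t_3, t_3]}(L_1^N(s) - ps) \geq R_1 N^{\gamma/2}\}$ depends only on $L_1^N$, the identity $\mathbb{P}_{\mathrm{avoid}}(A) = \mathbb{P}_{H^{\mathrm{RW}}}(A \cap E)/Z \leq \mathbb{P}_{H^{\mathrm{RW}}}(A)/Z$ yields
\[
\mathbf{1}_{G_1}\, \mathbb{P}\bigl(A \,\big|\, \mathcal{F}^{H^{\mathrm{RW}}}_{\mathrm{ext}}\bigr) \leq \mathbf{1}_{G_1}\, \frac{1}{\delta} \, \mathbb{P}^{-s_N,\, s_N,\, L_1^N(-s_N),\, L_1^N(s_N)}_{H^{\mathrm{RW}}}(A).
\]
To bound the free-bridge probability on $G_2$, invoke Assumption \ref{KMT} after translating the bridge to $[0, 2s_N]$: this couples $L_1^N$ on $[-s_N,s_N]$ to a variance-$\sigma^2$ Brownian bridge $B^\sigma$ on $[0,2s_N]$ with error $\Delta(2s_N, z)$, where $z = L_1^N(s_N) - L_1^N(-s_N)$. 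On $G_2$ the affine interpolation of the endpoints stays within $MN^{\gamma/2}$ of $ps$, and $|z - 2ps_N| \leq 2M N^{\gamma/2} \lesssim \sqrt{s_N}$, so Corollary \ref{Cheb} gives $\mathbb{P}(\Delta(2s_N, z) > N^{\gamma/2}) < \delta\epsilon/6$ for $N$ large. On $G_2 \cap \{\Delta \leq N^{\gamma/2}\}$, the event $A$ thus forces $\sqrt{2 s_N}\, \sup_{t \in [0,1]} B^\sigma_t \geq (R_1 - M - 1) N^{\gamma/2}$.

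By equation \eqref{maxDist} of Lemma \ref{BBmax}, this last probability is at most $\exp\bigl(-(R_1 - M - 1)^2/(R_0 \sigma^2)\bigr)$, since $\sqrt{2 s_N} \leq \sqrt{2R_0}\, N^{\gamma/2}$ eventually. Choosing $R_1$ sufficiently large (depending only on $r, \epsilon$ through $R_0, M, \sigma, \delta$) makes the free-bridge probability at most $\delta \epsilon/6$, whence $\mathbf{1}_{G_1 \cap G_2}\, \mathbb{P}(A \,|\, \mathcal{F}^{H^{\mathrm{RW}}}_{\mathrm{ext}}) \leq \epsilon/3$, and the lemma follows from
\[
\mathbb{P}(A) \leq \mathbb{P}(G_1^c) + \mathbb{P}(G_2^c) + \mathbb{E}\bigl[\mathbf{1}_{G_1 \cap G_2} \mathbb{P}\bigl(A \,\big|\, \mathcal{F}^{H^{\mathrm{RW}}}_{\mathrm{ext}}\bigr)\bigr] < \epsilon.
\]
The main technical wrinkle is the KMT step: carefully shifting Assumption \ref{KMT} (stated for bridges on $[0,n]$) to the interval $[-s_N, s_N]$, verifying the hypothesis $|z - p n| = O(\sqrt{n})$ for Corollary \ref{Cheb}, and tracking the scales $\sqrt{2s_N}$ and $R_0$ so that the resulting Brownian-bridge tail bound is sharp enough in $R_1$ to swallow the $1/\delta$ factor coming from the acceptance probability.
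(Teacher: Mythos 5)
Your plan has a genuine circularity that the paper is specifically structured to avoid. You invoke Proposition~\ref{APProp} (the acceptance-probability lower bound) to control the denominator $Z$, but the paper proves Proposition~\ref{APProp} \emph{using} Lemma~\ref{NoBigMax}: in Step~2 of the proof of that proposition (Section~\ref{APpf}), the event $E_N$ includes $\sup_{s\in[-t_3,t_3]}\big(L_k^N(s)-ps\big)\leq M\sqrt{2t_3}$, whose high probability is justified exactly by Lemma~\ref{NoBigMax} (via $L_k^N\leq L_1^N$), and the off-lattice endpoint data $L_1^N(\pm t_1)$ are likewise controlled through the no-big-max/no-low-min lemmas. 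So the acceptance-probability estimate is downstream of the lemma you are trying to prove, and cannot be used as input. Note that nothing in Section~\ref{Section4.1} promises an order of proofs matching the order of statements; the explicit statement ``We give the proofs of these results in Sections~\ref{Section5} and~\ref{Section6}'' places Lemmas~\ref{NoBigMax} and~\ref{NoLowMin} (Section~\ref{Section5}) logically before Proposition~\ref{APProp} (Section~\ref{Section6}).

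The modulus-of-continuity part of your argument (KMT coupling, translation to $[0,2s_N]$, the $|z-pn|=O(\sqrt n)$ check, Lemma~\ref{BBmax}) is carried out correctly, and the Radon--Nikodym inequality $\mathbb{P}_{\mathrm{avoid}}(A)\leq \mathbb{P}_{\mathrm{free}}(A)/Z$ is valid. The difficulty is only that $Z\geq\delta$ has no independent justification here. The paper side-steps the need for any acceptance-probability input by running a \emph{contrapositive} pinning argument: decompose the bad event according to the last time $s\in[0,s_4]$ at which $L_1^N$ exceeds the threshold, condition via the $H^{\mathrm{RW}}$-Gibbs property on the bridge data on $[-s_4,s]$, drop the lower bounding curve by monotone coupling (Lemma~\ref{MCL}), and use the free-bridge interpolation estimate Lemma~\ref{BridgeInterp} to deduce that with conditional probability at least $1/3$ the curve would then also be anomalously high at the integer-scale time $-s_3$ --- contradicting the one-point tightness of Definition~\ref{gooddef}. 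That route uses only $\mathbb{P}_{\mathrm{avoid}}\geq\mathbb{P}_{\mathrm{free}}$ for monotone events of the top curve (the direction you do not need $Z$ for), so no lower bound on $Z$ is required. To repair your proof you would either need to reproduce such a non-circular argument, or supply an acceptance-probability lower bound whose proof does not pass through Lemma~\ref{NoBigMax}.
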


\begin{lemma}\label{NoLowMin}  For any $r,\epsilon > 0$ there exist $R_2 > 0$ and $N_3 \in\mathbb{N}$ depending on $r,\epsilon$ such that for all $N \geq N_3$,
	\[
	\mathbb{P}\left( \inf_{s \in [ -t_3, t_3 ]}\left(L^N_{k-1}(s) - p s \right) \leq - R_2N^{\gamma/2} \right) < \epsilon.
	\]
\end{lemma}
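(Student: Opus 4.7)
The plan is to mirror the general strategy one expects for the companion Lemma \ref{NoBigMax}, but handle the extra difficulty that non-crossing only controls $L_{k-1}^N$ from above (via $L_{k-1}^N \leq L_1^N$), not from below. First I would apply the Gibbs property of Definition \ref{DefHRWGP} to the top $k-1$ curves $(L_1^N,\dots,L_{k-1}^N)$ on $[-t_3,t_3]$, with lower boundary $L_k^N|_{[-t_3,t_3]}$. By Proposition \ref{APProp} applied with $r+2$ in place of $r$, on a good event $G_1$ of probability at least $1-\epsilon/3$ the acceptance probability $Z$ is bounded below by some $\delta>0$. On $G_1$, the standard transfer
\[
\mathbb{P}_{\mathrm{avoid},H^{\mathrm{RW}}}^{-t_3,t_3,\vec x,\vec y,\infty,L_k^N}(B) \;\leq\; \mathbb{P}_{H^{\mathrm{RW}}}^{-t_3,t_3,\vec x,\vec y}(B)/\delta
\]
replaces the avoiding measure by the product of free $H^{\mathrm{RW}}$ bridge measures, under which $L_{k-1}^N|_{[-t_3,t_3]}$ is an independent bridge from $x_{k-1}=L_{k-1}^N(-t_3)$ to $y_{k-1}=L_{k-1}^N(t_3)$.

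With this in hand I would take $B=\{\inf_{s\in[-t_3,t_3]}(L_{k-1}^N(s)-ps)\leq -R_2N^{\gamma/2}\}$ and apply Lemma \ref{BridgeInfSup} to this free bridge (after the obvious translation to start at $0$). That lemma yields the desired bound provided the bridge endpoints $(x_{k-1},y_{k-1})$ are controlled on scale $N^{\gamma/2}$, in which case the probability of the bridge dipping below the $p$-shift by $A\sqrt{T}$ decays in $A$; choosing $R_2$ correspondingly large makes the free probability of $B$ at most $\epsilon\delta/3$. The upper endpoint bound $L_{k-1}^N(\pm t_3)-p(\pm t_3)\leq MN^{\gamma/2}$ with high probability is immediate from Lemma \ref{NoBigMax} combined with $L_{k-1}^N\leq L_1^N$.

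The main obstacle is the matching lower endpoint bound $L_{k-1}^N(\pm t_3)-p(\pm t_3)\geq -MN^{\gamma/2}$, which does not follow from the hypotheses on the top curve or from non-crossing. To establish it I would repeat the Gibbs/Proposition \ref{APProp}/free-bridge transfer on a slightly enlarged interval $[-t_4,t_4]$, and then invoke Lemma \ref{SliceWeakConv}: viewing the appropriately centered and scaled values $(L_i^N(\pm t_3))_{i=1}^{k-1}$ as the evaluation of an avoiding $H^{\mathrm{RW}}$ ensemble on $[-t_4,t_4]$ at an interior time $\pm t_3/t_4$, the resulting random vectors converge weakly to a random vector whose law has a density supported on the open Weyl chamber; this supplies tight lower (and upper) tails for every coordinate, including the $(k-1)$-th. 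The outer endpoint data at $\pm t_4$ needed to run Lemma \ref{SliceWeakConv} are in turn controlled by bootstrapping from Definition \ref{gooddef}(3) applied to $L_1^N$, together with non-crossing and another invocation of Proposition \ref{APProp} and Lemma \ref{CurveSep} to guarantee a tight joint law. Once the lower endpoint control is secured, a final union bound over the failure of $G_1$, the endpoint event, and the free-bridge infimum event yields total failure probability at most $\epsilon$, completing the proof.
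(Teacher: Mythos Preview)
Your proposal has a genuine circularity problem: you invoke Proposition \ref{APProp} (twice, in fact), but if you look at the proof of Proposition \ref{APProp} in Section \ref{APpf}, it explicitly uses Lemma \ref{NoLowMin} to define the high-probability event $E_N$ on which the entry/exit data and the bottom curve are controlled. So Proposition \ref{APProp} logically depends on Lemma \ref{NoLowMin}, and you cannot use it here. The same circularity reappears in your bootstrapping step for the lower endpoint bound at $\pm t_4$: you again appeal to Proposition \ref{APProp}, and moreover the ``tight joint law'' you want for $(L_i^N(\pm t_4))$ via Lemma \ref{SliceWeakConv} would itself require two-sided control of $L_{k-1}^N(\pm t_4)$, which is the very thing you are trying to establish (just shifted to a larger time).

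The paper avoids this circularity entirely by first proving an auxiliary result, Lemma \ref{ParabolaMin}, which uses \emph{only} the parabolic one-point assumption on $L_1^N$ (condition (3) in Definition \ref{gooddef}) together with monotone coupling and the Gibbs property for the top $k-2$ curves over $L_{k-1}^N$. The idea is that if $L_{k-1}^N$ were uniformly very low on a long interval $[a,b]$, then by the Gibbs property $L_1^N$ would behave like a free bridge on $[a,b]$ and sit near the chord at the midpoint, contradicting the parabolic shape of its one-point marginals. This yields, with high probability, a random time in each of $[c,d]$ and $[-d',-c']$ where $L_{k-1}^N$ is not too low; these serve as the pinning points you were missing. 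The proof of Lemma \ref{NoLowMin} then conditions on these pinning points, uses monotone coupling to lower the data to a well-separated configuration, and applies Lemma \ref{BridgeInf} and Lemma \ref{CurveSep} directly --- never touching Proposition \ref{APProp}.
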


%-------------------------------------------------------------------------------------------------------------------------------------------------------------------------------------------------
% Section 4.2
%
%-------------------------------------------------------------------------------------------------------------------------------------------------------------------------------------------------
\subsection{Proof of Theorem \ref{GoodTight} }\label{Section4.2}

With the results of the previous section, we are equipped to prove tightness. It suffices to show that for each $i\in\llbracket 1,k-1\rrbracket$ and for every $r,\epsilon > 0$, we have
\begin{equation}\label{unifbd}
	\lim_{R\to\infty}\limsup_{N\to\infty} \mathbb{P}\left(|f_i^N(0)| \geq R\right) = 0,
\end{equation}
\begin{equation}\label{equicont}
	\lim_{\delta\searrow 0}\limsup_{N\to\infty} \mathbb{P}\bigg(\sup_{x,y\in[-r,r],\,|x-y|<\delta} \left|f_i^N(x) - f_i^N(y)\right| \geq \epsilon \bigg) = 0.
\end{equation}
The proof of \eqref{unifbd} is essentially immediate from Lemmas \ref{NoBigMax} and \ref{NoLowMin}. Indeed, given $\eta > 0$, these lemmas allow us to find $R>0$ and $N_0\in\mathbb{N}$ so that
\begin{align*}
	\mathbb{P}\left(f_i^N(0) \geq R\right) = \mathbb{P}\left(L_i^N(0) \geq RN^{\gamma/2}\right) &\leq \mathbb{P}\bigg(\sup_{s\in[-t_3,t_3]} \left(L_1^N(s) - ps\right) \geq RN^{\gamma/2}\bigg) < \eta/2,\\
	\mathbb{P}\left(f_i^N(0) \leq -R\right) = \mathbb{P}\left(L_i^N(0) \leq -RN^{\gamma/2}\right) &\leq \mathbb{P}\bigg(\inf_{s\in[-t_3,t_3]} \left(L_{k-1}^N(s) - ps\right) \leq -RN^{\gamma/2}\bigg) < \eta/2.
\end{align*}
Note we used here the fact that $L_1(0) \geq L_i(0) \geq L_{k-1}(0)$. Combining these gives
\[
\mathbb{P}\left(|f_i^N(0)| \geq R\right) \leq \mathbb{P}\left(f_i^N(0) \geq R\right) + \mathbb{P}\left(f_i^N(0) \leq -R\right) < \eta
\]
for $N\geq N_0$, which implies \eqref{unifbd}.

To prove \eqref{equicont}, given $\delta > 0$ and $N\in\mathbb{N}$ we define the event
\[
E_\delta = \left\{\sup_{s,t\in[-t_1,t_1],\,|s-t|<\delta N^\gamma} \left|L_i^N(s) - L_i^N(t) - p(s-t)\right| \geq \epsilon N^{\gamma/2}\right\},
\]
where we recall that $t_1 = \lfloor (r+1)N^\gamma\rfloor$ as in \eqref{eqsts}. Then we observe that
\begin{align*}
	\mathbb{P}\Bigg(\sup_{\substack{x,y\in[-r,r],\\|x-y|<\delta}} \left|f_i^N(x) - f_i^N(y)\right| \geq \epsilon \Bigg) &= \mathbb{P}\Bigg(\sup_{\substack{s,t\in[-rN^\gamma,rN^\gamma], \\ |s-t|<\delta N^\gamma}} \left|L_i^N(s) - L_i^N(t) - p(s-t)\right| \geq \epsilon N^{\gamma/2}\Bigg) \leq \mathbb{P}(E_\delta).
\end{align*}
We will complete the proof by showing that given $\eta > 0$, we can find $\delta_0 > 0$ and $N_1\in\mathbb{N}$ so that if $0<\delta\leq\delta_0$ and $N\geq N_1$, then
\begin{equation}\label{Edelta}
	\mathbb{P}(E_\delta) < \eta.
\end{equation}
We split the proof of this fact into two steps for clarity.\\

\noindent\textbf{Step 1.} Here we define two further events we will use to prove \eqref{Edelta}. We fix $\eta > 0$ as above. Given $\tilde{\delta}>0$, $R>0$, and $N\in\mathbb{N}$, we define
\[
F = \left\{\max_{1\leq j\leq k-1}\left|L_j^N(\pm t_1) \mp pt_1\right| \leq RN^{\gamma/2}\right\}, \quad G = \left\{Z(-t_1,t_1,\vec{x},\vec{y},\infty,L_k^N\llbracket-t_1,t_1\rrbracket) \geq \tilde{\delta}\right\}.
\] 
Here we have written $\vec{x} = (L_1^N(-t_1),\dots,L_{k-1}^N(-t_1))$, $\vec{y} = (L_1^N(t_1),\dots,L_{k-1}^N(t_1))$ as in Proposition \ref{APProp}. Lemmas \ref{NoBigMax} and \ref{NoLowMin} and Proposition \ref{APProp} allow us to choose $\tilde{\delta} > 0$, $R>0$, and $N_{10}\in\mathbb{N}$ so that for $N\geq N_{10}$ we have
\[
\mathbb{P}(F^c \cup G^c) < \eta/2.
\]
This implies
\[
\mathbb{P}(E_\delta) \leq \mathbb{P}(E_\delta\cap F\cap G) + \mathbb{P}(F^c\cup G^c) < \mathbb{P}(E_\delta\cap F\cap G) + \eta/2.
\]
In the following step, we will find $\delta_0 > 0$ and $N_{11}\in\mathbb{N}$ so that if $0<\delta\leq\delta_0$ and $N\geq N_{11}$, then
\begin{equation}\label{EdeltaFG}
	\mathbb{P}(E_\delta\cap F\cap G) < \eta/2.
\end{equation}
This implies \eqref{Edelta} for $N\geq N_1 := \max(N_{10},N_{11})$.\\

\noindent\textbf{Step 2.} We now prove \eqref{EdeltaFG}. Let $\mathcal{F}$ denote the $\sigma$-algebra generated by the random variables $L_k^N\llbracket -t_1,t_1\rrbracket$ and $L_i^N(\pm t_1)$ for $i\in\llbracket 1,k-1\rrbracket$. Then we have $F,G\in\mathcal{F}$. It follows that
\[
\mathbb{P}(E_\delta \cap F \cap G) = \ex[\mathbf{1}_{E_\delta}\mathbf{1}_F\mathbf{1}_G] = \ex[\mathbf{1}_F\mathbf{1}_G\cdot\ex[\mathbf{1}_{E_\delta}\,|\,\mathcal{F}]].
\]
The $H^{\mathrm{RW}}$-Gibbs property (Definition \ref{DefHRWGP}) implies that
\[
\ex[\mathbf{1}_{E_\delta}\,|\,\mathcal{F}] = \ex^{-t_1,t_1,\vec{x},\vec{y},\infty,L_k\llbracket-t_1,t_1\rrbracket}_{\mathrm{avoid},H^{\mathrm{RW}}}[\mathbf{1}_{E_\delta}].
\]
We also observe that the Radon-Nikodym derivative of the measure $\pr^{-t_1,t_1,\vec{x},\vec{y},\infty,L_k\llbracket-t_1,t_1\rrbracket}_{\mathrm{avoid},H^{\mathrm{RW}}}$ with respect to $\pr^{-t_1,t_1,\vec{x},\vec{y}}_{H^{\mathrm{RW}}}$ is given by
\[
\frac{d\pr^{-t_1,t_1,\vec{x},\vec{y},\infty,L_k\llbracket-t_1,t_1\rrbracket}_{\mathrm{avoid},H^{\mathrm{RW}}}}{d\pr^{-t_1,t_1,\vec{x},\vec{y}}_{H^{\mathrm{RW}}}}(Q_1,\dots,Q_{k-1}) = \frac{\mathbf{1}_A}{Z(-t_1,t_1,\vec{x},\vec{y},\infty,L_k\llbracket -t_1,t_1\rrbracket)},
\]
where $(Q_1,\dots,Q_{k-1})$ has law $\mathbb{P}^{-t_1,t_1,\vec{x},\vec{y}}_{H^{\mathrm{RW}}}$ and $A$ denotes the avoidance event that $L_1 \geq \cdots \geq L_{k-1} \geq L_k\llbracket -t_1,t_1\rrbracket$ on $\llbracket -t_1,t_1\rrbracket$. This is immediate from the fact that the acceptance probability $Z$ is defined as $\mathbb{P}_{H^{\mathrm{RW}}}^{-t_1,t_1,\vec{x},\vec{y}}(A)$ (see Definition \ref{HRWavoiddef}).

The last three equations together imply that
\begin{align*}
	\mathbb{P}(E_\delta\cap F\cap G) &= \ex\left[\mathbf{1}_F\mathbf{1}_G\cdot \ex^{-t_1,t_1,\vec{x},\vec{y}}_{H^{\mathrm{RW}}}\left[\frac{\mathbf{1}_{\tilde{E}_\delta}\mathbf{1}_A}{Z(-t_1,t_1,\vec{x},\vec{y},\infty,L_k\llbracket -t_1,t_1\rrbracket)}\right]\right],
\end{align*}
where $\tilde{E}_\delta$ is defined in the same way as $E_\delta$ with $(L_1,\dots,L_{k-1})$ replaced by $(Q_1,\dots,Q_{k-1})$. Since the acceptance probability is bounded below by $\tilde{\delta}$ on the event $G$, we have
\begin{equation}\label{tildedelta-1}
	\mathbb{P}(E_\delta\cap F\cap G) \leq \tilde{\delta}^{-1}\cdot\ex\left[\mathbf{1}_F\cdot \ex^{-t_1,t_1,\vec{x},\vec{y}}_{H^{\mathrm{RW}}}\left[\mathbf{1}_{\tilde{E}_\delta}\right]\right] = \tilde{\delta}^{-1}\cdot\ex\left[\mathbf{1}_F\cdot\mathbb{P}^{-t_1,t_1,\vec{x},\vec{y}}_{H^{\mathrm{RW}}}(\tilde{E}_\delta)\right].
\end{equation}
Note that $Q_1,\dots,Q_{k-1}$ are each independent with laws $\mathbb{P}^{-t_1,t_1,x_i,y_i}_{H^{\mathrm{RW}}}$, so we have
\[
\mathbb{P}^{-t_1,t_1,\vec{x},\vec{y}}_{H^{\mathrm{RW}}}(\tilde{E}_\delta) = \prod_{i=1}^{k-1} \mathbb{P}^{0,2t_1,0,y_i-x_i}_{H^{\mathrm{RW}}}\left(\sup_{x,y\in[0,2t_1],\,|x-y| < \delta N^{\gamma/2}} |\ell_i(x)-\ell_i(y) - p(x-y)| \geq \epsilon N^{\gamma/2}\right),
\]
where $\ell_i$ have laws $\mathbb{P}^{0,2t_1,0,y_i-x_i}_{H^{\mathrm{RW}}}$. Since on the event $F$ we have $|y_i - x_i - 2pt_1| \leq 2RN^{\gamma/2}$, Lemma \ref{MOC} allows us to choose $\delta_0 > 0$ and $N_{11}\in\mathbb{N}$ so that if $0<\delta\leq\delta_0$ and $N\geq N_{11}$, then each of the factors in the product on the right is bounded by $(\tilde{\delta}\eta/2)^{1/(k-1)}$ on the event $F$. Then we have
\[
\mathbf{1}_F\cdot\mathbb{P}^{-t_1,t_1,\vec{x},\vec{y}}_{H^{\mathrm{RW}}}(\tilde{E}_\delta) \leq \tilde{\delta}\eta/2,
\]
which in view of \eqref{tildedelta-1} implies \eqref{EdeltaFG} for $0<\delta\leq\delta_0$ and $N\geq N_{11}$. This completes the proof of Theorem \ref{GoodTight}.

%-------------------------------------------------------------------------------------------------------------------------------------------------------------------------------------------------
% Section 4.3
%
%-------------------------------------------------------------------------------------------------------------------------------------------------------------------------------------------------

\subsection{Proof of Theorem \ref{GoodBGP} }\label{Section4.3}

We now turn to the proof of the Brownian Gibbs property for subsequential limits. We fix notation as in the theorem statement; namely, we let $f^\infty = (f_1^\infty,\dots,f_{k-1}^\infty)$ denote a weak subsequential limit of the sequence $\{f^N\}$ appearing in Theorem \ref{GoodTight}. We write $\mathcal{L}^N := \sigma^{-1} f^N$, $\mathcal{L}^\infty := \sigma^{-1} f^\infty$, with $\sigma = \sigma_p$ as in Assumption \ref{KMT}. Roughly, the idea is to argue that the $H^{\mathrm{RW}}$-Gibbs property transfers to the partial Brownian Gibbs property in the limit. We will utilize the weak convergence result Lemma \ref{ScaledWeakConv} for non-crossing random walk bridges.

We will need the following lemma in order to apply Lemma \ref{ScaledWeakConv}, which states that at any fixed time, the values of the curves of $\mathcal{L}^\infty$ are almost surely distinct.

\begin{lemma}\label{DistinctPts}
	For any $x\in\mathbb{R}$, we have $\mathcal{L}^\infty(x) = (\mathcal{L}^\infty_1(x),\dots,\mathcal{L}^\infty_{k-1}(x)) \in W_{k-1}^\circ$, $\mathbb{P}$-a.s.
\end{lemma}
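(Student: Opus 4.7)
The plan is to reduce the claim to controlling minimum gaps of the pre-limit ensembles $\mathcal{L}^N = \sigma^{-1} f^N$ (with $\sigma = \sigma_p$ as in Assumption \ref{KMT}) and then control those gaps using the $H^{\mathrm{RW}}$-Gibbs property together with Lemma \ref{DeltaSep}. Since $\mathcal{L}^N$ is non-intersecting and the event $\{f_1(x) \geq \cdots \geq f_{k-1}(x)\}$ is closed in $C(\llbracket 1, k-1\rrbracket \times \mathbb{R})$, portmanteau first yields $\mathcal{L}^\infty(x) \in \overline{W}_{k-1}$ almost surely. To upgrade to $\mathcal{L}^\infty(x) \in W^\circ_{k-1}$ a.s., it suffices to produce, for every $\eta > 0$, a $\delta > 0$ with
\[
\mathbb{P}\Bigl(\min_{1 \leq i \leq k-2} \bigl(\mathcal{L}^\infty_i(x) - \mathcal{L}^\infty_{i+1}(x)\bigr) < \delta\Bigr) < \eta,
\]
since then letting $\eta \to 0$ forces $\mathbb{P}(\mathcal{L}^\infty(x) \notin W^\circ_{k-1}) = 0$. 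Because $f \mapsto \min_i (f_i(x) - f_{i+1}(x))$ is continuous and $\{\cdot < \delta\} \subset \mathbb{R}$ is open, portmanteau applied to the weakly convergent subsequence $\mathcal{L}^{N_\ell} \Rightarrow \mathcal{L}^\infty$ reduces the task to producing $\delta > 0$ and $N_0$ such that $\mathbb{P}(\min_i (\mathcal{L}^N_i(x) - \mathcal{L}^N_{i+1}(x)) < \delta) < \eta$ for all $N \geq N_0$ along the subsequence.

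This discrete bound will be established via a Gibbs-property reduction that closely parallels the proof of Theorem \ref{GoodTight}. Fix $r > |x| + 1$, set $t_1 = \lfloor (r+1) N^\gamma \rfloor$, and write $\vec{x} = (L^N_1(-t_1), \ldots, L^N_{k-1}(-t_1))$, $\vec{y} = (L^N_1(t_1), \ldots, L^N_{k-1}(t_1))$. Lemmas \ref{NoBigMax} and \ref{NoLowMin} together with Proposition \ref{APProp} provide $R > 0$ and $\tilde{\delta} > 0$ such that the events
\[
F = \Bigl\{\max_{1 \leq j \leq k-1} \bigl|L^N_j(\pm t_1) \mp p t_1\bigr| \leq R N^{\gamma/2}\Bigr\}, \quad G = \bigl\{Z\bigl(-t_1, t_1, \vec{x}, \vec{y}, \infty, L^N_k \llbracket -t_1, t_1 \rrbracket\bigr) \geq \tilde{\delta}\bigr\}
\]
satisfy $\mathbb{P}(F^c \cup G^c) < \eta/2$ for all sufficiently large $N$. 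Conditioning on the $\sigma$-algebra generated by $L^N_k \llbracket -t_1, t_1\rrbracket$ and $L^N_j(\pm t_1)$, the $H^{\mathrm{RW}}$-Gibbs property identifies the conditional law of $(L^N_1, \ldots, L^N_{k-1})$ on $\llbracket -t_1, t_1\rrbracket$ with $\mathbb{P}^{-t_1, t_1, \vec{x}, \vec{y}, \infty, L^N_k}_{\mathrm{avoid}, H^{\mathrm{RW}}}$. Comparing this with the boundary-free avoiding measure and using the lower bound $Z \geq \tilde{\delta}$ on $G$ shows that the former is dominated pointwise by $\tilde{\delta}^{-1} \mathbb{P}^{-t_1, t_1, \vec{x}, \vec{y}}_{\mathrm{avoid}, H^{\mathrm{RW}}}$. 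It therefore suffices to prove that for $\delta$ small enough and $N$ large,
\[
\mathbf{1}_F \cdot \mathbb{P}^{-t_1, t_1, \vec{x}, \vec{y}}_{\mathrm{avoid}, H^{\mathrm{RW}}}\Bigl(\min_i \bigl(Q_i(xN^\gamma) - Q_{i+1}(xN^\gamma)\bigr) < \delta \sigma N^{\gamma/2}\Bigr) \leq \tilde{\delta} \eta / 2.
\]

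This final estimate is exactly what Lemma \ref{DeltaSep} delivers after a translation of coordinates. Shifting the interval $[-t_1, t_1]$ to $[0, T]$ with $T = 2 t_1$ via $Q'_i(s) = Q_i(s - t_1) + p t_1$ preserves the $H^{\mathrm{RW}}$ law (increments are invariant under vertical shifts) and converts the boundary data to $Q'_i(0) = L^N_i(-t_1) + p t_1$ and $Q'_i(T) = L^N_i(t_1) + p t_1$, which on $F$ satisfy $|Q'_i(0)|, |Q'_i(T) - pT| \leq R N^{\gamma/2}$. Since $\sqrt{T} \sim \sqrt{2(r+1)}\, N^{\gamma/2}$, these are bounded by a fixed constant multiple of $\sqrt{T}$. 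The evaluation time $x N^\gamma + t_1 \in [0, T]$ corresponds to $t_N := (x N^\gamma + t_1)/T$, which converges to $\tfrac{1}{2} + x/(2(r+1)) \in (0, 1)$ as $N \to \infty$ since $|x| < r+1$. Lemma \ref{DeltaSep} then provides a $\delta_0 > 0$ such that the avoiding-bridge probability above is at most $\tilde{\delta}\eta/2$ on $F$ for all large $N$; choosing $\delta$ small enough that $\delta \sigma N^{\gamma/2} \leq \delta_0 \sqrt{T}$ for large $N$ closes the argument. The main obstacle is the passage from the conditional avoiding measure with lower boundary $L^N_k$ to the boundary-free avoiding measure required by Lemma \ref{DeltaSep}, which is handled by the acceptance-probability bound from Proposition \ref{APProp}, together with the bookkeeping needed to match the vertical scale $R N^{\gamma/2}$ to the $\sqrt{T}$ scale of Lemma \ref{DeltaSep}.
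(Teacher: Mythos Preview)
Your argument is correct and takes a genuinely different route from the paper's own proof. The paper argues by contradiction on the limit: after Skorohod representation, on the event $\{\mathcal{L}_i^\infty(x)=\mathcal{L}_{i+1}^\infty(x)\}$ the rescaled difference of two \emph{free} $H^{\mathrm{RW}}$ bridges with endpoints $(L_i^N(a),L_i^N(b))$ and $(L_{i+1}^N(a),L_{i+1}^N(b))$ converges (via Lemma~\ref{ScaledWeakConv}) to a Brownian bridge started at $0$, which by the Blumenthal $0$--$1$ law dips below $0$ almost surely; hence the probability that $Q_i\geq Q_{i+1}$ on $[a,b]$ tends to $0$, forcing the acceptance probability $Z$ to be small, in contradiction with Proposition~\ref{APProp}. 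Your approach instead runs the same Radon--Nikodym/acceptance-probability machinery as in the tightness proof (Section~\ref{Section4.2}) and feeds the resulting boundary-free avoiding measure directly into Lemma~\ref{DeltaSep}, which was built precisely to give a uniform lower bound on the gap at an interior time. This is arguably cleaner: it stays entirely on the pre-limit side, avoids the somewhat delicate $\omega$-by-$\omega$ application of Lemma~\ref{ScaledWeakConv} with random endpoints, and reuses existing lemmas without any new probabilistic input. The only small technical wrinkle is that your vertical shift by $pt_1$ need not be an integer; replacing it by $\lfloor pt_1\rfloor$ costs at most $1$ in the endpoint bounds and is harmless. The paper's route, by contrast, makes no use of Lemma~\ref{DeltaSep} or of Lemmas~\ref{NoBigMax}--\ref{NoLowMin}, trading those for a direct Brownian-bridge crossing argument.
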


\begin{proof}
	By passing to a subsequence, we may assume that $\mathcal{L}^N \implies \mathcal{L}^\infty$ as $N\to\infty$. By the Skorohod representation theorem, we may assume furthermore that $\mathcal{L}^N \to \mathcal{L}^\infty$ uniformly on compact sets as $N\to\infty$, $\mathbb{P}$-a.s. We recall that $\sigma \mathcal{L}^N_i(x) = N^{-\gamma/2}(L^N_i(xN^\gamma) - pxN^\gamma)$.
	
	Suppose that $\mathcal{L}_i^\infty(x) = \mathcal{L}_{i+1}^\infty(x)$ for some $i\in\llbracket 1,k-2\rrbracket$. Let us write $a = \lfloor xN^\gamma\rfloor $, $b = \lceil (x+2)N^\gamma\rceil$. If $Q_i,Q_{i+1}$ are independent $H^{\mathrm{RW}}$ bridges with laws $\mathbb{P}^{a,b,L_i^N(a),L_i^N(b)}_{H^{\mathrm{RW}}}$ and $\mathbb{P}^{a,b,L_{i+1}^N(a),L_{i+1}^N(b)}_{H^{\mathrm{RW}}}$, then $\ell := Q_i - Q_{i+1}$ is a random walk bridge on $[a,b]$ taking values in $\llbracket \alpha - \beta, -\alpha + \beta\rrbracket$. Note in particular that 
	\begin{equation}\label{EndptLim}
		\begin{split}
			N^{-\gamma/2}\ell(a)/\sigma &= \mathcal{L}_i^N(aN^{-\gamma}) - \mathcal{L}_{i+1}^N(aN^{-\gamma}) \longrightarrow \mathcal{L}_i^\infty(x) - \mathcal{L}_{i+1}^\infty(x) = 0,\\
			N^{-\gamma/2}\ell(b)/\sigma &= \mathcal{L}_i^N(bN^{-\gamma}) - \mathcal{L}_{i+1}^N(bN^{-\gamma}) \longrightarrow \mathcal{L}_i^\infty(x+2) - \mathcal{L}_{i+1}^\infty(x+2)
		\end{split}
	\end{equation}
	as $N\to\infty$. Here we used the facts that $aN^{-\gamma}\to x$ and $bN^{-\gamma}\to x+2$ as $N\to\infty$, and the assumption that the convergence of $\mathcal{L}^N$ is uniform.
	
	By Lemma \ref{ScaledWeakConv}, the rescaled bridges $N^{-\gamma/2}(Q_{i+1}(tN^\gamma) - ptN^\gamma)/\sigma$ and $N^{-\gamma/2}(Q_i(tN^\gamma)-ptN^\gamma)/\sigma$, with $t\in[aN^{-\gamma},bN^{-\gamma}]$, converge weakly to two Brownian bridges $B^1$ and $B^2$ on $[x,x+2]$. Therefore, their difference $N^{-\gamma/2}\ell(tN^\gamma)/\sigma$ converges weakly to the difference of two independent Brownian bridges, $B := B^1-B^2$. As a difference of Brownian bridges, $B$ is itself a Brownian bridge up to scaling. In particular, in view of \eqref{EndptLim} we see that $B/\sqrt{2}$ is a Brownian bridge from $(x,0)$ to $(x+2,y/\sqrt{2})$, where $y = \mathcal{L}_i^\infty(x+2) - \mathcal{L}_{i+1}^\infty(x+2)$ (see \cite[Lemma 7.6]{Ber}). Therefore $N^{-\gamma/2}\ell(tN^\gamma)/\sigma\sqrt{2}$ converges weakly to $\mathbb{P}^{x,x+2,0,y/\sqrt{2}}_{\mathrm{free}}$. Since $B/\sqrt{2}$ is a Brownian bridge started at 0, with probability one we have $\min_{t\in[x,x+2]} B_t/\sqrt{2} < 0$. (This follows from the Blumenthal 0-1 law; see \cite[Lemma 7.5]{Ber}.) Thus given $\delta > 0$, we can choose $N$ large enough so that the probability of $N^{-\gamma/2}\ell(tN^\gamma)/\sigma\sqrt{2}$, or equivalently $\ell$, remaining above 0 on $[a,b]$ is less than $\delta$. Let us write $\mathbb{P}^{a,b,\ell(a),\ell(b)}_{\mathrm{diff}}$ for the law of $\ell$; this is a random measure depending on $\ell(a)$ and $\ell(b)$. Then for large enough $N$ we have
	\begin{equation}\label{collideAP}
		\begin{split}
			\mathbb{P}\left(\mathcal{L}_i^\infty(x) = \mathcal{L}_{i+1}^\infty(x)\right) &\leq \mathbb{P}\bigg(\mathbb{P}^{a,b,\ell(a),\ell(b)}_{\mathrm{diff}}\bigg(\inf_{s\in[a,b]}\ell(s) \geq 0\bigg) < \delta\bigg)\\ 
			&\leq \mathbb{P}\left(Z(a,b,L^N(a),L^N(b),\infty,L^N_k) < \delta\right).
		\end{split}
	\end{equation}
	Here $Z$ denotes the acceptance probability of Definition \ref{HRWavoiddef}. This is the probability that $k-1$ independent Bernoulli bridges $Q_1,\dots,Q_{k-1}$ on $[a,b]$ with entrance and exit data $L^N(aN^\gamma)$ and $L^N(bN^\gamma)$ do not cross one another or $L_k^N$. The last inequality follows because $\ell$ has the law of the difference of $Q_i$ and $Q_{i+1}$, and the acceptance probability is bounded above by the probability that $Q_i$ and $Q_{i+1}$ do not cross, i.e., that $Q_i - Q_{i+1} \geq 0$. By Proposition \ref{APProp}, given $\epsilon > 0$ we can choose $\delta$ so that the probability on second line in \eqref{collideAP} is $<\epsilon$. We conclude that
	\[
	\mathbb{P}\left(\mathcal{L}_i^\infty(x) = \mathcal{L}_{i+1}^\infty(x)\right) = 0.
	\]
\end{proof}

We conclude this section with the proof of the theorem.

\begin{proof}[Proof of Theorem \ref{GoodBGP}]
	As above, we assume without loss of generality that $\mathcal{L}^N \implies \mathcal{L}^\infty$ as $N\to\infty$. Fix a set $K = \llbracket k_1,k_2\rrbracket \subseteq \llbracket 1, k-2\rrbracket$ and $a,b\in\mathbb{R}$ with $a<b$, as well as a bounded Borel-measurable function $F:C(K\times[a,b])\to\mathbb{R}$. It suffices to prove that $\mathbb{P}$-a.s., $\mathcal{L}^\infty$ is non-intersecting and
	\begin{equation}\label{BGPcondex}
		\ex[F(\mathcal{L}^\infty|_{K\times[a,b]})\,|\,\mathcal{F}_{\mathrm{ext}}(K\times(a,b))] = \ex^{a,b,\vec{x},\vec{y},f,g}_{\mathrm{avoid}}[F(\mathcal{Q})],
	\end{equation}
	where $\vec{x} = (\mathcal{L}^\infty_{k_1}(a),\dots,\mathcal{L}^\infty_{k_2}(a))$, $\vec{y} = (\mathcal{L}^\infty_{k_1}(b),\dots,\mathcal{L}^\infty_{k_2}(b))$, $f=\mathcal{L}^\infty_{k_1-1}$ (with $\mathcal{L}^\infty_0 = +\infty$), $g=\mathcal{L}^\infty_{k_2+1}$, the $\sigma$-algebra $\mathcal{F}_{\mathrm{ext}}(K\times(a,b))$ is as in Definition \ref{DefBGP}, and $\mathcal{Q}$ has law $\mathbb{P}^{a,b,\vec{x},\vec{y},f,g}_{\mathrm{avoid}}$. Note that \eqref{BGPcondex} in fact implies that $\mathcal{L}^\infty$ is a.s. non-intersecting. Indeed, take $K = \llbracket 1,k-2\rrbracket$, $[a,b]=[-m,m]$ where $m\in\mathbb{N}$, and $F(f_1,\dots,f_{k-2}) = \mathbf{1}\{f_1(s) > \cdots > f_{k-2}(s) \mbox{ for all } s\in[-m,m]\}$. Then the right-hand side is 1, so $\mathcal{L}^\infty|_{\llbracket 1,k-2\rrbracket\times[-m,m]}$ is a.s. non-intersecting. Taking the countable intersection over $m,k\in\mathbb{N}$  shows that $\mathcal{L}^\infty$ is a.s. non-intersecting. It remains to prove \eqref{BGPcondex}, which we do in two steps.\\
	
	\noindent\textbf{Step 1. } Fix $m\in\mathbb{N}$, $n_1,\dots,n_m\in\llbracket 1,k\rrbracket$, $t_1,\dots,t_m\in\mathbb{R}$, and $h_1,\dots,h_m : \mathbb{R}\to\mathbb{R}$ bounded continuous functions. Define $S = \{i\in\llbracket 1,m\rrbracket : n_i \in K, t_i \in [a,b]\}$. In this step we prove that
	\begin{equation}\label{BBcondexsplit}
		\ex\left[\prod_{i=1}^m h_i(\mathcal{L}^\infty_{n_i}(t_i))\right] = \ex\left[\prod_{s\notin S} h_s(\mathcal{L}^\infty_{n_s}(t_s))\cdot\ex^{a,b,\vec{x},\vec{y},f,g}_{\mathrm{avoid}}\left[\prod_{s\in S} h_s(Q_{n_s}(t_s))\right]\right],
	\end{equation}
	where $Q$ denotes a random variable with law $\pr^{a,b,\vec{x},\vec{y},f,g}_{\mathrm{avoid}}$. By assumption, we have
	\begin{equation}\label{BGPweak}
		\lim_{N\to\infty}\ex\left[\prod_{i=1}^m h_i(\mathcal{L}^N_{n_i}(t_i))\right] = \ex\left[\prod_{i=1}^m h_i(\mathcal{L}^\infty_{n_i}(t_i))\right].
	\end{equation}
	We define the sequences $a_N = \lfloor aN^\gamma\rfloor N^{-\gamma}$, $b_N = \lceil bN^\gamma\rceil N^{-\gamma}$, $f_N = \mathcal{L}_{k_1-1}^N$ (where $\mathcal{L}_0 = +\infty$), and $g_N = \mathcal{L}_{k_2+1}^N$. We also define $\vec{x}\,^N,\vec{y}\,^N \in \mathfrak{W}^{k_2-k_1+1}$ so that
	\[
	\frac{x_i^N - pa_NN^\gamma}{\sigma N^{\gamma/2}} = \mathcal{L}_i^N(a_N), \quad \frac{y_i^N - pb_NN^\gamma}{\sigma N^{\gamma/2}} = \mathcal{L}_i^N(b_N), \quad i \in \llbracket k_1, k_2\rrbracket.
	\]
	Since $a_N \to a$, $b_N\to b$, we may choose $N_0$ sufficiently large so that if $N\geq N_0$, then $t_s < a_N$ or $t_s > b_N$ for all $s\notin S$ with $n_s \in K$. Note that the line ensemble $(\mathcal{L}_1^N,\dots,\mathcal{L}_{k-1}^N)$ satisfies the $H^{\mathrm{RW}}$-Gibbs property of Definition \ref{DefHRWGP}. It follows that the law of $\mathcal{L}^N|_{K\times[a,b]}$ conditioned on the $\sigma$-algebra $\mathcal{F} = \sigma\left(\mathcal{L}^N_{k_1-1}, \mathcal{L}^N_{k_2+1}, \mathcal{L}^N_{k_1}(a_N), \mathcal{L}^N_{k_1}(b_N),\dots,\mathcal{L}^N_{k_2}(a_N),\mathcal{L}^N_{k_2}(b_N)\right)$ is the law of the $C(K\times[a,b])$-valued random variable $Z^N$ given by
	\[
	Z^N(t) = \left(\frac{Q_{k_1}^N(tN^\gamma)-ptN^\gamma}{\sigma N^{\gamma/2}}, \dots, \frac{Q_{k_2}^N(tN^\gamma)-ptN^\gamma}{\sigma N^{\gamma/2}}\right), \quad t\in[a,b],
	\] 
	where $(Q_{k_1}^N,\dots,Q_{k_2}^N)$ has law $\mathbb{P}^{a_N,b_N,\vec{x}\,^N,\vec{y}\,^N,f_N,g_N}_{\mathrm{avoid},H^{\mathrm{RW}}}$. Let us write $\ex_{\mathrm{avoid},N}$ for the expectation with respect to the law of $Z^N$. Then we have
	\begin{equation}\label{BBschur}
		\ex\left[\prod_{i=1}^m h_i(Y^N_{n_i}(t_i))\right] = \ex\left[\prod_{s\notin S} h_s(Y^N_{n_s}(t_s))\cdot\ex_{\mathrm{avoid},N}\left[\prod_{s\in S} h_s(Z^N_{n_s-k_1+1}(t_s))\right]\right].
	\end{equation}
	Now by Lemma \ref{DistinctPts}, we have $\mathbb{P}$-a.s. that $\vec{x},\vec{y} \in W_{k_2-k_1+1}^\circ$. By the Skorohod representation theorem, we may assume that $\mathcal{L}^N \to \mathcal{L}^\infty$ uniformly on compact sets almost surely. In particular, $f_N\to f = f^\infty_{k_2+1}$ and $g_N\to g = f^\infty_{k_1-1}$ uniformly on $[a-1,b+1]\supseteq [a_N,b_N]$, and $N^{-\gamma/2}(x_i^N - pa_N N^\gamma)/\sigma \to x_i$, $N^{-\gamma/2}(y_i^N-pb_N N^{\gamma})/\sigma \to y_i$ for $i\in\llbracket k_1, k_2\rrbracket$. It follows from Lemma \ref{ScaledWeakConv} and the definition of $Z^N$ above that 
	\begin{equation}\label{BGPNweak}
		\lim_{N\to\infty} \ex_{\mathrm{avoid},N}\left[\prod_{s\in S} h_s(Z^N_{n_s-k_1+1}(t_s))\right] = \ex^{a,b,\vec{x},\vec{y},f,g}_{\mathrm{avoid}}\left[\prod_{s\in S} h_s(Q_{n_s}(t_s))\right].
	\end{equation}
	Lastly, the continuity of the $h_i$ implies that
	\begin{equation}\label{BGPuniform}
		\lim_{N\to\infty}\prod_{s\notin S} h_s(\mathcal{L}_{n_s}^N(t_s)) = \prod_{s\notin S} h_s(\mathcal{L}^\infty_{n_s}(t_s)).
	\end{equation}
	Combining \eqref{BGPweak}, \eqref{BBschur}, \eqref{BGPNweak}, and \eqref{BGPuniform} and applying the dominated convergence theorem proves \eqref{BBcondexsplit}.\\
	
	\noindent\textbf{Step 2. } In this step we prove \eqref{BGPcondex} as a consequence of \eqref{BBcondexsplit}. For $n\in\mathbb{N}$ we define piecewise linear functions
	\[
	\chi_n(x,r) = \begin{cases}
		0, & x > r + 1/n,\\
		1-n(x-r), & x\in[r,r+1/n],\\
		1, & x < r.
	\end{cases}
	\]
	We fix $m_1,m_2\in\mathbb{N}$, $n^1_1,\dots,n^1_{m_1},n^2_1,\dots,n^2_{m_2}\in\llbracket 1,k\rrbracket$, $t^1_1,\dots,t^1_{m_1},t^2_1,\dots,t^2_{m_2}\in\mathbb{R}$, such that $(n^1_i,t^1_i)\notin K\times[a,b]$ and $(n^2_i,t^2_i)\in K\times[a,b]$ for all $i$. Then \eqref{BBcondexsplit} implies that
	\[
	\ex\left[\prod_{i=1}^{m_1} \chi_n(\mathcal{L}_{n_i^1}^\infty(t_i^1),a_i)\prod_{i=1}^{m_2}\chi_n(\mathcal{L}_{n_i^2}^\infty(t_i^2),b_i)\right] = \ex\left[\prod_{i=1}^{m_1} \chi_n(\mathcal{L}_{n_i^1}^\infty(t_i^1),a_i) \cdot \ex^{a,b,\vec{x},\vec{y},f,g}_{\mathrm{avoid}}\left[\prod_{i=1}^{m_2} \chi_n(Q_{n_i^2}(t_i^2),b_i)\right]\right].
	\]
	Letting $n\to\infty$, we have $\chi_n(x,r)\to \chi(x,r) := \mathbf{1}_{x\leq r}$, and the dominated convergence theorem implies that
	\[
	\ex\left[\prod_{i=1}^{m_1} \chi(\mathcal{L}_{n_i^1}^\infty(t_i^1),a_i)\prod_{i=1}^{m_2}\chi(\mathcal{L}_{n_i^2}^\infty(t_i^2),b_i)\right] = \ex\left[\prod_{i=1}^{m_1} \chi(\mathcal{L}_{n_i^1}^\infty(t_i^1),a_i)\cdot \ex^{a,b,\vec{x},\vec{y},f,g}_{\mathrm{avoid}}\left[\prod_{i=1}^{m_2} \chi(Q_{n_i^2}(t_i^2),b_i)\right]\right].
	\]
	Let $\mathcal{H}$ denote the space of bounded Borel measurable functions $H:C(K\times[a,b])\to\mathbb{R}$ satisfying
	\begin{equation}\label{BGPH}
		\ex\left[\prod_{i=1}^{m_1} \chi(\mathcal{L}_{n_i^1}^\infty(t_i^1),a_i)\cdot H(\mathcal{L}^\infty|_{K\times[a,b]})\right] = \ex\left[\prod_{i=1}^{m_1} \chi(\mathcal{L}_{n_i^1}^\infty(t_i^1),a_i)\cdot\ex^{a,b,\vec{x},\vec{y},f,g}_{\mathrm{avoid}}\left[H(\mathcal{Q})\right]\right].
	\end{equation}
	The above implies that $\mathcal{H}$ contains all functions $\mathbf{1}_A$ for sets $A$ contained in the $\pi$-system $\mathcal{A}$ consisting of sets of the form
	\[
	\{h\in C(K\times[a,b]) : h(n_i^2,t_i^2) \leq b_i \mbox{ for } i\in\llbracket 1,m_2\rrbracket\}.
	\]
	Note that $\mathcal{H}$ is closed under linear combinations by linearity of expectation, and if $H_n\in\mathcal{H}$ are nonnegative bounded measurable functions converging monotonically to a bounded function $H$, then $H\in\mathcal{H}$ by the monotone convergence theorem. Thus by the monotone class theorem \cite[Theorem 5.2.2]{Durrett}, $\mathcal{H}$ contains all bounded $\sigma(\mathcal{A})$-measurable functions. Since the finite dimensional sets in $\mathcal{A}$ generate the full Borel $\sigma$-algebra $\mathcal{C}_K$ (see for instance \cite[Lemma 3.1]{DimMat}), we have in particular that $F\in\mathcal{H}$.
	
	To conclude, let $\mathcal{B}$ denote the collection of sets $B\in\mathcal{F}_{\mathrm{ext}}(K\times(a,b))$ such that
	\begin{equation}\label{BGPB}
		\ex[\mathbf{1}_B \cdot F(\mathcal{L}^\infty|_{K\times[a,b]})] = \ex[\mathbf{1}_B \cdot \ex^{a,b,\vec{x},\vec{y},f,g}_{\mathrm{avoid}}[F(\mathcal{Q})]].
	\end{equation}
	We observe that $\mathcal{B}$ is a $\lambda$-system. Indeed, since \eqref{BGPH} holds for $H=F$, taking $a_i,b_i\to\infty$ and applying the dominated convergence theorem shows that \eqref{BGPB} holds with $\mathbf{1}_B = 1$. Thus if $B\in\mathcal{B}$ then $B^c\in\mathcal{B}$ since $\mathbf{1}_{B^c} = 1-\mathbf{1}_B$. If $B_i\in\mathcal{B}$, $i\in\mathbb{N}$, are pairwise disjoint and $B=\bigcup_i B_i$, then $\mathbf{1}_B = \sum_i \mathbf{1}_{B_i}$, and it follows from the monotone convergence theorem that $B\in\mathcal{B}$. Moreover, \eqref{BGPH} with $H=F$ implies that $\mathcal{B}$ contains the $\pi$-system $P$ of sets of the form
	\[
	\{h\in C(\Sigma\times\mathbb{R}) : h(n_i,t_i) \leq a_i \mbox{ for } i \in\llbracket 1,m_1\rrbracket, \mbox{ where } (n_i,t_i)\notin K\times(a,b)\}.
	\]
	By the $\pi$-$\lambda$ theorem \cite[Theorem 2.1.6]{Durrett} it follows that $\mathcal{B}$ contains $\sigma(P) = \mathcal{F}_{\mathrm{ext}}(K\times(a,b))$. Thus \eqref{BGPB} holds for all $B\in\mathcal{F}_{\mathrm{ext}}(K\times(a,b))$. It is proven in \cite[Lemma 3.4]{DimMat} that $\ex^{a,b,\vec{x},\vec{y},f,g}_{\mathrm{avoid}}[F(\mathcal{Q})]$ is an $\mathcal{F}_{\mathrm{ext}}(K\times(a,b))$-measurable function, and \eqref{BGPcondex} therefore follows from \eqref{BGPB}.
	
\end{proof}

%-------------------------------------------------------------------------------------------------------------------------------------------------------------------------------------------------
% Section 5
%
%-------------------------------------------------------------------------------------------------------------------------------------------------------------------------------------------------
\section{No big max and no low min}\label{Section5}

In this section we prove the ``no big max'' Lemma \ref{NoBigMax} and ``no low min'' Lemma \ref{NoLowMin}, in Sections \ref{NBMpf} and \ref{NLMpf} respectively.

\subsection{Proof of Lemma \ref{NoBigMax}}\label{NBMpf}

Let us briefly explain the idea of the argument. We bound separately the suprema over $[0,t_3]$ and $[-t_3,0]$; the arguments are symmetrical. For the former, we consider two negative times $-s_4 < -s_3$ comparable in magnitude to $t_3$, and we use the one-point tightness assumption in Definition \ref{gooddef} to ensure that the top curve $L_1^N$ lies within a certain window on scale $N^{\gamma/2}$ at $-s_4$.  If the supremum over $[0,t_3]$ were sufficiently large, then we could find a time $s\in[0,t_3]$ where $L_1^N$ is very high. Then applying the $H^{\mathrm{RW}}$-Gibbs property along with Lemma \ref{BridgeInterp} for a bridge between $-s_4$ and $s$, we argue that in this situation $L_1^N$ would rise high above the window it is expected to lie in at time $-s_3$, which is a very unlikely event. We now make this argument precise.
\begin{proof}
	We will prove that we can find $R_1,N_2$ so that
	\begin{equation}\label{NoBigMax+}
		\mathbb{P} \bigg( \sup_{s \in [ 0, t_3] }\big( L^N_1(s) - p s \big) \geq  R_1N^{\gamma/2} \bigg) < \epsilon/2
	\end{equation}
	for $N\geq N_2$. A very similar argument will prove the same inequality with $[-t_3,0]$ in place of $[0,t_3]$, and together with \eqref{NoBigMax+} this implies the conclusion of the Lemma.
	
	We first introduce notation to be used in the proof. We write $s_3 = \lceil \lfloor r+3\rfloor N^\gamma\rceil$, $s_4 = \lceil \lceil r+4\rceil N^\gamma\rceil$, so that $s_3 \leq t_3 \leq s_4$. Throughout, we will assume $N$ is large enough so that $\psi(N)N^\gamma \geq s_4$, which is possible by condition (2) in Definition \ref{gooddef}, and thus $\mathfrak{L}^N$ is defined at $\pm s_4$. We define events
	\[
	E = \left\{\left|L_1^N(-s_4) + ps_4\right| > MN^{\gamma/2}\right\}, \quad F = \left\{L_1^N(-s_3) + ps_3 > MN^{\gamma/2}\right\},
	\]
	\[
	G = \left\{\sup_{s\in[0,s_4]}\left(L_1^N(s)-ps\right) \geq 3M(2r+10)^{3/2}N^{\gamma/2}\right\}.
	\]
	Here, $M>0$ is chosen along with $N_{20}\in\mathbb{N}$ according to condition (3) in Definition \ref{gooddef} so that
	\begin{equation}\label{NoBigMaxEF}
		\mathbb{P}(E) < \epsilon/4 \quad \mathrm{and} \quad \mathbb{P}(F) < \epsilon/12
	\end{equation}
	for $N \geq N_{20}$. (Note that $-s_3 = \lfloor -\lfloor r+3\rfloor N^\gamma\rfloor$ and $-s_4 = \lfloor -\lceil r+4\rceil N^\gamma\rfloor$.) The proof of \eqref{NoBigMax+} will be accomplished by finding $N_{21}$ so that
	\begin{equation}\label{NoBigMaxGE}
		\mathbb{P}(G\setminus E) < \epsilon/4
	\end{equation}
	for $N\geq N_{21}$. In view of \eqref{NoBigMaxEF}, this implies $\mathbb{P}(G) \leq \mathbb{P}(G\setminus E) + \mathbb{P}(E) < \epsilon/2$ for $N\geq N_2 := \max(N_{20},N_{21})$, which in turn implies \eqref{NoBigMax+} with $R_1 = 3M(2r+10)^{3/2}$. We split the proof of \eqref{NoBigMaxGE} into three steps for clarity.\\
	
	\noindent\textbf{Step 1. } In this step, we decompose the set $G\setminus E$ into a countable disjoint union
	\begin{equation}\label{NoBigMaxDisj}
		G\setminus E = \bigsqcup_{(a,b,s,\ell_{\mathrm{t}},\ell_{\mathrm{b}}) \in I} H(a,b,s,\ell_{\mathrm{t}},\ell_{\mathrm{b}}),
	\end{equation}
	with the sets $I$ and $H(a,b,s,\ell_{\mathrm{t}},\ell_{\mathrm{b}})$ defined as follows. For $a,b\in\mathbb{Z}$, $s\in\llbracket 0, s_4 \rrbracket$, and $\ell_{\mathrm{t}},\ell_{\mathrm{b}}$ discrete paths on $\llbracket s,s_4\rrbracket$ and $\llbracket-s_4,s\rrbracket$ respectively, we define $H(a,b,s,\ell_{\mathrm{t}},\ell_{\mathrm{b}})$ to be the event that $L_1^N(-s_4) = a$, $L_1^N(s) = b$, $L_1^N$ agrees with $\ell_{\mathrm{t}}$ on $\llbracket s,s_4\rrbracket$, and $L_2^N$ agrees with $\ell_{\mathrm{b}}$ on $\llbracket-s_4,s\rrbracket$. We define $I$ as the set of tuples $(a,b,s,\ell_{\mathrm{t}},\ell_{\mathrm{b}})$ satisfying
	\begin{enumerate}[label=(\arabic*)]
		\item $0\leq s\leq s_4$ and $\alpha(s+s_4)\leq b-a \leq \beta(s + s_4)$,
		\item $|a + ps_4| \leq MN^{\gamma/2}$ and $b-ps \geq 3M(2r+10)^{3/2}N^{\gamma/2}$,
		\item $z_1,z_2\in\mathbb{Z}$, $z_1\leq a$, $z_2\leq b$, $\alpha(s+s_4) \leq z_2-z_1 \leq \beta(s+s_4)$, and $\ell_{\mathrm{b}}\in\Omega(-s_4, s, z_1, z_2)$,
		\item if $s < s' \leq s_4$, then $\ell_{\mathrm{t}}(s') -ps' < 3M(2r+10)^{3/2}N^{\gamma/2}$.
	\end{enumerate}
	It is clear that $I$ is countable since $a,b,s$ are each integers and there are finitely many choices of $\ell_{\mathrm{t}},\ell_{\mathrm{b}}$ for fixed choices of $a,b,s$. It is also clear from the four conditions that $G\setminus E = \bigcup_I H(a,b,s,\ell_{\mathrm{t}},\ell_{\mathrm{b}})$. In particular, to see that condition (4) is not too restrictive, note that on the event $G$, there must be a last time on $[0,s_4]$ at which the lower bound is exceeded. Moreover, condition (4) clearly implies that the sets $H(a,b,s,\ell_{\mathrm{t}},\ell_{\mathrm{b}})$ are pairwise disjoint. This proves \eqref{NoBigMaxDisj}.\\
	
	\noindent\textbf{Step 2. } In this step, we find $N_{21}$ so that if $N\geq N_{21}$, $(a,b,s,\ell_{\mathrm{t}},\ell_{\mathrm{b}})\in I$, and $\mathbb{P}(H(a,b,s,\ell_{\mathrm{t}},\ell_{\mathrm{b}})) > 0$, then
	\begin{equation}\label{NoBigMax1/3a}
		\mathbb{P}^{-s_4,s,a,b,\infty,\ell_{\mathrm{b}}}_{\mathrm{avoid},H^{\mathrm{RW}}}\left(\ell(-s_3) + ps_3 > MN^{\gamma/2}\right) \geq \frac{1}{3}.
	\end{equation}
	Note that the condition $\mathbb{P}(H(a,b,s,\ell_{\mathrm{t}},\ell_{\mathrm{b}}))>0$ implies that $\Omega_{\mathrm{avoid}}(-s_4,s,a,b,\infty,\ell_{\mathrm{b}}) \neq \varnothing$, so the measure on the left is well-defined. By Lemma \ref{MCL}, we have
	\[
	\mathbb{P}^{-s_4,s,a,b}_{H^{\mathrm{RW}}}\left(\ell(-s_3) + ps_3 > MN^{\gamma/2}\right) \leq \mathbb{P}^{-s_4,s,a,b,\infty,\ell_{\mathrm{b}}}_{\mathrm{avoid},H^{\mathrm{RW}}}\left(\ell(-s_3) + ps_3 > MN^{\gamma/2}\right)
	\] 
	so it suffices to show that
	\begin{equation}\label{NoBigMax1/3}
		\mathbb{P}^{-s_4,s,a,b}_{H^{\mathrm{RW}}}\left(\ell(-s_3) + ps_3 > MN^{\gamma/2}\right) \geq \frac{1}{3}.
	\end{equation}
	The assumption in condition (2) that $a+ps_4 \geq -MN^{\gamma/2}$ implies that
	\begin{equation}\label{NoBigMax2M}
		\begin{split}
			\mathbb{P}^{-s_4,s,a,b}_{H^{\mathrm{RW}}}\left(\ell(-s_3) + ps_3 > MN^{\gamma/2}\right) &= \mathbb{P}^{0,s+s_4,0,b-a}_{H^{\mathrm{RW}}}\left(\ell(s_4-s_3) + a + ps_3 > MN^{\gamma/2}\right)\\
			&\geq \mathbb{P}^{0,s+s_4,0,b-a}_{H^{\mathrm{RW}}}\left(\ell(s_4-s_3) -p(s_4-s_3) > 2MN^{\gamma/2}\right).
		\end{split}
	\end{equation}
	Furthermore, the facts that $a+ps_4 \leq MN^{\gamma/2}$, $b-ps \geq 3M(2r+10)^{3/2}N^{\gamma/2}$, and $s+s_4 \leq 2s_4 \leq (2r+10)N^{\gamma}$ imply that
	\begin{align*}
		b-a &\geq p(s+s_4) + 2M(2r+10)^{3/2}N^{\gamma/2}\geq p(s+s_4) + 2M(2r+10)(s+s_4)^{1/2}.
	\end{align*}
	We can thus apply Lemma \ref{BridgeInterp} with $M_1 = 0$, $M_2 = 2M(2r+10)$ to obtain $N_{21}\in\mathbb{N}$ such that 
	\begin{equation}\label{NoBigMaxInterp}
		\mathbb{P}^{0,s+s_4,0,b-a}_{H^{\mathrm{RW}}}\left(\ell(s_4-s_3) \geq \frac{s_4-s_3}{s+s_4}\left[p(s+s_4) + M_2 (s+s_4)^{1/2}\right] - (s+s_4)^{1/3}\right) \geq \frac{1}{3}
	\end{equation}
	for $N\geq N_{21}$. Note that $\frac{s_4-s_3}{s+s_4} \geq \frac{1}{2r+11}$ for sufficiently large $N$, so we have
	\begin{align*}
		\frac{s_4-s_3}{s+s_4}\left[p(s+s_4) + M_2 (s+s_4)^{1/2}\right] - (s+s_4)^{1/3} &\geq p(s_4-s_3) + M(s+s_4)^{1/2}\\
		&\geq p(s_4-s_3) + 2MN^{\gamma/2}.
	\end{align*}
	The last inequality follows since $s+s_4 \geq s_4 \geq 4N^\gamma$. In view of \eqref{NoBigMax2M} and \eqref{NoBigMaxInterp}, this implies \eqref{NoBigMax1/3} for $N\geq N_{21}$.\\
	
	\noindent\textbf{Step 3. } To conclude, we prove \eqref{NoBigMaxGE} for $N \geq N_{21}$, with $N_{21}$ as in Step 2. Let $J$ denote the set of $(a,b,s,\ell_{\mathrm{b}})\in I$ for which $\mathbb{P}(H(a,b,s,\ell_{\mathrm{b}})) > 0$. If $(a,b,s,\ell_{\mathrm{b}})\in J$, the $H^{\mathrm{RW}}$-Gibbs property (see Definition \ref{DefHRWGP}) together with \eqref{NoBigMax1/3a} implies that
	\begin{equation*}
		\mathbb{P}\left(L_1^N(-s_3) + ps_3 > MN^{\gamma/2}\,\Big|\,H(a,b,s,\ell_{\mathrm{t}},\ell_{\mathrm{b}})\right) = \mathbb{P}^{-s_4,s,a,b,\infty,\ell_{\mathrm{b}}}_{\mathrm{avoid},H^{\mathrm{RW}}}\left(\ell(-s_3) + ps_3 > MN^{\gamma/2}\right) \geq \frac{1}{3}
	\end{equation*}
	for all $N\geq N_{21}$. It follows from \eqref{NoBigMaxEF} that
	\begin{align*}
		\epsilon/12 &> \mathbb{P}(F) \geq \sum_{(a,b,s,\ell_{\mathrm{t}},\ell_{\mathrm{b}})\in J} \mathbb{P}(F\cap H(a,b,s,\ell_{\mathrm{t}},\ell_{\mathrm{b}}))\\
		&= \sum_{(a,b,s,\ell_{\mathrm{t}},\ell_{\mathrm{b}})\in J}\mathbb{P}\left(L_1^N(-s_3) + ps_3 > MN^{\gamma/2}\,\Big|\,H(a,b,s,\ell_{\mathrm{t}},\ell_{\mathrm{b}})\right)\mathbb{P}(H(a,b,s,\ell_{\mathrm{t}},\ell_{\mathrm{b}}))\\
		&\geq \frac{1}{3}\sum_{(a,b,s,\ell_{\mathrm{t}},\ell_{\mathrm{b}})\in I}\mathbb{P}(H(a,b,s,\ell_{\mathrm{t}},\ell_{\mathrm{b}})) = \frac{1}{3}\cdot\mathbb{P}(G\setminus E)
	\end{align*}
	for $N\geq N_{21}$. In the last line, we were able to replace $J$ with $I$ since the terms coming from $I\setminus J$ do not contribute to the sum, and the last equality follows from \eqref{NoBigMaxDisj}. This implies \eqref{NoBigMaxGE}, completing the proof.
	
\end{proof}

\subsection{Proof of Lemma \ref{NoLowMin}}\label{NLMpf}

We will first prove the following lemma, which says roughly that with high probability, the $(k-1)$st curve cannot dip uniformly low on scale $N^{\gamma/2}$ on any large interval. We will use this lemma in the proof of \ref{NoLowMin} in order to ``pin'' the curve at two points on opposite sides of the origin where it cannot be low. The key to the proof of this lemma is the parabolic shift assumption in the definition of a $(\gamma,p,\lambda,\theta)$-good sequence. In the case $k=3$, the rough idea is that if the second curve is uniformly low on a large interval $[a,b]$, then the top curve will not feel it and will look like a free bridge by the $H^{\mathrm{RW}}$-Gibbs property. Then at the midpoint of $[a,b]$, we expect the top curve to be close to the midpoint of a straight line segment, but this will be far from the value of the parabola at the midpoint. The same reasoning applies for larger $k$ once we make use of monotone coupling. We make this argument precise in the proof below.

\begin{lemma}\label{ParabolaMin} For any $r,\epsilon>0$ there exist $R, N_5\in\mathbb{N}$ such that for all $N\geq N_5$,
	\[
	\mathbb{P}\left(\max_{s\in[a,b]}\left(L_{k-1}^N(s) - ps\right) \leq -\left(\lambda R^2 + R^\theta + \phi(\epsilon/16)\right)N^{\gamma/2}\right) < \epsilon,
	\]
	where $a = \lfloor \lfloor r\rfloor N^\gamma\rfloor$, $b = \lfloor R N^\gamma\rfloor$. The same statement holds for the maximum on the interval $[-b',-a']$, where $b' = \lceil RN^\gamma\rceil$ and $a' = \lceil \lfloor r\rfloor N^\gamma\rceil$.
\end{lemma}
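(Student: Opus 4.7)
I would prove the lemma by contradiction, pitting the parabolic shape assumption at three integer times against the hypothetical event via the $H^{\mathrm{RW}}$-Gibbs property on $[a,b]$. Fix $n_1 = \lfloor r \rfloor$, let $R \in \mathbb{N}$ be a large integer (depending on $\epsilon$) to be chosen, and set $n_3 = \lceil (n_1 + R)/2 \rceil$. With $a = \lfloor n_1 N^\gamma\rfloor$, $b = \lfloor R N^\gamma\rfloor$, and $m = \lfloor n_3 N^\gamma\rfloor$, Definition \ref{gooddef}(3) applied at each of $n = n_1, n_3, R$ produces parabolic-window events $F_n$ with $\mathbb{P}(F_n^c) < \epsilon/16$ for $N$ sufficiently large. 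A union bound over the three $F_n^c$'s reduces the problem to proving
\[
\mathbb{P}(E \cap F_{n_1} \cap F_{n_3} \cap F_R) < \epsilon/2,
\]
where $E$ denotes the event in the lemma statement.

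The decisive geometric observation is that, by concavity of the inverted parabola $-\lambda x^2$, the straight-line interpolation of $L_1^N(a)$ and $L_1^N(b)$ at time $m$ lies below the parabolic value at $n_3$ by approximately $\lambda(R-n_1)^2/4 \cdot N^{\gamma/2}$; since $\theta < 2$, this gap dominates $n_3^\theta N^{\gamma/2}$ and the Brownian-bridge fluctuation scale $\sqrt{b-a}\,N^{\gamma/2} \sim \sqrt{R-n_1}\,N^{\gamma/2}$ once $R$ is large. I then apply the $H^{\mathrm{RW}}$-Gibbs property to the top $k-1$ curves on $\llbracket a, b\rrbracket$, conditioning on the $\sigma$-algebra $\mathcal{F}$ generated by the endpoint vector $(L_j^N(a), L_j^N(b))_{j=1}^{k-1}$ and $L_k^N\llbracket a,b\rrbracket$. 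Writing $x_G := pm - (\lambda n_3^2 + n_3^\theta + \phi(\epsilon/16))N^{\gamma/2}$ for the lower end of the $F_{n_3}$ window, a Radon-Nikodym computation for the marginal law of $L_1^N\llbracket a,b\rrbracket$ yields
\[
\mathbb{P}\bigl(L_1^N(m) \geq x_G \,\big|\, \mathcal{F}\bigr) \leq Z^{-1}\cdot \mathbb{P}^{a,b, L_1^N(a), L_1^N(b)}_{H^{\mathrm{RW}}}\bigl(\ell(m) \geq x_G\bigr),
\]
with $Z$ the acceptance probability of the $(k-1)$-curve avoiding system on $[a,b]$. The numerator is controlled by coupling the free $H^{\mathrm{RW}}$ bridge to a Brownian bridge of variance $\sigma^2$ via Assumption \ref{KMT} and applying Lemma \ref{BBmax} to the centered Brownian bridge at its midpoint (where the Gaussian has standard deviation of order $\sigma\sqrt{R-n_1}\,N^{\gamma/2}/2$), producing a bound of order $\exp(-c \lambda^2 (R-n_1)^3 / \sigma^2)$ that is arbitrarily small for $R$ large.

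The principal technical obstacle is obtaining a uniform positive lower bound on $Z$, since the intermediate endpoints $L_j^N(a), L_j^N(b)$ for $j = 2, \ldots, k-2$ are not directly constrained by $E$ or the parabolic shape. The key observation is that, on the event at hand, the total spread $L_1^N(a) - L_{k-1}^N(a)$ (and likewise at $b$) is of order $\lambda R^2 N^{\gamma/2}$, which for $R$ large easily accommodates a consecutive spacing of order $C\sqrt{b-a} \sim C\sqrt{R-n_1}\,N^{\gamma/2}$ among the $k-1$ curves. Exploiting this via Lemma \ref{MCL}, I would construct substitute entry and exit data $\tilde{\vec{x}} \geq \vec{x}$, $\tilde{\vec{y}} \geq \vec{y}$ (obtained by a careful case-by-case raising of the intermediate coordinates to a suitable scaffolding of well-separated levels) whose top curve in the corresponding avoiding ensemble stochastically dominates $L_1^N$, so that the Radon-Nikodym bound above remains valid with $Z$ replaced by the acceptance probability $\tilde{Z}$ of the modified ensemble. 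Lemma \ref{CurveSep} then gives $\tilde{Z} \geq (1 - 3e^{-C^2/(8\sigma^2)})^{k-1}$, a positive constant for $C$ sufficiently large, and combining this with the numerator estimate furnishes the desired contradiction with $F_{n_3}$. The analogous statement for the interval $[-b', -a']$ is obtained by the mirror-image argument at the negative integer times $-n_1, -n_3, -R$, which Definition \ref{gooddef}(3) covers via its quantification over $n \in \mathbb{Z}$.
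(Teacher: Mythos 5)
Your geometric insight is correct and matches the paper's: by concavity, the chord through the parabolic windows at $n_1$ and $R$ lies roughly $\lambda(R-n_1)^2/4 \cdot N^{\gamma/2}$ below the window at the midpoint, and since $\theta < 2$ this gap dominates the fluctuation scale $\sqrt{R-n_1}\,N^{\gamma/2}$ and the sub-parabolic correction $R^\theta N^{\gamma/2}$. The contradiction framework and the three-time decomposition are sound. However, there is a genuine gap in your treatment of the acceptance probability.

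Your claim that ``the total spread $L_1^N(a)-L_{k-1}^N(a)$ \emph{(and likewise at $b$)} is of order $\lambda R^2 N^{\gamma/2}$'' is false at $b$. The event $F_R$ constrains $L_1^N(b)-pb$ to lie \emph{above} $-\bigl(\lambda R^2 + R^\theta + \phi(\epsilon/16)\bigr)N^{\gamma/2}$, while $E$ constrains $L_{k-1}^N(b)-pb$ to lie \emph{below} that same threshold. These bounds are flush; the spread $L_1^N(b)-L_{k-1}^N(b)$ is only guaranteed to be positive and can be as small as one integer step. (At $a$ the spread is indeed of order $\lambda R^2 N^{\gamma/2}$, because the threshold in $E$ is set by $R$ while the window is set by $n_1\ll R$; the asymmetry is precisely what your estimate overlooks.) Consequently, a scaffold of mesh $C\sqrt{b-a}\sim C\sqrt{R}\,N^{\gamma/2}$ cannot be threaded between $L_1^N(b)$ and $L_{k-1}^N(b)$ while respecting $\tilde{y}_1 = y_1$, and the intermediate endpoints $L_j^N(b)$ for $j=2,\dots,k-2$ could all be crowded into this tiny interval, so a ``raising of the intermediate coordinates'' leaving $y_1$ fixed is not available. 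Any scaffold that does satisfy the separation hypotheses of Lemma \ref{CurveSep} will necessarily raise the top coordinate $\tilde{y}_1$ past $y_1$, which changes the numerator in your Radon--Nikodym bound in a way your argument does not account for. A second, related issue is that the boundary in your avoiding ensemble is $L_k^N$ on all of $\llbracket a,b\rrbracket$, which is $\mathcal{F}$-measurable but not otherwise constrained by your conditioning (the event $E$ controls $L_{k-1}^N$, not $L_k^N$, and is not itself $\mathcal{F}$-measurable), so the separation condition (1) of Lemma \ref{CurveSep} does not follow without further argument.

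The paper's proof sidesteps both problems by going in the opposite direction. It conditions on $L_{k-1}^N\llbracket a,b\rrbracket$ (not $L_k^N$), applies the Gibbs property to the top $k-2$ curves only, and establishes a \emph{lower} bound $\mathbb{P}(F\mid G)\geq 1/4$, where $F$ is the event that $L_1^N$ dips \emph{below} the window at the midpoint and $G\supseteq E\cap F_{n_1}\cap F_R$. Monotone coupling is then used in the direction compatible with this: the data are raised to a deterministic scaffold built from the \emph{tops} $\overline{x},\overline{y}$ of the parabolic windows and spaced by $C\sqrt{T}$, so the lowest scaffold curve sits $\geq C\sqrt{T}$ above the boundary $\ell_{\mathrm{b}}$ uniformly (at $b$ the margin is $2(R^\theta+\phi)N^{\gamma/2}+\lceil C\sqrt{T}\rceil$), and Lemma \ref{CurveSep} furnishes a uniform acceptance probability $\geq 11/12$. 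Since raising the data only raises the top curve, the raised ensemble's dip probability is a lower bound for the original, and no Radon--Nikodym estimate or endpoint-spread claim is needed. If you wish to salvage your direction of inequality, you would have to raise $\tilde{x}_1,\tilde{y}_1$ above $x_1,y_1$ by $O(\sqrt{T})$, track the resulting shift in the free-bridge numerator (it is dominated by the $(R-n_1)^2$ parabolic gap, so the estimate survives), and replace $E$ by the $\mathcal{F}$-measurable event that $L_k^N\leq ps-(\lambda R^2+R^\theta+\phi)N^{\gamma/2}$ on $\llbracket a,b\rrbracket$; but none of this is in your write-up, and the spread claim as stated is simply wrong.
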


\begin{proof}
	We write $\eta = \epsilon/16$ for convenience. The proof of the statement with $[-b',-a']$ in place of $[a,b]$ is essentially the same, as will become clear. We will assume that $r\in\mathbb{Z}$, which causes no loss of generality due to the way $a$ is defined. We begin by establishing some notation to be used in the proof and defining the constant $R$ in the statement of the lemma. We fix the constant
	\begin{equation}\label{ParabolaR}
		\begin{split}
			C &= \sqrt{8\sigma^2\log\frac{3}{1-(11/12)^{1/(k-2)}}},
		\end{split}
	\end{equation}
	and we fix $R > r + 2$ as a positive integer with the same parity as $r$, large enough so that
	\begin{equation}\label{ParabolaRcond}
	\frac{\lambda}{4}(R-r)^2 - \left(\frac{R+r}{2}\right)^\theta - Ck\sqrt{R-r} > 3Ck + 2\phi(\eta).
	\end{equation}
	We may do so because $\theta < 2$. In particular, we note that $r,R$ and the midpoint $(r+R)/2$ are all integers. We write $\rho := (r+R)/2$, and $c = \lfloor \rho N^\gamma\rfloor \in [a,b]$. We will assume throughout that $N$ is large enough so that $\psi(N) > R+1$ and thus $\mathfrak{L}^N$ is defined at $b$. We may do so by condition (2) in Definition \ref{gooddef}. We now define the two events
	\[
	E = \left\{\max_{x\in[a,b]}\left(L_{k-1}^N(s) - ps\right) \leq -\left(\lambda R^2 + R^\theta + \phi(\eta)\right)N^{\gamma/2}\right\}.
	\]
	\[
	F = \left\{N^{-\gamma/2}\left(L_1^N(c) - pc\right) + \lambda\rho^2 < - (\phi(\eta) + \rho^\theta) \right\},
	\]
	To prove the lemma, we find $N_5$ so that 
	\begin{equation}\label{ParabolaFbd}
		\mathbb{P}(E) < \epsilon
	\end{equation}
	for all $N\geq N_5$. Let $G$ denote the subset of $E$ for which we have
	\begin{align*}
		&\left|N^{-\gamma/2}\left(L_1^N(a) - pa\right) + \lambda r^2\right| < \phi(\eta) + r^\theta,\\
		&\left|N^{-\gamma/2}\left(L_1^N(b) - pb\right) + \lambda R^2\right| < \phi(\eta) + R^\theta.
	\end{align*}
	By condition (3) in Definition \ref{gooddef}, we can find $N_{50}$ so that for all $N\geq N_{50}$, these two inequalities hold simultaneously with probability at least $1-4\eta$. Thus for $N\geq N_{50}$,
	\begin{equation}\label{ParabolaFsb}
		\mathbb{P}(E) \leq \mathbb{P}(G) + 4\eta < \mathbb{P}(G) + \epsilon/2.
	\end{equation}
	Note that condition (3) in Definition \ref{gooddef} allows us to find $N_{51}$ so that $\mathbb{P}(F) < 2\eta = \epsilon/8$ for $N\geq N_{51}$. To prove \eqref{ParabolaFbd}, we will find $N_{52}$ so that for $N\geq N_{52}$ we have
	\begin{equation}\label{ParabolaF|G}
		\mathbb{P}(F\,|\,G) \geq 1/4.
	\end{equation}
	Then for $N\geq \max(N_{51},N_{52})$ we have
	\[
	\mathbb{P}(G) = \frac{\mathbb{P}(F\cap G)}{\mathbb{P}(F\,|\,G)} \leq 4\mathbb{P}(F) < \epsilon/2,
	\]
	and in view of \eqref{ParabolaFsb} this proves \eqref{ParabolaFbd} for $N \geq N_5 := \max(N_{50},N_{51},N_{52})$. We split the proof of \eqref{ParabolaF|G} into three steps for clarity.\\
	
	\noindent\textbf{Step 1. } To simplify the problem of bounding the conditional probability in \eqref{ParabolaF|G}, we first decompose the event $G$ into a countable disjoint union
	\begin{equation}\label{ParabolaGdisj}
		G = \bigsqcup_{(\vec{x},\vec{y},\ell_{\mathrm{b}})\in I} H(\vec{x},\vec{y},\ell_{\mathrm{b}}),
	\end{equation}
	with the sets $I$ and $H(\vec{x},\vec{y},\ell_{\mathrm{b}})$ defined in the following. For $\vec{x},\vec{y} \in \mathfrak{W}_{k-2}$, and a discrete path $\ell_{\mathrm{b}}$ on $\llbracket a,b\rrbracket$, we let $H(\vec{x},\vec{y},\ell_{\mathrm{b}})$ denote the event that $L_i^N(a) = x_i$ and $L_i^N(b) = y_i$ for $i\in\llbracket 1,k-2\rrbracket$, and $L_{k-1}^N$ agrees with $\ell_{\mathrm{b}}$ on $\llbracket a,b\rrbracket$. Let $I$ denote the set of triples $(\vec{x},\vec{y},\ell_{\mathrm{b}})$ such that
	\begin{enumerate}[label=(\arabic*)]
		\item $\alpha(b-a) \leq y_i-x_i \leq \beta(b-a)$ for $i\in\llbracket 1,k-2\rrbracket$,
		
		\item $|N^{-\gamma/2}(x_1 - pa)+\lambda r^2| < \phi(\eta) + r^\theta$ and $|N^{-\gamma/2}(y_1-pb)+\lambda R^2| < \phi(\eta) + R^\theta$,
		
		\item $z_1\leq x_{k-2}$, $z_2\leq y_{k-2}$, and $\ell_{\mathrm{b}} \in \Omega(a,b,z_1,z_2)$,
		
		\item $\max_{x\in[a,b]} (\ell_{\mathrm{b}}(s)-ps) \leq -(\lambda R^2+R^\theta+\phi(\eta))N^{\gamma/2}$.
		
	\end{enumerate}
	It is clear that $I$ is countable since $x_i,y_i,z_1,z_2$ are integers and there are finitely many $\ell_{\mathrm{b}}$ for each choice of $z_1,z_2$. The three conditions together show that $G = \bigcup_I H(\vec{x},\vec{y},\ell_{\mathrm{b}})$, and it is also clear that the $H(\vec{x},\vec{y},\ell_{\mathrm{b}})$ are pairwise disjoint. This proves \eqref{ParabolaGdisj}.
	
	Now let $J$ denote the set of $(\vec{x},\vec{y},\ell_{\mathrm{b}})\in I$ for which $\mathbb{P}(H(\vec{x},\vec{y},\ell_{\mathrm{b}})) > 0$. In the following step, we will find $N_{52}$ so that for all $N\geq N_{52}$ and all $(\vec{x},\vec{y},\ell_{\mathrm{b}}) \in J$, we have
	\begin{equation}\label{ParabolaFcond}
		\mathbb{P}(F\,|\,H(\vec{x},\vec{y},\ell_{\mathrm{b}})) > 1/4.
	\end{equation}
	It follows from \eqref{ParabolaGdisj} that for $N\geq N_{52}$,
	\begin{align*}
		\mathbb{P}(F\,|\,G) &= \sum_{(\vec{x},\vec{y},\ell_{\mathrm{b}})\in J} \frac{\mathbb{P}(F\,|\,H(\vec{x},\vec{y},\ell_{\mathrm{b}}))\mathbb{P}(H(\vec{x},\vec{y},\ell_{\mathrm{b}}))}{\mathbb{P}(G)} \geq \frac{1}{4}\cdot\frac{\sum_{(\vec{x},\vec{y},\ell_{\mathrm{b}})\in J} \mathbb{P}(H(\vec{x},\vec{y},\ell_{\mathrm{b}}))}{\mathbb{P}(G)} = \frac{1}{4},
	\end{align*}
	which proves \eqref{ParabolaF|G}.\\
	
	\noindent\textbf{Step 2. } In this step we prove \eqref{ParabolaFcond}. Fix $(\vec{x},\vec{y},\ell_{\mathrm{b}})\in J$. We first observe by the $H^{\mathrm{RW}}$-Gibbs property (see Definition \ref{DefHRWGP}) and the definition of $F$ that
	\[
	\mathbb{P}(F\,|\,H(\vec{x},\vec{y},\ell_{\mathrm{b}})) = \mathbb{P}^{a,b,\vec{x},\vec{y},\infty,\ell_{\mathrm{b}}}_{\mathrm{avoid},H^{\mathrm{RW}}}\left(Q_1(c) - pc < -(\lambda R^2 + R^\theta + \phi(\eta))N^{\gamma/2}\right).
	\]
	Next, notice that by Lemma \ref{MCL}, the probability on the right will decrease if we raise the entry and exit data $\vec{x}$ and $\vec{y}$. We define new data $\vec{x}\,',\vec{y}\,'\in\mathfrak{W}_{k-2}$ as follows. Write
	\begin{equation}\label{ParabBar}
		\overline{x} = \lceil pa - (\lambda r^2 - r^\theta - \phi(\eta) )N^{\gamma/2}\rceil, \quad \overline{y} = \lceil pb - (\lambda R^2 - R^\theta - \phi(\eta))N^{\gamma/2}\rceil.
	\end{equation}
	Condition (2) in the definition of $I$ in Step 1 implies that $L_1^N(a) \leq \overline{x}$ and $L_1^N(b) \leq \overline{y}$. Now writing $T = b-a$, we put
	\[
	x_i' = \overline{x} + (k-1-i)\lceil C\sqrt{T}\rceil, \quad y_i' = \overline{y} + (k-1-i)\lceil C\sqrt{T}\rceil,
	\]
	for $i\in\llbracket 1,k-2\rrbracket$. Then $\vec{x}\,',\vec{y}\,'$ satisfy
	\begin{enumerate}[label=(\alph*)]
		\item $x_i' \geq \overline{x} \geq x_1 \geq x_i$ and $y_i' \geq \overline{y} \geq y_1 \geq y_i$ for $i\in\llbracket 1,k-2\rrbracket$,
		
		\item $x_i' - x_{i+1}' \geq C\sqrt{T}$ for $i\in\llbracket 1,k-3\rrbracket$,
		
		\item $x_{k-1}' + (y_{k-1}' - x_{k-1}')s/T - \ell_{\mathrm{b}}(s) \geq C\sqrt{T}$ for $s\in[0,T]$.
		
	\end{enumerate}	
	Note that the last condition follows from condition (4) in Step 1, since this implies that $\ell_{\mathrm{b}}$ lies uniformly below the line segment connecting $\overline{x}$ and $\overline{y}$. In particular, condition (a) and Lemma \ref{MCL} imply that
	\begin{equation}\label{ParabolaFsplit}
		\begin{split}
			&\mathbb{P}(F\,|\,H(\vec{x},\vec{y},\ell_{\mathrm{b}})) \geq \mathbb{P}^{a,b,\vec{x}\,',\vec{y}\,',\infty,\ell_{\mathrm{b}}}_{\mathrm{avoid},H^{\mathrm{RW}}}\left(Q_1(c) - pc < -(\lambda \rho^2 + \rho^\theta + \phi(\eta))N^{\gamma/2}\right)\\
			\geq \; & \mathbb{P}^{a,b,x_1',y_1'}_{H^{\mathrm{RW}}}\left(\ell(c) - pc < -(\lambda \rho^2 + \rho^\theta + \phi(\eta))N^{\gamma/2}\right) - \mathbb{P}^{a,b,\vec{x}\,',\vec{y}\,',\infty,\ell_{\mathrm{b}}}_{H^{\mathrm{RW}}}(A^c),
		\end{split}
	\end{equation}
	where $A = \{L_1 \geq \cdots \geq L_{k-1} \geq \ell_{\mathrm{b}} \mbox{ on } [a,b]\}$ is the avoidance event for a line ensemble $(L_1,\dots,L_{k-1})$ with law $\mathbb{P}^{a,b,\vec{x}\,',\vec{y}\,',\infty,\ell_{\mathrm{b}}}_{H^{\mathrm{RW}}}$. We implicitly used the fact that since the curves are independent under this law, the law of $L_1$ is $\mathbb{P}^{a,b,x_1',y_1'}_{H^{\mathrm{RW}}}$. 
	
	Observe that properties (b) and (c) of $\vec{x}\,',\vec{y}\,'$ above imply via Lemma \ref{CurveSep} that there is an $N_{53}$ independent of $\vec{x},\vec{y},\ell_{\mathrm{b}}$ so that for all $N\geq N_{53}$, we have
	\[
	\mathbb{P}^{a,b,\vec{x}\,',\vec{y}\,'}_{H^{\mathrm{RW}}}(A) \geq \left(1-3e^{-C^2/8\sigma^2}\right)^{k-2}.
	\]
	By our choice of $C$ in \eqref{ParabolaR}, the right hand side is equal to 11/12. In the following step, we will find $N_{54}$ independent of $\vec{x},\vec{y},\ell_{\mathrm{b}}$ so that for $N\geq N_{54}$ we have
	\begin{equation}\label{Parabola1/3}
		\mathbb{P}^{a,b,x_1',y_1'}_{H^{\mathrm{RW}}}\left(\ell(c) - pc < -(\lambda \rho^2 + \rho^\theta + \phi(\eta))N^{\gamma/2}\right) \geq 1/3. 
	\end{equation}
	This gives a lower bound of $1/3-1/12 = 1/4$ in \eqref{ParabolaFsplit} for $N\geq N_{52}$, proving \eqref{ParabolaFcond}.\\
	
	\noindent\textbf{Step 3. } Here we prove \eqref{Parabola1/3}. With notation as in Step 2, we have
	\begin{align*}
		&\mathbb{P}^{a,b,x_1',y_1'}_{H^{\mathrm{RW}}}\left(\ell(c) - pc < -(\lambda \rho^2 + \rho^\theta +\phi(\eta))N^{\gamma/2}\right)\\ 
		= \; & \mathbb{P}^{0,T,x_1',y_1'}_{H^{\mathrm{RW}}}\left(\ell(c - a) - pc < -(\lambda \rho^2 + \rho^\theta + \phi(\eta))N^{\gamma/2}\right)\\
		\geq \; & \mathbb{P}^{0,T,\overline{x},\overline{y}}_{H^{\mathrm{RW}}}\left(\ell(c - a) - pc < -\left(\lambda \rho^2 + \rho^\theta + \phi(\eta) + (k-2)\lceil C\sqrt{R-r+2}\rceil\right)N^{\gamma/2}\right)\\
		\geq \; & \mathbb{P}^{0,T,\overline{x},\overline{y}}_{H^{\mathrm{RW}}}\left(\ell(c - a) - \frac{\overline{x}+\overline{y}}{2} < \left(\lambda\left(\frac{R^2+r^2}{2} - \rho^2\right) - \rho^\theta - Ck\big(\sqrt{R-r}+3\big)-2\phi(\eta)\right)N^{\gamma/2}\right).
	\end{align*}
	Let us elaborate on these inequalities. The first equality follows simply by shifting the domain. In the third line, we vertically shifted the entry and exit data to $\overline{x},\overline{y}$ and used the definitions of $x_1',y_1'$ in terms of these. In the last line, we used the fact that $|pc - (\overline{x}+\overline{y})/2| \leq (\lambda(R^2+r^2)/2 + \phi(\eta) + 2)N^{\gamma/2}$. We absorbed the 2 by replacing $k-2$ with $k$, and we used the estimate $\sqrt{R-r+2} \leq \sqrt{R-r} + 2$ following from concavity of the square root function.
	
	Now let us write $M_0$ for the constant multiplying $N^{\gamma/2}$ on the right hand side of the last line in the tower of inequalities above, and $M := M_0\sqrt{R-r-2}$. Shifting vertically by $\overline{x}$, writing $z = \overline{y} - \overline{x}$, and using the fact that $\sqrt{T} \geq \sqrt{R-r-2}\,N^{\gamma/2}$, we obtain
	\begin{equation}\label{ParabolaM}
		\begin{split}
			&\mathbb{P}^{a,b,x_1',y_1'}_{H^{\mathrm{RW}}}\left(\ell(c) - pc < -(\lambda \rho^2+\rho^\theta+\phi(\eta))N^{\gamma/2}\right) \geq \mathbb{P}^{0,T,0,z}_{H^{\mathrm{RW}}}\left(\ell(c - a) - z/2 < M\sqrt{T}\right).
		\end{split}
	\end{equation}
	We claim that $M>0$. Since by assumption $R-r-2 > 0$, it suffices to show that $M_0 > 0$. Note that
	\[
	\frac{R^2+r^2}{2} - \rho^2 = \frac{R^2+r^2}{2} - \frac{R^2+r^2-2rR}{4} = \frac{R^2-2rR+r^2}{4} = \frac{(R-r)^2}{4},
	\]
	so
	\begin{align*}
		M_0 &= \frac{\lambda}{4}(R-r)^2 - \left(\frac{R+r}{2}\right)^\theta - Ck\sqrt{R-r} - \left(3Ck + 2\phi(\eta) \right).
	\end{align*}
	In view of \eqref{ParabolaRcond}, we see that the right hand side is positive, so $M>0$ as claimed.
	
	It now suffices to prove a lower bound of 1/3 in \eqref{ParabolaM} for large $N$. By Assumption \ref{KMT}, we have a probability measure $\mathbb{P}_0$ supporting a random variable $\ell^{(T,z)}$ with law $\mathbb{P}^{0,T,0,z}_{H^{\mathrm{RW}}}$, coupled with a Brownian bridge $B^\sigma$ with variance $\sigma^2$ depending on $p$. Let us write $s = c - a$. We see that
	\[
	|s-T/2| = \left|\left\lfloor\frac{R+r}{2}\,N^\gamma\right\rfloor - \lfloor rN^\gamma\rfloor - \frac{\lfloor RN^\gamma\rfloor - \lfloor rN^\gamma\rfloor}{2}\right| \leq 2.
	\]
	Since $z \leq pT - (\lambda(R^2-r^2) - (R^\theta-r^\theta))N^{\gamma/2} + 1$ in view of \eqref{ParabBar}, we can take $N$ large enough so that $(s/T - 1/2)z \leq 2z/T \leq 3p \leq M\sqrt{T}/2$. Then the right hand side of \eqref{ParabolaM} is equal to
	\begin{align*}
		\mathbb{P}^{0,T,0,z}_{H^{\mathrm{RW}}}\left(\ell(s) - z/2 < M\sqrt{T}\right) &= \mathbb{P}^{0,T,0,z}_{H^{\mathrm{RW}}}\left(\ell(s) - (s/T) z < M\sqrt{T} - (s/T - 1/2)z\right)\\
		&\geq \mathbb{P}^{0,T,0,z}_{H^{\mathrm{RW}}}\left(\ell(s) - (s/T) z < M\sqrt{T}/2\right)\\
		&\geq \mathbb{P}_0\left(\left[\ell^{(T,z)}(s) - (s/T)z - \sqrt{T}B^\sigma_{s/T}\right] + \sqrt{T} B^\sigma_{s/T} < M\sqrt{T}/2\right)\\
		&\geq \mathbb{P}_0\left(B^\sigma_{s/T} \leq 0 \quad \mathrm{and} \quad \Delta(T,z) < M\sqrt{T}/2 \right)\\ 
		&\geq \frac{1}{2} - \mathbb{P}_0\left(\Delta(T,z) \geq M\sqrt{T}/2\right).
	\end{align*}
	Here, $\Delta(T,z)$ is as defined in Assumption \ref{KMT}. For the first term in the last line, we used the fact that $\mathbb{P}_0(B^\sigma_{s/T} \leq 0) = 1/2$, independent of $T$. For the second term, we have
	\[
	|z-pT| = |\overline{y}-\overline{x}-pT| \leq (\lambda(R^2-r^2)-(R^\theta-r^\theta) + 2)N^{\gamma/2}
	\]
	so Corollary \ref{Cheb} allows us to choose $N_{54}$ large enough so that $\mathbb{P}_0(\Delta(T,z) \geq M\sqrt{T}) < 1/6$ for $N\geq N_{54}$. This gives a lower bound in \eqref{ParabolaM} of $1/2 - 1/6 = 1/3$ for $N\geq N_{54}$, proving \eqref{Parabola1/3}.
	
\end{proof}

We now prove Lemma \ref{NoLowMin}. The idea is to use Lemma \ref{ParabolaMin} to pin the curve $L_{k-1}^N$ at two points far to the left and to the right of zero, and then use Lemma \ref{BridgeInf} to control the curve in between these points.

\begin{proof}[Proof of Lemma \ref{NoLowMin}]
	
	To begin, we establish notation to be used in the proofs. We define events
	\[
	E = \left\{\inf_{s\in[-t_3,t_3]}\left(L_{k-1}^N(s) - ps\right) \leq -R_2N^{\gamma/2}\right\},
	\]
	\[
	F = \left\{\max_{x\in[c,d]} \left(L_{k-1}^N(xN^\gamma) - pxN^\gamma\right) > -MN^{\gamma/2}\right\} \cap \left\{\max_{x\in[-d',-c']} \left(L_{k-1}^N(xN^\gamma) - pxN^\gamma\right) > -MN^{\gamma/2}\right\}.
	\]
	Here, $c = \lfloor\lfloor r+3\rfloor N^\gamma\rfloor$, $d = \lfloor RN^\gamma\rfloor$, $d' = \lceil RN^\gamma\rceil$, $c' = \lceil \lfloor r+3\rfloor N^\gamma\rceil$, and the constants $R$ and $M := \lambda R^2 + \phi(\epsilon/64)$ are taken via Lemma \ref{ParabolaMin} so that there exists $N_{30}$ with
	\begin{equation}\label{NoLowMinFbd}
		\mathbb{P}(F) > 1-\epsilon/2
	\end{equation}
	for all $N\geq N_{30}$. We prove the lemma by finding $N_3\in\mathbb{N}$ and $R_2 > 0$ so that for all $N\geq N_3$ we have
	\begin{equation}\label{NoLowMinEbd}
		\mathbb{P}(E) < \epsilon.
	\end{equation}
	To do so, we find $N_{31}$ so that for $N\geq N_{31}$,
	\begin{equation}\label{NoLowMinEFbd}
		\mathbb{P}(E\cap F) < \epsilon/2.
	\end{equation}
	Then for $N\geq N_3 := \max(N_{30},N_{31})$, \eqref{NoLowMinFbd} and \eqref{NoLowMinEFbd} together imply that
	\[
	\mathbb{P}(E) \leq \mathbb{P}(E\cap F) + \mathbb{P}(F) < \epsilon,
	\]
	proving \eqref{NoLowMinEbd} and the lemma. We split the proof of \eqref{NoLowMinEFbd} into three steps for clarity.\\
	
	\noindent\textbf{Step 1. } To simplify the proof of \eqref{NoLowMinEFbd}, we first express $F$ as a disjoint union
	\begin{equation}\label{NoLowMinFdisj}
		F = \bigsqcup_{(a,b,\vec{x},\vec{y},\ell_{\mathrm{b}})\in I} H(a,b,\vec{x},\vec{y},\ell_{\mathrm{b}},\ell_{\mathrm{t}}^1, \ell_{\mathrm{t}}^2),
	\end{equation}
	with the sets $I$ and $H(a,b,\vec{x},\vec{y},\ell_{\mathrm{b}},\ell_{\mathrm{t}}^1, \ell_{\mathrm{t}}^2)$ defined as follows.
	
	For $a\in\llbracket -d',-c'\rrbracket$, $b\in\llbracket c,d\rrbracket$, $\vec{x},\vec{y}\in\mathfrak{W}_{k-1}$, and discrete paths $\ell_{\mathrm{b}}$ on $\llbracket a,b\rrbracket$, $\ell_{\mathrm{t}}^1$ on $\llbracket-d',a\rrbracket$, and $\ell_{\mathrm{t}}^2$ on $\llbracket b,d\rrbracket$, we define $H(a,b,\vec{x},\vec{y},\ell_{\mathrm{b}})$ to be the event that $L_i^N(a) = x_i$ and $L_i^N(b) = y_i$ for $i\in\llbracket 1,k-1\rrbracket$, $L_{k-1}^N$ agrees with $\ell_{\mathrm{t}}^1$ on $\llbracket-d',a\rrbracket$ and with $\ell_{\mathrm{t}}^2$ on $\llbracket b,d\rrbracket$, and $L_k^N$ agrees with $\ell_{\mathrm{b}}$ on $\llbracket a,b\rrbracket$. Let $I$ be the collection of tuples $(a,b,\vec{x},\vec{y},\ell_{\mathrm{b}})$ satisfying 
	\begin{enumerate}[label=(\arabic*)]
		
		\item $\alpha(b-a) \leq y_i - x_i \leq \beta(b-a)$ for $i\in\llbracket 1,k-1\rrbracket$, 
		
		\item $x_{k-1} - pa > - MN^{\gamma/2}$ and $y_{k-1} - pb > - MN^{\gamma/2}$,
		
		\item if $a'\in\llbracket -d',a\rrbracket$ and $a' < a$, then $\ell_{\mathrm{t}}^1(a') - pa' \geq -MN^{\gamma/2}$,
		
		\item if $b'\in\llbracket b,d\rrbracket$ and $b' > b$, then $\ell_{\mathrm{t}}^2(b') - pb' \geq -MN^{\gamma/2}$,
		
		\item $z_1,z_2\in\mathbb{Z}$, $z_1\leq x_{k-1}$, $z_2\leq y_{k-1}$, and $\ell_{\mathrm{b}}\in\Omega(-a,b,z_1,z_2)$.
		
	\end{enumerate} 
	It is clear that $I$ is countable, and the five conditions together imply that $F = \bigcup_I H(a,b,\vec{x},\vec{y},\ell_{\mathrm{b}})$. This proves \eqref{NoLowMinFdisj}.
	
	Now let $J$ denote the subset of $I$ for which $\mathbb{P}(H(a,b,\vec{x},\vec{y},\ell_{\mathrm{b}},\ell_{\mathrm{t}}^1,\ell_{\mathrm{t}}^2)) > 0$. In the next step, we will find $N_{31}$ so that for all $N\geq N_{31}$ and $(a,b,\vec{x},\vec{y},\ell_{\mathrm{b}},\ell_{\mathrm{t}}^1,\ell_{\mathrm{t}}^2)\in J$, we have
	\begin{equation}\label{NoLowMinEcond}
		\mathbb{P}(E\,|\,H(a,b,\vec{x},\vec{y},\ell_{\mathrm{b}},\ell_{\mathrm{t}}^1,\ell_{\mathrm{t}}^2)) < \epsilon/2.
	\end{equation}
	It then follows from \eqref{NoLowMinFdisj} that for $N\geq N_{31}$ we have
	\begin{align*}
		\mathbb{P}(E\cap F) &= \sum_{(a,b,\vec{x},\vec{y},\ell_{\mathrm{b}},\ell_{\mathrm{t}}^1,\ell_{\mathrm{t}}^2) \in J} \mathbb{P}(E\,|\,H(a,b,\vec{x},\vec{y},\ell_{\mathrm{b}},\ell_{\mathrm{t}}^1,\ell_{\mathrm{t}}^2)) \mathbb{P}(H(a,b,\vec{x},\vec{y},\ell_{\mathrm{b}},\ell_{\mathrm{t}}^1,\ell_{\mathrm{t}}^2))\\
		&< \frac{\epsilon}{2}\sum_{(a,b,\vec{x},\vec{y},\ell_{\mathrm{b}},\ell_{\mathrm{t}}^1,\ell_{\mathrm{t}}^2)\in J} \mathbb{P}(H(a,b,\vec{x},\vec{y},\ell_{\mathrm{b}},\ell_{\mathrm{t}}^1,\ell_{\mathrm{t}}^2)) \leq \epsilon/2.
	\end{align*}
	This proves \eqref{NoLowMinEFbd}.\\
	
	\noindent\textbf{Step 2. } In this step we find $N_{31}$ so that \eqref{NoLowMinEcond} holds for all $N\geq N_{31}$ and $(a,b,\vec{x},\vec{y},\ell_{\mathrm{b}},\ell_{\mathrm{t}}^1,\ell_{\mathrm{t}}^2) \in J$. We first observe by the $H^{\mathrm{RW}}$-Gibbs property (see Definition \ref{DefHRWGP}) that
	\[
	\mathbb{P}(E\,|\,H(a,b,\vec{x},\vec{y},\ell_{\mathrm{b}},\ell_{\mathrm{t}}^1,\ell_{\mathrm{t}}^2)) = \mathbb{P}^{a,b,\vec{x},\vec{y},\infty,\ell_{\mathrm{b}}}_{\mathrm{avoid},H^{\mathrm{RW}}}\left(\inf_{s\in[-t_3,t_3]} \left(Q_{k-1}(s) - ps\right) \leq -R_2N^{\gamma/2}\right).
	\] 
	We next observe by Lemma \ref{MCL} that if we lower the entry and exit data $\vec{x},\vec{y}$, then this probability will increase. We fix the constant
	\[
	C = \sqrt{16\sigma^2\log\frac{3}{1-2^{-1/(k-1)}}},
	\]
	and we define new data $\vec{x}\,',\vec{y}\,'\in\mathfrak{W}_{k-1}$ by
	\[
	x_i' = \lfloor pa - MN^{\gamma/2}\rfloor - (i-1)\lceil CN^{\gamma/2}\rceil, \quad y_i' = \lfloor pb - MN^{\gamma/2}\rfloor - (i-1)\lceil CN^{\gamma/2}\rceil.
	\]
	We also write $T = b-a$, $z = y_{k-1}' - x_{k-1}'$. Then $\vec{x}\,',\vec{y}\,'$ satisfy
	\begin{enumerate}[label=(\alph*)]
		\item $x_i' \geq x_1 \geq x_i$ and $y_i' \geq y_1 \geq y_i$ for $i\in\llbracket 1,k-2\rrbracket$,
		
		\item $x_i' - x_{i+1}' \geq C\sqrt{T}$ for $i\in\llbracket 1,k-2\rrbracket$,
		
		\item $x_{k-1}' + (z/T)s - \ell_{\mathrm{b}}(s) \geq C\sqrt{T}$ for all $s\in[0,T]$.
		
	\end{enumerate}	
	Condition (a) and Lemma \ref{MCL} imply that
	\begin{equation*}
		\begin{split}
			&\mathbb{P}(E\,|\,H(a,b,\vec{x},\vec{y},\ell_{\mathrm{b}})) \leq \mathbb{P}^{a,b,\vec{x}\,',\vec{y}\,',\infty,\ell_{\mathrm{b}}}_{\mathrm{avoid},H^{\mathrm{RW}}}\left(\inf_{s\in[a,b]}\left(Q_{k-1}(s) - ps\right) \leq -R_2N^{\gamma/2}\right)\\
			\leq \; & \mathbb{P}^{a,b,\vec{x}\,',\vec{y}\,'}_{\mathrm{avoid},H^{\mathrm{RW}}}\left(\inf_{s\in[a,b]}\left(Q_{k-1}(s) - ps\right) \leq -R_2N^{\gamma/2}\right) \leq \frac{\mathbb{P}^{a,b,\vec{x}\,',\vec{y}\,'}_{H^{\mathrm{RW}}}\left(\inf_{s\in[a,b]}\left(L_{k-1}(s) - ps\right) \leq -R_2N^{\gamma/2}\right)}{\mathbb{P}^{a,b,\vec{x}\,',\vec{y}\,'}_{H^{\mathrm{RW}}}(A)},
		\end{split}
	\end{equation*}
	where $A = \{L_1 \geq \cdots \geq L_{k-1} \geq \ell_{\mathrm{b}} \mbox{ on } [a,b]\}$ is the avoidance event for a line ensemble $(L_1,\dots,L_{k-1})$ with law $\mathbb{P}^{a,b,\vec{x}\,',\vec{y}\,'}_{H^{\mathrm{RW}}}$. In the next step, we will find $N_{31}$ independent of $a,b,\vec{x},\vec{y}$ so that for $N\geq N_{31}$,
	\begin{equation}\label{NoLowMininf}
		\mathbb{P}^{a,b,\vec{x}\,',\vec{y}\,'}_{H^{\mathrm{RW}}}\left(\inf_{s\in[a,b]}\left(L_{k-1}(s) - ps\right) \leq -R_2N^{\gamma/2}\right) \leq \epsilon/4,
	\end{equation}
	\begin{equation}\label{NoLowMin1/2}
		\mathbb{P}^{a,b,\vec{x}\,',\vec{y}\,'}_{H^{\mathrm{RW}}}(A) \geq 1/2.
	\end{equation}
	This gives $\mathbb{P}(E\,|\,H(a,b,\vec{x},\vec{y},\ell_{\mathrm{b}},\ell_{\mathrm{t}}^1,\ell_{\mathrm{t}}^2)) \leq \epsilon/2$ for $N\geq N_{31}$, proving \eqref{NoLowMinEcond}.\\
	
	\noindent\textbf{Step 3. } In this last step, we find $N\geq N_{31}$ so that \eqref{NoLowMininf} and \eqref{NoLowMin1/2} both hold for all $N\geq N_{31}$ and $(a,b,\vec{x},\vec{y},\ell_{\mathrm{b}},\ell_{\mathrm{t}}^1,\ell_{\mathrm{t}}^2)\in J$. Let us first prove \eqref{NoLowMininf}. We write $z = y_{k-1}' - x_{k-1}'$ and $T = b-a$ as in Step 2. Since $L_1,\dots,L_{k-1}$ are independent under $\mathbb{P}^{a,b,\vec{x}\,',\vec{y}\,'}_{H^{\mathrm{RW}}}$, the law of $L_{k-1}$ is simply $\mathbb{P}^{a,b,x_{k-1}',y_{k-1}'}_{H^{\mathrm{RW}}}$. Thus 
	\begin{align*}
		&\mathbb{P}^{a,b,\vec{x}\,',\vec{y}\,'}_{H^{\mathrm{RW}}}\left(\inf_{s\in[a,b]}\left(L_{k-1}(s) - ps\right) \leq -R_2N^{\gamma/2}\right) = \mathbb{P}^{a,b,x_{k-1}',y_{k-1}'}_{H^{\mathrm{RW}}}\left(\inf_{s\in[a,b]} \left(\ell(s) - ps\right) \leq -R_2N^{\gamma/2}\right)\\ 
		= \; & \mathbb{P}^{0,b-a,0,z}_{H^{\mathrm{RW}}}\left(\inf_{s\in[0,b-a]}\left(\ell(s) - p(s-a) - \lceil pa + MN^{\gamma/2}\rceil - (k-2)\lceil CN^{\gamma/2}\rceil\right) \leq -R_2N^{\gamma/2}\right)\\
		\leq \; &\mathbb{P}^{0,b-a,0,z}_{H^{\mathrm{RW}}}\left(\inf_{s\in[0,b-a]} \left(\ell(s) - ps\right) \leq -(R_2-M-Ck)N^{\gamma/2}\right).
	\end{align*}
	Here, we have shifted horizontally by $-a$ and vertically by $-x_{k-1}'$, and used the estimate $(k-2)(CN^{\gamma/2} + 1) + 1 \leq CkN^{\gamma/2}$, valid for sufficiently large $N$. Since $z \geq p(b-a)$, by Lemma \ref{BridgeInf}, we can find $R_2 > 0$ and $N_{32}$ depending on $M,C,k,\epsilon$ so that the probability in the last line is bounded above by $\epsilon/4$ as long as $b-a \geq N_{32}$. Since $b-a \geq 2rN^\gamma$, it suffices to take $N_{31} \geq (N_{32}/2r)^{1/\gamma}$. This gives \eqref{NoLowMininf} for $N\geq N_{31}$, independent of $a,b,\vec{x},\vec{y}$.
	
	Finally, we show that we can enlarge $N_{31}$ so that \eqref{NoLowMin1/2} holds for $N\geq N_{31}$. We see from the definitions of $\vec{x}\,',\vec{y}\,'$ that $|z-pT| \leq 1$. In combination with properties (b) and (c) of $\vec{x}\,',\vec{y}\,'$ in Step 2, this implies via Lemma \ref{CurveSep} with $\ell_{\mathrm{b}} = -\infty$ that we can find $N_{33}$ so that if $b-a\geq N_{33}$, then
	\[
	\mathbb{P}^{a,b,\vec{x}\,',\vec{y}\,'}_{H^{\mathrm{RW}}}(A) = \mathbb{P}^{0,T,\vec{x}\,',\vec{y}\,'}_{H^{\mathrm{RW}}}(A) \geq \left(1 - 3e^{-C^2/8\sigma^2}\right)^{k-1}.
	\]
	By our choice of $C$ in Step 2, this is equal to 1/2, which proves \eqref{NoLowMin1/2} for $T\geq N_{33}$. Since $T \geq 2rN^\gamma$, we have $T \geq N_{33}$ as long as $N \geq (N_{33}/2r)^{1/\gamma}$. We therefore take $N_{31} = \max((N_{32}/2r)^{1/\gamma},(N_{33}/2r)^{1/\gamma})$. This completes the proof.
	
\end{proof}
%-------------------------------------------------------------------------------------------------------------------------------------------------------------------------------------------------
% Section 6
%
%-------------------------------------------------------------------------------------------------------------------------------------------------------------------------------------------------
\section{Bounds on the acceptance probability}\label{Section6}

In this section we prove Proposition \ref{APProp}. In Section \ref{APpf} we state a technical lemma with which we give the proof of the proposition, and we prove the lemma in Section \ref{APtechpf}.

\subsection{Proof of Proposition \ref{APProp}}\label{APpf}

Throughout this section we will write $S = \llbracket -t_3,-t_1\rrbracket \cup \llbracket t_1,t_3\rrbracket$. We will use the following technical lemma to prove Proposition \ref{APProp}. Roughly, it states that if a collection of random walk bridges on $\llbracket -t_3,t_3\rrbracket$ are conditioned to avoid one another and a lower-bounding curve on $S$ but not necessarily on $\llbracket -t_1,t_1\rrbracket$, and if the data of the curves at $\pm t_3$ lie in some compact window on scale $\sqrt{2t_3}$ (note that this event is likely by Lemmas \ref{NoBigMax} and \ref{NoLowMin}), then the acceptance probability of the curves on $\llbracket -t_1, t_1\rrbracket$ will be lower bounded with nonzero probability. We prove the lemma in the following section.

\begin{lemma}\label{APtech}
	Fix $k\in\mathbb{N}$, $p\in(\alpha,\beta)$, and $M>0$. Then there exist $g,h > 0$ and $N_6\in\mathbb{N}$ so that the following holds for all $N\geq N_6$. Let $\ell_{\mathrm{b}}$ be a discrete path on $\llbracket -t_3,t_3\rrbracket$ with $-\infty\leq \ell_{\mathrm{b}} < +\infty$, and let $\vec{x},\vec{y}\in\mathfrak{W}_{k-1}$ be such that $\Omega_{\mathrm{avoid}}(-t_3,t_3,\vec{x},\vec{y},\infty,\ell_{\mathrm{b}}) \neq \varnothing$. Assume that
	\begin{enumerate}[label=(\arabic*)]
		
		\item $\sup_{s\in[-t_3,t_3]}\big(\ell_{\mathrm{b}}(s) - ps\big) < M\sqrt{2t_3}$,
		
		\item $x_1 \leq -pt_3 + M\sqrt{2t_3}$ and $x_{k-1} \geq \max\big(\ell_{\mathrm{b}}(-t_3), -pt_3 - M\sqrt{2t_3}\big)$,
		
		\item $y_1 \leq pt_3 + M\sqrt{2t_3}$ and $y_{k-1} \geq \max\big(\ell_{\mathrm{b}}(t_3), pt_3 - M\sqrt{2t_3}\big)$.
		
	\end{enumerate}
	Let $\tilde{\mathfrak{Q}}$ have law $\mathbb{P}^{-t_3,t_3,\vec{x},\vec{y},\infty,\ell_{\mathrm{b}}}_{\mathrm{avoid},H^{\mathrm{RW}};S}$. Then
	\begin{equation}\label{APtecheq}
		\mathbb{P}^{-t_3,t_3,\vec{x},\vec{y},\infty,\ell_{\mathrm{b}}}_{\mathrm{avoid},H^{\mathrm{RW}};S}\left(Z(-t_1,t_1,\tilde{\mathfrak{Q}}(-t_1),\tilde{\mathfrak{Q}}(t_1),\infty,\ell_{\mathrm{b}}\llbracket -t_1,t_1\rrbracket) \geq g\right) \geq h.
	\end{equation}
	
\end{lemma}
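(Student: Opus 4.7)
The strategy is to identify an event $G$ on the endpoint values $\tilde{\mathfrak{Q}}(\pm t_1)$ that simultaneously (i) forces the middle-interval acceptance probability $Z$ to be uniformly bounded below, via Lemma \ref{CurveSep}, and (ii) has uniformly positive probability under $\mathbb{P}^{-t_3,t_3,\vec{x},\vec{y},\infty,\ell_{\mathrm{b}}}_{\mathrm{avoid},H^{\mathrm{RW}};S}$. To carry this out I would fix a large constant $K > 0$ and constants $a_1 > a_2 > \cdots > a_{k-1}$ with $a_i - a_{i+1} = K$ and $a_{k-1} > M\sqrt{(r+3)/(r+1)} + 2$, then set
\[
G := \bigcap_{i=1}^{k-1}\Big\{|\tilde{\mathfrak{Q}}_i(-t_1) + pt_1 - a_i\sqrt{2t_1}| \leq \tfrac{1}{2}\sqrt{2t_1}\Big\} \cap \Big\{|\tilde{\mathfrak{Q}}_i(t_1) - pt_1 - a_i\sqrt{2t_1}| \leq \tfrac{1}{2}\sqrt{2t_1}\Big\}.
\]
On $G$, the adjacent components of $\tilde{\mathfrak{Q}}(\pm t_1)$ are separated by at least $(K-1)\sqrt{2t_1}$, and the bottom component exceeds $\ell_{\mathrm{b}}(\pm t_1)$ by at least a positive multiple of $\sqrt{2t_1}$, using condition (1) together with $t_3/t_1 \to (r+3)/(r+1)$. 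Applying Lemma \ref{CurveSep} on $\llbracket -t_1, t_1 \rrbracket$ (with $T = 2t_1$ and separation constant $(K-1)$) then yields, for $N$ large, an explicit $g > 0$ depending on $K$, $\sigma_p$, and $k$ such that $G$ implies $Z(-t_1,t_1,\tilde{\mathfrak{Q}}(-t_1),\tilde{\mathfrak{Q}}(t_1),\infty,\ell_{\mathrm{b}}\llbracket -t_1,t_1\rrbracket) \geq g$.

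The heart of the argument is showing $\mathbb{P}^{-t_3,t_3,\vec{x},\vec{y},\infty,\ell_{\mathrm{b}}}_{\mathrm{avoid},H^{\mathrm{RW}};S}(G) \geq h$. My plan is to construct an explicit subevent $E \subseteq G$ which simultaneously implies the $S$-avoidance event $A_S$, and then write
\[
\mathbb{P}_S(G) \geq \mathbb{P}_S(E) = \mathbb{P}_{\mathrm{free}}(E)/\mathbb{P}_{\mathrm{free}}(A_S),
\]
where $\mathbb{P}_{\mathrm{free}} = \mathbb{P}^{-t_3,t_3,\vec{x},\vec{y}}_{H^{\mathrm{RW}}}$. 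The event $E$ will be a tube event forcing each $\tilde{\mathfrak{Q}}_i$ to stay within a small window around a piecewise linear target $\varphi_i$ passing through $(\mp t_3, x_i)$, $(- t_1, -pt_1 + a_i\sqrt{2t_1})$, $(t_1, pt_1 + a_i\sqrt{2t_1})$, $(\pm t_3, y_i)$. By the conditional independence of the three subintervals under $\mathbb{P}_{\mathrm{free}}$ given the values at $\pm t_1$, $\mathbb{P}_{\mathrm{free}}(E)$ factors as a product of tube probabilities for single $H^{\mathrm{RW}}$ bridges, each of which I would lower-bound using the KMT coupling of Assumption \ref{KMT} together with the Brownian-bridge tube estimates of Lemma \ref{BBmax} (in the spirit of Lemma \ref{CurvesHigh}). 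Upper-bounding $\mathbb{P}_{\mathrm{free}}(A_S)$ trivially by $1$ gives the required lower bound $\mathbb{P}_S(G) \geq h$, provided $\mathbb{P}_{\mathrm{free}}(E) \geq h$ uniformly in $\vec{x},\vec{y},\ell_{\mathrm{b}}$ satisfying the hypotheses.

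The main obstacle is twofold. First, $\ell_{\mathrm{b}}$ is completely general, with only a sup bound $\ell_{\mathrm{b}}(s) - ps < M\sqrt{2t_3}$ assumed, so Lemma \ref{ScaledWeakConv} cannot be invoked directly. I plan to bypass this by arranging the tube widths so that the bottom tube $\varphi_{k-1} - \delta\sqrt{2t_1}$ strictly exceeds the uniform upper envelope $ps + M\sqrt{2t_3}$ for $\ell_{\mathrm{b}}$ on all of $[-t_3,t_3]$: this makes the bottom-curve avoidance condition of $A_S$ automatic on the wings, irrespective of the pointwise behavior of $\ell_{\mathrm{b}}$. Second, since $\vec{x},\vec{y} \in \mathfrak{W}_{k-1}$ are only weakly ordered, adjacent targets $\varphi_i,\varphi_{i+1}$ can coincide at $\mp t_3$, and a constant-width tube cannot preserve ordering there. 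I expect to handle this either by tapering the tube widths linearly to $0$ at the endpoints, or by an initial monotone-coupling reduction via Lemma \ref{MCL}: raising $\vec{x},\vec{y}$ to well-separated data $\vec{x}\,',\vec{y}\,'$ (with $x_i' \geq x_i$, $y_i' \geq y_i$ in $\mathfrak{W}_{k-1}$) stochastically dominates the original ensemble, after which the tube construction applies cleanly; one then transfers the estimate back to the original $\vec{x},\vec{y}$ by combining a lower-bound-at-$\pm t_1$ form of $G$ (which is monotone) with independent ``no-big-max'' control on the wings in the spirit of Lemma \ref{NoBigMax}.
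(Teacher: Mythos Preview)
Your identification of the ``good'' event $G$ at $\pm t_1$ and the deduction $G \Rightarrow \{Z \geq g\}$ via Lemma~\ref{CurveSep} is correct and matches the paper's Lemma~\ref{APg}. The gap lies in the second half, where you try to lower-bound $\mathbb{P}_S(G)$ by a single tube event $E$ under the free measure.

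The obstruction is the avoidance of $\ell_{\mathrm{b}}$ on the wings. Your target $\varphi_{k-1}$ is pinned at $x_{k-1}$ at time $-t_3$, and the hypotheses only give $x_{k-1} \geq -pt_3 - M\sqrt{2t_3}$, which can sit $2M\sqrt{2t_3}$ \emph{below} the envelope $ps + M\sqrt{2t_3}$ at that point. Since the slope of $\varphi_{k-1}$ on $[-t_3,-t_1]$ is $p + O(N^{-\gamma/2})$ while the envelope also has slope $p$, the tube center stays below the envelope on a \emph{macroscopic} portion of $[-t_3,-t_1]$, not just in a boundary layer. Hence no tube around $\varphi_{k-1}$ can be forced above the envelope of $\ell_{\mathrm{b}}$ there, and since $\ell_{\mathrm{b}}$ is otherwise arbitrary you cannot guarantee $E \subseteq A_S$. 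Tapering the tube width fixes the ordering issue at coincident endpoints but does nothing for this $\ell_{\mathrm{b}}$ problem.

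Your monotone-coupling fallback also does not close the gap. Lemma~\ref{MCL} lets you lower-bound the probability of one-sided events (bottom curve high, or top curve low) by passing to shifted boundary data, and indeed this handles the $A$ and $B$ parts of your event. But the \emph{separation} condition $\tilde{Q}_i(\pm t_1) - \tilde{Q}_{i+1}(\pm t_1) \geq \epsilon\sqrt{2t_1}$ is not monotone in the boundary data, so it cannot be transferred this way; and your two-sided window event $G$ has probability bounded away from $1$, so a union bound combining separate one-sided estimates for $G_{\mathrm{low}}$ and $G_{\mathrm{high}}$ gives nothing.

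The paper resolves this by inserting an intermediate time $t_2$. Lemma~\ref{APt2} (proved via Lemma~\ref{CurvesHigh} and one-sided monotone coupling) shows that with probability $\geq h_1$ the values $\tilde{\mathfrak{Q}}(\pm t_2)$ already lie in a window $[V,U]\sqrt{2t_3}$ with $V > M$. Once one conditions on such data at $\pm t_2$, the straight segment joining the bottom entries dominates the envelope of $\ell_{\mathrm{b}}$ on all of $[-t_2,t_2]$, so Lemma~\ref{CurveSep}-type arguments become available on the inner interval. The separation at $\pm t_1$ is then obtained not by a tube construction but by applying Lemma~\ref{DeltaSep} on $[-t_2,-t_1]$ and $[t_1,t_2]$ after first pinning $\tilde{\mathfrak{Q}}(\pm t_1)$ in a compact window; this is the step where the non-monotone separation event is actually controlled, and it relies essentially on the interior-repulsion result imported from the Bernoulli case. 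In short, the two-stage structure (first climb at $\pm t_2$, then separate on the sub-wings) is precisely what replaces your single tube, and the missing ingredient in your outline is a mechanism for separation under $\mathbb{P}_S$ that does not rely on forcing all curves into prescribed narrow windows.
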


Using this lemma, we prove Proposition \ref{APProp}.

\begin{proof}
	
	Let $g,h$ be as in Lemma \ref{APtech}, and let us write $\mathfrak{Q}$ for a line ensemble with law $\mathbb{P}^{-t_3,t_3,\vec{x},\vec{y},\infty,\ell_{\mathrm{b}}}_{\mathrm{avoid},H^{\mathrm{RW}}}$. We split the proof into two steps.\\
	
	\noindent\textbf{Step 1.} We first show that \eqref{APtecheq} in the statement of Lemma \ref{APtech} implies that for all $\eta > 0$,
	\begin{equation}\label{APghe}
		\mathbb{P}^{-t_3,t_3,\vec{x},\vec{y},\infty,\ell_{\mathrm{b}}}_{\mathrm{avoid},H^{\mathrm{RW}}}\left(Z(-t_1,t_1,\mathfrak{Q}(-t_1),\mathfrak{Q}(t_1),\infty,\ell_{\mathrm{b}}\llbracket -t_1,t_1\rrbracket) \leq gh\eta\right) \leq \eta.
	\end{equation}
	Let us denote by $\mathbb{P}_S$ and $\widetilde{\mathbb{P}}_S$ the two measures on $\llbracket 1,k-1\rrbracket$ indexed line ensembles on $S$ obtained by restricting $\mathfrak{Q}$ and $\tilde{\mathfrak{Q}}$ to $S$, respectively. We observe that the Radon-Nikodym derivative between these two measures is given on line ensembles $\mathfrak{L} = (L_1,\dots,L_{k-1})$ on $S$ by
	\begin{equation}\label{APRN}
		\frac{d\mathbb{P}_S}{d\widetilde{\mathbb{P}}_S}(\mathfrak{L}) = \frac{\mathbb{P}_S(\mathfrak{L})}{\widetilde{\mathbb{P}}_S(\mathfrak{L})} = Z_S^{-1}\cdot Z(-t_1,t_1,\mathfrak{L}(-t_1),\mathfrak{L}(t_1),\infty,\ell_{\mathrm{b}}\llbracket -t_1,t_1\rrbracket),
	\end{equation}
	where $Z_S$ is the expectation of $Z(-t_1,t_1,\mathfrak{L}(-t_1),\mathfrak{L}(t_1),\infty,\ell_{\mathrm{b}}\llbracket -t_1,t_1\rrbracket)$ with respect to $\widetilde{\mathbb{P}}_S$. Now $(\mathfrak{L}(-t_1),\mathfrak{L}(t_1))$ has the same law as $(\tilde{\mathfrak{Q}}(-t_1),\tilde{\mathfrak{Q}}(t_1))$, so \eqref{APtecheq} implies that
	\[
	Z_S = \ex^{-t_3,t_3,\vec{x},\vec{y},\infty,\ell_{\mathrm{b}}}_{\mathrm{avoid},H^{\mathrm{RW}};S}\left(Z(-t_1,t_1,\tilde{\mathfrak{Q}}(-t_1),\tilde{\mathfrak{Q}}(t_1),\infty,\ell_{\mathrm{b}}\llbracket -t_1,t_1\rrbracket)\right) \geq gh.
	\]
	Let $F$ denote the event that $Z(-t_1,t_1,\mathfrak{L}(-t_1),\mathfrak{L}(t_1),\infty,\ell_{\mathrm{b}}\llbracket -t_1,t_1\rrbracket) \leq gh\eta$. Then using \eqref{APRN}, we find
	\begin{align*}
		\mathbb{P}_S(F) &= \ex_S\left[\mathbf{1}_F\right] = Z_S^{-1}\cdot  \widetilde{\ex}_S\left[\mathbf{1}_F\cdot Z(-t_1,t_1,\mathfrak{L}(-t_1),\mathfrak{L}(t_1),\infty,\ell_{\mathrm{b}}\llbracket -t_1,t_1\rrbracket)\right]\\
		&\leq Z_S^{-1} \cdot \widetilde{\ex}_S[\mathbf{1}_F \cdot gh\eta] \leq (gh)^{-1} gh\eta = \eta.
	\end{align*}
	Since the law of $(\mathfrak{L}(-t_1),\mathfrak{L}(t_1))$ under $\mathbb{P}_S$ is the law of $(\mathfrak{Q}(-t_1),\mathfrak{Q}(t_1))$, the left hand side of \eqref{APghe} is equal to $\mathbb{P}_S(F)$, so we conclude \eqref{APghe}.\\
	
	\noindent\textbf{Step 2.} We now prove Proposition \ref{APProp} using the result from Step 1. Fix $\epsilon > 0$, and for $N\in\mathbb{N}$ define the events
	\begin{align*}
		E_N &= \left\{L_1^N(\pm t_1) \pm pt_1 \leq M\sqrt{2t_3}\right\}\cap\left\{L_{k-1}^N(\pm t_1) \pm pt_1 \geq -M\sqrt{2t_3}\right\}\\ 
		&\qquad\cap \left\{\sup_{s\in[-t_3,t_3]} \big(L_k^N(s) - ps\big) \leq M\sqrt{2t_3}\right\}.
	\end{align*}
	Lemmas \ref{NoBigMax} and \ref{NoLowMin}, coupled with the fact that $L_k^N(s) \leq L_1^N(s)$ for all $s$ almost surely, allow us to find $N_{10}$ so that for $N\geq N_{10}$,
	\begin{equation}\label{APENc}
		\mathbb{P}(E_N^c) < \epsilon/2. 
	\end{equation}
	Let $g,h$ be as above, and put $\delta = gh\epsilon/2$. Define the event
	\[
	G = \left\{Z(-t_1,t_1,\vec{x},\vec{y},\infty,L_k^N\llbracket -t_1,t_1\rrbracket) < \delta\right\}
	\]
	and the $\sigma$-algebra $\mathcal{F} = \sigma(\mathfrak{L}^N(-t_3),\mathfrak{L}^N(t_3),L_k^N\llbracket -t_3,t_3\rrbracket)$. Then we claim that we can find $N_{11}$ so that for all $N\geq N_{11}$ we have
	\begin{align*}
		&\mathbb{P}(G\cap E_N) = \ex\left[\ex\left[\mathbf{1}_G \cdot \mathbf{1}_{E_N}\,|\,\mathcal{F}\right]\right] = \ex\left[\mathbf{1}_{E_N} \cdot \ex[\mathbf{1}_G\,|\,\mathcal{F}]\right]\\
		= \; & \ex\left[\mathbf{1}_{E_N}\cdot\mathbb{P}^{-t_3,t_3,\mathfrak{L}^N(-t_3),\mathfrak{L}^N(t_3),L_k^N\llbracket -t_3,t_3\rrbracket}_{\mathrm{avoid},H^{\mathrm{RW}}}\left(Z(-t_1,t_1,\mathfrak{L}(-t_1),\mathfrak{L}(t_1),\infty,L_k^N\llbracket -t_1,t_1\rrbracket) \leq \epsilon gh/2\right)\right]\\
		\leq \; & \ex[\mathbf{1}_{E_N}\cdot \epsilon/2] \leq \epsilon/2.
	\end{align*}
	The second equality uses the fact that $E_N\in\mathcal{F}$. The equality on the second line follows from the $H^{\mathrm{RW}}$-Gibbs property (see Definition \ref{DefHRWGP}). The first inequality on the third line is valid for $N$ larger than some $N_{11}$ by Step 1, since on the event $G$, the three conditions in the statement of Lemma \ref{APtech} are satisfied with $\mathfrak{L}^N(-t_3),\mathfrak{L}^N(t_3),L_k^N\llbracket -t_3,t_3\rrbracket$ in place of $\vec{x},\vec{y},\ell_{\mathrm{b}}$. Now if $N\geq N_1 := \max(N_{10},N_{11})$, then the above in combination with \eqref{APENc} implies that
	\[
	\mathbb{P}(G) \leq \mathbb{P}(G\cap E_N) + \mathbb{P}(E_N^c) < \epsilon/2 + \epsilon/2 = \epsilon.
	\]
	Recalling the definition of $G$, this proves Proposition \eqref{APProp}.
	
\end{proof}

\subsection{Proof of Lemma \ref{APtech}}\label{APtechpf}

Throughout this section, we will assume the same notation as in the statement of Lemma \ref{APtech}. Namely, we fix $k\in\mathbb{N}$, $p\in(\alpha,\beta)$, $M > 0$, a discrete path $\ell_{\mathrm{b}} : \llbracket - t_3,t_3\rrbracket \to \mathbb{R}\cup\{-\infty\}$, and $\vec{x},\vec{y}\in\mathfrak{W}_{k-1}$ such that $\Omega_{\mathrm{avoid}}(-t_3,t_3,\vec{x},\vec{y},\infty,\ell_{\mathrm{b}})\neq\varnothing$. We assume that
\begin{enumerate}[label=(\arabic*)]
	
	\item $\sup_{s\in[-t_3,t_3]}\big(\ell_{\mathrm{b}}(s) - ps\big) < M\sqrt{2t_3}$,
	
	\item $x_1 \leq -pt_3 + M\sqrt{2t_3}$ and $x_{k-1} \geq \max\big(\ell_{\mathrm{b}}(-t_3), -pt_3 - M\sqrt{2t_3}\big)$,
	
	\item $y_1 \leq pt_3 + M\sqrt{2t_3}$ and $y_{k-1} \geq \max\big(\ell_{\mathrm{b}}(t_3), pt_3 - M\sqrt{2t_3}\big)$.
	
\end{enumerate}
We write $S = \llbracket -t_3,-t_1\rrbracket \cup \llbracket t_1,t_3\rrbracket$, and we let $\mathfrak{Q}$ and $\tilde{\mathfrak{Q}}$ denote line ensembles with laws $\mathbb{P}^{-t_3,t_3,\vec{x},\vec{y},\infty,\ell_{\mathrm{b}}}_{\mathrm{avoid},H^{\mathrm{RW}}}$ and $\mathbb{P}^{-t_3,t_3,\vec{x},\vec{y},\infty,\ell_{\mathrm{b}}}_{\mathrm{avoid},H^{\mathrm{RW}};S}$ respectively. 

The proof relies on two auxiliary lemmas. The first provides conditions under which the lower bound of $g$ on the acceptance probability on $\llbracket -t_1,t_1\rrbracket$ is attained, as in \eqref{APtecheq}. Roughly, the conditions are that the entry and exit data at times $\pm t_1$ are separated from one another and from the path $\ell_{\mathrm{b}}$ by some small positive amount on scale $\sqrt{2t_3}$, and moreover that the data lie in a compact window on this scale.

\begin{lemma}\label{APg}
	Fix $\epsilon > 0$ and $U > 0$ such that $U \geq M +  (k-1) \epsilon$, and suppose $\vec{a}, \vec{b} \in \mathfrak{W}_{k-1}$ are such that 
	\begin{enumerate}
		\item $W(2t_3)^{1/2} \geq a_1 + p t_1 \geq a_{k-1} + pt_1 \geq (M +\epsilon) \sqrt{2t_3}$;
		\item $W(2t_3)^{1/2} \geq b_1 - p t_1 \geq b_{k-1} - pt_1 \geq (M + \epsilon) \sqrt{2t_3}$; 
		\item $a_i -a_{i+1} \geq \epsilon (2t_3)^{1/2}$ and $b_{i} - b_{i+1} \geq \epsilon \sqrt{2t_3}$ for $i = 1, \dots, k-2$.
	\end{enumerate}
	Then we can find $g = g(\epsilon, p,k) > 0$ and $N_6 \in \mathbb{N}$ such that for all $N \geq N_6$ we have 
	\begin{equation}\label{eqnRT}
		Z\big(  -t_1, t_1, \vec{a} ,\vec{b}, \ell_{\mathrm{b}}\llbracket -t_1, t_1\rrbracket\big) \geq g.
	\end{equation}
\end{lemma}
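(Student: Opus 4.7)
The plan is to reduce Lemma \ref{APg} directly to Lemma \ref{CurveSep}, with a horizontal translation sending $\llbracket -t_1,t_1\rrbracket$ to $\llbracket 0,2t_1\rrbracket$. Writing $T = 2t_1$, the acceptance probability $Z(-t_1,t_1,\vec{a},\vec{b},\ell_{\mathrm{b}}\llbracket-t_1,t_1\rrbracket)$ equals $\mathbb{P}^{0,T,\vec{a},\vec{b}}_{H^{\mathrm{RW}}}(A)$ where $A$ is exactly the avoidance event appearing in Lemma \ref{CurveSep}, applied with $\vec{x}=\vec{a}$, $\vec{y}=\vec{b}$ and with the shifted lower curve $s\mapsto\ell_{\mathrm{b}}(s-t_1)$. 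Thus it suffices to choose constants $C,K$ so that the three hypotheses (1)--(3) of Lemma \ref{CurveSep} hold uniformly, and then invoke that lemma to obtain a strictly positive lower bound $g=g(\epsilon,p,k)$.

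The substantive verification is hypothesis (1), that the straight-line interpolation between $a_{k-1}$ and $b_{k-1}$ sits at least $C\sqrt{T}$ above $\ell_{\mathrm{b}}$. For $s\in[-t_1,t_1]$, the interpolant equals $\tfrac{t_1-s}{2t_1}a_{k-1}+\tfrac{t_1+s}{2t_1}b_{k-1}$, and applying the lower bounds $a_{k-1}\geq -pt_1+(M+\epsilon)\sqrt{2t_3}$ and $b_{k-1}\geq pt_1+(M+\epsilon)\sqrt{2t_3}$ from conditions (1)--(2) of the lemma gives the pointwise lower bound $ps+(M+\epsilon)\sqrt{2t_3}$. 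By hypothesis (1) of Lemma \ref{APtech}, $\ell_{\mathrm{b}}(s)<ps+M\sqrt{2t_3}$, so the separation is at least $\epsilon\sqrt{2t_3}\geq \epsilon\sqrt{2t_1}=\epsilon\sqrt{T}/\sqrt{2}\cdot\sqrt{2}$. Thus hypothesis (1) of Lemma \ref{CurveSep} holds with $C=\epsilon$ (or a fixed positive multiple). Hypothesis (2) is immediate from condition (3), since $a_i-a_{i+1}\geq \epsilon\sqrt{2t_3}\geq \epsilon\sqrt{T}$ and likewise for $b_i-b_{i+1}$.

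For hypothesis (3), write $z_i=b_i-a_i$ and observe that $|z_i-pT|=|(b_i-pt_1)-(a_i+pt_1)|$. Monotonicity of $\vec{a},\vec{b}$ together with conditions (1)--(2) give $|a_i+pt_1|,|b_i-pt_1|\leq U\sqrt{2t_3}$ for each $i$, hence $|z_i-pT|\leq 2U\sqrt{2t_3}$. Since $t_3/t_1\to(r+3)/(r+1)$ as $N\to\infty$, for all sufficiently large $N$ we have $\sqrt{2t_3}/\sqrt{2t_1}\leq K_0$ for a constant $K_0=K_0(r)$, giving $|z_i-pT|\leq 2UK_0\sqrt{T}$. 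Thus Lemma \ref{CurveSep} applies with $C=\epsilon$ and $K=2UK_0$, yielding the desired bound $g=g(\epsilon,p,k)>0$ for all $N$ exceeding some $N_6$ that absorbs the threshold $T_4$ from that lemma.

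There is no real obstacle here: the lemma is essentially a repackaging of Lemma \ref{CurveSep} in the $(\pm t_1,\pm t_3)$ coordinates. The only mildly delicate point is ensuring that the ``scale mismatch'' between $\sqrt{2t_3}$ (in which the hypotheses are stated) and $\sqrt{2t_1}=\sqrt{T}$ (in which Lemma \ref{CurveSep} is stated) is harmless, which follows from the fact that $t_3/t_1$ is bounded uniformly in $N$.
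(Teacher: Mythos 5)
Your proposal is correct and follows essentially the same route as the paper: both reduce the acceptance-probability bound directly to Lemma \ref{CurveSep} by verifying its three hypotheses (separation of $\ell_{\mathrm{b}}$ from the bottom chord, separation between consecutive entries of the data, and boundedness of $b_i-a_i-pT$ on scale $\sqrt{T}$), with $C=\epsilon$ and $K$ absorbing the bounded ratio $\sqrt{2t_3}/\sqrt{2t_1}$. Your write-up just supplies a few of the arithmetic steps that the paper leaves implicit.
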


\begin{proof}
	This is essentially an immediate consequence of Lemma \ref{CurveSep}. Indeed, in view of condition (1) in the assumptions at the beginning of this section, the rightmost inequalities in conditions (1) and (2) in the hypothesis imply that $\ell_{\mathrm{b}}$ lies a distance of at least $\epsilon\sqrt{2t_3}$ uniformly below the line segment connecting $a_{k-1}$ and $b_{k-1}$; that is, the first condition in Lemma \ref{CurveSep} holds. Condition (3) in the hypothesis implies condition (2) in Lemma \ref{CurveSep} with $C = \epsilon$. Lastly, we see from conditions (1) and (2) that $|a_i - b_i - 2pt_3| \leq (U-M-\epsilon)\sqrt{2t_3}$ for each $i\in\llbracket 1,k-1\rrbracket$, so condition (3) in Lemma \ref{CurveSep} is also satisfied. We conclude that \eqref{eqnRT} holds with
	\[
	g = \left(\frac{1}{2} - \sum_{n=1}^\infty (-1)^{n-1}e^{-n^2\epsilon^2/8\sigma^2}\right)^{k-1}.
	\]
\end{proof}

The remainder of the proof consists of showing that the conditions listed in Lemma \ref{APg} occur with at least probability $h > 0$ under the law $\mathbb{P}^{-t_3,t_3,\vec{x},\vec{y},\infty,\ell_{\mathrm{b}}}_{\mathrm{avoid},H^{\mathrm{RW}}}$, with the data $a_i,b_i$ given by $Q_i(\pm t_1)$. We first show using Lemma \ref{CurvesHigh} that with positive probability, at times $\pm t_2$ the curves of the line ensemble $\tilde{\mathfrak{Q}}$ (defined at the beginning of this section) will be high above the path $\ell_{\mathrm{b}}$ but will also lie in a compact window on scale $\sqrt{2t_3}$. This is encoded in the following lemma.

\begin{lemma}\label{APt2}
	Given $R > 0$, there exist $U,V \geq M + R$, $h_1 > 0$ and $N_7 \in \mathbb{N}$ (depending on $R$) such that if $N \geq N_7$ we have
	\begin{equation}\label{eqnRT2}
		\mathbb{P}^{-t_3,t_3,\vec{x},\vec{y},\infty,\ell_{\mathrm{b}}}_{\mathrm{avoid},H^{\mathrm{RW}};S} \left(  U\sqrt{2t_3} \geq \tilde{Q}_1(\pm t_2) \mp p t_2 \geq \tilde{Q}_{k-1}(\pm t_2) \mp p t_2 \geq V\sqrt{2t_3}   \right) \geq h_1.
	\end{equation}
\end{lemma}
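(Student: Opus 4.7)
Write $L^c(s):=L(s)-ps$ for the centered version of any curve or function $L$. The plan is to decompose the target event, using the ordering at $\pm t_2\in S$, into a lower bound $\{\tilde Q_{k-1}^c(\pm t_2)\geq V\sqrt{2t_3}\}$ on the bottom curve and an upper bound $\{\tilde Q_1^c(\pm t_2)\leq U\sqrt{2t_3}\}$ on the top curve, and to handle each via a monotone coupling (Lemma~\ref{MCL}) against an auxiliary ensemble with well-separated data. For the lower bound I would couple $\tilde{\mathfrak Q}$ with an ensemble $\tilde{\mathfrak Q}'$ carrying data $\vec x',\vec y'\in\mathfrak W_{k-1}$ componentwise below $\vec x,\vec y$ and with $g^{\mathrm b}=-\infty$ against $g^{\mathrm t}=\ell_{\mathrm b}$, yielding $\tilde Q_i\geq\tilde Q_i'$ pointwise; for the upper bound, with an ensemble $\tilde{\mathfrak Q}''$ carrying data $\vec x'',\vec y''$ componentwise above $\vec x,\vec y$ and the same lower boundary $\ell_{\mathrm b}$, yielding $\tilde Q_i\leq\tilde Q_i''$. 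Both auxiliary data will be chosen with consecutive separation $10\sqrt{2t_3}$: for $(\vec x',\vec y')$ this is feasible because no lower boundary constrains the comparison ensemble, while for $(\vec x'',\vec y'')$ I would place the bottom curve a distance $D\sqrt{2t_3}$ above $\sup\ell_{\mathrm b}^c$ with $D$ a large fixed constant.

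\textbf{Lower bound.} I would adapt the tube argument from the proof of Lemma~\ref{CurvesHigh}, shifting the target from the midpoint $0$ to the pair of interior times $\pm t_2$. Setting $K_i=(V+2+10(k-1-i))\sqrt{2t_3}$, let $\phi_i$ be the piecewise linear function on $[-t_3,t_3]$ whose centered form has waypoints $\phi_i^c(-t_3)=(x_i')^c$, $\phi_i^c(\pm t_2)=K_i$, and $\phi_i^c(t_3)=(y_i')^c$, with $\phi_i^c\equiv K_i$ on $[-t_2,t_2]$. The uniform separation $10\sqrt{2t_3}$ of $(\vec x')^c$, $(\vec y')^c$, and $\vec K$ forces $\phi_i^c-\phi_{i+1}^c\equiv 10\sqrt{2t_3}$ throughout $\llbracket -t_3,t_3\rrbracket$, so the tube event
\[
E'=\bigl\{\,|(\tilde Q_i')^c(s)-\phi_i^c(s)|\leq 2\sqrt{2t_3}\text{ for all }s\in\llbracket -t_3,t_3\rrbracket\text{ and all }i\in\llbracket 1,k-1\rrbracket\,\bigr\}
\]
implies both the pairwise ordering $(\tilde Q_i')^c-(\tilde Q_{i+1}')^c\geq 6\sqrt{2t_3}$ everywhere and the window $(\tilde Q_{k-1}')^c(\pm t_2)\geq V\sqrt{2t_3}$. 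Since $E'$ is contained in the avoidance event $A_S$ for $\tilde{\mathfrak Q}'$, the ratio identity $\mathbb P^{-t_3,t_3,\vec x',\vec y'}_{\mathrm{avoid},H^{\mathrm{RW}};S}(E')\geq\mathbb P^{-t_3,t_3,\vec x',\vec y'}_{H^{\mathrm{RW}}}(E')$ reduces the problem to the free probability, which factors over $k-1$ independent bridges. Each factor is then lower bounded using Assumption~\ref{KMT} (via Corollary~\ref{Cheb}) together with Lemma~\ref{2bridges}, the latter applied twice to decompose the Brownian bridge proxy at the two waypoints $\pm t_2$ into three independent Brownian bridges joined by two Gaussian values, whose joint tube probability is then controlled by Lemma~\ref{BBmax}.

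\textbf{Upper bound and combination.} For the upper bound, the same ratio identity applied to $\tilde{\mathfrak Q}''$ yields
\[
\mathbb P^{-t_3,t_3,\vec x'',\vec y'',\infty,\ell_{\mathrm b}}_{\mathrm{avoid},H^{\mathrm{RW}};S}\bigl((\tilde Q_1'')^c(\pm t_2)>U\sqrt{2t_3}\bigr)\leq\frac{\mathbb P^{-t_3,t_3,\vec x'',\vec y''}_{H^{\mathrm{RW}}}\bigl((\tilde Q_1'')^c(\pm t_2)>U\sqrt{2t_3}\bigr)}{\mathbb P^{-t_3,t_3,\vec x'',\vec y''}_{H^{\mathrm{RW}}}(A_S)}.
\]
The numerator is handled by Lemma~\ref{BridgeInfSup} applied to the single top bridge, which is independent under the free measure, and is made smaller than any prescribed $\epsilon$ by taking $U$ sufficiently large; the denominator is bounded below by Lemma~\ref{CurveSep}, whose condition~(1) follows from the $D\sqrt{2t_3}$ buffer between $\vec x''$ and $\sup\ell_{\mathrm b}^c$, whose condition~(2) follows from the $10\sqrt{2t_3}$ separation, and whose condition~(3) follows from the setup bounds relating $y_i-x_i$ to $2pt_3$. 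Fixing the lower-bound constant $h'>0$ from the previous paragraph and then $U$ so that $\epsilon<h'/2$, a union bound over the two monotone comparisons yields $h_1:=h'/2>0$. The main obstacle will be the interior-time tube estimate of the second paragraph: unlike the single midpoint waypoint in Lemma~\ref{CurvesHigh}, here $\phi_i$ has two waypoints at $\pm t_2$, so Lemma~\ref{2bridges} must be invoked successively at $-t_2$ and at $t_2$, but the ensuing explicit Gaussian computation mirrors the proof of Lemma~\ref{CurvesHigh} closely.
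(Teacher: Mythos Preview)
Your proposal is correct, and for the upper bound on $\tilde Q_1$ it coincides with the paper's Step~3 almost verbatim (monotone coupling upward to separated data $\vec x'',\vec y''$, ratio identity, numerator by Lemma~\ref{BridgeInfSup}, denominator by Lemma~\ref{CurveSep}). The lower bound on $\tilde Q_{k-1}$, however, is organized differently from the paper.

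The paper does not build a single two-waypoint tube. Instead it first removes $\ell_{\mathrm b}$ by monotone coupling (as you do), then \emph{conditions on the value $\tilde{\mathfrak Q}(0)$ at the midpoint}: Lemma~\ref{CurvesHigh} is invoked verbatim to show $\tilde Q_{k-1}(0)\geq K\sqrt{2t_3}$ with probability at least a constant $A$, and then the ensemble is split into two independent halves on $[-t_3,0]$ and $[0,t_3]$. On each half, a second monotone coupling to separated data plus Lemma~\ref{BridgeInterp} (the one-point interpolation estimate) propagates the height from $0$ to $\pm t_2$, with Lemma~\ref{CurveSep} controlling the avoidance cost. Your route replaces this condition-and-split maneuver by a direct generalization of the Lemma~\ref{CurvesHigh} computation to the pair of waypoints $\pm t_2$, iterating Lemma~\ref{2bridges} twice. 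This is cleaner conceptually and avoids the conditioning layer, at the cost of redoing the Gaussian tube calculation rather than quoting Lemmas~\ref{CurvesHigh} and~\ref{BridgeInterp} off the shelf; the paper's approach has the converse trade-off. Both yield constants depending only on $M,R,k,p,r$ and not on $\vec x,\vec y,\ell_{\mathrm b}$, so either is adequate here.
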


\begin{proof}
	We will prove that for appropriate choices of $U,V,h_1$, we can find $N_7$ so that for $N\geq N_7$ we have
	\begin{align}
		\mathbb{P}^{-t_3,t_3,\vec{x},\vec{y},\infty,\ell_{\mathrm{b}}}_{\mathrm{avoid},H^{\mathrm{RW}};S}\left(\tilde{Q}_{k-1}(\pm t_2) \mp pt_2 \geq V\sqrt{2t_3}\right) &\geq 2h_1, \label{5.10bound1}\\
		\mathbb{P}^{-t_3,t_3,\vec{x},\vec{y},\infty,\ell_{\mathrm{b}}}_{\mathrm{avoid},H^{\mathrm{RW}};S}\left(\tilde{Q}_1(\pm t_2) \mp pt_2 > U\sqrt{2t_3}\right) &\leq h_1. \label{5.10bound2}
	\end{align}
	Assuming the validity of the claim, we then observe that the probability in \eqref{eqnRT2} is bounded below by $2h_1 - h_1 = h_1$, proving the lemma. We will prove \eqref{5.10bound1} and \eqref{5.10bound2} in three steps. In particular, we fix $h_1$ in Step 1, $V$ in Step 2, and $U$ in Step 3.\\
	
	\noindent\textbf{Step 1.} In this step we prove that there exists $N_7$ so that \eqref{5.10bound1} holds for $N\geq N_7$, assuming a suitable choice of the constant $V$ which will be established in Step 2. We condition on the value of $\tilde{\mathfrak{Q}}$ at 0 and divide $\tilde{\mathfrak{Q}}$ into two independent line ensembles on $\llbracket -t_3,0\rrbracket$ and $\llbracket 0,t_3\rrbracket$. We first observe via Lemma \ref{MCL} that
	\begin{equation}\label{5.10MC}
		\mathbb{P}^{-t_3,t_3,\vec{x},\vec{y},\infty,\ell_{\mathrm{b}}}_{\mathrm{avoid},H^{\mathrm{RW}};S}\left(\tilde{Q}_{k-1}(\pm t_2) \mp pt_2 \geq V\sqrt{2t_3}\right) \geq \mathbb{P}^{-t_3,t_3,\vec{x},\vec{y}}_{\mathrm{avoid}, H^{\mathrm{RW}}; S}\left(\tilde{Q}_{k-1}(\pm t_2) \mp pt_2 \geq V\sqrt{2t_3}\right).
	\end{equation}
	We now fix a constant $K$ depending on $V$ to be specified in Step 2, and we define events
	\[
	E_{\vec{z}} = \left\{\big(\tilde{Q}_1(0),\dots,\tilde{Q}_{k-1}(0)\big) = \vec{z}\right\}, \quad X = \left\{ \vec{z}\in\mathfrak{W}_{k-1} : z_{k-1} \geq K_1(2t_3)^{1/2} \mbox { and } \mathbb{P}^{-t_3,t_3,\vec{x},\vec{y}}_{\mathrm{avoid},H^{\mathrm{RW}}; S}(E_{\vec{z}}) > 0\right\},
	\]
	and $E = \bigsqcup_{\vec{z} \in X} E_{\vec{z}}$. We can choose $N_{70}$ large enough so that $X$ is non-empty for $N\geq N_{70}$. By Lemma \ref{CurvesHigh} we can find $N_{71}$ so that
	\begin{equation}\label{5.10Ebound}
		\mathbb{P}^{-t_3,t_3,\vec{x},\vec{y}}_{\mathrm{avoid}, H^{\mathrm{RW}}; S}(E) \geq \mathbb{P}^{-t_3,t_3,\vec{x},\vec{y}}_{\mathrm{avoid},H^{\mathrm{RW}}; S}\left(\tilde{Q}_{k-1}(0) \geq K\sqrt{2t_3}\right) \geq A,
	\end{equation}
	for $N\geq\tilde{N}_1$, where $A = A(k,p,M,K) > 0$ is a constant given explicitly in \eqref{19ineq}.
	
	Now let $\tilde{Q}_i^1$ and $\tilde{Q}_i^2$ denote the restrictions of $\tilde{Q}_i$ to $\llbracket-t_3,0\rrbracket$ and $\llbracket0,t_3\rrbracket$ respectively for $1\leq i\leq k-1$, and write $S_1 := S\cap\llbracket -t_3,0\rrbracket$, $S_2 := S\cap\llbracket 0, t_3\rrbracket$. If $\vec{z}\in X$, then
	\begin{equation}\label{5.10split}
		\mathbb{P}^{-t_3,t_3,\vec{x},\vec{y}}_{\mathrm{avoid},H^{\mathrm{RW}};S}\left(\tilde{Q}^1_{k-1} = \ell_1, \tilde{Q}^2_{k-1} = \ell_2 \, \big|\, E_{\vec{z}}\right) = \mathbb{P}^{-t_3,0,\vec{x},\vec{z}}_{\mathrm{avoid},H^{\mathrm{RW}};S_1}(\ell_1)\cdot\mathbb{P}^{0,t_3,\vec{z},\vec{y}}_{\mathrm{avoid},H^{\mathrm{RW}};S_2}(\ell_2).
	\end{equation}
	In Step 2, we will find $N_{72}$ so that for $N\geq N_{72}$ we have
	\begin{equation}\label{5.10fourth}
		\begin{split}
			\mathbb{P}^{-t_3,0,\vec{x},\vec{z}}_{\mathrm{avoid},H^{\mathrm{RW}};S_1}\left(\tilde{Q}^1_{k-1}(-t_2) + pt_2 \geq V\sqrt{2t_3}\right) &\geq \frac{1}{4},\\
			\mathbb{P}^{0,t_3,\vec{x},\vec{z}}_{\mathrm{avoid},H^{\mathrm{RW}};S_2}\left(\tilde{Q}^2_{k-1}(t_2) - pt_2 \geq V\sqrt{2t_3}\right) &\geq \frac{1}{4}.
		\end{split}
	\end{equation}
	Using \eqref{5.10Ebound}, \eqref{5.10split}, and \eqref{5.10fourth}, we conclude that
	\[
	\mathbb{P}^{-t_3,t_3,\vec{x},\vec{y}}_{\mathrm{avoid}, H^{\mathrm{RW}};S}\left(\tilde{Q}_{k-1}(\pm t_2) \mp pt_2 \geq (2t_3)^{1/2}V_1^b\right) \geq \frac{A}{16}
	\]
	for $N\geq N_7 = \max(N_{70},N_{71},N_{72})$. In combination with \eqref{5.10MC}, this proves \eqref{5.10bound1} with
	\begin{equation}\label{APh1}
		h_1 := \frac{A}{16} = \frac{2^{3k/2-4}}{\pi^{k/2}\sigma^k}\left(1 - 2\sum_{n=1}^\infty (-1)^{n-1}e^{-2n^2/\sigma^2}\right)^{2k} e^{-2k(K+M+10k-4)^2/\sigma^2}.
	\end{equation}\\
	
	\noindent\textbf{Step 2.} In this step, we fix the constants $V$ and $K$ and prove the inequalities in \eqref{5.10fourth} from Step 1, using Lemma \ref{BridgeInterp}. We first separate the curves of $\tilde{\mathfrak{Q}}$ using monotone coupling; define the constant
	\[
	C = \sqrt{8\sigma^2\log\frac{3}{1-(11/12)^{1/(k-2)}}}.
	\] 
	We define new data $\vec{x}\,', \vec{z}\,', \vec{y}\,'$ at times $-t_2, 0, t_2$ respectively by
	\begin{align*}
		x_i' &= \lfloor -pt_3 - M\sqrt{2t_3}\rfloor - (i-1)\lceil C\sqrt{2t_3}\,\rceil,\\
		z_i' &= \lfloor K\sqrt{2t_3}\rfloor - (i-1)\lceil C\sqrt{2t_3}\,\rceil,\\
		y_i' &= \lfloor pt_3 - M\sqrt{2t_3}\rfloor - (i-1)\lceil C\sqrt{2t_3}\,\rceil.
	\end{align*}
	Note that $x_i' \leq x_{k-1} \leq x_i$ and $x_i' - x_{i+1}' \geq C\sqrt{2t_3}$ for $1\leq i\leq k - 1$, and likewise for $z_i',y_i'$. By Lemma \ref{MCL} we have
	\begin{equation}\label{5.10separate}
		\begin{split}
			&\mathbb{P}^{-t_3,0,\vec{x},\vec{z}}_{\mathrm{avoid},H^{\mathrm{RW}};S_1}\left(\tilde{Q}^1_{k-1}(-t_2) + pt_2 \geq V\sqrt{2t_3}\right) \geq \mathbb{P}^{-t_3,0,\vec{x}\,',\vec{z}\,'}_{\mathrm{avoid},H^{\mathrm{RW}};S_1}\left(\tilde{Q}^1_{k-1}(-t_2) + pt_2 \geq V\sqrt{2t_3}\right) \\ 
			\geq \; & \mathbb{P}^{-t_3,0,x_{k-1}',z_{k-1}'}_{H^{\mathrm{RW}}}\left(\ell_1(-t_2) + pt_2 \geq V\sqrt{2t_3}\right) - \left( 1 - \mathbb{P}^{-t_3,t_3,\vec{x}\,',\vec{z}\,'}_{H^{\mathrm{RW}}}\left(\tilde{Q}^1_1 \geq \cdots \geq \tilde{Q}_{k-1}^1\right)\right).
		\end{split}
	\end{equation} 
	To bound the first term on the second line, first note that 
	\begin{align*}
		x_{k-1}' &\geq \tilde{x} := -pt_3 - (M+C(k-1))\sqrt{2t_3},\\ 
		z_{k-1}' &\geq \tilde{z} := K\sqrt{2t_3} - C(k-1)\sqrt{2t_3}
	\end{align*} for sufficiently large $N$. Thus by Lemma \ref{BridgeInterp}, we can find $N_{73}$ so that for $N\geq N_{73}$,
	\begin{equation}\label{5.10third1}
		\mathbb{P}^{-t_3,0,x_{k-1}',z_{k-1}'}_{H^{\mathrm{RW}}}\left(\ell_1(-t_2) \geq \frac{t_2}{t_3}\,\tilde{x} + \frac{t_3-t_2}{t_3}\,\tilde{z} - (2t_3)^{1/4}\right) \geq \frac{1}{3}.
	\end{equation}  
	Moreover, as long as $N_{73}^\gamma > 2$, we have for $N\geq N_{73}$ that
	\begin{equation}\label{2r+5}
		\frac{t_3-t_2}{t_3} \geq 1 - \frac{(r+2)N^\alpha}{(r+3)N^\alpha - 1} > 1-\frac{r+2}{r+5/2} = \frac{1}{2r+5}.
	\end{equation}
	We now put
	\begin{equation}\label{5.10const}
		\begin{split}
			V &= 2M + Ck + R, \qquad K = (4r+10)V.
		\end{split}
	\end{equation}
	Note in particular that $V \geq M + R$. It follows from \eqref{2r+5} that 
	\begin{align*}
		\frac{t_2}{t_3}\,\tilde{x} + \frac{t_3-t_2}{t_3}\,\tilde{z} - (2t_3)^{1/4} &= -pt_2 - C(k-1)\sqrt{2t_3} - \frac{t_2}{t_3}\,M\sqrt{2t_3} + \frac{t_3-t_2}{t_3}\,K\sqrt{2t_3} - (2t_3)^{1/4} \\ &\geq  -pt_2 - Ck\sqrt{2t_3} - M\sqrt{2t_3} + \frac{1}{2r+5}\,K\sqrt{2t_3}\\ 
		&=  -pt_2 + (M + Ck + 2(M+R))\sqrt{2t_3}\\
		&> -pt_2 + V\sqrt{2t_3}.
	\end{align*}
	For the first inequality, we used the fact that $t_2/t_3 < 1$, and we assumed that $N_{73}$ is sufficiently large so that $C(k-1)\sqrt{2t_3} + (2t_3)^{1/4} \leq Ck\sqrt{2t_3}$ for $N\geq\tilde{N}_3$. Using \eqref{5.10third1}, we conclude for $N\geq N_{73}$ that
	\begin{equation}\label{5.10third2}
		\mathbb{P}^{-t_3,0,x_{k-1}',z_{k-1}'}_{H^{\mathrm{RW}}}\left(\ell_1(-t_2) + pt_2 \geq V\sqrt{2t_3}\right) \geq \frac{1}{3}.
	\end{equation}
	Since $|z_i'-x_i'-pt_2| \leq (K+M+1)\sqrt{2t_3}$, we have by Lemma \ref{CurveSep} and our choice of $C$ that the second probability in the second line of \eqref{5.10separate} is bounded below by
	\[
	\left(1-3e^{-C^2/8\sigma^2}\right)^{k-1} \geq 11/12
	\]
	for $N$ larger than some $N_{74}$. It follows from \eqref{5.10separate} and \eqref{5.10third2} that for $N\geq N_{72} := \max(N_{73},N_{74})$, we have
	\begin{equation*}
		\mathbb{P}^{-t_3,0,\vec{x},\vec{z}}_{\mathrm{avoid},H^{\mathrm{RW}};S_1}\left(\tilde{Q}^1_{k-1}(-t_2) + pt_2 \geq V\sqrt{2t_3}\right) \geq \frac{1}{3} - \frac{1}{12} = \frac{1}{4},
	\end{equation*}
	proving the first inequality in \eqref{5.10fourth}. The proof of the second inequality is similar.
	\\
	
	\noindent\textbf{Step 3.} In this last step, we fix $U$ and prove that we can enlarge $N_7$ from Step 1 so that \eqref{5.10bound2} holds for $N\geq N_7$. Let $C$ be as in Step 2, and define new entry and exit data $\vec{x}\,'', \vec{y}\,''\in\mathfrak{W}_{k-1}$ by
	\begin{align*}
		x_i'' &= \lceil -pt_3 + M\sqrt{2t_3}\,\rceil + (k-i)\lceil C\sqrt{2t_3}\,\rceil,\\
		y_i'' &= \lceil pt_3 + M\sqrt{2t_3}\,\rceil + (k-i)\lceil C\sqrt{2t_3}\,\rceil.
	\end{align*}
	Note that $x_i'' \geq x_1 \geq x_i$ and $x_i''-x_{i+1}'' \geq C\sqrt{2t_3}$, and likewise for $y_i''$. Moreover, $\ell_{\mathrm{b}}$ lies a distance of at least $C\sqrt{2t_3}$ uniformly below the line segment connecting $x_{k-1}''$ and $y_{k-1}''$. By Lemma \ref{MCL} we have
	\begin{align*}
		\mathbb{P}^{-t_3,t_3,\vec{x},\vec{y},\infty,\ell_{\mathrm{b}}}_{\mathrm{avoid},H^{\mathrm{RW}};S}\left(\tilde{Q}_1(\pm t_2) \mp pt_2 > U\sqrt{2t_3}\right) &\leq \mathbb{P}^{-t_3,t_3,\vec{x}\,'',\vec{y}\,'',\infty,\ell_{\mathrm{b}}}_{\mathrm{avoid},H^{\mathrm{RW}};S}\left(\sup_{s\in[-t_3,t_3]} \big[\tilde{Q}_1(s)-ps\big]\geq U\sqrt{2t_3}\right)\\
		&\leq \frac{\mathbb{P}^{-t_3,t_3,x_1'',y_1''}_{H^{\mathrm{RW}}}\left(\sup_{s\in[-t_3,t_3]} \big[\tilde{L}_1(s)-ps\big] \geq U\sqrt{2t_3}\right)}{\mathbb{P}^{-t_3,t_3,\vec{x}\,'',\vec{y}\,''}_{H^{\mathrm{RW}}}\left(\tilde{L}_1\geq\cdots\geq\tilde{L}_{k-1}\geq\ell_{\mathrm{b}}\right)}.
	\end{align*}
	By Lemma \ref{BridgeInf}, since $\min(x_1'' + pt_3, \, y_1'' - pt_3) \leq (M_1+C(k-1))\sqrt{2t_3}$, we can choose $U > V$ as well as $N_{75}$ large enough so that the numerator is bounded above by $h_1/2$ for $N\geq N_{75}$. Since $|y_i'' - x_i'' - 2pt_3| \leq 1$, our choice of $C$ and Lemma \ref{CurveSep} give $N_{76}$ so that the denominator is at least $11/12$ for $N\geq N_{76}$. This gives an upper bound of $12/11\cdot h_1/2 < h_1/2$ in the above as long as $N_7\geq\max(N_{75},N_{76})$, proving \eqref{5.10bound2}. This completes the proof of the lemma.
	
\end{proof}

We now complete the proof of Lemma \ref{APtech}. The rough idea is to first argue using Lemma \ref{DeltaSep} that under the conditions of Lemma \ref{APt2}, the curves of $\tilde{\mathfrak{Q}}$ will be separated by a positive distance at the two times halfway bewteen $\pm t_1$ and $\pm t_2$. We then argue that in this situation, the three conditions in Lemma \ref{APg} are satisfied with positive probability $h$, which will imply \eqref{APtecheq}.

Accordingly, we define
\begin{equation}\label{t12}
	t_{12} := \left\lfloor \frac{t_1+t_2}{2}\right\rfloor.
\end{equation}

\begin{proof}[Proof of Lemma \ref{APtech}] We first define several events which will appear in the proof. We let $\tilde{S} = \llbracket -t_2,-t_1\rrbracket \cup \llbracket t_1,t_2\rrbracket$, and for $\vec{c}, \vec{d} \in \mathfrak{W}_{k-1}$ we write $\tilde{\Omega}(\vec{c},\vec{d}) = \Omega_{\mathrm{avoid}}(-t_2, t_2, \vec{c}, \vec{d}, \infty, \ell_{\mathrm{b}}; \tilde{S})$ for brevity. For $s\in\tilde{S}$ we define
	\begin{equation}
		\begin{split}
			&A(\vec{c}, \vec{d},s) = \left\{\tilde{\mathfrak{Q}} \in \tilde\Omega(\vec{c},\vec{d}): \tilde Q_{k-1}(\pm s) \mp ps \geq (M + 1) \sqrt{2t_3}\right\}, \\
			&B(\vec{c},\vec{d},W,s) = \left\{ \tilde{\mathfrak{Q}} \in \tilde\Omega(\vec{c},\vec{d}): \tilde Q_{1}(\pm s) \mp ps \leq W\sqrt{2t_3} \right\},\\
			&C(\vec{c}, \vec{d}, \epsilon, s) = \left\{ \tilde{\mathfrak{Q}} \in \tilde\Omega(\vec{c},\vec{d}): \min_{1\leq i\leq k-2} \big[\tilde Q_{i}(\pm s) - \tilde Q_{i+1}(\pm s)\big] \geq \epsilon \sqrt{2t_3} \right\},\\
			&D(\vec{c},\vec{d},W,\epsilon,s) = A(\vec{c}, \vec{d},s) \cap B(\vec{c},\vec{d},W,s) \cap C(\vec{c}, \vec{d}, \epsilon, s).
		\end{split}
	\end{equation}
	Here, $\epsilon > 0$ and $W \geq M + (k-1)\epsilon$ are constants which we will specify later. By Lemma \ref{APg}, for all $(\vec{c},\vec{d})$ and $N$ sufficiently large we have \[
	D(\vec{c},\vec{d},W,\epsilon,s)  \subset \left\{Z\left(  -t_1, t_1, \mathfrak{Q}(-t_1), \mathfrak{Q}(t_1), \ell_{\mathrm{b}}\llbracket -t_1, t_1\rrbracket\right) > g\right\}
	\]
	for some $g$ depending on $\epsilon,M,W$. Thus we will prove that probability of the event on the left under the uniform measure on $\tilde\Omega(\vec{c},\vec{d})$ is bounded below by $h := h_1/2$, with $h_1$ as in \eqref{APh1}. We divide the proof into several steps.\\
	
	{\bf \raggedleft Step 1.} We first show that there exist $R > 0$ and $\bar N_{60}$ large enough so that if $c_{k-1} + pt_2 \geq (M + R)\sqrt{2t_3}$ and $d_{k-1} - pt_2 \geq (M+R)\sqrt{2t_3}$, then for all $s\in\tilde{S}$ and $N\geq \bar N_{60}$ we have
	\begin{equation}\label{6.2step1}
		\begin{split}
			\mathbb{P}^{-t_2,t_2,\vec{c},\vec{d},\infty,\ell_{\mathrm{b}}}_{\mathrm{avoid},H^{\mathrm{RW}}; \tilde S}\big(A(\vec{c},\vec{d},s)\big) \geq  \frac{19}{20} \quad \mathrm{and} \quad \mathbb{P}^{-t_2,t_2,\vec{c},\vec{d}}_{\mathrm{avoid},H^{\mathrm{RW}};\tilde S}\left(Q_{k-1}(s) \geq \ell_{\mathrm{b}}(s) \mbox{ for all } s\in\tilde{S}\right) \geq \frac{99}{100}.
		\end{split}
	\end{equation} 
	Let us begin with the first inequality. We first apply monotone coupling to separate the curves of $\mathfrak{Q}$. Fix the constant
	\begin{equation}\label{6.2C}
		C = \sqrt{8\sigma^2\log\frac{3}{1-(199/200)^{1/(k-1)}}}
	\end{equation}
	and data $\vec{c}\,', \vec{d}\,' \in \mathfrak{W}_k$ with
	\begin{align*}
		c_i' &= \lfloor -pt_2 + (M+R)\sqrt{2t_3}\rfloor - (i-1)\lceil C\sqrt{2t_2}\,\rceil,\\
		d_i' &= \lfloor pt_2 + (M+R)\sqrt{2t_3}\rfloor - (i-1)\lceil C\sqrt{2t_2}\,\rceil.
	\end{align*}
	Then by Lemma \ref{MCL} we have
	\begin{equation}\label{6.2step1split}
		\begin{split}
			&\mathbb{P}^{-t_2,t_2,\vec{c},\vec{d},\infty,\ell_{\mathrm{b}}}_{\mathrm{avoid},H^{\mathrm{RW}}; \tilde S}\big(A(\vec{c},\vec{d},s)\big) \geq \mathbb{P}^{-t_2, t_2, \vec{c}\,', \vec{d}\,'}_{\mathrm{avoid}, H^{\mathrm{RW}}; \tilde S}\big(A(\vec{c}\,', \vec{d}\,',s)\big) \\ 
			\geq \; & \mathbb{P}^{-t_2, t_2, c_{k-1}', d_{k-1}'}_{H^{\mathrm{RW}}}\left(\inf_{s\in \tilde S}\big[\ell(s) - ps\big] \geq (M+1)\sqrt{2t_3}\right)\\ 
			\quad \; & -  \left( 1 - \mathbb{P}^{-t_2, t_2, \vec{c}\,', \vec{d}\,'}_{H^{\mathrm{RW}}}\left(L_1 \geq \cdots \geq L_{k-1}\right)\right).
		\end{split}
	\end{equation}
	By Lemma \ref{CurveSep} and the choice of $C$, we can find $N_{60}$ so that $\mathbb{P}^{-t_2, t_2, \vec{c}\,', \vec{d}\,'}_{H^{\mathrm{RW}}}(L_1 \geq \cdots \geq L_{k-1})>199/200 > 39/40$ for $N\geq N_{60}$. Writing $z = d_{k-1}' - c_{k-1}'$, the quantity in the second line of \eqref{6.2step1split} is equal to
	\begin{align*}
		&\mathbb{P}^{-t_2, t_2, 0, z}_{H^{\mathrm{RW}}}\Big(\inf_{s\in \tilde S}\big[\ell(s) + c_{k-1}' - ps\big]  \geq (M+ 1)\sqrt{2t_3}\Big) \\
		\geq \; & \mathbb{P}^{0, 2t_2, 0, z}_{H^{\mathrm{RW}}}\Big(\inf_{s\in [0,2t_2]}\big[\ell(s) - ps\big] \geq (-R+Ck+1)\sqrt{2t_3}\Big).
	\end{align*}
	Here we used the fact that $c_{k-1}' \geq -pt_2 + (M+R-Ck)\sqrt{2t_3}$. Now by Lemma \ref{BridgeInf}, we can choose $R$ and $N_{61}$ large enough so that this last probability is greater than $39/40$ for $N \geq N_{61}$. This gives a lower bound in \eqref{6.2step1split} of $39/40-1/40 = 19/20$ for $N\geq\max(N_{60},N_{61})$, proving the first inequality in \eqref{6.2step1}.
	
	We prove the second inequality in \eqref{6.2step1} similarly. Note that since $\ell_{\mathrm{b}}(s) \leq ps + M\sqrt{2t_3}$ on $\llbracket-t_3,t_3\rrbracket$ by assumption, we have
	\begin{equation*}
		\begin{split}
			\mathbb{P}^{-t_2,t_2,\vec{c},\vec{d}}_{\mathrm{avoid},H^{\mathrm{RW}};\tilde S}\left(\tilde Q_{k-1}(s) \geq \ell_{\mathrm{b}}(s) \mbox{ for all } s\in\tilde{S} \right) &\geq \mathbb{P}^{-t_2, t_2, \vec{c},\vec{d}}_{\mathrm{avoid}, H^{\mathrm{RW}}; \tilde S}\left(\inf_{s\in[-t_2, t_2]} \big[Q_{k-1}(s) - ps\big] \geq M\sqrt{2t_3}\right)\\
			&\geq \mathbb{P}^{-t_2, t_2, \vec{c}\,',\vec{d}\,'}_{\mathrm{avoid}, H^{\mathrm{RW}};\tilde S}\left(\inf_{s\in[-t_2, t_2]} \big[\tilde Q_{k-1}(s) - ps\big] \geq M\sqrt{2t_3}\right)\\
			&\geq\mathbb{P}^{0, 2t_2, 0, z}_{H^{\mathrm{RW}}}\left(\inf_{s\in[0, 2t_2]} \big[\ell(s) - ps\big] \geq -(R-Ck)\sqrt{2t_3}\right)\\ 
			&\qquad - \left(1 - \mathbb{P}^{-t_2,t_2,\vec{c}\,',\vec{d}\,'}_{H^{\mathrm{RW}}}(\tilde L_1\geq \cdots \geq \tilde L_{k-1})\right).
		\end{split}
	\end{equation*}
	We enlarge $R$ if necessary using Lemma \ref{BridgeInf} so that the probability in the third line $>199/200$ for $N\geq N_{62}$, and Lemma \ref{CurveSep} implies as above that the probability in the last line $> 199/200$ for $N$ larger than some $N_{63}$. This gives us a lower bound of $199/200 - 1/200 = 99/100$ for $N\geq \max(N_{62},N_{63})$ as desired. This proves the two inequalities in \eqref{6.2step1} for $N\geq \bar N_{60} := \max(N_{60},N_{61},N_{62},N_{63})$.\\
	
	{\bf \raggedleft Step 2.} With $R$ fixed from Step 1, let $U,V,h_1$ be as in Lemma \ref{APt2}  for this choice of $R$. Define the event
	\begin{equation}\label{6.2E}
		\begin{split}
			E = \big\{ \vec{c}, \vec{d} \in \mathfrak{W}_{k-1} : &\; U\sqrt{2t_3} \geq \max(c_1 + p t_2 , d_1 - pt_2) \mbox{ and }\\
			&\; \min(c_{k-1} + p t_2, d_{k-1} - pt_2)  \geq V\sqrt{2t_3} \big\}.
		\end{split}
	\end{equation}
	We show in this step that there exists $W \geq M + k-1$ and $\bar{N}_{61}$ such that for all $(\vec{c}, \vec{d}) \in E$, $s\in\tilde{S}$, and $N\geq\bar{N}_{61}$ we have
	\begin{equation}\label{6.2step2}
		\mathbb{P}^{-t_2,t_2,\vec{c},\vec{d},\infty,\ell_{\mathrm{b}}}_{\mathrm{avoid},H^{\mathrm{RW}};\tilde S}\big(B(\vec{c},\vec{d},U,s)\big) \geq  \frac{19}{20}.
	\end{equation}
	Let $C$ be as in \eqref{6.2C}, and define data $\vec{c}\,'', \vec{d}\,'' \in \mathfrak{W}_{k-1}$ by
	\begin{align*}
		c_i'' &= \lceil -pt_2 + W\sqrt{2t_3}\,\rceil + (k-1-i)\lceil C\sqrt{2t_2}\,\rceil,\\
		d_i'' &= \lceil pt_2 + W\sqrt{2t_3}\,\rceil + (k-1-i)\lceil C\sqrt{2t_2}\,\rceil.
	\end{align*}
	Then $c_i'' \geq c_1 \geq c_i$ and $c_i'' - c_{i+1}'' \geq C(2t_2)^{1/2}$ for each $i$, and likewise for $d_i''$. Furthermore, since $V \geq M+R$, we see that $\ell_{\mathrm{b}}$ lies a distance of at least $R\sqrt{2t_3} > C\sqrt{2t_3}$ uniformly below the line segment connecting $c_{k-1}''$ and $d_{k-1}''$. By Lemma \ref{MCL}, the left hand side of \eqref{6.2step2} is bounded below by
	\begin{equation}\label{6.2step3split}
		\begin{split}
			&\mathbb{P}^{-t_2,t_2,\vec{c}\,'', \vec{d}\,'', \infty,\ell_{\mathrm{b}}}_{\mathrm{avoid}, H^{\mathrm{RW}};\tilde S}\left(\sup_{s\in \tilde S}\big[\tilde Q_1(s) - ps\big] \leq U\sqrt{2t_3}\right)\\
			\geq \; & \mathbb{P}^{0,2t_2,0,z'}_{H^{\mathrm{RW}}}\left(\sup_{s\in[-t_2,t_2]}\big[\ell(s) - ps\big] \leq (W-U-Ck)(2t_3)^{1/2}\right) \\ &\qquad - \left(1 - \mathbb{P}^{-t_2,t_2,\vec{c}\,'', \vec{d}\,'', \infty,\ell_{\mathrm{b}}}_{H^{\mathrm{RW}}}\left(L_1\geq\cdots\geq L_{k-1}\geq \ell_{\mathrm{b}}\right)\right).
		\end{split}
	\end{equation}
	In the last line, we have written $z' = d_1'' - c_1''$, and we used the fact that $c_1'' \leq -pt_2 + (U + Ck)\sqrt{2t_3}$. Lemma \ref{BridgeInf} allows us to find $W$ large enough depending on $U,C,k,p$ and $N_{64}$ so that the probability in the third line of \eqref{6.2step3split} is at least $39/40$ for $N\geq N_{64}$. On the other hand, the above observations regarding $\vec{c}\,''$, $\vec{d}\,''$, and $\ell_{\mathrm{b}}$, as well as the fact that $|d_1'' - c_1'' - 2pt_2| \leq 1$, allow us to conclude from Lemma \ref{CurveSep} that the probability in the last line of \eqref{6.2step3split} is at least $39/40$ for $N$ greater than some $N_{65}$. This gives a lower bound of $39/40 - 1/40 = 19/20$ in \eqref{6.2step3split} for $\bar{N}_{61} := \max(N_{64},N_{65})$ as desired.\\
	
	{\bf \raggedleft Step 3.} In this step, we show that with $E$ and $W$ as in Step 2, there exist $\epsilon > 0$ sufficiently small and $\bar{N}_{62}$ such that for $(\vec{c}, \vec{d}) \in E$ and $N\geq\bar{N}_{62}$, we have
	\begin{equation}\label{LemmaBP2Step3}
		\mathbb{P}^{-t_2,t_2,\vec{c},\vec{d},\infty,\ell_{\mathrm{b}}}_{\mathrm{avoid},H^{\mathrm{RW}};\tilde S}\big(D(\vec{c},\vec{d},W,\epsilon,t_{12}) \big) \geq \frac{1}{2}.
	\end{equation}
	We claim that this follows if we find $N_{66}$ so that for $N\geq N_{66}$,
	\begin{equation}\label{6.2step3cond}
		\mathbb{P}^{-t_2,t_2,\vec{c},\vec{d}}_{\mathrm{avoid},H^{\mathrm{RW}};\tilde S}\big(C(\vec{c},\vec{d},\epsilon,t_{12})\,|\,A(\vec{c},\vec{d},t_1) \cap B(\vec{c},\vec{d},W,t_1)\big) \geq \frac{9}{10}.
	\end{equation}
	To see this, note that \eqref{6.2step1} and \eqref{6.2step2} imply that for $N\geq\max(\bar{N}_{60},\bar{N}_{61})$ we have
	\[
	\mathbb{P}^{-t_2,t_2,\vec{c},\vec{d}}_{\mathrm{avoid},H^{\mathrm{RW}};\tilde S}\big(A(\vec{c},\vec{d},t_1) \cap B(\vec{c},\vec{d},V^{top},t_1)\big) \geq \frac{19}{20} - \frac{1}{20} - \frac{1}{100} > \frac{4}{5},
	\]
	and then \eqref{6.2step3cond} and the second inequality in \eqref{6.2step1} imply for $N\geq \bar{N}_{62} := \max(\bar{N}_{60},\bar{N}_{61},N_{66})$ that
	\[
	\mathbb{P}^{-t_2,t_2,\vec{c},\vec{d},\infty,\ell_{\mathrm{b}}}_{\mathrm{avoid},H^{\mathrm{RW}};\tilde S}\big(A(\vec{c},\vec{d},t_1) \cap B(\vec{c},\vec{d},V^{top},t_1) \cap C(\vec{c},\vec{d},\epsilon,t_{12})\big) > \frac{9}{10}\cdot\frac{4}{5} - \frac{1}{100} > \frac{17}{25}.
	\]
	Then using \eqref{6.2step1} and \eqref{6.2step2} once again and recalling the definition of $D(\vec{c},\vec{d},W,\epsilon,t_{12}) $ gives a lower bound on the probability in \eqref{LemmaBP2Step3} of $17/25 - 1/10 > 14/25 > 1/2$ for $N\geq\bar{N}_2$ as desired. 
	
	In the remainder of this step, we verify \eqref{6.2step3cond}. Observe that $A(\vec{c},\vec{d},t_1) \cap B(\vec{c},\vec{d},W,t_1)$ can be written as a countable disjoint union: 
	\begin{equation}\label{6.2step3disj}
		A(\vec{c},\vec{d},t_1) \cap B(\vec{c},\vec{d},V^{top},t_1) = \bigsqcup_{(\vec{a},\vec{b})\in I} F(\vec{a},\vec{b}).
	\end{equation}
	Here, for $\vec{a},\vec{b}\in\mathfrak{W}_{k-1}$, $F(\vec{a},\vec{b})$ is the event that $\mathfrak{Q}(-t_1) = \vec{a}$ and $\mathfrak{Q}(t_1) = \vec{b}$, and $I$ is the collection of pairs $(\vec{a},\vec{b})$ satisfying
	\begin{enumerate}[label = (\arabic*)]
		
		\item $ \alpha(t_2-t_1) \leq \min(a_i - c_i,\, d_i - b_i) \leq \beta(t_2 - t_1)$ and $2\alpha t_1\leq b_i-a_i \leq 2\beta t_1$ for $1\leq i\leq k-1$,
		
		\item $\min(a_{k-1} + pt_1,\, b_{k-1} - pt_1) \geq (M+1)\sqrt{2t_3}$,
		
		\item $\max(a_1 + pt_1,\, b_1 - pt_1) \leq W\sqrt{2t_3}$.
		
	\end{enumerate}
	We denote by $\mathfrak{Q}^1 = (Q^1_1,\dots,Q^1_{k-1})$ and $\mathfrak{Q}^2 = (Q^2_2,\dots,Q^2_{k-1})$ the restrictions of $\tilde{\mathfrak{Q}}$ to $\llbracket -t_2,-t_1\rrbracket$ and $\llbracket t_1,t_2\rrbracket$ respectively. Then we observe that
	\begin{equation}\label{6.2step3ind}
		\begin{split}
			&\mathbb{P}^{-t_2,t_2,\vec{c},\vec{d}}_{\mathrm{avoid}, H^{\mathrm{RW}}; \tilde S}\left(\mathfrak{Q}^1 = \mathfrak{H}^1, \mathfrak{Q}^2 = \mathfrak{H}^2\,\big|\,F(\vec{a},\vec{b})\right) = \mathbb{P}^{-t_2,-t_1,\vec{c},\vec{a}}_{\mathrm{avoid}, H^{\mathrm{RW}}}\left(\mathfrak{Q}^1 = \mathfrak{H}^1\right) \cdot \mathbb{P}^{t_1,t_2,\vec{b},\vec{d}}_{\mathrm{avoid}, H^{\mathrm{RW}}}\left(\mathfrak{Q}^2 = \mathfrak{H}^2\right).
		\end{split}
	\end{equation}
	We also define $\tilde{I} := \{(\vec{a},\vec{b})\in I : \pr^{-t_2,t_2,\vec c, \vec d}_{\mathrm{avoid},H^{\mathrm{RW}};\tilde{S}}(F(\vec{a},\vec{b})) > 0\}$, which is nonempty for sufficiently large $N$. We now fix $(\vec{a},\vec{b})$ and argue that we can choose $\epsilon > 0$ small enough and $N_{66}$ so that for $N\geq N_{66}$,
	\begin{equation}\label{6.2step3 9/10}
		\pr^{-t_2,t_2,\vec c, \vec d}_{\mathrm{avoid},H^{\mathrm{RW}};\tilde{S}}\left(C(\vec{c},\vec{d},\epsilon,t_{12})\,\big|\, F(\vec{a},\vec{b})\right) \geq \frac{9}{10}.
	\end{equation}
	Enlarging $N_{66}$ if necessary so that $\tilde{I}\neq\varnothing$, using \eqref{6.2step3 9/10} and \eqref{6.2step3disj} and summing over the pairs in $\tilde{I}$ proves \eqref{6.2step3cond} for $N\geq N_{66}$.
	
	To prove \eqref{6.2step3 9/10}, we first show using Lemma \ref{DeltaSep} that we can find $\delta > 0$ and $N_{67}$ so that
	\begin{equation}\label{6.2step3left}
		\mathbb{P}^{-t_2,-t_1,\vec{c},\vec{a}}_{\mathrm{avoid}, H^{\mathrm{RW}}}\left(\max_{1\leq i\leq k-2} \big[Q^1_i(-t_{12}) - Q^1_{i+1}(-t_{12})\big] \geq \delta\sqrt{2t_3}\right) \geq \frac{3}{\sqrt{10}}
	\end{equation}
	for $N\geq N_{67}$. In order to apply the lemma, we rewrite the probability on the left as
	\begin{align*}
		&\mathbb{P}^{0, t_2-t_1,\vec{c},\vec{a}}_{\mathrm{avoid},H^{\mathrm{RW}}}\left(\max_{1\leq i\leq k-2} \big[Q^1_i(t_2-t_{12}) - Q^1_{i+1}(t_2-t_{12})\big] \geq \delta\sqrt{2t_3}\right)\\
		= \; & \mathbb{P}^{0, t_2-t_1,\vec{c},\vec{a}}_{\mathrm{avoid}, H^{\mathrm{RW}}}\left(\max_{1\leq i\leq k-2} \big[Q^1_i(t_N(t_2-t_1)) - Q^1_{i+1}(t_N(t_2-t_1))\big] \geq \delta\sqrt{2t_3}\right),
	\end{align*}
	with the sequence $t_N$ given by
	\[
	t_N := \frac{t_2-t_{12}}{t_2-t_1}.
	\]
	We see from the definition of $t_{12}$ in \eqref{t12} that $t_N \to 1/2$ as $N\to\infty$. Recalling the definition of $E$, we can therefore apply Lemma \ref{DeltaSep} with $M_1 = U$, $M_2 = W$, we obtain $N_{67}$ and $\delta>0$ such that if $N \geq N_{67}$, then
	\[
	\pr^{0,t_2-t_1,\vec c, \vec a}_{\mathrm{avoid},H^{\mathrm{RW}}}\left(\min_{1\leq i\leq k-1} \big[Q^1_i(t_N(t_2-t_1))-Q_{i+1}^1(t_N(t_2-t_1))\big] < \delta\sqrt{t_2-t_1}\right) < 1 - \frac{3}{\sqrt{10}}.
	\]
	Together with the fact that $t_3/4 < t_2-t_1$, this implies that
	\begin{equation*}
		\pr^{-t_2,-t_1,\vec c, \vec a}_{\mathrm{avoid},H^{\mathrm{RW}}}\left(\min_{1\leq i\leq k-1} \big[Q_i^1(-t_{12})-Q_{i+1}^1(-t_{12})\big]<(\delta/4)\sqrt{2t_3}\right) < 1 - \frac{3}{\sqrt{10}}
	\end{equation*}
	for $N\geq N_{67}$. This proves \eqref{6.2step3left}. A similar argument gives us an $\eta>0$ such that
	\[
	\pr^{-t_2,-t_1,\vec c, \vec a}_{\mathrm{avoid},H^{\mathrm{RW}}}\left(\min_{1\leq i\leq k-1} \big[Q_i(-t_{12})-Q_{i+1}(-t_{12})\big]<(\eta/4)\sqrt{2t_3}\right)<1 - \frac{3}{\sqrt{10}}
	\]
	for $N\geq N_{67}$. Then putting $\epsilon = \min(\delta,\eta)/4$ and using \eqref{6.2step3ind}, we obtain \eqref{6.2step3 9/10} for $N\geq N_{67}$.\\
	
	{\bf \raggedleft Step 4.} In this step, we find $\bar{N}_{63}$ so that with $W$ fixed as in Step 2 and $\epsilon$ as in Step 3, we have
	\begin{equation}\label{6.2step4}
		\mathbb{P}^{-t_2,t_2,\vec{c},\vec{d},\infty,\ell_{\mathrm{b}}}_{\mathrm{avoid},H^{\mathrm{RW}};\tilde S}\big(D(\vec{c},\vec{d},W,\epsilon,t_1) \big) \geq \frac{g}{2}
	\end{equation}
	for $N\geq\bar{N}_{63}$, with $g = g(\epsilon,p,k)$ as in Lemma \ref{APg}. We will find $N_{68}$ so that
	\begin{equation}\label{6.2step4sep}
		\mathbb{P}^{-t_2,t_2,\vec{c},\vec{d},\infty,\ell_{\mathrm{b}}}_{\mathrm{avoid},H^{\mathrm{RW}};\tilde S}\left(D(\vec{c},\vec{d},W,\epsilon,t_1) \,\big|\,D(\vec c, \vec d, W, \epsilon,t_{12})\right) \geq g
	\end{equation}
	for $N\geq N_{68}$. Then \eqref{LemmaBP2Step3} implies \eqref{6.2step4} for $N\geq\bar{N}_{63} := \max(\bar{N}_2,N_{68})$.
	
	To prove \eqref{6.2step4sep} we first observe that we can write
	\begin{equation}\label{6.2step4disj}
		D(\vec c, \vec d, W, \epsilon,t_{12}) = \bigsqcup_{(\vec{a},\vec{b})\in J} G(\vec{a},\vec{b}).
	\end{equation}
	Here, for $\vec{a},\vec{b}\in\mathfrak{W}_{k-1}$, $G(\vec{a},\vec{b})$ is the event that $\mathfrak{Q}(-t_{12}) = \vec{a}$ and $\mathfrak{Q}(t_{12}) = \vec{b}$, and $J$ is the collection of $(\vec{a},\vec{b})$ satisfying
	\begin{enumerate}[label = (\arabic*)]
		
		\item $ \alpha(t_2-t_{12}) \leq \min(a_i - c_i,\, d_i - b_i) \leq \beta(t_2 - t_{12})$ and $2\alpha t_{12}\leq b_i-a_i \leq 2\beta t_{12}$ for $1\leq i\leq k-1$,
		
		\item $\min(a_{k-1} + pt_1,\, b_{k-1} - pt_1) \geq (M+1)\sqrt{2t_3}$,
		
		\item $\max(a_1 + pt_1,\, b_1 - pt_1) \leq W\sqrt{2t_3}$,
		
		\item $\min(a_i-a_{i+1}, \, b_i-b_{i+1}) \geq \epsilon\sqrt{2t_3}$ for $1\leq i\leq k-2$.
		
	\end{enumerate}
	We let $\tilde{J} = \{(\vec{a},\vec{b})\in J : \mathbb{P}^{-t_2,t_2,\vec{c},\vec{d},\infty,\ell_{\mathrm{b}}}_{\mathrm{avoid},H^{\mathrm{RW}};\tilde S}(G(\vec{a},\vec{b})) > 0\}$, and we take $N_{68}$ large enough so that $\tilde{J}\neq\varnothing$. We also let $\bar{D}(W,\epsilon,t_1)$ denote the set consisting of elements of $D(\vec{c},\vec{d},W,\epsilon,t_1)$ restricted to $\llbracket -t_{12},t_{12}\rrbracket$. Then for $(\vec{a},\vec{b})\in\tilde{J}$ we have
	\begin{equation*}\label{6.2step4cond}
		\begin{split}
			&\mathbb{P}^{-t_2,t_2,\vec{c},\vec{d},\infty,\ell_{\mathrm{b}}}_{\mathrm{avoid},H^{\mathrm{RW}};\tilde S}\left(D(\vec{c},\vec{d},W,\epsilon,t_1) \,\big|\,G(\vec{a},\vec{b})\right) = \mathbb{P}^{-t_{12},t_{12},\vec{a},\vec{b},\infty,\ell_{\mathrm{b}}}_{\mathrm{avoid},H^{\mathrm{RW}};\tilde S}\left(\bar D(W,\epsilon,t_1)\right) \\
			\geq \; &\mathbb{P}^{-t_{12},t_{12},\vec{a},\vec{b}}_{H^{\mathrm{RW}}}\left(\bar D(W,\epsilon,t_1)\cap \{L_1\geq\cdots\geq L_{k-1}\geq\ell_{\mathrm{b}}\}\right).
		\end{split}
	\end{equation*}
	We observe that the event in the second line occurs as long as each curve $L_i$ remains within a distance of $(\epsilon/2)\sqrt{2t_3}$ from the straight line segment connecting $a_i$ and $b_i$ on $[-t_{12},t_{12}]$, for $1\leq i\leq k-2$. By the argument in the proof of Lemma \ref{CurveSep}, we can enlarge $N_{68}$ so that the probability of this event is bounded below by the same quantity $g$ appearing in Lemma \ref{APg}, for $N\geq N_{68}$. Then using \eqref{6.2step4disj} and summing over $\tilde{J}$ implies \eqref{6.2step4sep}.\\
	
	{\bf \raggedleft Step 5.} In this last step, we complete the proof of the lemma, fixing the constants $h$ and $N_5$. Let $h_1,W,\epsilon,g$ be as in Steps 2, 3, and 4, define
	\[
	h := \frac{gh_1}{2},
	\]
	and let $N_5 = \max(\bar{N}_{60},\bar{N}_{61},\bar{N}_{62},\bar{N}_{63},N_7)$, with $N_7$ as in Lemma \ref{APt2}. In the following we assume that $N\geq N_5$. Let us denote by $H$ the event that
	\begin{enumerate}
		\item $W\sqrt{2t_3} \geq \tilde Q_1(-t_1) + p t_1 \geq \tilde Q_{k-1}(-t_1) + pt_1 \geq (M + 1) \sqrt{2t_3}$,
		\item $W\sqrt{2t_3} \geq \tilde Q_1(t_1) - p t_1 \geq \tilde Q_{k-1}(t_1) - pt_1 \geq (M + 1) \sqrt{2t_3}$,
		\item $\tilde Q_i(-t_1) - \tilde Q_{i+1}(-t_1) \geq \epsilon \sqrt{2t_3}$ and $\tilde Q_i(t_1) - \tilde Q_{i+1}(t_1)  \geq \epsilon \sqrt{2t_3}$ for $i = 1, \dots, k-2$.
	\end{enumerate}
	By \eqref{6.2step4} we have that if $(\vec{c},\vec{d})\in E$ and $N\geq N_5$, then
	\[
	\mathbb{P}_{\mathrm{avoid}, H^{\mathrm{RW}};\tilde{S}}^{-t_2, t_2, \vec{c}, \vec{d}, \infty, \ell_{\mathrm{b}}} ( H) \geq \frac{h}{h_1},
	\]
	Let $Y$ denote the event appearing in \eqref{eqnRT2}. Then we can write $Y = \bigsqcup_{(\vec{c},\vec{d})\in E} Y(\vec{c},\vec{d})$, where $Y(\vec{c},\vec{d})$ is the event that $\tilde{\mathfrak{Q}}(-t_2) = \vec{c}$, $\tilde{\mathfrak{Q}}(t_2) = \vec{d}$, and $E$ is defined in Step 2. If $\tilde{E} = \{(\vec{c},\vec{d})\in E : \mathbb{P}^{-t_3,t_3,\vec{a},\vec{b},\infty,\ell_{\mathrm{b}}}_{\mathrm{avoid},H^{\mathrm{RW}};S}(Y(\vec{c},\vec{d})) > 0\}$, we can assume that $N_5$ is large enough so that $\tilde{E}\neq\varnothing$. It follows from Lemma \ref{APt2} that 
	\[
	\mathbb{P}^{-t_3,t_3,\vec{a},\vec{b},\infty,\ell_{\mathrm{b}}}_{\mathrm{avoid},H^{\mathrm{RW}};S}(Y) \geq h_1.
	\] 
	We conclude that for all $N\geq N_5$,
	\begin{align*}
		\mathbb{P}^{-t_3,t_3,\vec{a},\vec{b},\infty,\ell_{\mathrm{b}}}_{\mathrm{avoid},H^{\mathrm{RW}};S}(H) &\geq \mathbb{P}^{-t_3,t_3,\vec{a},\vec{b},\infty,\ell_{\mathrm{b}}}_{\mathrm{avoid},H^{\mathrm{RW}};S}(H\cap Y)\\ 
		&= \sum_{(\vec{c},\vec{d})\in \tilde E} \mathbb{P}^{-t_3,t_3,\vec{a},\vec{b},\infty,\ell_{\mathrm{b}}}_{\mathrm{avoid},H^{\mathrm{RW}};S}(Y(\vec{c},\vec{d}))\cdot \mathbb{P}^{-t_3,t_3,\vec{a},\vec{b},\infty,\ell_{\mathrm{b}}}_{\mathrm{avoid},H^{\mathrm{RW}};S}(H\,|\,Y(\vec{c},\vec{d}))\\
		&= \sum_{(\vec{c},\vec{d})\in \tilde E} \mathbb{P}^{-t_3,t_3,\vec{a},\vec{b},\infty,\ell_{\mathrm{b}}}_{\mathrm{avoid},H^{\mathrm{RW}};S}(Y(\vec{c},\vec{d}))\cdot \mathbb{P}^{-t_2,t_2,\vec{c},\vec{d},\infty,\ell_{\mathrm{b}}}_{\mathrm{avoid},H^{\mathrm{RW}};\tilde S}(H)\\ 
		&\geq \frac{h}{h_1}\sum_{(\vec{c},\vec{d})\in \tilde E} \mathbb{P}^{-t_3,t_3,\vec{a},\vec{b},\infty,\ell_{\mathrm{b}}}_{\mathrm{avoid},H^{\mathrm{RW}};S}(Y(\vec{c},\vec{d})) = \frac{h}{h_1}\,\mathbb{P}^{-t_3,t_3,\vec{a},\vec{b},\infty,\ell_{\mathrm{b}}}_{\mathrm{avoid},H^{\mathrm{RW}};S}(Y) \geq h.
	\end{align*}
	Now Lemma \ref{APg} implies \eqref{APtecheq}, completing the proof.
\end{proof}

%-------------------------------------------------------------------------------------------------------------------------------------------------------------------------------------------------
% Appendix
%
%-------------------------------------------------------------------------------------------------------------------------------------------------------------------------------------------------
\section*{Appendix}\label{Appendix}

Here we prove the monotone coupling Lemma \ref{MCL}. Our argument follows the approaches in \cite[Section 6]{CH14} and \cite[Section 6]{Wu19}.

\begin{proof}
	For simplicity, throughout the proof we will write $\Omega_{a,S}$ for $\Omega_{\mathrm{avoid}}(T_0,T_1,\vec{x},\vec{y},\infty,g^{\mathrm{b}};S)$ and $\Omega'_{a,S}$ for $\Omega_{\mathrm{avoid}}(T_0,T_1,\vec{x}\,',\vec{y}\,',\infty,g^{\mathrm{t}};S)$. We split the argument into two steps. In the first step, we will construct two Markov chains ordered with respect to one another which have as invariant measures $\mathbb{P}_{\mathrm{avoid}, H^{\mathrm{RW}}; S}^{T_0, T_1, \vec{x}, \vec{y}, \infty, g^{\mathrm{b}}}$ and $\mathbb{P}_{\mathrm{avoid}, H^{\mathrm{RW}}; S}^{T_0, T_1, \vec{x}\,', \vec{y}\,', \infty, g^{\mathrm{t}}}$, such that their laws converge weakly to these two measures. In the second step we will use the Skorohod representation theorem to construct $(\Omega,\mathcal{F},\mathbb{P})$, $\mathfrak{L}^{\mathrm{b}}$, and $\mathfrak{L}^{\mathrm{t}}$.\\
	
	\noindent\textbf{Step 1.} In this step we construct a Markov chain $(X^n,Y^n)_{n\geq 0}$ on $\Omega_{a,S}\times\Omega'_{a,S}$ with initial distribution given by
	\[
	X^0_i(t) = \min(x_i + \beta t - T_0, \, y_i), \quad Y^0_i(t) = \min(x_i' + \beta t - T_0, \, y_i'),
	\]
	with the following properties:
	\begin{enumerate}[label=(\arabic*)]
		\item $(X^n)_{n\geq 0}$ and $(Y^n)_{n\geq 0}$ are both Markov chains with respect to their own filtrations;
		
		\item $(X^n)$ is irreducible and aperiodic, with invariant measure $\mathbb{P}_{\mathrm{avoid}, H^{\mathrm{RW}}; S}^{T_0, T_1, \vec{x}, \vec{y}, \infty, g^{\mathrm{b}}}$;
		
		\item $(Y^n)$ is irreducible and aperiodic, with invariant measure $\mathbb{P}_{\mathrm{avoid}, H^{\mathrm{RW}}; S}^{T_0, T_1, \vec{x}\,', \vec{y}\,', \infty, g^{\mathrm{t}}}$;
		
		\item $X_i^n \leq Y_i^n$ on $\llbracket T_0, T_1\rrbracket$ for all $i\in\llbracket 1,k\rrbracket$.
		
	\end{enumerate}
	Together with \cite[Theorem 1.8.3]{Norris}, conditions (2) and (3) imply that
	\begin{equation}\label{MCweak} X^n \implies \mathbb{P}^{T_0,T_1,\vec{x},\vec{y},\infty,g^{\mathrm{b}}}_{\mathrm{avoid},H^{\mathrm{RW}};S} \quad \mathrm{and} \quad Y^n \implies \mathbb{P}_{\mathrm{avoid}, H^{\mathrm{RW}}; S}^{T_0, T_1, \vec{x}\,', \vec{y}\,', \infty, g^{\mathrm{t}}} \quad \mbox{as } n\to\infty.
	\end{equation}
	We first observe that $X^0$ is in fact in $\Omega_{a,S}$. To see this, simply observe that $x_i + \beta t - T_0 \leq x_{i+1} + \beta t - T_0$ and $y_i \leq y_{i+1}$ for each $i\in\llbracket 1,k-1\rrbracket$ and $t\in\llbracket T_0,T_1\rrbracket$ by assumption, so that $X^0_i(t) \leq X^0_{i+1}(t)$. Thus $X^0 \in \Omega_{\mathrm{avoid}}(T_0,T_1,\vec{x},\vec{y};S)$. Moreover, $X^0$ is maximal in this collection, in the sense that if $Z \in \Omega_{\mathrm{avoid}}(T_0,T_1,\vec{x},\vec{y};S)$, then $X^0_i \geq Z_i$ on $\llbracket T_0,T_1\rrbracket$ for all $i\in\llbracket 1,k\rrbracket$. By hypothesis there exists some $Z\in\Omega_{a,S}\subset \Omega_{\mathrm{avoid}}(T_0,T_1,\vec{x},\vec{y};S)$, and $X^0_k \geq Z_k \geq g^{\mathrm{b}}$ on $\llbracket T_0,T_1\rrbracket$. Thus $X^0$ is maximal in $\Omega_{a,S}$. Likewise, $Y^0$ is maximal in $\Omega'_{a,S}$.
	
	We now specify the dynamics of $(X^n,Y^n)$. At time $n$, we sample uniformly a triplet $(i,t,\zeta)\in \llbracket 1,k \rrbracket \times \llbracket T_0,T_1\rrbracket \times \{-1,1\}$. We also let $U^{(i,t,\zeta)}$ denote an independent uniform random variable in $[0,1]$. We update $X^n$ to $X^{n+1}$ as follows. Define a candidate $\tilde{X}^n_i$ for $X^{n+1}_i$ via $\tilde{X}^n_i(t) = X^n_i(t) + \zeta$, and $\tilde{X}^n_j(s) = X^n_j(s)$ if $j\neq i$ or $s\neq t$. If $\tilde{X}^n \in \Omega_{a,S}$ and
	\begin{equation}\label{MCupdate}
		R^{(i,t,\zeta)}_{X^n} := \frac{\mathbb{P}^{T_0,T_1,\vec{x},\vec{y}}_{H^{\mathrm{RW}}}(\tilde{X}^n)}{\mathbb{P}^{T_0,T_1,\vec{x},\vec{y}}_{H^{\mathrm{RW}}}(X^n)} \geq U^{(i,t,\zeta)},
	\end{equation}
	we set $X^{n+1} = \tilde{X}^n$. Otherwise we leave $X^{n+1} = X^n$. We update $Y^n$ according to the analogous rule, replacing $R^{(i,t,\zeta)}_{X^n}$ with the corresponding quantity $R^{(i,t,\zeta)}_{Y^n}$. 
	
	By construction, $(X^n,Y^n)\in \Omega_{a,S}\times\Omega'_{a,S}$ for all $n$. It is easy to see that $(X^n,Y^n)$ is a Markov chain, since the value of $(X^{n+1},Y^{n+1})$ depends only on the prior state $(X^n,Y^n)$. In particular $X^{n+1}$ depends only on $X^n$, so that $(X^n)$ is Markov in its own filtration, and likewise for $(Y^n)$. This proves condition (1) above.
	
	We now argue that $(X^n)$ is irreducible; a similar argument works for $(Y^n)$. Fix any $Z\in \Omega_{a,S}$. We must argue that the state $Z$ can be reached from $X^0$ after some number of steps with positive probability. Due to the maximality of $X^0$ we need only move the paths downward, and if we do so starting with the bottom path, then there is no danger of the paths crossing on $S$ at any step. To ensure that $X^n_k = Z$, we initialize $t = T_0 + 1$ and perform the following procedure. If $X^n_k(t) = Z(t)$, we increment $t$ by 1. If $X^n_k(t) > Z(t)$, we sample the triplet $(k,t,-1)$ and perform the update, then increment $t$ by 1. Once $t$ reaches $T_1$, we reset $t = T_0 + 1$. After finitely many iterations, $X^n_k$ will agree with $Z_i$ on all of $\llbracket T_0,T_1\rrbracket$. We then repeat this procedure for $X^n_i$ and $Z_i$, with $i$ descending. Each of these samples has positive probability, and the process terminates in finite time, so the probability of reaching $Z$ is positive.
	
	To see that $(X^n)$ and $(Y^n)$ are aperiodic, simply observe that sampling a triplet $(k,T_0,\zeta)$ leaves both $X^n$ and $Y^n$ unchanged.
	
	We now complete the proof of (2) and (3) by arguing that $(X^n)$ has invariant measure $\mathbb{P}^{T_0,T_1,\vec{x},\vec{y},\infty,g^{\mathrm{b}}}_{\mathrm{avoid},H^{\mathrm{RW}};S}$; a similar argument works for $(Y^n)$. Fix any $\omega\in\Omega_{a,S}$. Observe that if $\tau\in\Omega_{a,S}$, then $\mathbb{P}(X^{n+1} = \omega \,|\, X^n = \tau) \neq 0$ if and only if $\mathbb{P}(X^{n+1} = \tau \,|\, X^n = \omega) \neq 0$, since $\omega$ is reachable from $\tau$ if and only if $\omega$ and $\tau$ differ only at one point $(i,t)$ by 1, and this in turn is equivalent to $\tau$ being reachable from $\omega$. In this case, we note that
	\begin{equation}\label{MCfrac}
		\frac{\mathbb{P}(X^{n+1} = \omega \,|\, X^n = \tau)}{\mathbb{P}(X^{n+1} = \tau \,|\, X^n = \omega)} = \frac{\mathbb{P}^{T_0,T_1,\vec{x},\vec{y}}_{H^{\mathrm{RW}}}(\omega)}{\mathbb{P}^{T_0,T_1,\vec{x},\vec{y}}_{H^{\mathrm{RW}}}(\tau)}.
	\end{equation}
	This follows from the update rule \eqref{MCupdate} by observing that either the ratio on the right or its reciprocal is at least 1, hence necessarily at least as large as a uniform random variable on $[0,1]$. It follows from \eqref{MCfrac} that
	\begin{align*}
		&\sum_{\tau\in\Omega_{a,S}} \mathbb{P}^{T_0,T_1,\vec{x},\vec{y},\infty,g^{\mathrm{b}}}_{\mathrm{avoid},H^{\mathrm{RW}};S}(\tau)\cdot\mathbb{P}(X^{n+1} = \omega \,|\, X^n = \tau)\\ 
		= \; & \sum_{\tau\in\Omega_{a,S}} \mathbb{P}^{T_0,T_1,\vec{x},\vec{y},\infty,g^{\mathrm{b}}}_{\mathrm{avoid},H^{\mathrm{RW}};S}(\tau)\cdot \frac{\mathbb{P}^{T_0,T_1,\vec{x},\vec{y}}_{H^{\mathrm{RW}}}(\omega)}{\mathbb{P}^{T_0,T_1,\vec{x},\vec{y}}_{H^{\mathrm{RW}}}(\tau)}\cdot\mathbb{P}(X^{n+1} = \tau \,|\, X^n = \omega)\\
		= \; & \mathbb{P}^{T_0,T_1,\vec{x},\vec{y},\infty,g^{\mathrm{b}}}_{\mathrm{avoid},H^{\mathrm{RW}};S}(\omega) \sum_{\tau\in\Omega_{a,S}}\mathbb{P}(X^{n+1} = \tau \,|\, X^n = \omega) = \mathbb{P}^{T_0,T_1,\vec{x},\vec{y},\infty,g^{\mathrm{b}}}_{\mathrm{avoid},H^{\mathrm{RW}};S}(\omega).
	\end{align*}
	This proves invariance. (In fact, \eqref{MCfrac} implies reversibility.)
	
	Lastly, we argue that $X_i^n \leq Y_i^n$ on $\llbracket T_0,T_1\rrbracket$ for all $i\in\llbracket 1,k\rrbracket$ and $n\geq 0$. It is clear that $X_i^0 \leq Y_i^0$ for all $i$. Since the update rule cannot move curves of $X^n$ and $Y^n$ in opposite directions, the only way the ordering can be violated at time $n+1$ is if $X_i^n(t) = Y_i^n(t) = z$ for some $i\in\llbracket 1,k\rrbracket$ and $t\in\llbracket T_0+1,T_1-1\rrbracket$. The first case to consider is if $(i,t,+1)$ is sampled and 
	\begin{equation}\label{MCviol}
		R^{(i,t,+1)}_{Y^n} < U^{(i,t,+1)} \leq R^{(i,t,+1)}_{X^n}.
	\end{equation} 
	We argue now that \eqref{MCviol} is impossible. We have
	\begin{align*}
		R^{(i,t,+1)}_{X^n} &= \frac{\exp\left[-H^{\mathrm{RW}}(z+1-X^n_i(t-1)) - H^{\mathrm{RW}}(X^n_i(t+1)-z-1)\right]}{\exp\left[-H^{\mathrm{RW}}(z-X^n_i(t-1)) - H^{\mathrm{RW}}(X^n_i(t+1)-z)\right]}\\
		&= \frac{\exp\left[H^{\mathrm{RW}}(X_i^n(t+1)-z) - H^{\mathrm{RW}}(X_i^n(t+1)-z-1)\right]}{\exp\left[H^{\mathrm{RW}}(z-X_i^n(t-1)+1) - H^{\mathrm{RW}}(z-X_i^n(t-1))\right]},
	\end{align*}
	and similarly
	\begin{equation*}
		R^{(i,t,+1)}_{Y^n} = \frac{\exp\left[H^{\mathrm{RW}}(Y_i^n(t+1)-z) - H^{\mathrm{RW}}(Y_i^n(t+1)-z-1)\right]}{\exp\left[H^{\mathrm{RW}}(z-Y_i^n(t-1)+1) - H^{\mathrm{RW}}(z-Y_i^n(t-1))\right]}.
	\end{equation*}
	Since $X_i^n(t-1) \leq Y_i^n(t-1)$ and $X_i^n(t+1) \leq Y_i^n(t+1)$, the convexity of $H^{\mathrm{RW}}$ implies that
	\begin{align*}
		H^{\mathrm{RW}}(Y_i^n(t+1)-z) - H^{\mathrm{RW}}(Y_i^n(t+1)-z-1) &\leq H^{\mathrm{RW}}(X_i^n(t+1)-z) - H^{\mathrm{RW}}(X_i^n(t+1)-z-1),\\
		H^{\mathrm{RW}}(z-Y_i^n(t-1)+1) - H^{\mathrm{RW}}(z-Y_i^n(t-1)) &\geq H^{\mathrm{RW}}(z-X_i^n(t-1)+1) - H^{\mathrm{RW}}(z-X_i^n(t-1)).
	\end{align*}
	It follows that $R^{(i,t,+1)}_{Y^n} \geq R^{(i,t,+1)}_{X^n}$, so \eqref{MCviol} is impossible. The other case in which the ordering is violated is if $(i,t,-1)$ is sampled and $R^{(i,t,-1)}_{X^n} < U^{(i,t,-1)} \leq R^{(i,t,-1)}_{Y^n}$. A similar argument to the above shows that $R^{(i,t,-1)}_{Y^n} \leq R^{(i,t,-1)}_{X^n}$, so the ordering holds in this case as well. This completes the proof of (4).\\
	
	\noindent\textbf{Step 2.} In this step, we construct the desired probability space $(\Omega,\mathcal{F},\mathbb{P})$ and random variables $\mathfrak{L}^{\mathrm{b}}$, $\mathfrak{L}^{\mathrm{t}}$ using the Markov chains $(X^n)$ and $(Y^n)$ constructed in Step 1. It follows from \eqref{MCweak} that in particular $(X^n)$ and $(Y^n)$ are tight sequences, so $(X^n,Y^n)_{n\geq 0}$ is tight as well. By Prohorov's theorem, $(X^n,Y^n)$ is relatively compact. Let $(n_m)$ be a sequence such that $(X^{n_m},Y^{n_m})$ converges weakly. Since $C(\llbracket 1,k\rrbracket\times\llbracket T_0,T_1\rrbracket)$ is separable by Lemma \ref{Polish}, the Skorohod representation theorem \cite[Theorem 6.7]{Billing} implies that there is a probability space $(\Omega,\mathcal{F},\mathbb{P})$ supporting random variables $\mathfrak{X}^n$, $\mathfrak{Y}^n$ and $\mathfrak{L}^{\mathrm{b}},\mathfrak{L}^{\mathrm{t}}$ taking values in $\Omega_{a,S},\Omega_{a,S}'$ respectively, such that
	\begin{enumerate}[label=(\arabic*)]
		
		\item The law of $(\mathfrak{X}^n,\mathfrak{Y}^n)$ under $\mathbb{P}$ is the same as that of $(X^n,Y^n)$,
		
		\item $\mathfrak{X}^n(\omega) \longrightarrow \mathfrak{L}^{\mathrm{b}}(\omega)$ for all $\omega\in\Omega$,
		
		\item $\mathfrak{Y}^n(\omega) \longrightarrow \mathfrak{L}^{\mathrm{t}}(\omega)$ for all $\omega\in\Omega$.
		
	\end{enumerate}
	
	In particular, (1) implies that $\mathfrak{X}^{n_m}$ has the same law as $X^{n_m}$, which converges weakly to $\mathbb{P}_{\mathrm{avoid},H^{\mathrm{RW}};S}^{T_0,T_1,\vec{x},\vec{y},\infty,g^{\mathrm{b}}}$. It follows from (2) that $\mathfrak{L}^{\mathrm{b}}$ has law $\mathbb{P}_{\mathrm{avoid},H^{\mathrm{RW}};S}^{T_0,T_1,\vec{x},\vec{y},\infty,g^{\mathrm{b}}}$. Similarly, $\mathfrak{L}^{\mathrm{t}}$ has law $\mathbb{P}_{\mathrm{avoid},H^{\mathrm{RW}};S}^{T_0,T_1,\vec{x}\,',\vec{y}\,',\infty,g^{\mathrm{t}}}$. Moreover, condition (4) in Step 1 implies that $\mathfrak{X}^n_i \leq \mathfrak{Y}^n_i$, $\mathbb{P}$-a.s., so $\mathfrak{L}^{\mathrm{b}}_i \leq \mathfrak{L}^{\mathrm{t}}_i$ for $i\in\llbracket 1,k\rrbracket$, $\mathbb{P}$-a.s, as desired.
	
\end{proof}

\section*{Acknowledgments}

The author would like to thank Ivan Corwin and Evgeni Dimitrov for suggesting this problem and providing numerous helpful comments on drafts of this paper. The author is also grateful to Evgeni Dimitrov for many insightful conversations regarding this work, and for guiding the project on Bernoulli line ensembles which led to this paper. We thank the anonymous referees for providing several helpful comments that improved the paper.

\bibliographystyle{alpha}
\bibliography{bib}

\end{document}